\documentclass[11pt]{amsart}
\usepackage{adjustbox}
\usepackage{amssymb}
\usepackage{amsmath}
\usepackage{fancyhdr}
\usepackage[british]{babel}
\usepackage{geometry,mathtools}
\usepackage{enumitem}
\usepackage{algpseudocode}
\usepackage{dsfont}
\usepackage{centernot}
\usepackage{xstring}
\usepackage{colortbl}
\usepackage[section]{algorithm}
\usepackage{graphicx}
\usepackage[font={footnotesize}]{caption}
\usepackage[usenames,dvipsnames,table]{xcolor}
\usepackage{tikz}
\usepackage[h]{esvect}
\usepackage[
		bookmarksopen=true,
		bookmarksopenlevel=1,
		colorlinks=true,
		linkcolor=darkblue,
        linktoc=page,
		citecolor=darkblue,
]{hyperref}
\usepackage{bbm}
\usepackage{mathrsfs}
\usepackage{hyphenat}
\usepackage{soul}
\usepackage{cancel}
\usetikzlibrary{arrows.meta, decorations.pathmorphing, decorations, fit}

\allowdisplaybreaks

\title[Counting tight Hamilton cycles in Dirac hypergraphs]{Counting tight Hamilton cycles in Dirac hypergraphs}
\author[F.~Joos]{Felix Joos}
\address[F.~Joos]{Institut f\"ur Informatik, Universit\"at Heidelberg, Heidelberg,
Germany
}
\email{joos@informatik.uni-heidelberg.de}

\author[X.~Xie]{Xinyue Xie}
\address[X.~Xie]{Institut f\"ur Informatik, Universit\"at Heidelberg, Heidelberg,
Germany
}
\email{xie@informatik.uni-heidelberg.de}

\date{\today}

\thanks{The research leading to these results was supported by the Deutsche Forschungsgemeinschaft (DFG, German Research Foundation) -- 428212407}

\newtheorem{theorem}[algorithm]{Theorem}

\newtheorem{lemma}[algorithm]{Lemma}
\newtheorem{cor}[algorithm]{Corollary}
\newtheorem{fact}[algorithm]{Fact}

\theoremstyle{definition}

\newtheoremstyle{claimstyle}{5pt}{5pt}{\em}{5pt}{\em}{:}{5pt}{}
\theoremstyle{claimstyle}

\newtheoremstyle{stepstyle}{10pt}{5pt}{\em}{0pt}{\em}{:}{5pt}{}
\theoremstyle{stepstyle}

\numberwithin{equation}{section}

\definecolor{darkblue}{rgb}{0,0,0.5}

\newdimen\margin
\def\textno#1&#2\par{
   \margin=\hsize
   \advance\margin by -4\parindent
          \setbox1=\hbox{\sl#1}
   \ifdim\wd1 < \margin
      $$\box1\eqno#2$$
   \else
      \bigbreak
      \hbox to \hsize{\indent$\vcenter{\advance\hsize by -3\parindent
      \it\noindent#1}\hfil#2$}
      \bigbreak
   \fi}

\begin{document}

\newcommand{\new}[1]{\textcolor{red}{#1}}
\newcommand{\old}[1]{\textcolor{red}{\st{#1}}}
\newcommand{\oldblock}[1]{\begin{tikzpicture}\node[inner sep=0pt, outer sep=0pt] at (0,0) (bb) {\begin{minipage}{\textwidth}\color{red}#1\end{minipage}};\draw[-, red] (bb.north west) -- (bb.south east);\end{tikzpicture}}
\def\COMMENT#1{}
\def\TASK#1{}

\newcommand{\todo}[1]{\begin{center}\color{red}\textbf{to do:} #1 \end{center}}

\def\op{\operatorname}
\def\eps{\varepsilon}

\newcommand{\pr}{\mathbb{P}}
\newcommand{\ex}{\mathbb{E}}

\newcommand{\bN}{\mathbb{N}}
\newcommand{\bZ}{\mathbb{Z}}
\newcommand{\bQ}{\mathbb{Q}}
\newcommand{\bR}{\mathbb{R}}
\newcommand{\bC}{\mathbb{C}}

\newcommand{\bw}{\mathbf{w}}
\newcommand{\bx}{\mathbf{x}}
\newcommand{\by}{\mathbf{y}}
\newcommand{\bz}{\mathbf{z}}

\newcommand{\cE}{\mathcal{E}}
\newcommand{\cF}{\mathcal{F}}
\newcommand{\cG}{\mathcal{G}}
\newcommand{\cS}{\mathcal{S}}
\newcommand{\cT}{\mathcal{T}}
\newcommand{\cQ}{\mathcal{Q}}
\newcommand{\cP}{\mathcal{P}}
\newcommand{\cA}{\mathcal{A}}
\newcommand{\cU}{\mathcal{U}}
\newcommand{\cZ}{\mathcal{Z}}

\newcommand{\1}{\mathbbm 1}
\newcommand{\supp}{{\rm supp}}
\newcommand{\diff}{\mathop{}\!\mathrm{d}}

\newcommand\restrict[1]{\raisebox{-.5ex}{$|$}_{#1}}

\newcommand{\Set}[1]{\{#1\}}
\newcommand{\set}[2]{\{#1\,:\;#2\}}

\newcommand{\norm}[1]{\|#1\|}

\newcommand{\AU}[1]{\op{Aut}(#1)}

\begin{abstract} 
Suppose $G$ is a $k$-uniform hypergraph on $n$ vertices such that every $(k-1)$-subset~$S$ of $V(G)$ belongs to at least $\delta n$ edges, where $\delta> 1/2$. Let $\Psi(G)$ denote the number of tight Hamilton cycles in $G$, that is, cyclic orderings of $V(G)$ in which every $k$ consecutive vertices form an edge. We prove that $\log\Psi(G)\ge kh(G)-n\log{n\choose k-1}+n\log n-n\log e-o(n)$, where $h(G)$ is the hypergraph entropy of $G$, defined via perfect fractional matchings. 
This bound is tight, for example, for all (nearly) regular hypergraphs, in particular for the binomial random hypergraph. 
It also implies a conjecture by Ferber, Hardiman and Mond, stating that $\Psi(G)\ge (\delta-o(1))^n n!$.
\end{abstract}

\maketitle

\section{Introduction}

A classical theorem of Dirac~\cite{Dirac:52} states that every graph $G$ on $n\ge 3$ vertices with minimum degree $\delta(G)\ge n/2$ contains a Hamilton cycle. In fact, once this degree threshold is reached, the graph typically contains many Hamilton cycles. This observation motivated a line of research focused on counting Hamilton cycles in graphs and on extending such results to hypergraphs.

We denote the number of Hamilton cycles in a graph $G$ by $\Psi(G)$ and the number of perfect matchings by $\Phi(G)$. In a seminal work, Cuckler and Kahn~\cite{CK:09:ub,CK:09:lb} established asymptotically tight bounds on $\Psi(G)$ and $\Phi(G)$ in terms of a suitably defined notion of graph entropy.

To this end, for a $k$-uniform graph ($k$-graph) $G$ on $n$ vertices, let a \emph{perfect fractional matching~$\bx$} be a function $\bx:E(G)\to \bR_{\ge 0}$ such that $\sum_{v\in e\in E(G)}\bx(e)=1$ for every vertex $v\in V(G)$. The \emph{entropy of $\bx$} is
$$h(\bx)\coloneqq \sum_{e\in E(G)}\bx(e)\log \frac{1}{\bx(e)}.$$
The \emph{entropy of the $k$-graph $G$} is then \mbox{$h(G)\coloneqq \sup\{h(\bx)\colon\bx\text{ is a perfect fractional matching of }G\}$}.

\begin{theorem}[{\cite{CK:09:ub,CK:09:lb}}]\label{thm: CK bounds}
Let $G$ be a graph on $n$ vertices with $\delta(G)\ge n/2$. Then
$$\log \Psi(G)=2h(G)-n\log e-o(n).$$
If $n$ is even, then
$$\log \Phi(G)=h(G)-\frac{n}{2}\log e-o(n).$$
\end{theorem}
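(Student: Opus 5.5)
The statement has four parts: an upper and a lower bound for $\Psi(G)$, and likewise for $\Phi(G)$. I will prove the two upper bounds by entropy compression and the two lower bounds by a random greedy process steered by an optimal fractional matching, followed by an absorbing step that uses the Dirac condition $\delta(G)\ge n/2$.

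\emph{Upper bounds.} Fix a uniformly random perfect matching $M$ (respectively Hamilton cycle $C$) of $G$, so that $\log\Phi(G)=H(M)$. Define $\bx(e)=\pr(e\in M)$, or $\bx(e)=\tfrac12\pr(e\in C)$ in the cycle case; this is a perfect fractional matching, hence $h(\bx)\le h(G)$. I will reveal $M$ vertex by vertex in a uniformly random order and apply the chain rule. Following Cuckler--Kahn (a refinement of Radhakrishnan's proof of Br\`egman), the conditional entropy of the partner of $v$ is bounded by the log of the number of surviving choices. Averaging over the random order produces the factor $e^{-1}$ per vertex; this is the same mechanism that turns $\prod_v (d_v!)^{1/d_v}$ into $(d_v/e)$. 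With care, the contribution is bounded by $\sum_e \bx(e)\log(1/\bx(e)) - \tfrac n2\log e + o(n)$. The care consists of replacing the degree by the weighted quantities via a convexity (Gibbs) step. For cycles each vertex has two neighbours, giving $2h(\bx)-n\log e$; one can also reduce to $2$-factors, which are counted as permanents and bounded via Br\`egman. This half needs no degree condition.

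\emph{Lower bounds.} Take $\bx$ attaining $h(G)$. By Dirac, and by perturbing toward a uniform-ish fractional matching at negligible entropy cost, I may assume every $\bx(e)\ge n^{-1-o(1)}$ and $\bx(e)\le n^{-1+o(1)}$ (spread). Then I consider a random process: choose edges one at a time, with the next edge chosen with probability proportional to $\bx$ among edges disjoint from those already chosen. Using concentration (nibble or differential equations), I will show the process builds an almost perfect matching, covering all but $o(n)$ vertices, along $\exp(h(\bx)-\tfrac n2\log e - o(n))$ distinct trajectories, up to ordering factors. For Hamilton cycles I do the same with a random $2$-factor-like path system, for example two matchings or a weighted random greedy path process. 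That yields $\exp(2h-n\log e-o(n))$ linear forests with $o(n)$ paths.

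\emph{Completion, the main obstacle.} Each near-structure must be completed to a perfect matching or Hamilton cycle, with only $e^{o(n)}$ overcount. For matchings, I set aside a small random reservoir in advance and use the absorbing method under $\delta\ge n/2$ (with $n$ even) to absorb the leftover vertices. For cycles, I join the $o(n)$ paths through a random reserved set, using Dirac-type connectivity (P\'osa rotations) in the reserved graph, which inherits minimum degree $(1/2-o(1))|R|$. Each final object arises from at most $\binom{n}{o(n)}n^{o(n)}=e^{o(n)}$ partial structures, which gives the lower bounds. I expect the hardest part to be the precise entropy accounting in the random greedy phase. It must track that the $\bx$-weights of the remaining graph stay proportional, which I would attempt through a Rödl-nibble with pseudo-random maintenance, controlling deviations at every step.
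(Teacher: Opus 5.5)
\textbf{There are genuine gaps.} The paper does not prove this statement; it is quoted from Cuckler and Kahn. The paper's own machinery, specialised to $k=2$ (Theorem~\ref{main theorem - with entropy} and the appendix with $\ell=0$), recovers only the two lower bounds, and only for $\delta>1/2$ strictly.

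\emph{The core lower-bound step is missing.} The assertion that the $\bx$-guided greedy process has about $\exp(h(\bx)-\frac n2\log e-o(n))$ trajectories is exactly what needs proof. Keeping the $\bx$-weights of the remaining graph ``proportional'' through one long nibble is not something you can simply maintain: after removals, the restriction of $\bx$ is no longer a perfect fractional matching, coverage rates drift, and concentration is lost near the end. The paper avoids tracking the process for long. It runs the guided walk for only $m=n^{1/3}$ steps. For $k=2$ the walk steps from $u$ to $v$ with probability $\bx_{uv}$, so its stationary distribution is uniform; for matchings it samples edges proportionally to $\bx$. Rapid mixing and Azuma then show that each segment meets every neighbourhood proportionally in count, $\by$-mass and $\by$-entropy. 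The paper then re-normalises via Lemma~\ref{lemma: entropy is as expected after removing a well-behaved subset}, obtaining a fresh $(1+o(1))b$-normal perfect fractional matching of $G-M$ with entropy at least $\frac{n'}{n}h(\bx)-\frac12 n'\log\frac{n}{n'}-o(m)$. Iterating, the $-n\log e$ term comes from $\sum_i m_i\log n_i\ge\log n!-\log n_{\kappa+1}!$.

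Two smaller points on the lower bound. First, two perfect matchings give a $2$-factor that may have many cycles, not a linear forest with $o(n)$ paths, so that variant needs an extra argument. Second, $n^{o(n)}$ is not $e^{o(n)}$. You only need the $\binom{n}{o(n)}$ factor, which counts the sub-forests of a fixed cycle.

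\emph{The exact threshold $\delta(G)=n/2$ breaks several steps.} This is why the paper assumes $\delta>1/2$. In $K_{n/2,n/2}$ plus one edge inside a part, every perfect fractional matching gives that edge weight $0$, so ``every $\bx(e)\ge n^{-1-o(1)}$'' cannot be arranged. A random reservoir inherits only minimum degree $(\frac12-o(1))|R|$. That guarantees neither a perfect matching nor a Hamilton path between prescribed ends (compare $K_{r/2-1,r/2+1}$), so P\'osa rotations do not finish the job. The paper's reservoir (Lemma~\ref{lemma: absorption subset}) has $(\frac12+\frac34\hat\delta)|U|$ neighbours per vertex, so the leftover graph stays strictly Dirac and Lemma~\ref{lemma: tight Hamilton-connected} applies. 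In two copies of $K_{n/2+1}$ sharing two vertices, the walk has a bottleneck, so neither rapid mixing nor Lemma~\ref{lemma: well-connectedness} holds. Covering $\delta(G)\ge n/2$ needs an argument beyond this strategy, as in Cuckler and Kahn.

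\emph{The upper bounds also have a gap.} Your claim that they need no degree condition is false. $G=C_n$ has $\Psi(G)=1$ while $2h(G)-n\log e=(1-\log e)n<0$, and a perfect matching has $\Phi=1$ but $h=0$. The $e^{-1}$ factor per vertex appears only when the marginals of the uniform matching (or permutation) are all $o(1)$, and you must derive this from the Dirac condition. Plain Br\`egman gives $\sum_v\log d_v$, which can exceed $2h(G)$ by $\Theta(n)$. You therefore need the weighted Gibbs version applied to $\Phi(G)^2\le\mathrm{per}(A)$ and $2\Psi(G)\le\mathrm{per}(A)$, followed by symmetrising the marginal matrix using concavity of $t\log(1/t)$.
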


In the same work, Cuckler and Kahn also proved a tight lower bound on $h(G)$ in terms of the minimum degree. Combined with the above theorem, this yields the following result.

\begin{theorem}[{\cite{CK:09:lb}}]\label{thm: CK lower bound}
Let $G$ be a graph on $n$ vertices with $\delta(G)\ge\delta n\ge n/2$. Then 
$$\Psi(G)\ge \left(\delta+o(1)\right)^{n}n!.$$
\end{theorem}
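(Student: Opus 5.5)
The plan is to derive the bound from Theorem~\ref{thm: CK bounds} by proving a lower bound on the entropy $h(G)$. First I do the bookkeeping. By Stirling, $\log n! = n\log n - n\log e + o(n)$. So the desired inequality $\log\Psi(G)\ge n\log\delta+\log n! - o(n)$ follows from Theorem~\ref{thm: CK bounds} once we show
$$h(G)\ \ge\ \frac n2\log(\delta n)-o(n).$$
For this I will exhibit a perfect fractional matching $\bx$ with $\bx(e)\le (1+o(1))/D$ for every edge, where $D\coloneqq\delta n$. Every perfect fractional matching satisfies $\sum_e \bx(e)=n/2$, since each edge is counted at its two endpoints. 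With such a cap we get $\log(1/\bx(e))\ge \log D - o(1)$, and hence
$$h(\bx)\ge \frac n2\bigl(\log D-o(1)\bigr),$$
which is what we need.

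To construct $\bx$, I pass to the bipartite double cover $B$. It has parts $V$ and $V'$ (a copy of $V$), and $uv'\in E(B)$ whenever $uv\in E(G)$. Suppose $\by$ is a perfect fractional matching of $B$ with $\by\le 1/D$. Then $\bx(uv)\coloneqq(\by(uv')+\by(vu'))/2$ is a perfect fractional matching of $G$, still with $\bx\le 1/D$. In the bipartite case, existence of $\by$ is a capacitated flow problem: source to $V$ with capacity $1$, $V$ to $V'$ along edges of $B$ with capacity $1/D$, $V'$ to sink with capacity $1$. By max-flow/min-cut (Gale's theorem), a flow of value $n$ exists if and only if
$$|S|-|T|\ \le\ \frac{e_B(S,V'\setminus T)}{D}\qquad\text{for all } S\subseteq V,\ T\subseteq V'.$$

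The main step is verifying this cut condition. Write $s=|S|$ and $t=|T|$; we may assume $s>t$. Two lower bounds on $e_B(S,V'\setminus T)$ come from the minimum degree.
\begin{itemize}
\item Counting from $S$: each vertex of $S$ has at least $D-t$ neighbours outside $T$, so $e_B(S,V'\setminus T)\ge s(D-t)$. This settles the case $s\le D$, because then $s(D-t)/D = s - st/D\ge s-t$.
\item Counting from $V'\setminus T$: each such vertex has at least $D-(n-s)$ neighbours in $S$, so $e_B(S,V'\setminus T)\ge (n-t)(D-n+s)$.
\end{itemize}
For $s>D\ge n/2$ I would use the second bound, or a convex combination of the two. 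Checking that the resulting inequality $s-t\le \max\{s(D-t),(n-t)(D-n+s)\}/D$ holds throughout the region is where I expect trouble. Near corner cases, for instance $t=0$ with $s$ slightly larger than $D$, the bounds appear to be tight or even insufficient.

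If the exact cap $1/D$ fails there, my fallback is to relax it to $1/((1-\eps)D)$ with $\eps\to0$, which still gives $h(G)\ge\frac n2\log(\delta n)-o(n)$. The extra slack, together with $D\ge n/2$, should absorb the boundary cases. If that still does not suffice, I would instead cap by $1/\min\{d(u),d(v)\}$, in the spirit of the natural weighting $\by(uv')=\frac12(1/d(u)+1/d(v))$ corrected to be exactly perfect.
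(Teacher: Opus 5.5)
Your reduction is correct and matches how the paper obtains this statement. By Theorem~\ref{thm: CK bounds} and Stirling, it suffices to show $h(G)\ge \frac n2\log(\delta n)-o(n)$, which the paper imports from Cuckler and Kahn (for $\delta>1/2$ it is the case $k=2$ of Theorem~\ref{thm: graph entropy bound}).

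The gap is your route to this entropy bound. A perfect fractional matching with $\bx\le (1+o(1))/D$ on every edge need not exist, because $D\bx$ would be a fractional $D$-factor, and Dirac graphs can be a constant factor away from having one. For example, let $V=A\cup B$ with $|A|=n/2+1$ even and $|B|=n/2-1$. Let $G[A]$ be a perfect matching, and join every vertex of $B$ to all other vertices. Then $\delta(G)=n/2$. Summing the vertex constraints over $A$ gives $|A|=2\bx(E(A))+\bx(E(A,B))\le |A|\max_e\bx_e+|B|$, so every perfect fractional matching has an edge of weight at least $2/|A|\approx 2/D$. In your cut formulation, $S=A$, $T=B'$ has $|S|-|T|=2$ but $e_B(S,V'\setminus T)/D=|A|/D<2$. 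So the trouble you anticipated for $s$ slightly above $D$ is a genuine failure of the cut condition, not a weakness of your two counting bounds.

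Neither fallback helps. Relaxing to $1/((1-\eps)D)$ cannot absorb a factor $2$. The cap $1/\min\{d(u),d(v)\}$ fails in the same example, because every vertex of $A$ has degree exactly $n/2$. Nor is this special to $\delta=1/2$. Take $|A|=\frac n2(1+\sqrt{2\delta-1})$, let $G[A]$ be regular of degree $\delta n-|B|$, and join $B$ to everything. The same count forces an edge of weight at least $\frac{2\delta}{\delta+\sqrt{2\delta-1}}\cdot\frac{1}{\delta n}$, which is a constant factor above $1/(\delta n)$ for every $\delta<1$.

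The missing idea is that the entropy must be controlled on average, not pointwise. In the example the bound still holds, because the forced heavy edges carry only $O(1)$ total weight. Putting $2/|A|$ on each matching edge inside $A$ and $1/|A|$ on each $A$--$B$ edge gives a perfect fractional matching of entropy at least $(\frac n2-1)\log(\frac n2+1)$. So you would need one of two things:
\begin{enumerate}
\item a perfect fractional matching in which edges heavier than $(1+o(1))/D$ carry only $o(n/\log n)$ total weight, or
\item a direct proof of the Cuckler--Kahn inequality $h(G)\ge\frac n2\log(\delta n)$. A natural starting point is that the entropy maximizer has the form $\bx_{uv}=\mu_u\mu_v$ on its support, so that $h(G)=\sum_u\log(1/\mu_u)$.
\end{enumerate}
As it stands, your proposal does not establish this entropy bound, which is the only substantive input beyond Theorem~\ref{thm: CK bounds}.
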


Observe that this is asymptotically tight for the binomial random graph. These entropy-based results for graphs naturally raise the question of how hamiltonicity and its enumeration behave in the richer setting of hypergraphs. We begin by carefully defining the relevant notions of cycles in hypergraphs. Let $\ell$ be an integer such that $0\le \ell\le k-1$. A $k$-graph $C$ is an \emph{$\ell$-cycle} if there exists a cyclic ordering of its vertex set $V(C)$ such that every edge of $C$ consists of $k$ consecutive vertices, and every pair of consecutive edges intersects in precisely $\ell$ vertices. A \emph{Hamilton $\ell$-cycle} in a $k$-graph $G$ is an $\ell$-cycle $C$ with $V(C)=V(G)$. A $k$-graph $G$ is \emph{hamiltonian} if it contains a Hamilton $(k-1)$-cycle, which is also referred to as a \emph{tight Hamilton cycle}. A \emph{perfect matching} of~$G$ is a Hamilton 0-cycle where the vertex ordering is ignored; equivalently, it is a set of pairwise disjoint edges that covers the vertex set of $G$.

R\"odl, Ruci\'nski and Szemer\'edi~\cite{RRS:06,RRS:08} generalized Dirac's theorem by proving an asymptotic version for $k$-graphs, thereby establishing a threshold for the existence of Hamilton $\ell$-cycles.

\begin{theorem}[{\cite{RRS:06,RRS:08}}]
Let $k\ge3$, $\delta>1/2$, and let $G$ be a $k$-graph on $n$ vertices, where $n$ is sufficiently large. If the minimum $(k-1)$-degree satisfies $\delta_{k-1}(G)\ge\delta n$, then $G$ is hamiltonian.
\end{theorem}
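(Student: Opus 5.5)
The statement is the Rödl–Ruciński–Szemerédi theorem, and I would prove it by the absorbing method. It has four parts: an absorbing path, a reservoir for connections, almost-covering by paths, and final closing. Fix constants $1/n\ll\beta\ll\gamma\ll\eta\ll\delta-1/2$.

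\emph{Connecting lemma.} I would first show that $\delta_{k-1}(G)\ge\delta n$ with $\delta>1/2$ gives the following. For any two disjoint ordered $(k-1)$-tuples $S,T$, each spanning a tight path start or end, there are $\Omega(n^{m})$ tight paths from $S$ to $T$ with $m$ inner vertices, for some constant $m=m(k)$. The key fact is that for two $(k-1)$-sets the common neighbourhood has size at least $(2\delta-1)n>0$. Using it, I would walk one vertex at a time from $S$, replacing its entries, until reaching a $(k-1)$-set that links to $T$. Because every step has linearly many choices, the count of such paths is polynomial in $n$ with the right exponent. Next I choose a random reservoir $R$ with $|R|=\beta n$. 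By Chernoff bounds, every pair $S,T$ can then be connected through $R$. Moreover, this still holds after deleting any $o(\beta n)$ reservoir vertices, which permits $O(1/\gamma)$ greedy connections.

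\emph{Absorbing lemma.} For each vertex $v$, call a $(k-1)$-tuple path $P$ (of bounded length) an \emph{absorber} for $v$ if both $P$ and a path with the same ends on $V(P)\cup\{v\}$ are tight paths. I would show that each $v$ has $\Omega(n^{|P|})$ absorbers. One construction uses a tight path $x_1\dots x_{2k-2}$ in which $x_1\dots x_{k-1}v x_k\dots x_{2k-2}$ is also tight. The required edges all contain $v$ or consist of $k$ consecutive $x$'s. To realise them, I would choose the sequence so that each new vertex lies in the common neighbourhood of two prescribed $(k-1)$-sets, which is possible since $\delta>1/2$. Then I pick a random family of such paths with probability $\sim\gamma n^{1-|P|}$. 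After deleting overlapping paths, each $v$ keeps $\Omega(\gamma n)$ disjoint absorbers and the family has $\le\gamma n$ vertices in total. Connecting the family through $R$ gives an absorbing path $A$. This $A$ can absorb any set $U$ with $|U|\le\eta' n$, where $\eta'\ll\gamma$.

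\emph{Path cover and closing.} On $G'=G-(V(A)\cup R)$, which still has $\delta_{k-1}\ge(\delta-2\gamma)n$, I would cover all but $\eta' n$ vertices by at most $O(1/\gamma)$ vertex-disjoint tight paths. This almost-cover step is the main obstacle. My first approach would be the weak hypergraph regularity lemma applied to $G'$. In the reduced $k$-graph the minimum codegree condition survives with $\delta'>1/2$, which gives an almost-perfect fractional (in fact integral) matching of $k$-partite $k$-tuples; here $\delta>1/2$ is more than enough for a near-perfect matching. Inside each regular $k$-partite cluster tuple, a greedy embedding then yields a long tight path covering most of the clusters. Alternatively, one can iterate: find a long path greedily while many vertices remain, using the $\epsilon$-regularity to extend. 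Finally, I connect $A$, all these paths, and back to $A$ through $R$ using the connecting lemma. The uncovered leftover of $G'$ together with the unused reservoir vertices has at most $\eta' n+\beta n$ vertices, so I take $\beta\ll\eta'$ accordingly. Absorbing this leftover into $A$ yields a tight Hamilton cycle.
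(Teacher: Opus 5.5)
The paper gives no proof of this statement. It quotes it from R\"odl, Ruci\'nski and Szemer\'edi, and your outline is their absorbing proof. Your absorber $x_1\dots x_{k-1}vx_k\dots x_{2k-2}$ is correct, since each new vertex only has to lie in the common neighbourhood of two $(k-1)$-sets.

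\emph{The absorbing path fails as written.} You connect the absorber family through $R$. That family has $\Theta(\gamma n)$ members, because a single $v$ already keeps $\Omega(\gamma n)$ of them. So you need $\Theta(\gamma n)$ connections and $\Theta(\gamma n)$ reservoir vertices. But $|R|=\beta n$ with $\beta\ll\gamma$, and you only claim that $R$ survives $o(\beta n)$ deletions. Reversing $\beta$ and $\gamma$ does not help: the unused part of $R$ must be absorbed at the end, which forces $\beta\le\eta'\ll\gamma$.

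The fix is the standard order. First connect the absorbers directly in $G$ with the connecting lemma. This works because the $O(\gamma n)$ vertices used so far lie on only $O(\gamma n^{m})$ of the $\Omega(n^{m})$ connecting paths, and $\gamma$ is small compared with the implicit constant. Only then choose $R$ inside $G-V(A)$, and use it solely for the boundedly many connections in the last step. Also let the path cover leave at most $\eta' n/2$ vertices and take $\beta\le\eta'/2$; your leftover of up to $(\eta'+\beta)n$ vertices currently exceeds the absorbing capacity $\eta' n$.

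\emph{The connecting lemma is not justified.} Besides the absorbers, this is where $\delta>1/2$ does its real work. Your sketch does not explain why the walk from $\vec S$ ever reaches a tuple that links to $\vec T$, and greedy extension cannot guarantee it. Every inner vertex of a tight path from $\vec S$ to $\vec T$ lies in exactly $k$ of its edges. So whichever inner vertex you choose last must lie in the common neighbourhood of $k$ prescribed $(k-1)$-sets. For $k\ge3$ this intersection may be empty for the given choice of the other vertices.

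Put differently, counting only gives you two sets of density about $\delta^{k-1}$ in $V(G)^{k-1}$: the tuples reachable from $\vec S$ and the tuples linking to $\vec T$. Nothing forces them to intersect. Your absorber escapes this only because neither of its ends is prescribed. You need a genuine reachability argument, such as the R\"odl--Ruci\'nski--Szemer\'edi connecting lemma. The walk-counting Lemma~\ref{lemma: well-connectedness} quoted in the paper would also do, since walks with a repeated vertex are only an $O(1/n)$-fraction of the $\zeta n^{L-(k-1)}$ walks it provides.

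Two smaller points concern the path cover.
\begin{itemize}
\item Under weak regularity, the codegree bound is inherited only by all but a few $(k-1)$-sets of clusters. This still suffices for an almost perfect matching.
\item Inside a regular $k$-tuple, greedy extension gives a bounded number of long tight paths, not one path covering most of it. To get them, pass to a subgraph in which every $(k-1)$-subset of an edge has degree linear in the cluster size, extend greedily, and iterate on the remaining large subsets.
\end{itemize}
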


As in the graph setting, we use $\Psi(G)$ and $\Phi(G)$ to denote the number of tight Hamilton cycles and perfect matchings, respectively, in a $k$-graph $G$. Let $K_n^{(k)}$ denote the complete $k$-graph on $n$ vertices. Recently, Kwan, Safavi, and Wang~\cite{KSW:24} generalized the lower bounds on the number of perfect matchings in Theorems~\ref{thm: CK bounds} and~\ref{thm: CK lower bound} from graphs to hypergraphs.

\begin{theorem}[{\cite{KSW:24}}]\label{thm: KSW}
Fix $k\ge 2$ and $\delta>1/2$. Let $G$ be an $n$-vertex $k$-graph with $\delta_{k-1}(G)\ge \delta n$ and $k$ divides $n$. Then
$$\log \Phi(G)=h(G)-\frac{k-1}{k}n\log e+o(n)\quad\text{and}\quad\Phi(G)\ge \Phi(K^{(k)}_n)\cdot(\delta+o(1))^{n/k}.$$
\end{theorem}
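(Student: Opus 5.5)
The plan is to prove the two inequalities for $\log\Phi(G)$ separately, and then obtain the comparison with $\Phi(K^{(k)}_n)$ from a lower bound on $h(G)$.

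\textbf{Upper bound.} I would use an entropy argument in the style of Br\'egman and Cuckler--Kahn. Let $M$ be a uniformly random perfect matching of $G$, so that $\log\Phi(G)=H(M)$. Set $\bx(e)=\pr(e\in M)$; this is a perfect fractional matching, hence $h(\bx)\le h(G)$. Reveal $M$ by processing the vertices in a uniformly random order and recording, for each vertex not yet covered, the edge of $M$ containing it. By the chain rule and concavity, the conditional entropy of the edge at $v$ is at most $\log$ of the expected number of edges at $v$ still available. Averaging over the random order, the vertex that reveals an edge $e$ is equally likely to be any of its $k$ vertices. This produces the factor $\frac{k-1}{k}n\log e$, exactly as in the bipartite permanent computation. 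Comparing the remaining terms with $\sum_e\bx(e)\log(1/\bx(e))$ then gives $\log\Phi(G)\le h(G)-\frac{k-1}{k}n\log e+o(n)$. One technical point is that, for the "available edge" counts to concentrate, one needs $\max_e\bx(e)=o(1)$. This follows from the minimum degree condition: any fixed edge can be switched with $\Omega(n^{k-1})$ alternatives.

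\textbf{Lower bound.} Let $\bx$ be a maximiser of $h$.

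First, I would show that $\bx$ is \emph{spread}, i.e.\ $\bx(e)\le n^{-(k-1)+o(1)}$ for all $e$. Heavy edges can be redistributed along small alternating structures, which exist by $\delta>1/2$. This strictly increases entropy, contradicting optimality. The Lagrange conditions give $\bx(e)=\prod_{v\in e}\lambda_v$, and the codegree condition forces the $\lambda_v$ to be comparable.

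Next, I would sample a random sparse subhypergraph $G'$ by keeping each edge $e$ independently with probability proportional to $\bx(e)n^{k-1-\eps}$. Then I would count perfect matchings of $G'$, reweighted, through a random greedy (nibble) process, tracking the entropy of its choices. A standard analysis with the differential equations method shows that the process builds an almost perfect matching in $\exp\bigl(h(\bx)-\frac{k-1}{k}n\log e-o(n)\bigr)$ ways. The $\log e$ term arises as $\int_0^1\log(1-t)\,\diff t$-type losses.

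Finally, I would complete each such almost perfect matching to a perfect matching using an absorbing structure set aside beforehand. The absorber is built using $\delta_{k-1}(G)>n/2$ together with the Dirac-type existence results; it costs only $e^{o(n)}$ in the count. Thus $\log\Phi(G)\ge h(G)-\frac{k-1}{k}n\log e-o(n)$.

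\textbf{Comparison with $K^{(k)}_n$.} For $K^{(k)}_n$ the uniform fractional matching gives $h=\frac nk\log\binom{n-1}{k-1}$. By the first part it therefore suffices to construct a perfect fractional matching $\by$ of $G$ with $\max_e\by(e)\le(1+o(1))/(\delta\binom{n-1}{k-1})$. Indeed, such a $\by$ has total mass $n/k$, so
$$h(\by)\ge\frac nk\log\Bigl(\delta\tbinom{n-1}{k-1}\Bigr)-o(n).$$
Each vertex has degree at least about $\delta\binom{n-1}{k-1}$. I would start from $\by_0(e)=1/(\delta\binom{n-1}{k-1})$ on a suitably regularised spanning subgraph. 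Then I would fix the vertex-sum imbalances, which are small, by a correction supported on many edges. Feasibility of this correction would be argued via LP duality (a Farkas/Hall-type condition), using that any two vertices share many common $(k-1)$-neighbourhoods when $\delta>1/2$.

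\textbf{Main obstacle.} I expect the main difficulty to be the lower bound, specifically the nibble analysis with non-uniform weights combined with absorption. The spreadness of the optimal $\bx$, which is needed to control the trajectory, is the step I would attack first, via the Lagrange multiplier structure.
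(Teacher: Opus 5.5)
This theorem is only quoted in the paper. The upper bound on $\log\Phi(G)$ is never reproved. The lower bound $\log\Phi(G)\ge h(G)-\frac{k-1}{k}n\log e-o(n)$ is recovered as the case $\ell=0$ of Theorem~\ref{main theorem - with entropy - non-tight}, as follows:
\begin{itemize}
\item the walk on $\mathbb{S}_0[G]$ samples i.i.d.\ edges with probability proportional to a $b$-normal pfm $\bx$ with $h(\bx)\ge h(G)-\eps n$;
\item batches of $\lfloor n^{1/3}\rfloor$ disjoint, well-behaved edges are removed, and after each batch a fresh near-optimal pfm of the remaining graph is used (Lemmas~\ref{lemma: entropy is as expected after removing a well-behaved subset} and~\ref{lemma: existence of b-normal pfm});
\item the leftover is absorbed through the reserved set $U$, and one divides by $C_{n,k,0}=(n/k)!(k!)^{n/k}$.
\end{itemize}
This re-optimisation replaces the differential-equations trajectory analysis your nibble plan relies on, and the $\log e$ loss simply comes from Stirling. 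The bound $\Phi(G)\ge\Phi(K^{(k)}_n)(\delta+o(1))^{n/k}$ is then obtained by inserting Theorem~\ref{thm: graph entropy bound} (i.e.\ \cite[Theorem~6.1]{KSW:24}), as in the proof of Theorem~\ref{thm: FHM} with $\ell=0$.

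Your upper-bound sketch needs two repairs. Bounding $H(M(v)\mid\text{past})$ by the logarithm of the expected \emph{number} of available edges only produces $\frac1k\sum_v\log\deg_G(v)$ instead of $h(\bx)$. This can exceed $h(G)$ by $\Theta(n)$, for instance in the example below. You need Gibbs' inequality with reference distribution proportional to $\bx$ on the available edges. Also, the availability estimate $(1-t)^{k(k-1)}$ needs $\bx_e=O(n^{-(k-1)})$, not merely $\max_e\bx_e=o(1)$; a switching argument like yours gives this.

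The genuine gap is in your route to the comparison with $K^{(k)}_n$. You reduce it to finding a pfm $\by$ with $\max_e\by_e\le(1+o(1))/(\delta\binom{n-1}{k-1})$, but such a $\by$ need not exist. Take $k=2$, $\delta\in(1/2,1)$ and a constant $\sigma\in(0,1/2)$. Let $G$ consist of a set $S$ of $\sigma n$ vertices adjacent to all other vertices, together with a $(\delta-\sigma)n$-regular graph on $T\coloneqq V(G)\setminus S$, so $\delta(G)=\delta n$. For every pfm $\by$, counting the weight at $T$ gives
\[
(1-\sigma)n=2\sum_{e\subseteq T}\by_e+\sum_{e\in E(S,T)}\by_e\le(1-\sigma)(\delta-\sigma)n^2\max_e\by_e+\sigma n,
\]
so $\max_e\by_e\ge\frac{1-2\sigma}{(1-\sigma)(\delta-\sigma)n}$. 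For small $\sigma$ this equals $\frac{1+(\delta^{-1}-1)\sigma+O(\sigma^2)}{\delta n}$, a constant factor above your target. Choosing $\sigma=\frac{1-\sqrt{2\delta-1}}{2}$ gives $\max_e\by_e\ge\frac{2}{(\delta+\sqrt{2\delta-1})n}$, matching Katerinis' bound for regular factors.

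The obstruction persists for every $k$. Take all $k$-sets meeting $S$ plus a $k$-graph on $T$ with all codegrees about $(\delta-\sigma)n$. The same count gives $\max_e\by_e\ge(1-o(1))\frac{1-k\sigma}{(1-\sigma)^{k-1}(\delta-\sigma)}\binom{n-1}{k-1}^{-1}$, again a constant factor too large for small $\sigma>0$. Consequently:
\begin{itemize}
\item no near-regular spanning subgraph of degree about $\delta\binom{n-1}{k-1}$ exists;
\item the vertex-sum imbalances of your starting weighting are of constant relative size, not small;
\item no Farkas/Hall-type correction can fix them.
\end{itemize}

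The bound $h(G)\ge\frac nk\log\binom{n}{k-1}+\frac nk\log\delta$ is nevertheless true, because $kh(\by)=\sum_v\sum_{e\ni v}\by_e\log(1/\by_e)$ is an average of local entropies. In the example, the vertices of $S$ can carry local entropy about $\log((1-\sigma)n)>\log(\delta n)$, which compensates the unavoidable deficit at $T$. A proof must exploit this averaging, as \cite[Theorem~6.1]{KSW:24} does, rather than a uniform bound on the weights.
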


Let $\Psi_{\ell}(G)$ denote the number of Hamilton $\ell$-cycles in a $k$-graph $G$. Observe that the expected number of Hamilton $\ell$-cycles in a random $k$-graph on $n$ vertices, in which every edge appears independently with probability $p$, is \mbox{$\Psi_{\ell}(K_n^{(k)})\cdot p^\frac{n}{k-\ell}$}. In particular, for tight Hamilton cycles we have $\Psi(K_n^{(k)})=\frac{n!}{2n}$. For $0\le \ell\le k-2$, Ferber, Krivelevich and Sudakov~\cite{FKS:16}, as well as Ferber, Hardiman and Mond~\cite{FHM:23}, showed that this estimate using binomial random $k$-graphs in fact provides a correct lower bound once $p$ is replaced by a parameter depending on the minimum $(k-1)$-degree. This extends Theorem~\ref{thm: CK lower bound} to hypergraphs.

\begin{theorem}[{\cite{FHM:23, FKS:16}}]\label{thm: FHM}
Let $0\le \ell\le k-2$ and $\delta>1/2$. Let $n$ be sufficiently large and divisible by $k-\ell$. Let $G$ be a $k$-graph on $n$-vertices with $\delta_{k-1}(G)\ge\delta n$. Then the number of Hamilton $\ell$-cycles in $G$ is at least $(1-o(1))^n\cdot\Psi_{\ell}(K_n^{(k)})\cdot\delta^{\frac{n}{k-\ell}}$.
\end{theorem}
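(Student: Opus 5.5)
The plan is to count Hamilton $\ell$-cycles by building them edge by edge along a path, and to close the path with an absorbing structure. The essential point is that every single step must be counted with a loss of at most a factor $\delta-o(1)$ compared with $K_n^{(k)}$. An ordered path with consecutive overlaps $\ell\le k-2$ gains $k-\ell$ new vertices per edge. Suppose the last $\ell$ vertices of the current path are fixed. We choose the next $k-\ell-1$ new vertices freely among the unused vertices $U$. The final new vertex must then complete a $(k-1)$-set to an edge, and $\delta_{k-1}(G)\ge\delta n$ gives many candidates for it. In $K_n^{(k)}$ every choice is allowed, so if the last vertex always has at least $(\delta-o(1))|U|$ choices inside $U$, we lose exactly a factor $\delta-o(1)$ per edge. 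Over $n/(k-\ell)$ edges this gives $(1-o(1))^n\Psi_\ell(K_n^{(k)})\delta^{n/(k-\ell)}$, up to the closing step.

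\textbf{Step 1 (keeping the degree proportional).} Once $|U|<(1-\delta)n$, the bound $\delta n-(n-|U|)$ on the number of completions is useless, so we need degrees into $U$ to stay proportional. We fix a random partition $V=V_1\cup\dots\cup V_t$ into $t=\omega(1)$ parts of size about $n/t$. Chernoff bounds plus a union bound over all $(k-1)$-sets and all unions of a terminal segment of parts show that, with high probability, every $(k-1)$-set $S$ has at least $(\delta-o(1))|W|$ neighbours in $W=V_i\cup\dots\cup V_t$. We then build the path part by part. While covering $V_i$ we draw the last vertex of each edge only from $V_i\cup\dots\cup V_t$. We also stop using $V_i$ once only $o(n/t)$ of its vertices remain, and push those leftovers into a small reservoir to be absorbed later. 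With this rule the number of completions is always at least $(\delta-o(1))$ times the number of available vertices, and this is the counting bound we need.

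\textbf{Step 2 (absorber and closing).} Before Step 1 we set aside a random set of $\varepsilon n$ vertices. Inside it we build, following R\"odl--Ruci\'nski--Szemer\'edi, an absorbing $\ell$-path $A$ and a connecting reservoir $R$. The path $A$ can absorb any set of $o(n)$ leftover vertices, divisible appropriately, and any two $\ell$-ends can be joined through $R$ by a path of constant length. Both properties follow from $\delta>1/2$. The long path from Step 1 covers all but $o(n)$ vertices outside $A\cup R$. We connect its ends to $A$ through $R$, and absorb the leftovers together with the unused vertices of $R$.

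\textbf{Step 3 (counting).} We count all outcomes of the procedure. The long path contributes at least the product of the Step 1 bounds. Many absorbers exist, but we only need one fixed admissible choice, since its $\varepsilon n$ vertices are also counted greedily with the same $(\delta-o(1))$-per-edge bound when we build $A$ as a path. The $o(n)$ absorbed and connecting vertices are the only ones not counted freely, and they cost at most $n^{o(n)}$, which is absorbed by taking the leftover count $o(n/\log n)$. To convert this into a count of cycles we bound how many times a fixed Hamilton $\ell$-cycle can arise. This multiplicity is at most the number of ways to choose the starting position, the segment playing the role of $A$, the reservoir and leftover sets, and the partition data. That is at most $n^{O(1)}\binom{n}{\varepsilon n}\binom{n}{o(n)}=e^{O(\varepsilon\log(1/\varepsilon))n}$, which is $(1-o(1))^{n}$ after letting $\varepsilon\to0$.

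The main obstacle is this last bookkeeping. We must make sure that the vertices whose placement is forced by the absorbing and connecting steps are only $o(n/\log n)$ in number, or are themselves counted with the $\delta$-bound. Otherwise the loss is $n^{\Omega(\varepsilon n)}$, which is not $(1-o(1))^n$. I would handle this by making $A$ itself the initial segment of the greedy count, and by making the absorbed leftover set polylogarithmically small, using a final part $V_t$ of size $o(n/\log n)$ together with a short connecting path.
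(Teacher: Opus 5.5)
\textbf{There is a genuine gap, and it is in Step~1.} This is the central difficulty of the problem, not a technicality.

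Your random partition controls degrees only into the \emph{fixed} sets $W_i=V_i\cup\dots\cup V_t$. But you draw from $U=W_i\setminus D$, where $D$ is the history-dependent set of already used vertices of $W_i$: the used part of $V_i$, plus every last vertex that earlier edges placed in later parts. All you can conclude is $|N(S)\cap U|\ge(\delta-o(1))|W_i|-|D|$, and this is $(\delta-o(1))|U|$ only if $|D|=o(|W_i|)$. A $\frac{1}{k-\ell}$-fraction of all vertices are last vertices drawn from all of $W_i$, so for large $i$ a constant fraction of $W_i$ can already be used when phase $i$ starts. If you instead take all new vertices from $V_i$, the same thing happens inside $V_i$ once fewer than $(1-\delta)|V_i|$ of its vertices remain. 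Either way you get a constant-factor deficit on $\Theta(n)$ steps, which costs $e^{\Theta(n)}$, and the theorem does not allow that.

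In fact no per-step bound holds for all histories. Let $E(G)$ consist of all $k$-sets not contained in a fixed set $Y$ with $|Y|=(1-\delta)n$. Then $G$ is $\delta$-Dirac. Your rules permit histories that use up $V\setminus Y$ while $|U|=\Omega(n)$. A few steps later the current $(k-1)$-set lies in $Y$, so its neighbourhood is $V\setminus Y$, and it has no completion in $U$ at all. So the product of per-step minima is not $\prod(\delta-o(1))|U|$.

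A sanity check confirms that something must break. Nothing in Step~1 uses $\ell\le k-2$. With $k-\ell-1=0$ free vertices the same argument would give $(\delta-o(1))^n n!$ tight Hamilton cycles. That is the Ferber--Hardiman--Mond conjecture, Theorem~\ref{main theorem - without entropy}, which is the main new result of this paper.

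\textbf{What is missing} is a way to restrict the count to histories in which the used set stays random-like, and to show that these are numerous without taking minima over steps. Choosing uniformly among the available completions does not achieve this, because vertices lying in many neighbourhoods are consumed faster.

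The paper gets this balance from a $b$-normal perfect fractional matching $\mathbf{x}$ of near-maximal entropy:
\begin{enumerate}
\item It derives Theorem~\ref{thm: FHM} from Theorem~\ref{main theorem - with entropy - non-tight} together with $h(G)\ge\frac{n}{k}\log\binom{n}{k-1}+\frac{n}{k}\log\delta$ (Theorem~\ref{thm: graph entropy bound}).
\item Theorem~\ref{main theorem - with entropy - non-tight} is proved by running the $\mathbf{x}$-weighted random walk on $\mathbb{S}_\ell[G]$ for about $n^{1/3}$ steps. The vertex marginals of this walk are uniform.
\item The resulting segment is well-behaved with high probability. Deleting it therefore keeps the graph $\delta'$-Dirac with a matching of the predicted entropy.
\item The number of well-behaved segments is bounded below by the entropy of the walk (Fact~\ref{fact: entropy given a likely event}), not by per-step minima, and the procedure is iterated.
\end{enumerate}

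Your closing step is also heavier than needed and, as written, inconsistent. You fix one absorber yet claim its $\varepsilon n$ vertices are counted. A fixed absorber costs $n^{\Theta(\varepsilon n)}$, and the gadget structure of an absorber is not captured by your per-edge bound. This part is repairable, and the paper avoids absorbers altogether:
\begin{itemize}
\item It reserves a set $U$ of size $n^{1-\beta/2}$ containing at least $(\frac12+\frac{3\hat\delta}{4})|U|$ neighbours of every $(k-1)$-set (Lemma~\ref{lemma: absorption subset}).
\item The long path in $G-U$ covers all but at most $n^{1-\beta}$ of the remaining vertices.
\item The cycle is closed using Hamilton-connectedness of the leftover Dirac graph (Lemma~\ref{lemma: tight Hamilton-connected}).
\end{itemize}
This loses only a factor $e^{o(n)}$.
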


Futhermore, Ferber, Hardiman and Mond~\cite{FHM:23} conjectured that this result extends to $\ell=k-1$, that is, to tight Hamilton cycles. This is a stronger form of Conjecture 7.1 by Glock, Gould, Joos, K\"uhn and Osthus~\cite{GGJKO:21}, where the authors also asked whether the entropy approach of Cuckler and Kahn can be generalized to hypergraphs.

In this paper, we answer this question affirmatively by establishing a tight lower bound for~$\Psi(G)$ in terms of hypergraph entropy. We prove the following two main theorems. Theorem~\ref{main theorem - with entropy} generalizes the lower bound on $\Psi(G)$ in Theorem~\ref{thm: CK bounds} to hypergraphs. Theorem~\ref{main theorem - without entropy} follows as a corollary to Theorem~\ref{main theorem - with entropy} and resolves the above-mentioned conjectures.

\begin{theorem}\label{main theorem - with entropy}
Suppose $k\ge2$ and $\delta> 1/2$. Let $G$ be a $k$-graph on $n$ vertices with \mbox{$\delta_{k-1}(G)\ge\delta n$}. Then 
$$\log\Psi(G)\ge kh(G)-n\log{n\choose k-1}+n\log n-n\log e-o(n).$$
\end{theorem}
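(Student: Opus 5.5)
Our plan is to follow the Cuckler--Kahn strategy, via a random greedy process guided by an entropy-maximising fractional matching, combined with absorption. First fix a perfect fractional matching $\bx$ with $h(\bx)\ge h(G)-o(n)$. Using the minimum degree condition $\delta_{k-1}(G)\ge\delta n$ with $\delta>1/2$, we would argue, by a perturbation or smoothing argument, that we may assume $\bx$ is \emph{spread}: $\bx(e)=O(n^{-(k-1)})$ for every edge $e$, at a loss of only $o(n)$ in entropy. Mixing with a near-uniform fractional matching of a robust subgraph lets us enforce this. Next, set aside a small absorbing structure: a short tight path $P_{\rm abs}$ on $o(n)$ vertices, together with a small reservoir $R$, such that for every sufficiently small leftover set $L$ (roughly $|L|\le \eps n$, disjoint from the absorber), together with any tight path covering $V(G)\setminus (V(P_{\rm abs})\cup L\cup R)$ and having suitable ends, the pieces can be completed to a tight Hamilton cycle. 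This follows the R\"odl--Ruci\'nski--Szemer\'edi absorbing method and uses $\delta>1/2$ only through connectivity and absorbability. It costs only $e^{o(n)}$ in the count, since the absorber is fixed and each completed cycle arises from at most $e^{o(n)}$ partial configurations.

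The core step is to count long tight paths built by a weighted random greedy walk. Start from a $(k-1)$-tuple and repeatedly append a vertex $v$ to the current last $(k-1)$-tuple $S$. Choose $v$ among unused vertices with probability proportional to $\bx(S\cup\{v\})$, which is the weighting that mimics the entropy of $\bx$. The heuristic is that the $k$ consecutive $k$-sets of the path behave like samples from the distribution $\bx(e)/(n/k)$ on edges. Each of the $k$ ``positions'' of a vertex then contributes entropy, giving $\log(\#\text{paths})\approx k\,h(\bx)-n\log\binom{n}{k-1}+n\log n-n\log e$. Here $n\log n-n\log e\approx\log n!$ accounts for the orderings, while the correction $-n\log\binom{n}{k-1}$ normalises the sum $\sum_{S}\bx(S\cup\{v\})$ over $(k-1)$-sets. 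Concretely we would lower-bound the count by $\exp(H(\text{walk}))$, where $H$ is the Shannon entropy of the output, and compute $H$ by the chain rule as a sum of conditional step entropies. To evaluate these, we would show by martingale concentration (Freedman-type inequalities, using spreadness) that throughout the process, for most $(k-1)$-sets $S$ among the unused vertices $U$, the weight $\sum_{v\in U}\bx(S\cup\{v\})$ stays close to its expectation $\approx (|U|/n)\cdot(\text{initial weight})$. The scaling with $|U|/n$ over the process produces the $\log n!-n\log e$ type terms after summation.

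The main obstacle is this tracking step. In the graph case the walk's next vertex depends only on the last vertex, whereas here it depends on a $(k-1)$-set. The relevant statistics are then the weights of $(k-1)$-sets, which are not uniformly controlled by the fractional matching condition, since that condition only constrains vertex sums. We would attempt to handle this in two ways. First, we would randomise the process in rounds, as in nibble-style arguments, so that the tuples seen are close to a stationary distribution induced by $\bx$. Second, we would use the degree condition to guarantee that each $(k-1)$-set has total weight bounded away from zero, which avoids blow-up of $\log$-terms. We would then stop the process when $\eps n$ vertices remain and use the absorber to close the cycle. Dividing by the $e^{o(n)}$ overcounting gives the bound in Theorem~\ref{main theorem - with entropy}.
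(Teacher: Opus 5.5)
Your overall architecture matches the paper's in spirit: a spread, near-optimal $\bx$, tight paths extended vertex by vertex with probabilities $\bx(S\cup\{v\})/\sum$, a count lower-bounded by entropy via the chain rule, and a reserved set to close the cycle. But the endgame as you set it up cannot give an $o(n)$ error. You stop the process when $\eps n$ vertices remain and reserve an absorber on $o(n)$ vertices, claiming this costs only $e^{o(n)}$. It does not, because every vertex not placed by the entropy-guided process forfeits about $\log n$ bits. Every transition probability is at most $b^2/(\delta n)$ (Lemma~\ref{lemma: S[G] property v}), so $h_{\bx}(S)=\log n-O_b(1)$ for every $S$. In your own chain-rule heuristic, a step with $u$ unused vertices contributes about $\frac{1}{n}\sum_S\bx_Sh_{\bx}(S)+\log\frac{u}{n}=\log u-O_b(1)$. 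Summing the missing steps $u=\eps n,\dots,1$ shows that truncating at $\eps n$ loses $\eps n\log n-O_{b,\eps}(n)$. Likewise, excluding a set $A$ (absorber plus reservoir) from the process loses roughly $|A|\log n$. The main terms in the theorem are of order $n\log n$ and the allowed error is $o(n)$, so both the leftover and everything set aside must have size $o(n/\log n)$. An R\"odl--Ruci\'nski--Szemer\'edi absorber with linear-size parameters is far too large.

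The paper uses a leftover of $n^{1-\beta}$ vertices and a random reservoir $U$ of size $n^{1-\beta/2}$ in which every $(k-1)$-set has at least $(\frac{1}{2}+\frac{3\hat\delta}{4})|U|$ neighbours (Lemma~\ref{lemma: absorption subset}). Because $|U|$ dominates the leftover, $G[V(G'-Q)\cup U\cup S\cup T]$ is itself Dirac, and Hamilton-connectedness (Lemma~\ref{lemma: tight Hamilton-connected}) closes the cycle without any absorbers. Lemma~\ref{lemma: entropy decreases slightly after removing a small subset}, valid for $|U|\le n/\log^2 n$, gives $h(G-U)\ge h(G)-\eps n$.

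Once the sizes are fixed this way, the burden falls on the step you leave open. The path-building must stay controlled until only $n^{1-\beta}$ vertices remain. The neighbourhood weight and neighbourhood entropy statistics must be concentrated for \emph{every} $(k-1)$-set, not merely most, so that the remaining graph stays Dirac and the walk never gets stuck. Also, $\bx$ restricted to the unused vertices is not a perfect fractional matching there, so the restricted chain has no reason to have stationary distribution proportional to $\bx_S$.

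The paper handles this in three steps. It runs an honest Markov chain on $\mathbb{S}[G_i]$ for only $n_i^{1/3}$ steps, which gives rapid mixing, self-avoidance with high probability, and well-behavedness for all $S$ by a union bound. It then re-normalises to a fresh $b'$-normal perfect fractional matching of $G_{i+1}$ with $h(\bx')\ge\frac{n'}{n}h(\bx)-\frac{k-1}{k}n'\log\frac{n}{n'}-n^{1/3-\alpha/2}$ (Lemma~\ref{lemma: entropy is as expected after removing a well-behaved subset}). Finally, telescoping these bounds together with $\sum_i m_i\log n_i\ge\log n!-\log n_{\kappa+1}!$ produces the $n\log n-n\log e$ term. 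Your remark about rounds and near-stationarity points in this direction, but the proposal does not supply the mechanism, so the counting step is not established.
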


\begin{theorem}\label{main theorem - without entropy}
Suppose $k\ge2$ and $\delta>1/2$. Let $G$ be a $k$-graph on $n$ vertices with \mbox{$\delta_{k-1}(G)\ge\delta n$}. Then $G$ contains at least $(\delta-o(1))^n n!$ tight Hamilton cycles.
\end{theorem}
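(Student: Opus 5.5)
The plan is to derive Theorem~\ref{main theorem - without entropy} from Theorem~\ref{main theorem - with entropy}. The only extra ingredient is a lower bound on the entropy $h(G)$ in terms of $\delta$: we want
$$h(G)\ge \frac{n}{k}\log\left(\delta\binom{n}{k-1}\right)-o(n),$$
or at least something of the form $kh(G)\ge n\log\left(\delta\binom{n}{k-1}\right)-o(n)$.

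Granting this, Theorem~\ref{main theorem - with entropy} gives
$$\log\Psi(G)\ge n\log\delta+n\log\binom{n}{k-1}-n\log\binom{n}{k-1}+n\log n-n\log e-o(n).$$
The binomial terms cancel. Since $\log n!=n\log n-n\log e+O(\log n)$ by Stirling, this yields $\Psi(G)\ge \delta^n e^{-o(n)} n! = (\delta-o(1))^n n!$, as required.

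To bound $h(G)$, exhibit a perfect fractional matching $\bx$ that is nearly uniform, with $\bx(e)\le (1+o(1))/(\delta\binom{n}{k-1})$ for every $e$. Since $\sum_e \bx(e)=n/k$ and each $\bx(e)$ is at most this value,
$$h(\bx)\ge \frac{n}{k}\log\left(\frac{\delta\binom{n}{k-1}}{1+o(1)}\right),$$
which is exactly the bound needed.

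To build $\bx$, start from the natural weighting: set $\bx_0(e)$ proportional to $1$, so each vertex has weighted degree $\deg(v)\cdot c$. Then $\deg(v)\ge \delta n\binom{n-1}{k-2}/(k-1)\approx\delta\binom{n}{k-1}$, since every $(k-1)$-set has codegree at least $\delta n$. These degrees need not be equal, so $\bx_0$ has to be corrected to a perfect fractional matching while keeping every weight $O(1/\binom{n}{k-1})$ and essentially capped at $1/(\delta\binom{n}{k-1})$.

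A cleaner alternative is to average over random structures. By Theorem~\ref{thm: KSW}, when $k\mid n$ the hypergraph $G$ has many perfect matchings. Take $\bx(e)=\Pr[e\in M]$ for a uniformly random perfect matching $M$; this is a perfect fractional matching. Alternatively, use the entropy formula directly: $h(G)\ge \log\Phi(G)-\log\Phi(\text{matchings in }K)\ldots$. The direct route is easier: combine the two statements of Theorem~\ref{thm: KSW}. This gives
$$h(G)=\log\Phi(G)+\frac{k-1}{k}n\log e-o(n)\ge \log\Phi(K^{(k)}_n)+\frac nk\log\delta+\frac{k-1}{k}n\log e-o(n).$$
Now $\log\Phi(K_n^{(k)})=\log\left(n!/((k!)^{n/k}(n/k)!)\right)$, and a Stirling computation should turn this into $\frac nk\log\binom{n}{k-1}-\frac{k-1}{k}n\log e+o(n)$. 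Checking this: $n\log n-n-\frac nk\log k!-\frac nk\log(n/k)+\frac nk$ equals $\frac nk\left[(k-1)\log n-\log (k-1)!\right]-\frac{k-1}{k}n$, and this matches $\frac nk\log\binom{n}{k-1}-\frac{(k-1)n}{k}\log e$ up to $o(n)$. Hence $kh(G)\ge n\log\binom{n}{k-1}+n\log\delta-o(n)$.

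The main obstacle is divisibility: Theorem~\ref{thm: KSW} requires $k\mid n$. If $k\nmid n$, delete a set $R$ of fewer than $k$ vertices. The induced hypergraph $G'=G-R$ still has $\delta_{k-1}(G')\ge(\delta-o(1))|V(G')|$, so we obtain a near-optimal fractional matching on $G'$. We then extend it to a perfect fractional matching of $G$ by shifting $o(1)$ of the mass onto edges meeting $R$. One way is to mix with a fractional matching supported on edges through $R$ that are balanced using the large codegrees. This changes the entropy by only $o(n)$, because the perturbation weights are $O(n^{-(k-1)})$-sized and cover only $O(1)$ vertex constraints. Alternatively, the nearly-uniform construction above avoids divisibility entirely, and I would fall back on it if the perturbation argument becomes messy.
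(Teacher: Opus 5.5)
Your proof is correct, and its last step (inserting $kh(G)\ge n\log\binom{n}{k-1}+n\log\delta-o(n)$ into Theorem~\ref{main theorem - with entropy}, cancelling the binomial terms and applying Stirling) is exactly the paper's argument. The only difference is where the entropy lower bound comes from. The paper cites Theorem~\ref{thm: graph entropy bound} (Theorem~6.1 of Kwan--Safavi--Wang), which gives $h(G)\ge\frac nk\log\binom{n}{k-1}+\frac nk\log\delta$ for every $n$, with no error term and no divisibility hypothesis, so the whole proof is two lines. You instead recover this bound up to $o(n)$ by combining the two halves of Theorem~\ref{thm: KSW} with a Stirling evaluation of $\log\Phi(K_n^{(k)})$. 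That computation is right when read in natural logarithms, and it shows that the entropy bound and the perfect-matching count carry the same information. Here, though, it gains nothing and costs you the restriction $k\mid n$ plus a patching step.

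Your patch is true, but the reason you give is not. Weight placed on edges through $R$ lands on $\Theta(n)$ vertices of $G'=G-R$, not on $O(1)$ vertex constraints, so $\bx'$ must be rescaled. Here is one clean way. Let $n'=|V(G')|$ and $\theta=(k-1)|R|/n'$, and multiply $\bx'$ by $1-\theta$. For each $u\in R$, give the edge $\{u\}\cup T$ weight $\frac{k-1}{n'}\bz_u(T)$, where $\bz_u$ is a perfect fractional matching of the link of $u$ in $G'$. This link has minimum $(k-2)$-degree at least $\delta n-k$, so for $k\ge3$ it is Dirac and $\bz_u$ exists by Lemma~\ref{lemma: existence of b-normal pfm}; for $k=2$ the statement is already Theorem~\ref{thm: CK lower bound}. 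The result is a perfect fractional matching of $G$ with entropy at least $(1-\theta)h(\bx')\ge h(\bx')-O(k^2\log n)$.

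Your fallback, a perfect fractional matching with all weights at most $(1+o(1))/(\delta\binom{n}{k-1})$, is asserted without proof. The entropy bound you want from it is precisely Theorem~\ref{thm: graph entropy bound}, so citing that theorem directly removes both the fallback and the divisibility detour.
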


The method presented in this paper also applies in the more general setting of counting Hamilton $\ell$-cycles in a \mbox{$k$-graph}~$G$ on $n$ vertices with $\delta_{k-1}(G)\ge \delta n$ for some $\delta>1/2$. We define
$$C_{n,k,\ell}\coloneqq \begin{cases}
(\pi_1!\pi_2!)^{\frac{n}{k-\ell}}\quad&\text{if }\ell\in[k-1]\\
(n/k)!(k!)^{\frac{n}{k}}\quad&\text{if }\ell=0
\end{cases},$$
where $\pi_1=k\bmod(k-\ell)$ and $\pi_2=(k-\ell)-\pi_1$. Observe that when $(k-\ell)|n$, we have \mbox{$\Psi_{\ell}(K_n^{(k)})=\frac{n!}{\frac{2n}{k-\ell}C_{n,k,\ell}}$} for $\ell\in[k-1]$, and $\Phi(K_n^{(k)})=\frac{n!}{C_{n,k,0}}$. Here we state the general result in Theorem~\ref{main theorem - with entropy - non-tight}. The proofs carry over with only minor modifications, and we therefore defer the details to the appendix. The lower bound in Theorem~\ref{thm: KSW} and Theorem~\ref{thm: FHM} follow as corollaries when $\ell=0$ and $0\le \ell\le k-2$, respectively.

\begin{theorem}\label{main theorem - with entropy - non-tight}
Suppose $k\ge2$, $\ell\in[k-1]_0$ and $\delta> 1/2$. Let $G$ be a $k$-graph on $n$ vertices with $(k-\ell)|n$ and \mbox{$\delta_{k-1}(G)\ge\delta n$}. Then
$$\log\Psi_{\ell}(G)\ge \frac{k}{k-\ell}h(G)-\frac{n}{k-\ell}\log{n\choose k-1}\frac{(n-k+1)!}{(n-\ell)!n}-n\log e-\log C_{n,k,\ell}-o(n).$$
\end{theorem}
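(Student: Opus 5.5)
The plan is to run the argument for Theorem~\ref{main theorem - with entropy} with the cyclic structure of an $\ell$-cycle in place of the tight one.

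\textbf{Setup.} First fix a perfect fractional matching $\bx$ with $h(\bx)\ge h(G)-o(n)$. We may assume $\bx$ is well spread, that is, $\bx(e)=O(n^{-(k-1)})$ for all edges. Such an $\bx$ should exist because $\delta_{k-1}(G)\ge\delta n$ with $\delta>1/2$, via the same regularisation step used in the tight case. Then build a random Hamilton $\ell$-cycle sequentially. An $\ell$-cycle on $n$ vertices is determined by a cyclic vertex ordering read in blocks of $k-\ell$ new vertices: each new edge consists of the $\ell$ last vertices of the previous edge together with $k-\ell$ new ones. So there are $m=n/(k-\ell)$ steps. At step $i$, choose the next $k-\ell$ vertices as an ordered tuple, with probability proportional to $\bx(e_i)$ divided by a normalising factor. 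Here $e_i$ is the edge formed with the current $\ell$-set. For $\ell=0$ we simply choose edges of a perfect matching.

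\textbf{Counting via entropy.} Lower-bound $\Psi_\ell(G)$ by $\exp(H(\text{output}))$, corrected for the overcounting of each cycle: there are $\frac{2n}{k-\ell}$ starting points and directions, and the internal orders give the factor $C_{n,k,\ell}$. Each edge of the $\ell$-cycle splits into $\pi_1$ vertices shared with the neighbouring edges and $\pi_2$ private ones, or the whole $k!$ for $\ell=0$. Relabelling within these blocks does not change the cycle, which accounts for $\log C_{n,k,\ell}$.

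\textbf{Chain-rule estimate.} Next estimate $H=\sum_i H(\text{step }i\mid\text{past})$ by the chain rule. Heuristically, at step $i$ the available edge containing the current $\ell$-set gets conditional probability about $\bx(e)\cdot\frac{n}{k}\cdot\frac{k}{n-\text{used}}$-type weights. Averaging over the $m$ steps, the sum of $\bx(e)\log(1/\bx(e))$ weighted by the number of times each edge is "usable", about $k/(k-\ell)$ per unit mass, produces $\frac{k}{k-\ell}h(\bx)$. The normalisations contribute $-\frac{n}{k-\ell}\log\bigl(\binom{n}{k-1}\frac{(n-k+1)!}{(n-\ell)!\,n}\bigr)$. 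This factor measures how many ordered $(k-\ell)$-extensions of an $\ell$-set there are relative to $\binom{n}{k-1}$, and for $\ell=k-1$ it reduces to $\binom{n}{k-1}/n$. The depletion of vertices over time gives the $-n\log e$ term through $\sum\log(n-j)\approx\log n!$ compared with $n\log n$.

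\textbf{Main obstacle: concentration.} The key difficulty is showing that throughout the process the remaining weight $\sum_{e\subseteq\text{unused}\cup S}\bx(e)$ stays close to its expected value, so that the normalisation factors are deterministic up to $1+o(1)$. I would handle this by a nibble/martingale argument as in the tight case, with the well-spreadness of $\bx$ giving bounded differences. Then the process must be closed into a cycle using an absorbing structure together with the connectivity from $\delta>1/2$. This costs only $e^{o(n)}$ choices, which is possible because the final $o(n)$ steps contribute negligible entropy. The $\ell$-dependent counting is routine once the tight case is done, so these modifications are what the appendix would carry out.
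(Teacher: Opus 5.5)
\textbf{There is a genuine gap, at the step you call the ``main obstacle''.} The rest of your accounting matches the paper: the chain rule, the Jensen term producing $\log\binom{n}{k-1}\frac{(n-k+1)!}{(n-\ell)!n}$, one factor $\log(n_i/n)$ per new vertex summing to $\log(n!/n^n)\approx -n\log e$, division by $C_{n,k,\ell}$, and a reserved set with Hamilton-connectedness to close up. (As a minor point, your transition heuristic should read $\frac{\bx(S\cup X)}{(k-\ell)!\,\bx_S}(n/n_i)^{k-\ell}$; the exponent $k-\ell$ is what gives one $\log(n_i/n)$ per new vertex.) The problem is that the proof actually lives in the step you leave open, and what you sketch there would not go through.

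You keep one fixed $\bx$ for the whole process and ask only that the normalising factors be deterministic up to $1+o(1)$. That is not enough for the chain rule. $H(\text{step } i\mid \text{past})$ is an average of the conditional entropies at the current $\ell$-set $S$. Since $h_{\bx}(S)$ can vary by $\Theta(1)$ across $S$ (for instance when codegrees range between $\delta n$ and $n$), you need two further things:
\begin{enumerate}
\item[(a)] the law of the current $\ell$-set must stay close to $\propto \bx_S$ on almost all steps, since a constant total-variation error on a constant fraction of the $n/(k-\ell)$ steps can already cost $\Theta(n)$;
\item[(b)] you need an entropy analogue of your normalisation estimate for every single $S$, which is condition (iii) of well-behavedness in the paper. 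A single global weight sum does not give this.
\end{enumerate}
With a fixed $\bx$, the chain restricted to unused vertices changes at every step. Its stationary law is close to $\propto\bx_S$ only while $\bx$ restricted to the unused part is close to a multiple of a perfect fractional matching. That requires the used set to be random-like simultaneously for every vertex and every neighbourhood, with errors that must not accumulate over $\Theta(n)$ steps until only $n^{1-\beta}$ vertices remain. Nothing in your sketch controls this. The paper's tight-case argument contains no nibble, and its martingale step (Lemma~\ref{lemma: behaviour wrt subsets}) is Azuma over blocks of a \emph{time-homogeneous} chain run for only $n^{1/3}$ steps, relying on rapid mixing of that fixed chain.

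\textbf{The missing idea is the paper's restart and re-normalisation.}
\begin{itemize}
\item The walk is run only for $m_i=n_i^{1/3}$ steps. This is long enough for rapid mixing and short enough that, with probability at least $1-n_i^{-1/4}$, the walk is self-avoiding and the removed set is $(G_i,\alpha)$-well-behaved.
\item Lemma~\ref{lemma: entropy is as expected after removing a well-behaved subset} then rescales and perturbs $\bx_i$ into an exact $b_{i+1}$-normal perfect fractional matching $\bx_{i+1}$ of $G_{i+1}$ with $h(\bx_{i+1})\ge\frac{n_{i+1}}{n_i}h(\bx_i)-\frac{k-1}{k}n_{i+1}\log\frac{n_i}{n_{i+1}}-n_i^{1/3-\alpha/2}$.
\item Each segment is therefore a walk of a fresh chain with exactly known stationary law $\propto(\bx_{i})_S$, so errors reset every round. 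Over the at most $n^{2/3+\beta/3}$ rounds one keeps $b_i\le 2b$ and $\delta_i\ge\frac12+\frac{\hat\delta}{2}$, and the entropy bounds telescope.
\item The short horizon also makes conditioning on the good event cheap via Fact~\ref{fact: entropy given a likely event}, with loss $n_i^{-1/4}\cdot O(m_i\log n)=o(m_i)$. For one process of length $\Theta(n)$ you would instead need failure probability $o(1/\log n)$ for the entire trajectory.
\end{itemize}
Without this device, or a full differential-equation-type analysis of the time-inhomogeneous fixed-$\bx$ process (which you do not supply), your proposal is a correct heuristic for the bound rather than a proof.
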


The paper is organized as follows. In Section~\ref{section: Proof overview}, we provide an outline of our proof method. Section~\ref{section: Preliminaries} introduces the necessary notations and definitions and proves several basic results. In Section~\ref{section: Subgraph entropy}, we study how hypergraph entropy behaves under the removal of vertex subsets. In Section~\ref{section: Probabilistic properties of random walks}, we study random walks in $k$-graphs and show that they are almost always well-behaved. In Section~\ref{section: Counting Hamilton cycles}, we put together the results developed earlier to count long paths and extend them to Hamilton cycles, thereby proving our main theorems.

\section{Proof overview}\label{section: Proof overview}
We provide an outline for the proof of Theorem~\ref{main theorem - with entropy}. Let $k\ge 2$, $\delta> 1/2$ and $G$ be a $k$-graph on $n$ vertices with $\delta_{k-1}(G)\ge \delta n$. 
We first reserve a set $U\subseteq V(G)$ of size~$n^{\alpha}$, with $\alpha$ close to 1, such that every $(k-1)$-subset of $V(G)$ has many neighbours in $U$. Let $G'=G-U$ and $n'\coloneqq |V(G')|$. Observe that $\delta_{k-1}(G')\ge \delta'n'$ for some $\delta'>1/2$. We construct long tight paths in $G'$. The strong connectivity of $U$ then allows us to complete these paths into Hamilton cycles of $G$.

In $G'$, we take a perfect fractional matching $\bx$ whose entropy $h(\bx)$ is close to $h(G)$. We use $\bx$ as a probability distribution that guides the construction of a tight path. Moreover, we require $\bx$ to be $b$-normal, which means that the edge weights are roughly evenly distributed so that along a long path, the probability of visiting any vertex is roughly uniform.

Let $\vec\cS(G')$ denote the set of all $(k-1)$-tuples of vertices of $G'$. Since extensions of a tight path are determined by its last $k-1$ vertices, we study a random walk $Z$ on $\vec\cS(G)$, viewed as a Markov chain whose transition matrix is defined in terms of $\bx$. Starting at some $\vec S\in\vec\cS(G)$, we run this Markov chain for $m'\coloneqq (n')^{1/3}$ steps. The length $m'$ is chosen large enough so that the rapid mixing property of a Markov chain applies. It is also small enough so that with high probability, the walk does not revisit vertices and thus yields a tight path $P$ in $G'$.

We show that with high probability, the random walk $Z$ is well-behaved, which in particular implies the following. First, $Z$ yields a tight path $P$ in $G$ from $\vec S$ to some $\vec T\in\vec\cS(G')$. Second, the subgraph $G''=G'-(P-T)$ with $n''\coloneqq V(G'')$ still satisfies a minimum codegree condition $\delta_{k-1}(G'')\ge \delta'' n''$ for some $1/2<\delta''\le \delta'$. Lastly, $G''$ admits a $b'$-normal perfect fractional matching $\bx'$, with $b'$ only slightly larger than $b$, such that $h(\bx')$ scales appropriately with $n''$ and $h(\bx')$ is close to $h(G'')$. Hence we can treat $G''$ in the same way as $G'$ and construct random walks starting at $\vec T$.

We iterate this procedure. Starting from $G_0\coloneqq G'$ and some $\vec S_0\in\vec\cS(G_0)$, we remove a well-behaved path $P_i$ of length $|V(G_i)|^{1/3}$, from $\vec S_i$ to some $\vec S_{i+1}\in\vec\cS(G_i)$, to obtain the next subgraph $G_{i+1}\coloneqq G_i-(P_i-S_{i+1})$. At each step, let $z_i$ denote the number of well-behaved paths, which can be lower bounded using the entropy of the random walk. We continue this process until $Q\coloneqq P_0\cup...\cup P_{\kappa}$ has length $|Q|\ge n'-n'^{1-\beta}$ for some $\kappa\in\mathbb{N}$ and small $\beta>0$. 

Finally, the set $U$ is used to absorb the remaining vertices and complete $Q$ into a tight Hamilton cycle in $G$. The product $\prod_{i=0}^{\kappa}z_i$ provides the desired lower bound on $\Psi(G)$.

\section{Preliminaries}\label{section: Preliminaries}

In this section, we introduce some notations and definitions that will be used throughout this paper, and establish several preliminary results.

\subsection{Notation}
For any integer $n\ge 1$, we define $[n]\coloneqq \{1,...,n\}$ and $[n]_0\coloneqq \{0,...,n\}$. For real numbers $a, b$ and some error term $c\ge 0$, we write $a=b\pm c$ if $b-c\le a\le b+c$. Expressions with more error terms are defined similarly and should be read from inner to outer brackets. For $\alpha,\beta\in(0, 1]$, we write that a statement holds for $\alpha\ll\beta$ if there is a non-decreasing function $f:(0, 1]\to (0,1]$ such that the statement holds for any $\alpha\in(0,f(\beta)]$. Hierarchies with more constants are defined similarly and should be read from right to left. Note that if $1/k$ and $1/n$ appear in a hierarchy, this implicitly means that $k$ and $n$ are natural numbers, respectively, and in addition that $k\ge 2$. We ignore rounding issues when they do not affect the argument.

We use standard asymptotic notation. For functions $f=f(n)$ and $g=g(n)$, we write $f=O(g)$ if there exists a constant $C>0$ such that $f(n)\le Cg(n)$ for all $n$, $f=o(g)$ if $f/g\to 0$ as $n\to \infty$, and $f=\Theta(g)$ if $f=O(g)$ and $g=O(f)$.

We also use standard probabilistic notation. We denote the probability of an event $\cE$ by $\pr[\cE]$ and the expected value of a random variable $X$ by $\ex[X]$.

We write $\log\coloneqq\log_2$ and $\ln\coloneqq\log_e$. We use the convention $0\log 0=0$. For $x\ge 0$, we often use the inequality $\log (1+x)\le 2x$, which follows from $1+x\le e^x\le 2^{2x}$. We also often use $\frac{n^k}{k^k}\le {n\choose k}\le \frac{n^k}{k!}$ to bound binomial coefficients. Moreover, we use binomial approximation $(1+x)^{\alpha}=1+\alpha x\pm O(x^2)=1\pm O(x)$ when $|x|<1$ and $|\alpha x|$ is sufficiently small. We use Stirling's approximation $\log(n!)=n\log\frac{n}{e}\pm\log n$.

\subsection{Dirac hypergraphs} We introduce the basic graph theoretic definitions and notation.

A \emph{\mbox{$k$-uniform} hypergraph} (or \mbox{\emph{$k$-graph}}) $G$ is a hypergraph in which each edge is a set of exactly $k$ vertices. We denote its vertex set by $V(G)$ and its edge set by $E(G)$. For any integer $d$ with $d\in[k-1]_0$, the \emph{neighbourhood} of a \mbox{$d$-subset} $S\subseteq V(G)$ in $G$ is defined by \mbox{$N_G(S)\coloneqq \{T\subseteq V(G)|S\cup T\in E(G)\}$}, and the \emph{degree} of $S$ in $G$ is $\deg_G(S)\coloneqq |N_G(S)|$. Note that when $d=0$, we have $N_G(\emptyset)=E(G)$. When $d=1$~or~$d=k-1$, we simply write the singleton sets as vertices. That is, \mbox{$N_G(v)\coloneqq \{T\subseteq V(G)|\{v\}\cup T\in E(G)\}$} for each vertex $v\in V(G)$ and \mbox{$N_G(S)\coloneqq \{v\in V(G)|S\cup\{v\}\in E(G)\}$} for each \mbox{$(k-1)$-subset} $S\subseteq V(G)$. The \emph{minimum $d$-degree}~$\delta_d(G)$ of $G$ is the minimum degree over all $d$-subsets of $V(G)$. The minimum $d$-degree of a $k$-graph is monotone in the following sense.

\begin{fact}\label{lemma: monotonicity of minimum degrees}
Let $G$ be a $k$-graph. Then
$$\frac{\delta_0(G)}{{n\choose k}}\ge\frac{\delta_1(G)}{{n-1 \choose k-1}}\ge ...\ge \frac{\delta_{k-1}(G)}{{n-k+1\choose 1}}.$$
\end{fact}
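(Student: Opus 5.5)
It suffices to prove each adjacent inequality. That is, for every $d\in[k-1]$ we show
$$\frac{\delta_{d-1}(G)}{{n\choose k-d+1}}\ge\frac{\delta_d(G)}{{n-d\choose k-d}}.$$
We adopt the natural reading of the displayed denominators, namely that the quantity for level $d$ is $\delta_d(G)/{n-d\choose k-d}$. Under this reading the first term is $\delta_0(G)/{n\choose k}$ and the last is $\delta_{k-1}(G)/{n-k+1\choose 1}$, as in the statement. The proof is the standard double-counting argument.

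Fix a $(d-1)$-subset $S\subseteq V(G)$ with $\deg_G(S)=\delta_{d-1}(G)$. We count the pairs $(v,e)$ with $v\notin S$ and $S\cup\{v\}\subseteq e\in E(G)$ in two ways.

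\emph{Counting by $v$.} There are $n-d+1$ choices of $v\notin S$. For each, the $d$-set $S\cup\{v\}$ lies in at least $\delta_d(G)$ edges. Hence the number of pairs is at least $(n-d+1)\,\delta_d(G)$.

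\emph{Counting by $e$.} Each edge $e\supseteq S$ contains exactly $k-d+1$ vertices outside $S$. Hence the number of pairs is exactly $(k-d+1)\deg_G(S)=(k-d+1)\,\delta_{d-1}(G)$.

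Comparing the two counts gives
$$\delta_{d-1}(G)\ge\frac{n-d+1}{k-d+1}\,\delta_d(G).$$
By the identity
$${n-d+1\choose k-d+1}=\frac{n-d+1}{k-d+1}{n-d\choose k-d},$$
this is exactly the claimed ratio inequality between levels $d-1$ and $d$. Chaining these inequalities for $d=1,\dots,k-1$ proves the fact.

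The only point needing care is the denominators. The normalising binomial at level $d$ must be ${n-d\choose k-d}$, the number of $(k-d)$-sets available to complete a $d$-set to an edge. For $d=1$ this is ${n-1\choose k-1}$ and for $d=k-1$ it is ${n-k+1\choose 1}$, matching the statement. There is no real obstacle beyond this bookkeeping.
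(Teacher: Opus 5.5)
Your double-counting argument is correct, and it is the standard proof of this fact; the paper states the fact without proof, so there is no different route to compare against. One slip to fix: in your first display the left-hand denominator should be ${n-d+1\choose k-d+1}$, not ${n\choose k-d+1}$. As written, that inequality is false for $d\ge 2$ (e.g.\ for $K_n^{(k)}$). The identity you invoke at the end uses the correct denominator, so your argument itself proves the right statement.
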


Let $G$ be a $k$-graph on $n$ vertices. Let $1/2< \delta<1$ be a constant. We say that $G$ is \emph{$\delta$-Dirac} if $\delta_{k-1}(G)\ge\delta n$. We say that $G$ is an \mbox{\emph{$(n, k, \delta)$-graph}} if $G$ is a $\delta$-Dirac $k$-graph on $n$ vertices. We use $\hat{\delta}\coloneqq \delta-1/2$ as a shorthand to denote the offset of $\delta$ above the $1/2$ threshold.

Suppose $G$ is a hypergraph, $H$ is a subgraph of $G$ and $U\subseteq V(G)$. We write $G[U]$ for the subgraph of $G$ induced by $U$. We define $G-U\coloneqq G[V(G)\setminus U]$ and $G-H\coloneqq G-V(H)$.

Let $G$ be a $k$-graph. We say a subgraph~$W$ of $G$ is a \emph{(tight) walk} of length $L$ if $W$ admits an ordering of its vertices $V(W)=(v_1, ...v_{L+k-1})$ such that $E(W)=\{\{v_i,... v_{i+k-1}\}|i\in [L]\}$. The walk~$W$ is a \emph{(tight) path} if the vertices $v_1,...,v_{L+k-1}$ are distinct. The walk~$W$ is a \emph{(tight) cycle} if $(v_1,...v_{k-1})=(v_{L+1},...,v_{L+k-1})$ and the vertices $v_1,...,v_L$ are distinct. A \emph{(tight) Hamilton path} in $G$ is a tight path~$P$ such that $V(P)=V(G)$. A \emph{(tight) Hamilton cycle} in $G$ is a tight cycle~$C$ such that $V(C)=V(G)$.

Since we focus exclusively on tight walks, paths, and cycles in this paper, we will henceforth omit the word ``tight".

\subsection{Hypergraph entropy}
We define perfect fractional matchings and hypergraph entropy.

Let $G$ be a $k$-graph on $n$ vertices. An \emph{edge weighting} of $G$ is a function $\bx:E(G)\to \bR_{\ge 0}$. For all $e\in E(G)$, we write $\bx_e$ or $\bx[e]$ for $\bx(e)$. We call $\bx$ a $\emph{perfect fractional matching}$ if $\sum_{v\in e\in E(G)}\bx_e=1$ for every vertex $v\in V(G)$. Let $F\subseteq E(G)$, we define the \emph{entropy of $F$ induced by $\bx$} as
$$h_{\bx}(F)\coloneqq \sum_{e\in F}\bx_e\log \frac{1}{\bx_e}.$$
We define the \emph{entropy of $\bx$} by \mbox{$h(\bx)\coloneqq h_{\bx}(E(G))$}. We define the \emph{entropy} of the hypergraph $G$ by \mbox{$h(G)\coloneqq \sup\{h(\bx)|\bx\text{ is a perfect fractional matching of }G\}$}.

Let $G$ be a $k$-graph on $n$ vertices, $\bx$ be an edge weighting of $G$ and $b\ge 1$. We say $\bx$ is \emph{$b$-normal} if for all $e\in E(G)$, we have
$$\frac{1}{bn^{k-1}}\le \bx_e\le \frac{b}{n^{k-1}}.$$

As $h(G)$ is monotone increasing with respect to taking supergraphs, it follows that \mbox{$h(G)\le h(K_n^{(k)})\le\frac{k-1}{k}n\log n$}, where the second inequality is an immediate consequence of the definition of graph entropy and Jensen's inequality applied to the concave function $x\mapsto \log x$. A tight lower bound for the entropy of an $(n,k,\delta)$-graph is proved in \cite{KSW:24}.

\begin{theorem}[{\cite[Theorem~6.1]{KSW:24}}]\label{thm: graph entropy bound}
Let $G$ be an $(n,k,\delta)$-graph. Then,
$$\frac{n}{k}\log{n\choose k-1}+\frac{n}{k}\log \delta\le h(G)\le\frac{k-1}{k}n\log n.$$
\end{theorem}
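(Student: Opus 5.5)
The upper bound follows from Jensen's inequality, and the lower bound from an explicit perfect fractional matching built from the codegree condition.

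\emph{Upper bound.} Since $h$ is monotone under taking supergraphs, we have $h(G)\le h(K_n^{(k)})$. For any perfect fractional matching $\bx$, summing the vertex constraints over all vertices gives $\sum_e \bx_e = n/k$. Concavity of $\log$, applied with weights $\bx_e/(n/k)$, then gives
$$h(\bx)\le \frac nk\log\frac{|E(G)|}{n/k}\le \frac nk\log\frac{\binom nk k}{n}=\frac nk\log\binom{n-1}{k-1}\le \frac{k-1}{k}n\log n .$$

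\emph{Lower bound.} We need a perfect fractional matching $\bx$ of $G$ with $h(\bx)\ge \frac nk\log\binom n{k-1}+\frac nk\log\delta$. Equivalently, up to normalisation, we need a probability distribution on $E(G)$ with entropy close to $\log|E(G)|$ whose vertex marginals are exactly uniform. The natural first attempt is $\bx_e=\frac{1}{k\binom n{k-1}\delta}$ for every edge: it has roughly the right entropy, but it is not perfect unless $G$ is regular. I would therefore proceed in three steps.

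\textbf{Step 1.} Define the ``codegree-uniform'' weighting. Choose a uniformly random $(k-1)$-set $S$, then a uniformly random $v\in N_G(S)$, and output the edge $e=S\cup\{v\}$. This gives
$$p_e=\frac{1}{\binom n{k-1}}\sum_{S\subset e,\,|S|=k-1}\frac1{\deg_G(S)} .$$
Each term $1/\deg_G(S)$ is at most $1/(\delta n)$, so $p_e\le k/(\binom n{k-1}\delta n)$ and hence $\log(1/p_e)\ge \log\binom n{k-1}+\log\delta+\log(n/k)$. Scaling by $n/k$ gives weights $\frac nk p_e$ whose total mass is $n/k$, each with $\log$-reciprocal at least $\log\binom n{k-1}+\log\delta$. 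If these weights formed a perfect fractional matching we would be done, with entropy $\ge\frac nk(\log\binom n{k-1}+\log\delta)$.

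\textbf{Step 2.} Make the vertex marginals exact; this is the main obstacle. The vertex marginals of $p$ are not exactly $k/n$, so the weighting must be corrected into a perfect fractional matching without losing entropy at first order. One option is maximum-entropy duality: the optimal $\bx$ has the form $\bx_e=\prod_{v\in e}\lambda_v$. Then
$$h(\bx)=-\sum_v\log\lambda_v,$$
and one shows directly that $\prod_v\lambda_v\le(\delta\binom n{k-1})^{-n/k}$. This can be done by weak duality: for any $\mu$, $h(G)\ge$ the Lagrangian lower bound, or one applies a Gibbs-type inequality against $p$. Concretely, for any perfect fractional matching $\bx$,
$$h(\bx)=\sum_e\bx_e\log\frac1{q_e}-\sum_e \bx_e\log\frac{\bx_e}{q_e}$$
for any reference weighting $q$. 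Existence of a perfect fractional matching close to a multiple of $p$ follows from the Dirac condition $\delta>1/2$. A flow or Hall-type argument shows that the fractional perfect matching polytope has points in every direction needed, so an exact correction is possible.

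\textbf{Step 3.} Carry out the correction concretely. I would take $\bx=(1-\eta)\frac nk p+\eta\,\by$, where $\by$ is any $b$-normal perfect fractional matching; its existence uses $\delta>1/2$, since such a $\by$ can be obtained by mixing fractional matchings via the switching/absorbing structure. The discrepancy of $p$ is then compensated by a small perturbation. Concavity of entropy bounds the loss by $O(\eta n)$ plus the perturbation. In fact the clean statement is exact with no error term, which suggests the intended proof in the cited source is the duality route: optimise over product-form weights $\bx_e=\prod\lambda_v$, then use AM--GM on $\sum_{v}\sum_{e\ni v}\bx_e=n$ together with the codegree bound $\deg(S)\ge\delta n$. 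I would pursue that route first and expect verifying the product bound $\prod\lambda_v\le(\delta\binom n{k-1})^{-n/k}$ to be the hard part.
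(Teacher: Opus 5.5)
\paragraph{Upper bound.} Your upper bound is correct, and it is exactly the paper's justification: monotonicity under supergraphs plus Jensen. You even get the sharper $\frac nk\log{n-1\choose k-1}$.

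\paragraph{Lower bound: the Step 1 weighting cannot be corrected cheaply.} The paper does not prove the lower bound; it imports it from \cite{KSW:24}. Your proposal does not prove it either. It sketches three routes and completes none, and each step meant to carry the weight fails as stated.

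\begin{itemize}
\item \emph{The vertex loads are off by constants, not by a small amount.} The codegree-uniform weighting $\frac nk p$ does satisfy $\frac nk p_e\le 1/(\delta{n\choose k-1})$, but for irregular $G$ its loads $m_v$ can differ from $1$ by a constant at every vertex. Take $k=2$, a set $A$ of size $n/3$ joined to everything, and $G[B]$ $\frac n4$-regular on the other $\frac{2n}{3}$ vertices. The loads tend to $\frac{26}{21}\approx 1.24$ on $A$ and $\frac{37}{42}\approx 0.88$ on $B$. Mixing with $\by$ leaves the discrepancy $(1-\eta)(m_v-1)=\Theta(1)$. So there is no ``small perturbation'' and no basis for an $O(\eta n)$ loss bound.
\item \emph{The weight cap behind Step~1 cannot survive any correction.} Take $k=2$ and a vertex $a$ joined to all vertices of an $r$-regular graph on $n-1$ vertices, with $n/2<r<n-2$. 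Every other vertex has degree exactly $d=r+1=\delta n$. A perfect fractional matching with all weights at most $1/d$ must therefore put exactly $1/d$ on all their edges. This loads $a$ with $(n-1)/d>1$, a contradiction.
\end{itemize}
So any correct argument must let some weights exceed $1/(\delta{n\choose k-1})$ and show that entropy gained elsewhere compensates. That argument is the real content of the theorem, and it is missing.

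\paragraph{Lower bound: Step 2 and the duality route.}
\begin{itemize}
\item \emph{Weak duality points the wrong way.} For the maximisation $h(G)=\max_{\bx}h(\bx)$, weak duality yields upper bounds on $h(G)$, not lower bounds.
\item \emph{So does the Gibbs identity.} In $h(\bx)=\sum_e\bx_e\log\frac1{q_e}-D(\bx\,\|\,q)$, the first term is indeed at least $\frac nk\log(\delta{n\choose k-1})$ for every perfect $\bx$ when $q=\frac nk p$. But it comes with a subtracted divergence you cannot control.
\item \emph{What actually has to be shown.} With the product form $\bx_e=\prod_{v\in e}\lambda_v$, the bound $\prod_v\lambda_v\le(\delta{n\choose k-1})^{-n/k}$ is, by strong duality, equivalent to
\[
\sum_{e\in E(G)}\prod_{v\in e}\lambda_v\;\ge\;\frac nk\,\delta{n\choose k-1}\Bigl(\prod_v\lambda_v\Bigr)^{k/n}\quad\text{for all }\lambda>0 .
\]
\item \emph{AM--GM only settles the regular case.} AM--GM over the edges gives $|E(G)|\prod_v\lambda_v^{\deg_G(v)/|E(G)|}$. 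This has the required form only when $G$ is vertex-regular, and then the uniform weighting is already a perfect fractional matching, so the bound is trivial there.
\item \emph{The irregular case needs $\delta>1/2$ essentially.} For irregular $G$ the inequality genuinely depends on the Dirac condition. It fails for $K_{a,n-a}$ with $a<n/2$ as $\lambda|_A\to0$. Nothing in your sketch uses the Dirac condition in a way that could deliver it.
\end{itemize}
You flag this inequality as ``the hard part'', but it is the whole lower bound. As written you would need to cite \cite{KSW:24} or reproduce its argument.
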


It then follows that $h(G)=\Theta(n\log n)$ for $(n,k,\delta)$-graphs. In fact, the same asymptotic bound holds for any $b$-normal perfect fractional matching of $G$.

\begin{lemma}[{\cite[Lemma~2.6]{KSW:24}}]\label{lemma: pfm entropy bound}
Let $G$ be a $k$-graph on $n$ vertices and $\bx$ be a perfect fractional matching of $G$ with $\bx_e\le L$ for every edge $e\in E(G)$. Then
$$h(\bx)\ge\frac{n}{k}\log\frac{n}{L^2 k|E(G)|}.$$
\end{lemma}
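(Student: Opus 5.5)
The plan is to use only the trivial pointwise bound $\log\frac{1}{\bx_e}\ge\log\frac1L$, together with the fact that the total weight of a perfect fractional matching is fixed. The stated inequality is in fact weaker than the bound $h(\bx)\ge\frac nk\log\frac1L$ that this gives.

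First, compute the total weight of $\bx$. Summing the perfect-matching condition $\sum_{v\in e\in E(G)}\bx_e=1$ over all $v\in V(G)$ counts each edge exactly $k$ times, so
$$\sum_{e\in E(G)}\bx_e=\frac{n}{k}.$$

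Next, bound the entropy from below termwise. Edges with $\bx_e=0$ contribute $0$ by the convention $0\log 0=0$. For every edge with $\bx_e>0$ we have $0<\bx_e\le L$, hence $\log\frac{1}{\bx_e}\ge\log\frac1L$. Multiplying by $\bx_e\ge 0$ and summing gives
$$h(\bx)=\sum_{e\in E(G)}\bx_e\log\frac1{\bx_e}\ge \log\frac1L\sum_{e\in E(G)}\bx_e=\frac nk\log\frac1L.$$
This holds whatever the sign of $\log\frac1L$, since we only multiply by nonnegative numbers.

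Finally, compare with the claimed bound. Since each $\bx_e\le L$, we get $\frac nk=\sum_e\bx_e\le L|E(G)|$, so $\frac{n}{kL|E(G)|}\le 1$. Therefore
$$\log\frac{n}{L^2k|E(G)|}=\log\frac1L+\log\frac{n}{kL|E(G)|}\le\log\frac1L.$$
Multiplying by $\frac nk>0$ and combining with the previous display yields $h(\bx)\ge\frac nk\log\frac{n}{L^2k|E(G)|}$.

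The only delicate point is to make sure no concavity or Jensen argument in the wrong direction is needed. It is not: the claim follows from the pointwise bound together with $\sum_e\bx_e=n/k\le L|E(G)|$. I do not expect any real obstacle beyond handling zero-weight edges and the sign of $\log\frac1L$, both covered above.
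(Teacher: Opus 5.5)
Your proof is correct and complete. On edges of positive weight $\log\frac{1}{\bx_e}\ge\log\frac1L$, and summing against $\sum_{e}\bx_e=\frac nk$ gives $h(\bx)\ge\frac nk\log\frac1L$. Then $\frac nk=\sum_e\bx_e\le L|E(G)|$ shows that this dominates the stated bound.

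The paper gives no proof of its own here; the lemma is quoted from \cite{KSW:24}, so there is no in-paper argument to compare with. What your route adds is the observation that the stated inequality is just a weakening of the trivial bound $h(\bx)\ge\frac nk\log\frac1L$.

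That observation has a small payoff. Used directly in Corollary~\ref{cor: pfm entropy bound} with $L=b/n^{k-1}$, your bound immediately yields $h(\bx)\ge\frac{k-1}{k}n\log n-\frac nk\log b$. This avoids the detour through $|E(G)|\le\binom{n}{k}$, and the error term is $\frac nk\log b$ instead of the $\frac{2n}{k}\log b$ obtained there.
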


\begin{cor}\label{cor: pfm entropy bound}
Let $G$ be a $k$-graph on $n$ vertices and $\bx$ be a $b$-normal perfect fractional matching of $G$ for some $b\ge 1$. Then,
$h(\bx)=\frac{k-1}{k}n\log n-O_b(n)$.
\end{cor}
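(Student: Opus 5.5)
Both bounds come straight from the definition of $b$-normality, with Lemma~\ref{lemma: pfm entropy bound} supplying the lower bound. Two facts are needed first. Since $\bx$ is a perfect fractional matching, summing the vertex constraints over all $n$ vertices counts each edge $k$ times, so
$$\sum_{e\in E(G)}\bx_e=\frac{n}{k}.$$
The lower bound $\bx_e\ge 1/(bn^{k-1})$ then gives
$$|E(G)|\le \frac{n}{k}\cdot bn^{k-1}=\frac{b}{k}n^k.$$

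\textbf{Upper bound.} Each edge satisfies $\bx_e\ge 1/(bn^{k-1})$, so $\log(1/\bx_e)\le \log b+(k-1)\log n$. Therefore
$$h(\bx)\le \frac{n}{k}\bigl((k-1)\log n+\log b\bigr)=\frac{k-1}{k}n\log n+O_b(n).$$
Alternatively, $h(\bx)\le h(G)\le \frac{k-1}{k}n\log n$ by the earlier discussion. Either way the upper bound holds.

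\textbf{Lower bound.} The cleanest route is the same pointwise argument. Each edge satisfies $\bx_e\le b/n^{k-1}$, so $\log(1/\bx_e)\ge (k-1)\log n-\log b$. Multiplying by $\bx_e$ and summing gives
$$h(\bx)\ge \frac{k-1}{k}n\log n-\frac{n}{k}\log b.$$
As a cross-check, Lemma~\ref{lemma: pfm entropy bound} applied with $L=b/n^{k-1}$ and the edge count above gives
$$h(\bx)\ge\frac{n}{k}\log\frac{n}{(b^2/n^{2k-2})\cdot k\cdot (b/k)n^k}=\frac{n}{k}\log\frac{n^{k-1}}{b^3}=\frac{k-1}{k}n\log n-\frac{3n}{k}\log b.$$
This agrees with the direct bound up to an $O_b(n)$ term.

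Combining the two bounds yields $h(\bx)=\frac{k-1}{k}n\log n\pm O_b(n)$, which is the claim. There is no genuine obstacle here. The only point needing care is the identity $\sum_e \bx_e=n/k$, which is what converts the pointwise bounds on $\log(1/\bx_e)$ into global bounds on $h(\bx)$.
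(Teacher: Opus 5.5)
Your proof is correct, but your main lower-bound argument differs from the paper's. The paper gets the lower bound by invoking Lemma~\ref{lemma: pfm entropy bound} with $L=b/n^{k-1}$ and the crude count $|E(G)|\le{n\choose k}$, which gives $h(\bx)\ge\frac{k-1}{k}n\log n-\frac{2n}{k}\log b$. That is essentially your ``cross-check''; you bound $|E(G)|$ via normality instead, so you lose $b^3$ rather than $b^2$.

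Your direct route uses $\bx_e\le b/n^{k-1}$ edge by edge together with $\sum_e\bx_e=n/k$. It is self-contained, needs no external lemma, and gives the slightly better error $\frac{n}{k}\log b$. The cited lemma's only advantage is that it needs just an upper bound $L$ on the weights and no lower bound. That does not matter here, since $b$-normality supplies both.

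For the upper bound, your pointwise estimate only gives $\frac{k-1}{k}n\log n+\frac{n}{k}\log b$. To get the one-sided form $\frac{k-1}{k}n\log n-O_b(n)$ as literally stated, you need the alternative you mention: $h(\bx)\le h(K_n^{(k)})\le\frac{k-1}{k}n\log n$ by Jensen. This is also what the paper uses, though it cites Theorem~\ref{thm: graph entropy bound}. Your justification via the monotonicity remark is actually more appropriate, because the corollary is stated for arbitrary $k$-graphs rather than only Dirac ones.
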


\begin{proof}
By Theorem~\ref{thm: graph entropy bound}, $h(\bx)\le h(G)\le\frac{k-1}{k}n\log n$. By Lemma~\ref{lemma: pfm entropy bound}, we have
$$h(\bx)\ge\frac{n}{k}\log\frac{n}{(b/n^{k-1})^2 k|E(G)|}\ge\frac{n}{k}\log\frac{n^{2k-1}}{b^2 k{n\choose k}}\ge\frac{n}{k}\log\frac{n^{k-1}}{b^2}\ge \frac{k-1}{k}n\log n-O_b(n),$$
which completes the proof.
\end{proof}

The following lemma states that every $(n,k,\delta)$-graph admits a $b$-normal perfect fractional matching whose entropy is close to the entropy of the graph. For digraphs, this is proven as Lemma 4.6 in~\cite{JS:24}.

\begin{lemma}[{\cite[Lemma~3.1]{KSW:24}}]\label{lemma: existence of b-normal pfm}
Suppose $1/b\ll\eps\ll\hat{\delta},1/k$. Let $G$ be an $(n,k,\delta)$-graph with $n$ sufficiently large with respect to $\hat{\delta}$ and $k$. Then there is a $b$-normal perfect fractional matching $\bx$ of $G$ such that $h(\bx)\ge h(G)-\eps n$.
\end{lemma}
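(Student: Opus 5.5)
This statement is quoted from \cite[Lemma~3.1]{KSW:24}, so in the paper it can simply be cited. If one wants a self-contained argument, the plan is to take a near-optimal perfect fractional matching and mix it with a very "flat" one. In more detail, fix a perfect fractional matching $\by$ with $h(\by)\ge h(G)-\eps n/2$; such a $\by$ exists since $h(G)$ is a supremum. Next construct a $b_0$-normal perfect fractional matching $\bz$ of $G$, with $b_0=O_{\hat\delta,k}(1)$. Then set $\bx=(1-\gamma)\by+\gamma\bz$ for a small $\gamma=\gamma(\eps)$.

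Since the set of perfect fractional matchings is convex, $\bx$ is again a perfect fractional matching. The lower bound is immediate: $\bx_e\ge\gamma/(b_0n^{k-1})$. By concavity of $t\mapsto t\log(1/t)$, we get $h(\bx)\ge(1-\gamma)h(\by)+\gamma h(\bz)$. Corollary~\ref{cor: pfm entropy bound} together with $h(\by)\le\frac{k-1}{k}n\log n$ gives $h(\bz)\ge h(\by)-O(n)$, so the mixture loses only $\gamma\cdot O(n)\le \eps n/4$.

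For the upper bound $\bx_e\le b/n^{k-1}$, the first step is to truncate $\by$ before mixing. Call an edge heavy if $\by_e>K/n^{k-1}$. Each vertex has total weight $1$, so it lies in at most $n^{k-1}/K$ heavy edges. I would then show that shifting weight off heavy edges, and rebalancing via switchings (replace $e$ by edges sharing $k-1$ vertices, using the codegree condition $\delta>1/2$), costs only $O(n/K)\cdot\log$-factors of entropy. The cleanest way to organise this is through the maximum-entropy structure of $\by$. An optimal $\by$ has the form $\by_e=\prod_{v\in e}\lambda_v$, from Lagrange multipliers applied to the convex program. Normality then reduces to showing that all $\lambda_v$ are $\Theta(n^{-(k-1)/k})$ up to constant factors. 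This would follow from comparing $\lambda_u$ and $\lambda_v$ through common $(k-1)$-sets: since $\delta>1/2$, any two vertices $u,v$ have $\Omega(n^{k-1})$ sets $S$ with $S\cup u,S\cup v\in E(G)$, and the constraint $\sum_{e\ni v}\by_e=1$ then forces $\lambda_u/\lambda_v=O(1)$.

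The main obstacle is this comparability of the multipliers. First, a maximiser might lie on the boundary, with $\by_e=0$ for some edges, so the product form holds only on the support. I would handle this by maximising over the relative interior, using that $\bz$ shows the interior is nonempty. Second, the iteration $\lambda_v\sum_{S\in N(v)}\prod_{w\in S}\lambda_w=1$ must be bootstrapped from an average bound to a pointwise bound. The construction of $\bz$ is itself nontrivial; I would take the uniform weighting and correct it by a small perturbation, solving the degree-balancing equations with the codegree condition, as in the digraph argument of \cite[Lemma~4.6]{JS:24}.
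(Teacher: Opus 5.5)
Citing the lemma is exactly what the paper does. It imports Lemma~\ref{lemma: existence of b-normal pfm} from Kwan, Safavi and Wang without proof, so in the context of this paper your first sentence is the whole argument.

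\textbf{The gap in your self-contained sketch.} It fails at the one step that carries weight. The mixing $\bx=(1-\gamma)\by+\gamma\bz$ and the concavity estimate handle the lower bound. The upper bound $\bx_e\le b/n^{k-1}$, however, needs $\max_e\by_e=O(n^{1-k})$, and neither of your routes establishes it.

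The truncation step is only asserted. The heavy weight at a single vertex can a priori be close to $1$, so there is no reason for an $O(n/K)$ entropy cost.

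For the multiplier route, write $\lambda^S=\prod_{w\in S}\lambda_w$ and $F(v)=\sum_{S\in N_G(v)}\lambda^S$, so that $\lambda_vF(v)=1$. Knowing that $N_G(u)\cap N_G(v)$ contains $\Omega(n^{k-1})$ sets says nothing about these weighted sums. Each set in $N_G(u)\setminus N_G(v)$ may carry weight up to $\lambda_{\max}^{k-1}$, against only $\lambda_{\min}^{k-1}$ guaranteed on the common part. So for $r=\lambda_{\max}/\lambda_{\min}$ you only get $r\le 1+c\,r^{k-1}$, which is vacuous for $k\ge 3$. The ``bootstrap'' you defer is precisely the missing idea.

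Separately, $\bz$ is not a small perturbation of the uniform weighting. Vertex degrees of a $\delta$-Dirac $k$-graph can differ by a constant factor, so the corrections are of the same order as the weights and positivity is not automatic. The paper's normalization procedure works only because $\Delta$ is tiny there.

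\textbf{A repair.} Both problems disappear with the following argument.
\begin{enumerate}
\item You only need \emph{some} strictly positive perfect fractional matching. For example, average over $e\in E(G)$ the weighting $\1_e$ plus $1/k$ times the edge indicator of a tight Hamilton cycle in the still-Dirac graph obtained by deleting the vertices of $e$.
\item Then the maximiser $\by$ of $h$ over the compact set of perfect fractional matchings has full support, because $t\log(1/t)$ has infinite slope at $0^+$. Hence it has product form $\by_e=\prod_{v\in e}\lambda_v$.
\item Take $a,b$ with $\lambda_a$ maximal and $\lambda_b$ minimal. Split $F(v)=\frac{1}{k-1}\sum_{T}\lambda^T\sum_{w\in N_G(T\cup\{v\})}\lambda_w$ over $(k-2)$-sets $T\not\ni v$.
\item For $T$ avoiding $a,b$, only one multiplier varies. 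The codegree condition then gives $\sum_{w\in N_G(T\cup\{b\})}\lambda_w\le(1+\frac{1-\delta}{\delta}r)\sum_{w\in N_G(T\cup\{a\})}\lambda_w$.
\item The terms with $a\in T$ contribute exactly $\frac{k-2}{k-1}\by_{\{a,b\}}$ to $\lambda_bF(b)=1$. Minimality of $\lambda_b$ gives $\by_e\le\by_{e\setminus\{b\}}/(\delta n)$ for every $e\ni b$, hence $\by_{\{a,b\}}\le (k-1)/(\delta n)$.
\item Combining these with $\lambda_aF(a)=1$ gives $r(1-\frac{k-2}{\delta n})\le 1+\frac{1-\delta}{\delta}r$, that is, $r\le\frac{\delta}{2\delta-1}+o(1)$.
\end{enumerate}
Consequently every $\by_e$ lies within a factor $r^k$ of the average $\frac{n/k}{|E(G)|}=\Theta(n^{1-k})$. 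The maximiser is therefore itself $O_{\delta,k}(1)$-normal. This proves the lemma even with $\eps=0$ and makes the mixing and truncation steps unnecessary.
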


\subsection{Information-theoretic entropy}
We recall the Shannon entropy for random variables. Let $X$ be a random variable with finite support. Then the \emph{entropy} of $X$ is defined as
$$H(X)\coloneqq\sum_{x\in\supp(X)}\pr[X=x]\log\frac{1}{\pr[X=x]}.$$
For an event $\cE$ on the same probability space as $X$, we write $H(X|\cE)$ for the entropy of $X$ in the conditional probability space given $\cE$. For another random variable $Y$ with finite support on the same probability space as $X$, the \emph{conditional entropy} of $X$ given $Y$ is defined as
$$H(X|Y)\coloneqq \sum_{y\in \supp(Y)}\pr[Y=y]H(X|{Y=y}).$$

We need the following facts on entropy. Fact \ref{fact: max entropy} relates entropy to the size of the support. Fact \ref{fact: CR for entropy} is the chain rule for entropy, allowing us to deal with a sequence of random variables on the same probability space. Fact~\ref{fact: entropy given a likely event} states that the entropy does not decrease significantly when conditioned on a likely event.

\begin{fact}\label{fact: max entropy}
Let $X$ be a random variable with finite support. Then $H(X)\le \log|\supp(X)|$, where equality holds when $X$ is a uniform distribution.
\end{fact}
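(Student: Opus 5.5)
The statement is Fact~\ref{fact: max entropy}: $H(X)\le\log|\supp(X)|$, with equality for the uniform distribution. I will prove it with Jensen's inequality applied to the concave function $\log$.

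Write $p_x\coloneqq\pr[X=x]>0$ for $x\in\supp(X)$, so that $\sum_{x\in\supp(X)}p_x=1$. Then $H(X)=\sum_x p_x\log\frac{1}{p_x}$ is the expectation of $\log\frac{1}{p_X}$ under the distribution of $X$. By Jensen's inequality for the concave function $\log$,
$$H(X)=\ex\left[\log\frac{1}{p_X}\right]\le\log\ex\left[\frac{1}{p_X}\right]=\log\sum_{x\in\supp(X)}p_x\cdot\frac{1}{p_x}=\log|\supp(X)|.$$

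For the equality case, suppose $X$ is uniform on its support, so $p_x=1/|\supp(X)|$ for every $x\in\supp(X)$. Substituting directly gives
$$H(X)=\sum_{x\in\supp(X)}\frac{1}{|\supp(X)|}\log|\supp(X)|=\log|\supp(X)|.$$

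An alternative route is Gibbs' inequality: $\log|\supp(X)|-H(X)$ is the Kullback–Leibler divergence of the law of $X$ from the uniform law on $\supp(X)$, which is nonnegative by $\ln t\le t-1$.

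None of these steps presents a real obstacle; the only point to watch is that the sums run over the support, where every $p_x>0$. This keeps $1/p_x$ well defined, and it is consistent with the paper's convention $0\log 0=0$.
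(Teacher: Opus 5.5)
Your proof is correct. The paper states this as a standard fact without proof, and your Jensen argument (weights $p_x$ over $\supp(X)$, then direct substitution for the uniform case) is the standard one; it mirrors the Jensen step the paper itself uses to bound $\sum_{S}\bx_S\log\frac{1}{\bx_S}$ in the auxiliary-graph lemma.
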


\begin{fact}\label{fact: CR for entropy}
Let $X_1,...,X_m$ be random variables on the same probability space. Then
$$H(X_1,...,X_m)=\sum_{i=1}^m H(X_i|X_1,...,X_{i-1}).$$
\end{fact}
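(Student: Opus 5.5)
We argue directly from the definitions, proving Fact~\ref{fact: CR for entropy} by induction on $m$. The case $m=1$ is trivial, since conditioning on the empty family gives $H(X_1)$. So the substance is the two-variable identity $H(X,Y)=H(X)+H(Y|X)$ for random variables $X,Y$ with finite support on a common probability space. Once this holds, we apply it with $X=(X_1,\dots,X_{m-1})$ and $Y=X_m$. This gives
\[
H(X_1,\dots,X_m)=H(X_1,\dots,X_{m-1})+H(X_m|X_1,\dots,X_{m-1}).
\]
The induction hypothesis then expands the first term into $\sum_{i=1}^{m-1}H(X_i|X_1,\dots,X_{i-1})$. Here we use that conditioning on the tuple $(X_1,\dots,X_{i-1})$ is the same as conditioning on the single random variable it defines, which also has finite support.

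For the two-variable identity, write $p(x,y)=\pr[X=x,Y=y]$, $p(x)=\pr[X=x]$, and $p(y|x)=p(x,y)/p(x)$ for $x\in\supp(X)$. The pairs $(x,y)$ with $p(x,y)>0$ are exactly the support of $(X,Y)$, and each such $x$ lies in $\supp(X)$. Hence we may write
\[
H(X,Y)=\sum_{p(x,y)>0}p(x,y)\log\frac{1}{p(x)p(y|x)}.
\]
Splitting the logarithm as $\log\frac1{p(x)}+\log\frac1{p(y|x)}$ gives two sums. In the first, summing over $y$ for fixed $x$ gives $\sum_y p(x,y)=p(x)$, so the first sum equals $H(X)$. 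In the second, we factor out $p(x)$ and obtain
\[
\sum_{x\in\supp(X)}p(x)\sum_{y}p(y|x)\log\frac{1}{p(y|x)}.
\]
The inner sum is precisely $H(Y|X=x)$, the entropy of $Y$ in the conditional space given $X=x$, since $\pr[Y=y\mid X=x]=p(y|x)$. So the second sum is $H(Y|X)$ by definition.

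There is no real obstacle here. The only care needed is bookkeeping of supports under the convention $0\log 0=0$: terms with $p(x,y)=0$ are omitted consistently on both sides, and all sums are finite, so rearranging them is legitimate.
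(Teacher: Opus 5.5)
Your proof is correct: deriving $H(X,Y)=H(X)+H(Y|X)$ from the definitions and then inducting, with $(X_1,\dots,X_{m-1})$ treated as a single finitely supported random variable, is the standard argument. The paper gives no proof of its own; it simply quotes the chain rule as a standard fact, so your argument is exactly the routine derivation it takes for granted.
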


\begin{fact}[{\cite[Lemma~3.1]{JS:24}}]\label{fact: entropy given a likely event}
Let $X$ be a random variable with values in $\{x_1,...,x_N\}$. Let $A\ge 16$ be such that $\pr[X=x_i]\ge \frac{1}{A}$ for all $i\in[N]$. Let $J\subseteq\{x_1,...,x_N\}$ and $\cE$ be the event that $X\in J$. Suppose that $\pr[\cE]\ge 1-a$ for some $0\le a\le\frac{1}{2}$. Then
$$H(X)-H(X|\cE)\le 2a\log A.$$
\end{fact}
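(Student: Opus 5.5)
Write $q\coloneqq 1-\pr[\cE]$, so $0\le q\le a\le \frac12$. If $q=0$ the conditioning changes nothing and the bound is trivial, so assume $q>0$. The plan is to split $H(X)$ according to the indicator $\1_\cE$. This indicator is a function of $X$, so $H(X)=H(X,\1_\cE)$. Applying Fact~\ref{fact: CR for entropy} to the pair $(\1_\cE,X)$ gives
$$H(X)=H(\1_\cE)+(1-q)H(X|\cE)+qH(X|\overline{\cE}).$$
Dropping the nonpositive term $-qH(X|\cE)$, this yields
$$H(X)-H(X|\cE)\le H(\1_\cE)+qH(X|\overline{\cE}).$$
It therefore suffices to bound the binary entropy $H(\1_\cE)=q\log\frac1q+(1-q)\log\frac1{1-q}$ and the term $qH(X|\overline{\cE})$.

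The key use of the hypothesis $\pr[X=x_i]\ge 1/A$ is to bound the size of the support outside $J$. Each value $x_i\notin J$ carries probability at least $1/A$, and these probabilities sum to $q$. Hence at most $qA$ values lie outside $J$. By Fact~\ref{fact: max entropy} applied in the conditional space,
$$H(X|\overline{\cE})\le \log(qA).$$
Combining this with the first summand of the binary entropy, the $\log q$ terms cancel:
$$q\log\frac1q+q\log(qA)=q\log A.$$

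For the remaining summand, use $\ln\frac{1}{1-q}\le \frac{q}{1-q}$, which gives $(1-q)\log\frac1{1-q}\le q\log e\le 2q$. Altogether,
$$H(X)-H(X|\cE)\le q\log A+2q\le 2q\log A\le 2a\log A.$$
Here the middle inequality uses $\log A\ge 4\ge 2$, which holds since $A\ge16$, and the last uses $q\le a$.

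The only delicate point is the cancellation of $q\log\frac1q$. A naive bound $H(X|\overline{\cE})\le\log A$ would leave an uncontrolled term $q\log\frac1q$. Counting the atoms outside $J$ via the lower bound $1/A$ on point probabilities removes it exactly. The constraint $a\le\frac12$ is used only mildly, to keep $q$ away from $1$ in the estimate for $(1-q)\log\frac1{1-q}$.
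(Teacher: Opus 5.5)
Your proof is correct and complete. The paper itself does not prove this statement; it quotes it from \cite[Lemma~3.1]{JS:24}, so there is no in-paper argument to match against.

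Each step of yours holds:
\begin{itemize}
\item $H(X)=H(\1_{\cE},X)$, because the indicator is a function of $X$, and the chain-rule split is right.
\item The number of atoms outside $J$ is at most $qA$. When $q>0$ it is also at least $1$, so $\log(qA)\ge 0$.
\item The cancellation $q\log\frac1q+q\log(qA)=q\log A$ is exact.
\item The final step $q\log A+q\log e\le 2q\log A$ needs only $A\ge e$.
\end{itemize}

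One correction to your closing remark. The inequality $(1-q)\ln\frac{1}{1-q}\le q$ holds for every $q\in[0,1)$, since $q+(1-q)\ln(1-q)$ vanishes at $0$ and has derivative $-\ln(1-q)\ge 0$. So your argument never uses $a\le\frac12$. You only need $q<1$, which is required anyway for $H(X|\cE)$ to be defined.

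For comparison, there is a shorter route that avoids both the chain rule and the support-counting cancellation. Write $P\coloneqq\pr[\cE]$ and $p_x\coloneqq\pr[X=x]$. Directly from the definition,
\[
H(X|\cE)=\frac1P\sum_{x\in J}p_x\log\frac1{p_x}+\log P\ \ge\ H(X)-\sum_{x\notin J}p_x\log\frac1{p_x}+\log(1-q).
\]
The hypothesis $p_x\ge 1/A$ bounds the middle sum by $q\log A$. The bound $\log(1-q)\ge -2q$ for $q\le\frac12$ is where the hypothesis $a\le\frac12$ naturally enters. Together these give the same bound $q\log A+2q\le 2q\log A$ in two lines.

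Your decomposition is a bit more conceptual and gives $q\log A+q\log e$ with no restriction on $q$ beyond $q<1$. The direct computation is quicker. Both use the lower bound on point probabilities only for atoms outside $J$.
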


\subsection{Auxiliary graph}\label{section: auxiliary graph}
To build a path in a $k$-graph~$G$, the choice of each vertex is constrained by the preceding $k-1$ vertices. To facilitate many of our arguments, it is therefore useful to consider the interactions among the $(k-1)$-tuples of $V(G)$. To this end, we introduce an auxiliary graph on such tuples and study its properties in the following definitions and lemmas.

Let $G$ be an $(n,k,\delta)$-graph and $\bx$ be a perfect fractional matching of $G$. For any $S\subseteq V(G)$ with $|S|\in [k]_0$, we define the quantity \mbox{$\bx_S\coloneqq \sum_{S\subseteq e\in E(G)}\bx_e$}, which is the total weight assigned by $\bx$ to the edges containing $S$. Define \mbox{$\cS(G)\coloneqq {V(G)\choose {k-1}}$} to be the set of all $(k-1)$-subsets of $V(G)$. Let $\vec{\cS}(G)$ denote the set of all $(k-1)$-tuples in $V(G)^{k-1}$ with distinct entries. For each \mbox{$\vec S=(s_1,...,s_{k-1})\in\vec{\cS}(G)$}, we write $S=\{s_1,...,s_{k-1}\}$ for the underlying $(k-1)$-set in $\cS(G)$. Note that for all $S\in\cS(G)$, $\bx_S\coloneqq \sum_{v\in N_G(S)}\bx[S\cup\{v\}]$.

We define $\mathbb{S}[G]$ to be the weighted digraph with vertex set $\vec{\cS}(G)$, where for all $\vec S=(s_1,...,s_{k-1}), \vec T=(t_1,...,t_{k-1})\in \vec{\cS}(G)$, there is a directed edge $\vec S\vec T\in E(\mathbb{S}[G])$ if $(s_2,...s_{k-1})=(t_1,...,t_{k-2})$ and $S\cup\{t_{k-1}\}\in E(G)$. The edge weighting $\by:E(\mathbb{S}[G])\to[0,1]$ in $\mathbb{S}[G]$ induced by $\bx$ is defined by
$$\by[\vec S\vec T]\coloneqq\begin{cases}\frac{\bx[S\cup T]}{\bx_S}&\text{if }\bx_S\neq 0\\
\frac{1}{\deg_G(S)}&\text{if }\bx_S=0\end{cases}$$
for all $\vec S\vec T\in E(\mathbb{S}[G])$. As we focus on the scenario where $\bx$ is $b$-normal for some $b\ge 1$, we always have $\bx_S\neq 0$. Note that when an edge $\vec S\vec T$ exists in $\mathbb{S}[G]$, its weight is independent of the ordering of the entries in $\vec S$ and $\vec T$. Therefore, for simplicity, we often write $\by[S\to t_{k-1}]$ for $\by[\vec S\vec T]$. We further observe that for each $\vec S\in \vec{\cS}(G)$, we have \mbox{$\sum_{\vec T\in N^+_{\mathbb{S}[G]}(\vec S)}\by[\vec S\vec T]=\sum_{v\in N_G(S)}\by[S\to v]=1$}, where $N^+_{\mathbb{S}[G]}(\vec S)$ denotes the out-neighbourhood of $\vec S$ in $\mathbb{S}[G]$. So we may interpret $\by$ as a probability distribution on the outgoing edges of $\vec S\in \vec{\cS}(G)$.

For each $S\in \cS(G)$, we define the \emph{entropy of $S$ induced by $\bx$} by
$$h_{\bx}(S)\coloneqq \sum_{v\in N_G(S)}\by[S\to v]\log\frac{1}{\by[S\to v]}.$$

Note that there is a natural bijection between the directed walks in $\mathbb{S}[G]$ and the walks in~$G$, by tracing the same sequence of vertices in $G$. Let $W$ be a walk in~$G$, and $W'$ be the corresponding directed walk in $\mathbb{S}[G]$. If $W'$ is a walk from $\vec S$ to $\vec T$, then we also say that $W$ is a walk from $\vec S$ to $\vec T$.

We collect some useful properties in the following lemma.

\begin{lemma}
Suppose $n\ge k\ge 2$. Let $G$ be an $(n,k,\delta)$-graph and $\bx$ be a perfect fractional matching of $G$. Then the following holds.
\begin{enumerate}[label=\normalfont(\roman*), ref={\thelemma(\roman*)}]
\item $\sum_{e\in E(G)}\bx_e=n/k,$\label{lemma: S[G] property i}
\item $\sum_{S\in \cS(G)}\bx_S=n$, and\label{lemma: S[G] property ii}
\item $\sum_{v\in S\in \cS(G)}\bx_S=k-1$ for every vertex $v\in V(G)$.\label{lemma: S[G] property iii}
\end{enumerate}
If we further assume that $\bx$ is $b$-normal for some $b\ge1$, then
\begin{enumerate}[label=\normalfont(\roman*), ref={\thelemma(\roman*)}]\setcounter{enumi}{3}
\item $\sum_{S\in\cS(G)}\bx_S h_{\bx}(S)=kh(\bx)+\sum_{S\in\cS(G)}\bx_S\log\bx_S\ge kh(\bx)-n\log{n\choose k-1}+n\log n$,\label{lemma: S[G] property vi}
\item $\frac{\delta}{bn^{k-2}}\le\bx_S\le\frac{b}{n^{k-2}}$ for all $S\in\cS(G)$ and\label{lemma: S[G] property iv}
\item $\frac{1}{b^2n}\le\by[\vec S\vec T]\le\frac{b^2}{\delta n}$
for all $\vec S\vec T\in E(\mathbb{S}[G])$.\label{lemma: S[G] property v}
\end{enumerate}
\end{lemma}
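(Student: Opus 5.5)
The plan is to handle (i)--(iii) by double counting, (iv) by expanding the definition of $h_{\bx}(S)$ and applying Jensen, and (v)--(vi) by direct size estimates using $b$-normality and the codegree condition.

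\textbf{Items (i)--(iii).} For (i), I will sum the vertex constraints $\sum_{v\in e}\bx_e=1$ over all $v\in V(G)$. Each edge is counted $k$ times, so $k\sum_e\bx_e=n$. For (ii), I will write $\sum_{S}\bx_S=\sum_S\sum_{e\supseteq S}\bx_e$. Each edge contains exactly $k$ sets in $\cS(G)$, so this equals $k\cdot n/k=n$. For (iii), fix $v$. Each edge $e\ni v$ contains exactly $k-1$ sets $S\in\cS(G)$ with $v\in S$ (drop one of the other $k-1$ vertices). Hence the sum equals $(k-1)\sum_{e\ni v}\bx_e=k-1$.

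\textbf{Item (iv), the identity.} Since $b$-normality gives $\bx_S>0$, we have $\by[S\to v]=\bx[S\cup\{v\}]/\bx_S$. Then
\[
\bx_S h_{\bx}(S)=\sum_{v\in N_G(S)}\bx[S\cup\{v\}]\Bigl(\log\frac{1}{\bx[S\cup\{v\}]}+\log\bx_S\Bigr).
\]
Summing over $S$, each edge $e$ appears once for each of its $k$ subsets $S$ of size $k-1$. The first term therefore gives $k h(\bx)$, and the second gives $\sum_S\bx_S\log\bx_S$.

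\textbf{Item (iv), the inequality.} I apply Jensen (equivalently, nonnegativity of relative entropy, or the log-sum inequality) to the weights $\bx_S$, which by (ii) have total mass $n$ spread over $|\cS(G)|={n\choose k-1}$ terms:
\[
\sum_S\bx_S\log\bx_S\ge n\log\frac{n}{{n\choose k-1}}.
\]
This is exactly $-n\log{n\choose k-1}+n\log n$. I expect this to be the only step that needs any thought, and even it is routine once the mass normalisation from (ii) is in hand.

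\textbf{Item (v).} For the upper bound, $\bx_S$ is a sum of at most $n$ terms, each at most $b/n^{k-1}$, so $\bx_S\le b/n^{k-2}$. For the lower bound, $S$ lies in at least $\delta n$ edges, each of weight at least $1/(bn^{k-1})$, so $\bx_S\ge\delta/(bn^{k-2})$.

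\textbf{Item (vi).} I divide the edge weight bounds by the bounds from (v). The lower bound is
\[
\by\ge\frac{1/(bn^{k-1})}{b/n^{k-2}}=\frac{1}{b^2n},
\]
and the upper bound is
\[
\by\le\frac{b/n^{k-1}}{\delta/(bn^{k-2})}=\frac{b^2}{\delta n}.
\]
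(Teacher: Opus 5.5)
Your proposal is correct and follows the paper's proof essentially step by step. You use double counting for (i)--(iii), expand $\by[S\to v]=\bx[S\cup\{v\}]/\bx_S$ and apply Jensen with the total mass $n$ from (ii) for (iv), and use direct bounds from $b$-normality and the codegree condition $\delta_{k-1}(G)\ge\delta n$ for (v) and (vi).
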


\begin{proof}
We have
$$k\sum_{e\in E(G)}\bx_e=\sum_{e\in E(G)}\sum_{v\in e}\bx_e=\sum_{v\in V(G)}\sum_{v\in e\in E(G)}\bx_e=\sum_{v\in V(G)}1=n.$$
Dividing both sides by $k$ yields (i). Next, (ii) and (iii) follow from the following calculations.
\begin{align*}
\sum_{S\in \cS(G)}\bx_S&=\sum_{S\in \cS(G)}\sum_{v\in N_G(S)}\bx[S\cup\{v\}]=k\sum_{e\in E(G)}\bx_e=n,\text{ and}\\
\sum_{v\in S\in \cS(G)}\bx_S&=\sum_{v\in S\in \cS(G)}\sum_{u\in N_G(S)}\bx[S\cup\{u\}]=(k-1)\sum_{v\in e\in E(G)}\bx_e=k-1
\end{align*}
for every vertex $v\in V(G)$. Next, suppose $\bx$ is $b$-normal. To prove (iv), we calculate
\begin{align*}
\sum_{S\in\cS(G)}\bx_S h_{\bx}(S)&=\sum_{S\in\cS(G)}\bx_S\sum_{v\in N_G(S)}\by[ S\to v]\log\frac{1}{\by[S\to v]}\\
&=\sum_{S\in\cS(G)}\sum_{v\in N_G(S)}\bx[ S\cup\{v\}]\log\frac{\bx_S}{\bx[S\cup \{v\}]}\\
&=k\sum_{e\in E(G)}\bx_e\log\frac{1}{\bx_e}+\sum_{S\in\cS(G)}\sum_{v\in N_G(S)}\bx[ S\cup\{v\}]\log\bx_S\\
&=kh(\bx)+\sum_{S\in\cS(G)}\bx_S\log\bx_S\\
&\ge kh(\bx)-n\log{n\choose k-1}+n\log n,
\end{align*}
where the last step follows from Jensen's inequality applied to the concave function $t\mapsto \log t$, as shown below.
\begin{align*}
\sum_{S\in\mathcal{S}(G)}\bx_S\log \frac{1}{\bx_S}&\le \left(\sum_{S\in\mathcal{S}(G)}\bx_S\right)\log\frac{\sum_{S\in\mathcal{S}(G)}\bx_S\frac{1}{\bx_S}}{\sum_{S\in\mathcal{S}(G)}\bx_S}=n\log \frac{{n\choose k-1}}{n}.
\end{align*}
This proves (iv). For each $S\in \cS(G)$, we have
$$\frac{\delta}{bn^{k-2}}\le\delta_{k-1}(G)\frac{1}{bn^{k-1}}\le\sum_{v\in N_G(S)}\bx[S\cup\{v\}]\le n\frac{b}{n^{k-1}}=\frac{b}{n^{k-2}}.$$
This proves (v). For each $\vec S\vec T\in E(\mathbb{S}[G])$, we exploit the previous inequality and obtain
$$\frac{1}{b^2n}=\frac{1}{bn^{k-1}}\frac{n^{k-2}}{b}\le\by[\vec S\vec T]\le\frac{b}{n^{k-1}}\frac{bn^{k-2}}{\delta}= \frac{b^2}{\delta n},$$
which proves (vi).
\end{proof}

\section{Subgraph entropy}\label{section: Subgraph entropy}

We now study how removing a subset of vertices affects the entropy of an $(n,k,\delta)$-graph. Lemma~\ref{lemma: entropy is as expected after removing a well-behaved subset} shows that if a set interacts with the neighbourhood of each $S\in\cS(G)$ roughly in the way we would heuristically expect it from a randomly chosen set, then the entropy of the graph also decreases roughly as expected after its removal. The proofs in this section follow the same general strategy as those of Lemmas 4.5 and 4.6 in~\cite{JS:24} for digraphs.

\begin{lemma}\label{lemma: entropy is as expected after removing a well-behaved subset}
Suppose $1/n\ll\alpha\ll \hat\delta,1/k,1/b$ where $b\ge 4$. Let $G$ be an $(n,k,\delta)$-graph and $\bx$ be a $b$-normal perfect fractional matching of $G$. Let $M\subseteq V(G)$ be of size $m\coloneqq n^{1/3}$. Suppose the following holds for all $S\in\cS(G)$.
\begin{enumerate}[label=\normalfont(\roman*), ref={\thelemma(\roman*)}]
    \item $\sum_{v\in M\cap N_G(S)}\by[S\to v]=(1\pm n^{-\alpha})\frac{m}{n}$, and
    \item $\sum_{v\in M\cap N_G(S)}\by[S\to v]\log\frac{1}{\by[S\to v]}=\frac{m}{n}h_{\bx}(S)\pm n^{-\frac{2}{3}-\alpha}$.
\end{enumerate}
Then there is a $(1+n^{-\frac{2}{3}-\frac{\alpha}{2}})b$-normal perfect fractional matching $\bz$ of $G-M$ that satisfies
$$h(\bz)\ge\frac{n-m}{n}h(\bx)-\frac{k-1}{k}(n-m)\log \frac{n}{n-m}-n^{\frac{1}{3}-\frac{\alpha}{2}}.$$
\end{lemma}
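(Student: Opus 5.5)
Starting from $\bx$, I will build $\bz$ in two stages: first rescale $\bx$ restricted to $G-M$, then repair the vertex sums with a small correction. Let $G'=G-M$ and $n'=n-m$, and let $\bx'$ be the restriction of $\bx$ to edges avoiding $M$.

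\emph{Vertex deficits.} For $v\notin M$, the deficit is $d_v\coloneqq 1-\sum_{v\in e\in E(G')}\bx_e=\sum_{v\in e,\,e\cap M\neq\emptyset}\bx_e$. To estimate $d_v$, I group edges through $(k-1)$-sets containing $v$. Each edge $e\ni v$ meeting $M$ contains some $S\ni v$ with $|S|=k-1$ and $e=S\cup\{u\}$, $u\in M$. Summing $\bx_S\by[S\to u]$ over $S\ni v$ and $u\in M$, condition (i) gives $\sum_{S\ni v}\bx_S(1\pm n^{-\alpha})\tfrac{m}{n}$. By Lemma~\ref{lemma: S[G] property iii} this equals $(k-1)(1\pm n^{-\alpha})m/n$. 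An edge meeting $M$ in $j$ vertices is counted $j$ times. Edges meeting $M$ at least twice carry weight $O(m^2 n^{k-3}\cdot b/n^{k-1})=O(m^2/n^2)$, so $d_v=(k-1)\tfrac{m}{n}(1\pm 2n^{-\alpha})$.

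\emph{Rescaling.} I set $\bz_e\coloneqq \bx_e(1-\tfrac{m}{n})^{-(k-1)}\cdot(\text{correction})$. Heuristically $(1-m/n)^{-(k-1)}$ restores the average, since $(1-d_v)$ is about $1-(k-1)m/n$. Vertex sums are then $1\pm O(n^{-\alpha}m/n)=1\pm O(n^{-2/3-\alpha})$. To fix these residuals exactly, I plan to add a correction $\bz=\lambda\bx'+\bw$, where $\bw$ is a small perturbation solving $\sum_{e\ni v}\bw_e=\varepsilon_v$ with $|\varepsilon_v|\le n^{-2/3-\alpha}$. For $\bw$ I would use the standard Dirac-type trick of KSW/JS. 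Write the target residual as a combination of ``switchings'' or use an explicit weighting $\bw_e=\sum_{v\in e}c_v\,\bx_e\,\cdot$ (normalized). Because the matrix $A$ with $A_{vu}=\sum_{e\ni u,v}\bx_e$ is close to $\tfrac{k-1}{n}J$ plus a diagonal, it is invertible with good bounds thanks to $\delta>1/2$. This gives $|\bw_e|\le O(n^{-2/3-\alpha})\bx_e$. Hence $\bz_e=\bx_e(1\pm O(m/n))$ and more precisely $\bz_e/\bx_e\in(1-\tfrac mn)^{-(k-1)}(1\pm n^{-2/3-3\alpha/4})$. The normality parameter then becomes $b(1+n^{-2/3-\alpha/2})$ relative to $n'^{k-1}$, because $(n/n')^{k-1}$ cancels the rescaling factor.

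\emph{Entropy.} I would write $h(\bz)=\sum_e\bz_e\log(1/\bx_e)-\sum_e\bz_e\log(\bz_e/\bx_e)$. The second term is $\tfrac{n'}{k}\cdot(k-1)\log\tfrac{n}{n'}\pm n'\cdot O(n^{-2/3-3\alpha/4})$, using $\sum\bz_e=n'/k$. For the first term, $\sum_{e\in E(G')}\bz_e\log(1/\bx_e)$ equals $(1+O(m/n))$ times $h(\bx)$ minus the entropy on edges meeting $M$. Here condition (ii) enters. Decomposing again via $(k-1)$-sets, I have $\sum_{e\cap M\ne\emptyset}\bx_e\log(1/\bx_e)$ with $\log(1/\bx_e)=\log\tfrac1{\bx_S}+\log\tfrac1{\by[S\to u]}$. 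Conditions (i) and (ii) give the sum over $u\in M$ as $\tfrac mn\bx_S(h_\bx(S)+\log\tfrac1{\bx_S})$ up to error $\bx_S\,n^{-2/3-\alpha}\log n$. Summing over $S$ and using Lemma~\ref{lemma: S[G] property vi} yields $k\tfrac{m}{n}h(\bx)\pm n^{1/3-\alpha}\log n$. Dividing by the $k$-fold overcount, the removed entropy is $\tfrac mn h(\bx)\pm$ error, matching the claimed $\tfrac{n-m}{n}h(\bx)$.

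\emph{Main obstacle.} The hardest part is the exact correction $\bw$ with multiplicative control $n^{-2/3-3\alpha/4}$ per edge while preserving nonnegativity. For this I would imitate Lemma~4.5 of \cite{JS:24}, solving the linear system through the near-regular structure guaranteed by $b$-normality and the codegree condition. Everything else is bookkeeping of $(1\pm n^{-\alpha})m/n$ errors, each contributing at most $n\cdot n^{-2/3-\alpha}\log n\ll n^{1/3-\alpha/2}$.
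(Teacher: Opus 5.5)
\textbf{There is a genuine error in the main-term entropy accounting.} Your architecture is the same as the paper's: rescale $\bx$ on $E(G-M)$, repair the vertex sums by a small perturbation, then account for entropy via condition (ii) and Lemma~\ref{lemma: S[G] property vi}. Your deficit estimate $d_v=(k-1)\frac{m}{n}(1\pm 2n^{-\alpha})$ is correct.

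The problem is the step that divides by $k$. The double sum $\sum_{S\in\cS(G)}\sum_{u\in M\cap N_G(S)}\bx[S\cup\{u\}]\log\frac{1}{\bx[S\cup\{u\}]}$ counts an edge $e$ exactly $|e\cap M|$ times, i.e.\ essentially once. You observed exactly this for the deficits. So there is no $k$-fold overcount to divide out. The entropy on edges meeting $M$ is $\frac{km}{n}h(\bx)$ up to $O(n^{\frac{1}{3}-\alpha}\log n)$, not $\frac{m}{n}h(\bx)$. This matches the fact that these edges carry total weight $m\pm n^{\frac{1}{3}-\frac{11\alpha}{12}}$, a $\frac{km}{n}$-fraction of the total $\frac{n}{k}$.

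Your final formula only appears correct because you also dropped the rescaling factor. We have $\lambda-1=\frac{(k-1)m}{n}+O(\frac{m^2}{n^2})$, and $\frac{(k-1)m}{n}h(\bx)=\Theta(m\log n)$. This is far larger than the permitted error $n^{\frac{1}{3}-\frac{\alpha}{2}}=mn^{-\frac{\alpha}{2}}$, so $(1+O(\frac{m}{n}))h(\bx)-\frac{m}{n}h(\bx)$ does not match $\frac{n-m}{n}h(\bx)$.

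The two errors cancel, and fixing only one of them fails either way:
\begin{itemize}
\item Keeping the division but tracking $\lambda$ gives a bound exceeding the claimed one by $\frac{(k-1)m}{n}h(\bx)$. That is false: for $G=K_n^{(k)}$ with uniform $\bx$ it would exceed $h(G-M)$.
\item Removing the division but dropping $\lambda$ gives $(1-\frac{km}{n})h(\bx)$, which is too weak by $\Theta(m\log n)$.
\end{itemize}
The correct bookkeeping is the paper's. Take $\Lambda=1+\frac{(k-1)m}{n-km}\pm n^{-\frac{2}{3}-\frac{10\alpha}{12}}$ and use $\bigl(1+\frac{(k-1)m}{n-km}\bigr)\bigl(1-\frac{km}{n}\bigr)=\frac{n-m}{n}$ exactly. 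With your $\lambda=(1-\frac{m}{n})^{-(k-1)}$ the product is $\frac{n-m}{n}-O(\frac{m^2}{n^2})$, which also suffices.

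\textbf{The repair step is only sketched, and its stated justification is inaccurate.} $b$-normality gives only $\bx_{\{u,v\}}=\Theta_b(1/n)$, not closeness of $A$ to $\frac{k-1}{n}J$ plus a diagonal. Your linear-algebra route can be rescued. Writing $\mathbf{1}_e$ for the indicator vector of $e$, $A=\sum_e\bx_e\mathbf{1}_e\mathbf{1}_e^{\top}$ is positive semidefinite. Any two vertices have $\Omega(n^{k-1})$ common neighbouring $(k-1)$-sets, which gives a spectral gap and then $\ell_\infty$ control of the solution.

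The paper avoids linear algebra altogether. While some $v$ has $\bx'_v>1$ and some $u$ has $\bx'_u<1$, it sets $d=\min\{\bx'_v-1,1-\bx'_u\}$. It then moves weight $d/N$ from $S\cup\{v\}$ to $S\cup\{u\}$ for $N=\hat\delta\binom{n'-1}{k-1}$ common neighbours $S$ of $u$ and $v$. After rescaling, all vertex sums are $1\pm\Delta$ with $\Delta\le n^{-\frac{2}{3}-\frac{9\alpha}{12}}$. Hence every edge moves by at most $k\Delta/N\le bn^{-\frac{2}{3}-\frac{8\alpha}{12}}\bx_e$.

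Given this multiplicative control, your decomposition $h(\bz)=\sum_e\bz_e\log\frac{1}{\bx_e}-\sum_e\bz_e\log\frac{\bz_e}{\bx_e}$ is a valid and somewhat cleaner substitute for the paper's per-shift entropy-loss estimate~(\ref{eq: h(z) bound}). Your argument goes through once the main term is corrected as above.
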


\begin{proof}
Let $n'\coloneqq n-m$, $G'\coloneqq G-M$ and $b'\coloneqq (1+n^{-\frac{2}{3}-\frac{\alpha}{2}})b$. We first scale the weights on $E(G')$ to obtain an edge weighting $\bx'$ with total weight $n'/k$, which, by Lemma~\ref{lemma: S[G] property i}, is the total weight of any perfect fractional matching of $G'$. We then define a procedure to slightly perturb $\bx'$ to obtain a perfect fractional matching $\bz$. Lastly, we verify that $\bz$ is $b'$-normal and has the desired entropy. Let
$$\Lambda\coloneqq \frac{n'/k}{\sum_{e\in E(G')}\bx_e}$$
be the scaling factor. We define an edge weighting $\bx'$ of $G'$ by $\bx'_e\coloneqq \Lambda\bx_e>0$ for all $e\in E(G')$. Let $E(M,G)\coloneqq \{e\in E(G)|e\cap M\neq \emptyset\}$. Note that
$$\Lambda=\frac{n'/k}{\sum_{e\in E(G)}\bx_e-\sum_{e\in E(M,G)}\bx_e}=\frac{n'/k}{n/k-\sum_{e\in E(M,G)}\bx_e}=\frac{n-m}{n-km}\frac{1}{1+\frac{k(m-\sum_{e\in E(M,G)}\bx_e)}{n-km}}.$$
We study the sum $\sum_{e\in E(M,G)}\bx_e$ to obtain a bound for $\Lambda$. Note that
$$\sum_{S\in\cS(G')}\sum_{v\in M \cap N_G(S)}\bx[S\cup\{v\}]\le \sum_{e\in E(M,G)}\bx_e\le \sum_{S\in \cS(G)}\sum_{v\in M \cap N_G(S)}\bx[S\cup\{v\}].$$
Recall that $\by:E(\mathbb{S}[G])\to[0,1]$ is the edge weighting in $\mathbb{S}[G]$ given by
$\by[\vec S\vec T]\coloneqq\frac{\bx[S\cup T]}{\bx_S}$ for all $\vec S\vec T\in E(\mathbb{S}[G])$. Using Lemmas~\ref{lemma: S[G] property ii} and~\ref{lemma: S[G] property iv}, we have
\begin{align*}
\sum_{e\in E(M,G)}\bx_e&=\sum_{S\in \cS(G)}\sum_{v\in M \cap N_G(S)}\bx[S\cup\{v\}]\pm\sum_{S\in \cS(G)\colon S\cap M\neq \emptyset}\sum_{v\in M \cap N_G(S)}\bx[S\cup\{v\}]\\
&=\sum_{S\in \cS(G)}\bx_S \sum_{v\in M \cap N_G(S)}\by[S\to v]\pm\sum_{S\in \cS(G)\colon S\cap M\neq \emptyset}\bx_S \sum_{v\in M \cap N_G(S)}\by[S\to v]\\
&=n\cdot (1\pm n^{-\alpha})\frac{m}{n}\pm n^{k-2}m\cdot \frac{b}{n^{k-2}}\cdot(1+ n^{-\alpha})\frac{m}{n}\\
&=\left(m\pm \frac{m}{n^{\alpha}}\right)\pm \frac{2bm^2}{n}=m\pm n^{\frac{1}{3}-\frac{11\alpha}{12}}.
\end{align*}
Putting these together yields a precise estimate for $\Lambda$,
\begin{equation}\label{eq: Lambda bound}
\Lambda=\frac{n-m}{n-km}\frac{1}{1\pm\frac{kn^{\frac{1}{3}-\frac{11\alpha}{12}}}{n-km}}=\left(1+\frac{(k-1)m}{n-km}\right)\left(1\pm\frac{2kn^{\frac{1}{3}-\frac{11\alpha}{12}}}{n}\right)=1+\frac{(k-1)m}{n-km}\pm n^{-\frac{2}{3}-\frac{10\alpha}{12}}.
\end{equation}

Next, we slightly redistribute the weights of $\bx'$ to obtain a perfect fractional matching. We proceed by induction on the number of vertices $v\in V(G')$ with $\bx_v'\neq 1$. If no such vertices exist, then $\bx'$ is a perfect fractional matching of $G'$. Otherwise, there exists a pair of vertices $v, u\in V(G')$ such that $\bx'_v> 1> \bx'_u$. Observe that $\delta_{k-1}(G')\ge(\frac{1}{2}+\frac{\hat\delta}{2})n'$ and thus by monotonicity of minimum degrees (Lemma~\ref{lemma: monotonicity of minimum degrees}), the vertices $v$ and $u$ have at least
\begin{equation}\label{eq: gamma def}
N_{G'}(v)\cap N_{G'}(u)\ge \hat\delta{n'-1\choose k-1}\eqqcolon N
\end{equation}
common neighbours in $G'$. We pick a set $\Gamma$ of $(k-1)$-sets in $N_{G'}(v)\cap N_{G'}(u)$ with $|\Gamma|=N$. Let \mbox{$d\coloneqq\min\left\{\bx'_v-1, 1-\bx'_u\right\}$}. For all $S\in \Gamma$, we perform a ``weight shift" as follows. We decrease the weight of the edge $S\cup \{v\}$ by $d/N$ and increase the weight of the edge $S\cup \{u\}$ by $d/N$. As a result, the total weight at~$v$ is decreased by~$d$, the total weight at $u$ is increased by~$d$, and the total weight at all other vertices remains unchanged. By definition of $d$, at least one of $v$ and $u$ now has weight 1, as desired.

We repeat this process until the total weight at every vertex of $G'$ is 1 and denote the resulting weighting by $\bz$. We refer to the above transformation of $\bx'$ into $\bz$ as the \emph{normalization procedure}.

We now prove that $\bz$ is $b'$-normal, which in particular implies that $\bz>0$ and hence that $\bz$ is a perfect fractional matching of $G'$. The calculation below also shows that the normalization procedure is valid, in the sense that the edge weighting at each intermediate step is $b'$-normal, since the bounds in (\ref{eq: z bound}) hold throughout the procedure. Let $\Delta\coloneqq \max_{v\in V(G')}\left|\bx_v'-1\right|$. For every edge, the amount of weight redistributed due to each of its vertices is at most $\Delta/N$. Since each edge has $k$ vertices, we have
\begin{equation}\label{eq: z bound}
\bz_e=\bx_e'\pm k\frac{\Delta}{N}=\Lambda \bx_e\pm k\frac{\Delta}{N}.
\end{equation}

We proceed to find bounds for $\Delta$. Similar to our estimation of $\sum_{e\in E(M,G)}\bx_e$, using Lemmas~\ref{lemma: S[G] property iii} and~\ref{lemma: S[G] property iv}, for every $v\in V(G')$, we have
\begin{align*}
&\mathrel{}\sum_{v\in e\in E(M,G)}\bx_e\\
=&\mathrel{}\sum_{v\in S\in \cS(G)}\bx_S\sum_{u\in M \cap N_G(S)}\by[S\to u]\pm\sum_{S\in \cS(G)\colon v\in S, S\cap M\neq\emptyset}\bx_S \sum_{u\in M \cap N_G(S)}\by[S\to u]\\
=&\mathrel{}(k-1)\cdot (1\pm n^{-\alpha})\frac{m}{n}\pm n^{k-3}m\cdot\frac{b}{n^{k-2}}\cdot (1+ n^{-\alpha})\frac{m}{n}\\
=&\mathrel{}\left(\frac{(k-1)m}{n}\pm \frac{(k-1)m}{n^{1+\alpha}}\right)\pm \frac{2bm^2}{n^2}=\mathrel{}\frac{(k-1)m}{n}\pm n^{-\frac{2}{3}-\frac{11\alpha}{12}}.
\end{align*}
Together with~(\ref{eq: Lambda bound}), this yields
\begin{align*}
\sum_{v\in e\in E(G')}\bx_e'&=\Lambda\sum_{v\in e\in E(G')}\bx_e=\Lambda\left(\sum_{v\in e\in E(G)}\bx_e-\sum_{v\in e\in E(M,G)}\bx_e\right)\\
&=\left(1+\frac{(k-1)m}{n-km}\pm n^{-\frac{2}{3}-\frac{10\alpha}{12}}\right)\left(1-\frac{(k-1)m}{n}\pm n^{-\frac{2}{3}-\frac{11\alpha}{12}}\right)\\
&=1\pm n^{-\frac{2}{3}-\frac{9\alpha}{12}}.
\end{align*}
That is, $\Delta\le n^{-\frac{2}{3}-\frac{9\alpha}{12}}$. Recall the definition of $N$ in~(\ref{eq: gamma def}). Hence, we obtain
\begin{equation}\label{eq: weight change  bound}
k\frac{\Delta}{N}\le \frac{kn^{-\frac{2}{3}-\frac{9\alpha}{12}}}{\hat\delta{n-m-1\choose k-1}}\le n^{-\frac{2}{3}-(k-1)-\frac{8\alpha}{12}}.
\end{equation}
We further observe that
\begin{align*}
\left(\Lambda\pm k\frac{\Delta}{N}bn^{k-1}\right)\frac{(n')^{k-1}}{n^{k-1}}&=\left(\left(1+\frac{(k-1)m}{n-km}\pm n^{-\frac{2}{3}-\frac{10\alpha}{12}}\right)\pm bn^{-\frac{2}{3}-\frac{8\alpha}{12}}\right)\left(1-\frac{m}{n}\right)^{k-1}\\
&=\left(1+\frac{(k-1)m}{n-km}\pm 2bn^{-\frac{2}{3}-\frac{8\alpha}{12}}\right)\left(1-\frac{(k-1)m}{n}\pm \frac{k^2m^2}{n^2}\right)\\
&=1\pm n^{-\frac{2}{3}-\frac{7\alpha}{12}}.
\end{align*}
Using~(\ref{eq: z bound}) and the fact that $\bx$ is $b$-normal, for each edge $e\in E(G')$, we have
\begin{align*}\bz_e&\le \Lambda \bx_e+k\frac{\Delta}{N}\le\Lambda\frac{b}{n^{k-1}}+k\frac{\Delta}{N}b^2=\left(\Lambda+ k\frac{\Delta}{N}bn^{k-1}\right)\frac{(n')^{k-1}}{n^{k-1}}\frac{b}{(n')^{k-1}}\le \frac{\left(1+n^{-\frac{2}{3}-\frac{6\alpha}{12}}\right)b}{(n')^{k-1}},\text{ and}\\
\bz_e&\ge\Lambda \bx_e-k\frac{\Delta}{N}\ge\Lambda\frac{1}{bn^{k-1}}-k\frac{\Delta}{N}=\left(\Lambda-k\frac{\Delta}{N}bn^{k-1}\right)\frac{(n')^{k-1}}{n^{k-1}}\frac{1}{b(n')^{k-1}}\\
&\ge\left(1-n^{-\frac{2}{3}-\frac{7\alpha}{12}}\right)\frac{1}{b(n')^{k-1}}\ge\frac{1}{\left(1+n^{-\frac{2}{3}-\frac{6\alpha}{12}}\right)b(n')^{k-1}}.
\end{align*}
This shows that $\bz$ is a $b'$-normal perfect fractional matching of $G'$.

It remains to find a lower bound on the entropy $h(\bz)$. Consider two edges $e, f\in E(G')$ with weights $w_e$ and $w_f$ respectively. Suppose a weight of $0<d'\le \Delta/N$ is redistributed from $e$ to $f$. That is, the new weights of the edges $e$ and $f$ are respectively $w_e-d'$ and $w_f+d'$. Assume that $w_e, w_e-d', w_f, w_f+d'$ are all $b'$-normal. Then by (\ref{eq: weight change  bound}), $d'\le n^{-\frac{2}{3}-(k-1)-\frac{8\alpha}{12}}$.
Note that $w_e-d'\ge w_e/2$ and $w_f+d'\le 2w_f.$ So, we have
\begin{align*}
-\left(\log\frac{1}{w_e-d'}-\log\frac{1}{w_f+d'}\right)&\ge-\left(\log\frac{2}{w_e}-\log\frac{1}{2w_f}\right)=\log w_e-\log w_f-2\\
&\ge \log \frac{1}{b'n^{k-1}}-\log \frac{b'}{n^{k-1}}-2\ge -3\log b'.
\end{align*}
Thus, the updated entropy of $\{e,f\}$ is
\begin{align*}
&\mathrel{}(w_e-d')\log\frac{1}{w_e-d'}+(w_f+d')\log\frac{1}{w_f+d'}\\
\ge&\mathrel{}w_e\log\frac{1}{w_e-d'}+w_f\log\frac{1}{w_f+d'}-d'\left(\log\frac{1}{w_e-d'}-\log\frac{1}{w_f+d'}\right)\\
\ge&\mathrel{}w_e\log\frac{1}{w_e}+w_f\log\frac{1}{w_f}-w_f\log\left(1+\frac{d'}{w_f}\right)-3d'\log b'\\
\ge&\mathrel{}w_e\log\frac{1}{w_e}+w_f\log\frac{1}{w_f}-w_f\cdot\frac{2d'}{w_f}-3d' \log b'\\
\ge&\mathrel{}w_e\log\frac{1}{w_e}+w_f\log\frac{1}{w_f}-4\frac{\Delta}{N} \log b'.
\end{align*}

Note that the normalization procedure involves weight shifts between at most $nN$ pairs of edges. Moreover, the edge weighting at each intermediate step is $b'$-normal. So we have
\begin{equation}\label{eq: h(z) bound}
h(\bz)\ge h(\bx')-nN\left(4\frac{\Delta}{N}\log b'\right)=h(\bx')-4n\Delta\log b'\ge h(\bx')-n^{\frac{1}{3}-\frac{8\alpha}{12}}.
\end{equation}
We now study $h(\bx')$. We first consider the following sum and use Lemmas~\ref{lemma: S[G] property ii}, \ref{lemma: S[G] property vi} and~\ref{lemma: S[G] property iv}.
\begin{align*}
&\mathrel{}\sum_{e\in E(M,G)}\bx_e\log\frac{1}{\bx_e}\\
\le&\mathrel{} \sum_{S\in \cS(G)}\sum_{v\in M\cap N_G(S)}\bx[S\cup \{v\}]\log\frac{1}{\bx[S\cup\{v\}]}\\
=&\mathrel{}\sum_{S\in \cS(G)}\bx_S\sum_{v\in M\cap N_G(S)}\by[S\to v]\log\frac{1}{\by[S\to v]}+\sum_{S\in \cS(G)}\bx_S\log\frac{1}{\bx_S}\sum_{v\in M\cap N_G(S)}\by[S\to v]\\
\le&\mathrel{}\sum_{S\in \cS(G)}\bx_S\left(\frac{m}{n}h_{\bx}(S)+n^{-\frac{2}{3}-\alpha}\right)+\sum_{S\in \cS(G)}\bx_S\log\frac{1}{\bx_S}\left(\frac{m}{n}+n^{-\frac{2}{3}-\alpha}\right)\\
=&\mathrel{}\frac{m}{n}\left(kh(\bx)+\sum_{S\in\cS(G)}\bx_S\log\bx_S-\sum_{S\in \cS(G)}\bx_S\log\bx_S\right)+ \sum_{S\in \cS(G)}\bx_S n^{-\frac{2}{3}-\alpha}\left(1+\log\frac{1}{\bx_S}\right)\\
\le&\mathrel{}\frac{m}{n}kh(\bx)+ \sum_{S\in \cS(G)}\bx_S n^{-\frac{2}{3}-\alpha}\left(1+\log\left(\frac{bn^{k-2}}{\delta}\right)\right)\\
\le&\mathrel{}\frac{m}{n}kh(\bx)+ n^{\frac{1}{3}-\frac{11\alpha}{12}}.\addtocounter{equation}{1}\tag{\theequation}\label{eq: decrease in entropy}
\end{align*}
Next, we bound $\log \Lambda$. Recall~(\ref{eq: Lambda bound}). Observe that
\begin{align*}
\Lambda-\left(\frac{n}{n'}\right)^{k-1}&= \Lambda-\left(1+\frac{m}{n'}\right)^{k-1}\le1+\frac{(k-1)m}{n-km}+ n^{-\frac{2}{3}-\frac{10\alpha}{12}} -\left(1+\frac{(k-1)m}{n-m}\right)\\
&=\frac{(k-1)m}{n}\left(\frac{1}{1-\frac{km}{n}}-\frac{1}{1-\frac{m}{n}}\right)+n^{-\frac{2}{3}-\frac{10\alpha}{12}}\le n^{-\frac{2}{3}-\frac{9\alpha}{12}}.
\end{align*}
So, $\Lambda\le(\frac{n}{n'})^{k-1}(1+n^{-\frac{2}{3}-\frac{9\alpha}{12}}(\frac{n'}{n})^{k-1})$, and hence, we have
\begin{align*}
\log \Lambda&\le \log \left[\left(\frac{n}{n'}\right)^{k-1}\left(1+n^{-\frac{2}{3}-\frac{9\alpha}{12}}\left(\frac{n'}{n}\right)^{k-1}\right)\right]\\
&\le (k-1)\log \frac{n}{n'}+\log \left(1+n^{-\frac{2}{3}-\frac{9\alpha}{12}}\right)\le (k-1)\log \frac{n}{n'}+2n^{-\frac{2}{3}-\frac{9\alpha}{12}}.
\end{align*}
Recall that $h(\bx)=\Theta_b(n\log n)$ by Corollary~\ref{cor: pfm entropy bound}. Therefore, using~(\ref{eq: decrease in entropy}), we have
\begin{align*}
&\mathrel{}h(\bx')\\
=&\mathrel{}\sum_{e\in E(G')}\Lambda\bx_e\log\frac{1}{\Lambda\bx_e}\\
=&\mathrel{}\Lambda\sum_{e\in E(G')}\bx_e\log\frac{1}{\bx_e}-\Lambda\sum_{e\in E(G')}\bx_e\log\Lambda\\
=&\mathrel{}\Lambda\left(h(\bx)-\sum_{e\in E(M,G)}\bx_e\log\frac{1}{\bx_e}\right)-\frac{n'}{k}\log\Lambda\\
\ge&\mathrel{} \left(1+\frac{(k-1)m}{n-km}-n^{-\frac{2}{3}-\frac{10\alpha}{12}}\right)\left(h(\bx)-\frac{m}{n}kh(\bx)-n^{\frac{1}{3}-\frac{11\alpha}{12}}\right)-\frac{n'}{k}\left((k-1)\log \frac{n}{n'}+2n^{-\frac{2}{3}-\frac{9\alpha}{12}}\right)\\
\ge&\mathrel{} \left(1+\frac{(k-1)m}{n-km}\right)\left(\frac{n'}{n}h(\bx)-\frac{(k-1)m}{n}h(\bx)\right)-\frac{k-1}{k}n'\log \frac{n}{n'}-n^{\frac{1}{3}-\frac{8\alpha}{12}}\\
\ge&\mathrel{}\frac{n'}{n}h(\bx)-\frac{k-1}{k}n'\log \frac{n}{n'}-n^{\frac{1}{3}-\frac{7\alpha}{12}}.
\end{align*}
Combining with~(\ref{eq: h(z) bound}) yields the desired result.
\end{proof}

The following lemma shows that when we remove a small enough subset of vertices from $G$, the entropy of the graph only decreases slightly.

\begin{lemma}\label{lemma: entropy decreases slightly after removing a small subset}
Suppose $1/n\ll \eps \ll\hat\delta,1/k$. Let $G$ be an $(n,k,\delta)$-graph and $M\subseteq V(G)$ be a vertex subset such that $|M|\le\frac{n}{\log^2 n}$. Then, $h(G-M)\ge h(G)-\eps n$.
\end{lemma}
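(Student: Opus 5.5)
The plan is to use the same restrict-rescale-renormalize scheme as in the proof of Lemma~\ref{lemma: entropy is as expected after removing a well-behaved subset}, but now only aiming at a loss of $\eps n$, so crude bounds suffice. Set $m\coloneqq|M|\le n/\log^2 n$, $n'\coloneqq n-m$ and $G'\coloneqq G-M$. Since $m=o(n)$, $G'$ is $(\frac12+\frac{\hat\delta}{2})$-Dirac. By Lemma~\ref{lemma: existence of b-normal pfm} (with $\eps/2$ in place of $\eps$ and a suitable $b$, $1/b\ll\eps$), I fix a $b$-normal perfect fractional matching $\bx$ of $G$ with $h(\bx)\ge h(G)-\eps n/2$. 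It then suffices to build a perfect fractional matching $\bz$ of $G'$ with $h(\bz)\ge h(\bx)-\eps n/2$.

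\emph{Step 1 (restriction and scaling).} Restrict $\bx$ to $E(G')$ and scale by $\Lambda\coloneqq \frac{n'/k}{\sum_{e\in E(G')}\bx_e}$ to get $\bx'$. By $b$-normality, the removed weight is
\[
\sum_{e\cap M\neq\emptyset}\bx_e\le m\cdot\binom{n}{k-1}\frac{b}{n^{k-1}}\le bm,
\]
so $\Lambda=1\pm O_b(m/n)$. For each $v\in V(G')$ the lost weight at $v$ is at most $m n^{k-2}\cdot b/n^{k-1}=bm/n$. Hence
\[
\Delta\coloneqq\max_v|\bx'_v-1|=O_b(m/n)=O_b(1/\log^2 n).
\]

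\emph{Step 2 (normalization).} Apply the normalization procedure verbatim. Pick $v,u$ with $\bx'_v>1>\bx'_u$ and shift weight across $N=\hat\delta\binom{n'-1}{k-1}/2$ common $(k-1)$-neighbour sets; this is allowed by Fact~\ref{lemma: monotonicity of minimum degrees} applied to $G'$. Each edge changes by at most $k\Delta/N=O_b(n^{-(k-1)}/\log^2 n)$, which is much smaller than $1/(bn^{k-1})$. So all intermediate weightings stay $2b$-normal, and $\bz$ is a perfect fractional matching of $G'$.

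\emph{Step 3 (entropy accounting).} This is the step I expect to be the real work, since $\Delta$ is now only $1/\log^2 n$ rather than polynomially small. First, $h(\bx')=\Lambda\bigl(h(\bx)-\sum_{e\cap M\ne\emptyset}\bx_e\log\frac1{\bx_e}\bigr)-\frac{n'}{k}\log\Lambda$. Since every $\bx_e\ge 1/(bn^{k-1})$, each removed edge contributes at most $\bx_e\log(bn^{k-1})$. The removed entropy is therefore at most
\[
bm\cdot O(k\log n)=O_b(n/\log n)=o(n).
\]
Next, $\Lambda\ge 1-O_b(m/n)$ together with $h(\bx)=O(n\log n)$ costs at most $O_b(m\log n)=o(n)$. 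Finally, $\frac{n'}{k}|\log\Lambda|=O_b(m)=o(n)$. For the normalization, the same two-edge computation as before shows each shift of size $d'$ loses at most $O(d'\log b)$ entropy. The total shifted mass is at most $n\Delta$, since each vertex is fixed once and moves at most $\Delta$. So the loss here is $O_b(n\Delta)=O_b(n/\log^2 n)=o(n)$.

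Summing, $h(\bz)\ge h(\bx)-o(n)\ge h(G)-\eps n$ for $n$ large, using $1/n\ll\eps$. The only delicate point is that the $\log n$ factor coming from $\log(1/\bx_e)$ is beaten by $m\le n/\log^2 n$. This is exactly why the hypothesis has $\log^2 n$: one $\log n$ is spent on the removed entropy and on $m\log n$, leaving a $1/\log n$ saving.
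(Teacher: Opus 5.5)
Your proposal is correct and follows the paper's proof essentially step for step. You rescale by $\Lambda$ and run the same normalization procedure, with per-edge change $O(\Delta/N)$ and entropy loss $O(n\Delta\log b)$; you then beat the $\log n$ factor in the removed entropy, at most $m\log(bn^{k-1})$ up to constants, using $m\le n/\log^2 n$.

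The only difference is cosmetic. The paper avoids your explicit $(\Lambda-1)h(\bx)$ and $\frac{n'}{k}\log\Lambda$ terms by observing that $\Lambda\ge 1$ (the removed weight is at least $m/k$) and that $t\mapsto t\log(1/t)$ is increasing on $(0,e^{-1}]$, which gives $h(\bx')\ge\sum_{e\in E(G')}\bx_e\log(1/\bx_e)$ directly.
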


\begin{proof}
Let $G'\coloneqq G-M$, $m\coloneqq |M|$ and $n'\coloneqq n-m$. By Lemma~\ref{lemma: existence of b-normal pfm}, there exists $b$ with $1/n\ll 1/b\ll \eps$ and $b\ge 4$ such that there is a $b$-normal perfect fractional matching $\bx$ of $G$ satisfying $h(\bx)\ge h(G)-\frac{\eps}{3}n$. Similar to the proof of Lemma~\ref{lemma: entropy is as expected after removing a well-behaved subset}, we first scale the weights on $E(G')$ and then perform the normalization procedure to obtain a perfect fractional matching of $G'$.

Let $E(M,G)\coloneqq \{e\in E(G)|e\cap M\neq \emptyset\}$. Define the scaling factor
$$\Lambda\coloneqq \frac{n'/k}{\sum_{e\in E(G')}\bx_e}=\frac{n'/k}{\sum_{e\in E(G)}\bx_e-\sum_{e\in E(M,G)}\bx_e}=\frac{n'/k}{\frac{n}{k}-\sum_{e\in E(M,G)}\bx_e}=\frac{n-m}{n-k\sum_{e\in E(M,G)}\bx_e}.$$
Define a new edge weighting $\bx_e'\coloneqq \Lambda \bx_e>0$ for each $e\in E(G')$. Note that
$$k\sum_{e\in E(M,G)}\bx_e=\sum_{e\in E(M,G)}\sum_{v\in e}\bx_e\ge \sum_{v\in M}\sum_{v\in e\in E(G)}\bx_e=m.$$
So, we have
$$\frac{m}{k}\le \sum_{e\in E(M,G)}\bx_e\le \sum_{v\in M}\sum_{v\in e\in E(G)}\bx_e=m.$$
Thus, $\Lambda$ satisfies
$$1=\frac{n-m}{n-m}\le\Lambda\le \frac{n-m}{n-km}\le 1+\frac{2km}{n}.$$
We apply the same normalization procedure as in the proof of Lemma~\ref{lemma: entropy is as expected after removing a well-behaved subset} to redistribute the weights of $\bx'$ until the total weight at each vertex in $G'$ is 1. We call the resulting edge weighting $\bz$. We now prove that $\bz$ is $b'$-normal for $b'\coloneqq(1+\frac{b^5m}{n})b$. It then follows that, in particular,  $\bz>0$ and hence that $\bz$ is a perfect fractional matching of $G'$. Let
$$\Delta\coloneqq \max_{v\in V(G')}\left|\bx_v'-1\right|\quad\text{and}\quad N\coloneqq\hat\delta{n'-1\choose k-1}\ge\eps n^{k-1}.$$
Then, as in~(\ref{eq: z bound}), we have
$$\bz_e=\Lambda \bx_e\pm k\frac{\Delta}{N}.$$
Observe that for all $v\in V(G')$, we have $|\{e\in E(M,G):v\in e\}|\le m\cdot n^{k-2}$. Hence, as $\bx$ is $b$-normal, this yields
$$0\le \sum_{v\in e\in E(M,G)}\bx_e\le mn^{k-2}\cdot\frac{b}{n^{k-1}}=\frac{bm}{n}.$$
Thus,
$$\bx_v'=\sum_{v\in e \in E(G')}\Lambda\bx_e=\Lambda\left(\sum_{v\in e \in E(G)}\bx_e-\sum_{v\in e \in E(M,G)}\bx_e\right)=\left(1\pm \frac{2km}{n}\right)\left(1\pm\frac{bm}{n}\right)=1\pm \frac{2bm}{n}.$$
Therefore, we have
$$\Delta\le \frac{2bm}{n}\quad\text{and}\quad
\frac{k\Delta}{N}\le\frac{k\cdot\frac{2bm}{n}}{\eps n^{k-1}}\le \frac{b^2m}{n^k}.$$
Observe that
$$\left(\Lambda\pm k\frac{\Delta}{N}bn^{k-1}\right)\frac{(n')^{k-1}}{n^{k-1}}\le\left(\left(1\pm \frac{2km}{n}\right)\pm \frac{b^2m}{n^k}\cdot bn^{k-1}\right)\left(1-\frac{m}{n}\right)^{k-1}\le1\pm \frac{b^4m}{n}.$$
We are now ready to show that $\bz$ is $b'$-normal. For each edge $e\in E(G')$, we have
\begin{align*}
\bz_e&\le \Lambda \bx_e+k\frac{\Delta}{N}\le\Lambda\frac{b}{n^{k-1}}+k\frac{\Delta}{N}b^2=\left(\Lambda+ k\frac{\Delta}{N}bn^{k-1}\right)\frac{(n')^{k-1}}{n^{k-1}}\frac{b}{(n')^{k-1}}\le\frac{\left(1+\frac{b^4 m}{n}\right)b}{(n')^{k-1}},\text{ and}\\
\bz_e&\ge\Lambda \bx_e-k\frac{\Delta}{N}\ge\Lambda\frac{1}{bn^{k-1}}-k\frac{\Delta}{N}=\left(\Lambda-k\frac{\Delta}{N}bn^{k-1}\right)\frac{(n')^{k-1}}{n^{k-1}}\frac{1}{b(n')^{k-1}}\\
&\ge\left(1-\frac{b^4 m}{n}\right)\frac{1}{b(n')^{k-1}}\ge\frac{1}{\left(1+\frac{b^5 m}{n}\right)b(n')^{k-1}}.
\end{align*}
Next, we find a lower bound for $h(\bz)$. We first consider
$$\sum_{e\in E(M,G)}\bx_e\log \frac{1}{\bx_e}\le mn^{k-1}\cdot\frac{b}{n^{k-1}}\log (bn^{k-1})\le \frac{\eps n}{3}$$
where the last step follows from the assumption that $m\le\frac{n}{\log^2 n}$. By the very same argument as~(\ref{eq: h(z) bound}), we have
\begin{align*}
h(\bz)&\ge h(\bx')-4n\Delta\log b'\ge \sum_{e\in E(G')}\bx_e'\log \frac{1}{\bx_e'}-4n\cdot \frac{2bm}{n}\cdot\log\left(1+\frac{b^5m}{n}\right)b\\
&\ge \sum_{e\in E(G')}\Lambda\bx_e\log \frac{1}{\Lambda\bx_e}-\frac{\eps n}{3}\ge \sum_{e\in E(G')}\bx_e\log \frac{1}{\bx_e}-\frac{\eps n}{3}\\
&\ge\sum_{e\in E(G)}\bx_e\log \frac{1}{\bx_e}-\sum_{e\in E(M,G)}\bx_e\log \frac{1}{\bx_e}-\frac{\eps n}{3}\ge h(\bx)-\frac{2\eps n}{3}\ge h(G)-\eps n.
\end{align*}
where the second line follows from that the function $t\mapsto t\log(1/t)$ is increasing when $t\le e^{-1}$ and we may choose $n$ large enough so that $\bx_e,\Lambda\bx_e\le e^{-1}$.
\end{proof}

\section{Probabilistic properties of random walks}\label{section: Probabilistic properties of random walks}

In this section, we study random walks in an $(n,k,\delta)$-graph and in its auxiliary graph defined in Section~\ref{section: auxiliary graph}. Results of this type can also be found in other papers on this topic; for example, for digraphs~{\cite[Section 5]{JS:24}}.

Let $G$ be an $(n,k,\delta)$-graph and $\bx$ be a perfect fractional matching of $G$. Let $\mathbb{S}[G]$ be as defined in Section 2.5 and $\by$ be the corresponding edge weighting induced by $\bx$. A \emph{random walk} on $\mathbb{S}[G]$ induced by $\bx$ is the Markov chain $(\vec Z_t)_{t\in\bN_0}$ with state space $\vec{\cS}(G)$ and a transition matrix $P=(p_{\vec S\vec T})_{\vec S, \vec T\in\vec{\cS}(G)}$, defined by
$$p_{\vec S\vec T}=\begin{dcases}
\by[\vec S\vec T]&\quad \text{if }\vec S\vec T\in E(\mathbb{S}[G])\\
0&\quad\text{otherwise}.
\end{dcases}$$
A \emph{random walk} on $G$ induced by $\bx$ is a sequence $(R_t)_{t\in \bN_0}$ of random variables with sample space~$V(G)$ such that if $\vec Z_t\coloneqq (R_t,...,R_{t+k-2})$ for all $t\in \bN_0$, then $(\vec Z_t)_{t\in \bN_0}$ is a random walk on $\mathbb{S}[G]$ induced by $\bx$. We note that every random walk on $\mathbb{S}[G]$ yields a random walk on $G$.

\subsection{Probabilistic tools}
We need the following probabilistic results for the remainder of this paper. Lemma~\ref{lemma: rapid mixing} is the rapid mixing property, which provides a lower bound on the speed of convergence of a Markov chain to the stationary distribution. A sequence $Y_0,...Y_{\ell}$ of randoms variables is a \emph{martingale} if $Y_0$ is a fixed real number and $\ex[Y_i|Y_0,...,Y_{i-1}]=Y_{i-1}$ for all $i\in[\ell]$. Lemma~\ref{lemma: azuma} is Azuma's inequality, see for example~\cite{azuma:67}, which yields concentration estimates of a martingale with bounded differences. Lemma~\ref{lemma: chernoff} is a Chernoff-type bound that is used at the end to select a suitable subgraph for the absorption argument, see for example~\cite{JLR:00}.

\begin{lemma}[{\cite[Lemma~3.2]{JK:21}}]\label{lemma: rapid mixing}
Let $(Z_t)_{t\in \bN_0}$ be a Markov chain with state space $\{s_1,...,s_n\}$ and transition matrix $P=(p_{ij})_{i,j\in[n]}$. Suppose there is a probability distribution $\sigma=(\sigma_i)_{i\in [n]}$ such that $\sigma P=\sigma$. Let
$$c_1\coloneqq \inf_{i,j,k\in[n]}\frac{p_{ij}}{\sigma_k}\text{ and } c_2\coloneqq \sup_{i,j,k\in[n]}\frac{p_{ij}}{\sigma_k}.$$
If $c_1>0$, then for $t\ge 2+2c_1^{-1}\log c_2$, we have
$$\pr[Z_t=s_i]=\left(1\pm\left(1-\frac{c_1}{2}\right)^t\right)\sigma_i.$$
\end{lemma}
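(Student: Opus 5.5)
The plan is a Doeblin (minorisation) argument. The first step is to split $P$ as a mixture of the stationary matrix and another stochastic matrix, which gives geometric decay in $\ell_1$. The second step converts this into the multiplicative, entrywise bound using one extra step of the chain. Write $\mu_t$ for the distribution of $Z_t$, so $\mu_t=\mu_0P^t$ for an arbitrary initial distribution $\mu_0$.

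\emph{Minorisation and $\ell_1$ contraction.} Taking $k=j$ in the definition of $c_1$ gives $p_{ij}\ge c_1\sigma_j$ for all $i,j$. Summing over $j$ gives $1\ge c_1$. If $c_1=1$ then $p_{ij}=\sigma_j$ for all $i,j$, so $\mu_t=\sigma$ for every $t\ge1$ and there is nothing to prove. So assume $c_1<1$. Let $\Pi$ be the matrix all of whose rows equal $\sigma$, and define
$$Q\coloneqq\frac{P-c_1\Pi}{1-c_1}.$$
Then $Q$ is a stochastic matrix: its entries are nonnegative by the minorisation, and its rows sum to $1$. For any probability vector $\nu$ we have $(\nu-\sigma)\Pi=0$, and together with $\sigma P=\sigma$ this gives $\nu P-\sigma=(1-c_1)(\nu-\sigma)Q$. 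Since a stochastic matrix does not increase the $\ell_1$-norm, induction yields
$$\|\mu_{t}-\sigma\|_1\le 2(1-c_1)^{t}.$$

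\emph{From $\ell_1$ to relative error.} Apply one more step of the chain:
$$\mu_t(i)-\sigma_i=\sum_j(\mu_{t-1}(j)-\sigma_j)\,p_{ji}.$$
Taking $k=i$ in the definition of $c_2$ gives $p_{ji}\le c_2\sigma_i$. Hence
$$|\mu_t(i)-\sigma_i|\le c_2\sigma_i\|\mu_{t-1}-\sigma\|_1\le 2c_2(1-c_1)^{t-1}\sigma_i.$$

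\emph{Matching the stated rate.} It remains to show that $2c_2(1-c_1)^{t-1}\le(1-c_1/2)^t$ whenever $t\ge 2+2c_1^{-1}\log c_2$. Since $1-c_1\le(1-c_1/2)^2$, it suffices to show $2c_2(1-c_1/2)^{t-2}\le 1$. Using $(1-c_1/2)^{t-2}\le e^{-c_1(t-2)/2}\le e^{-\log c_2}$, we get
$$2c_2(1-c_1/2)^{t-2}\le 2c_2\,e^{-\log c_2}=2c_2^{\,1-\log e},$$
where $\log=\log_2$ and $\log e>1$. Note also $c_2\ge1$, because $\sum_k\sigma_k=1$ forces $\min_k\sigma_k\le1/n$ while $\max_{ij}p_{ij}\ge1/n$.

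This final constant-chasing step is where I expect the only real friction. The leftover factor $2$ (and any $e$ versus $2$ base issue) may not be absorbed by the stated threshold when $c_2$ is close to $1$. If so, I would spend a bit more of the slack in the exponent: use $(1-c_1)^{t-1}\le(1-c_1/2)^{t}\,(1-c_1/2)^{t-2}$, and bound the spare factor $(1-c_1/2)^{t-2}$ by $c_2^{-\log e}$, which is at most $1/(2c_2)$ once $c_2$ exceeds an absolute constant. In the remaining degenerate range ($c_2$ bounded), $c_1$ is also bounded below, since $c_1\ge c_2^{-1}\cdot$(ratio of extreme entries). There the required inequality can be checked directly, or absorbed by using the extra $+2$ in the threshold together with the sharper one-step bound $\|\nu P-\sigma\|_1\le 2(1-c_1)$. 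The two structural steps, minorisation and the one-step conversion, are standard; only this bookkeeping needs care.
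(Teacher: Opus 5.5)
Your minorisation step, the stochasticity of $Q$, and the contraction $\|\mu_t-\sigma\|_1\le 2(1-c_1)^t$ are all correct, and this is a sound route. The paper only quotes this lemma from its source, so there is no in-paper proof to compare against.

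The gap is in the last step, and it is more than bookkeeping: the inequality you reduce to is false in general. That inequality is $2c_2(1-c_1)^{t-1}\le(1-c_1/2)^t$ at $t=2+2c_1^{-1}\log c_2$. Let $c_1\to0$ with $c_2$ fixed. Then the ratio of the left side to the right side tends to $2c_2e^{-\log c_2}$, which equals $2c_2^{1-\log_2 e}$ for $\log=\log_2$ (and $2$ if $\log$ is read as $\ln$). This exceeds $1$ whenever $c_2<2^{1/(\log_2 e-1)}\approx 4.8$.

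A concrete instance is the two-state chain with $p_{12}=p_{21}=1-\eps$ and $\sigma=(\tfrac12,\tfrac12)$. It has $c_1=2\eps$ and $c_2=2-2\eps$, and the ratio tends to $4/e>1$. The same example refutes your patch: bounded $c_2$ does not force $c_1$ to be bounded below, because $\min_{ij}p_{ij}/\max_{ij}p_{ij}$ can be arbitrarily small. The $+2$ in the threshold is already used up by the exponent $t-2$. Finally, $\|\nu P-\sigma\|_1\le 2(1-c_1)$ is just the first step of the contraction you already have, so it gains nothing.

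The missing idea is in the conversion from $\ell_1$ error to relative error. Set $d\coloneqq\mu_{t-1}-\sigma$. It has total mass zero, so $\sum_j d_jp_{ji}=\sum_j d_j\bigl(p_{ji}-\min_{j'}p_{j'i}\bigr)$. Also, the positive and negative parts of $d$ each have mass $\tfrac12\|d\|_1$. This gives
\[
|\mu_t(i)-\sigma_i|\le \tfrac12\|\mu_{t-1}-\sigma\|_1\bigl(\max_j p_{ji}-\min_j p_{ji}\bigr)\le c_2(1-c_1)^{t-1}\sigma_i,
\]
which removes your factor $2$.

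To finish, write $c_2(1-c_1)^{t-1}\le c_2(1-c_1/2)^{t-2}\cdot(1-c_1/2)^{t}$. Then use $(1-c_1/2)^{t-2}\le e^{-c_1(t-2)/2}\le e^{-\log c_2}\le c_2^{-1}$. The last inequality is an equality for $\ln$. For $\log_2$ it holds because $c_2\ge 1$, which you observed correctly. This gives exactly the bound $(1-c_1/2)^t$, with no case distinction needed.
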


\begin{lemma}\label{lemma: azuma}
Let $Y_0,...Y_{\ell}$ be a martingale such that $|Y_i-Y_{i-1}|\le c$ for all $i\in [\ell]$. Then for all $t>0$, we have
$$\pr[|Y_{\ell}-Y_0|\ge t]\le 2\exp\left(-\frac{t^2}{2\ell c^2}\right).$$
\end{lemma}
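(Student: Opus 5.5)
We prove Azuma's inequality by the exponential moment method combined with Hoeffding's lemma for bounded, mean-zero increments. Write $D_i\coloneqq Y_i-Y_{i-1}$ for $i\in[\ell]$. The martingale property gives $\ex[D_i\mid Y_0,\dots,Y_{i-1}]=0$, and by assumption $|D_i|\le c$. By symmetry (apply the argument to $-Y_i$, which is also a martingale with the same increment bound), it suffices to show $\pr[Y_\ell-Y_0\ge t]\le \exp(-t^2/(2\ell c^2))$ and then add the two tails.

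The key step is a conditional Hoeffding bound: for every $\lambda>0$,
$$\ex\bigl[e^{\lambda D_i}\mid Y_0,\dots,Y_{i-1}\bigr]\le e^{\lambda^2c^2/2}.$$
To prove it, use convexity of $x\mapsto e^{\lambda x}$ on $[-c,c]$. For $x\in[-c,c]$,
$$e^{\lambda x}\le \frac{c-x}{2c}e^{-\lambda c}+\frac{c+x}{2c}e^{\lambda c}.$$
Taking conditional expectations and using the zero conditional mean gives
$$\ex\bigl[e^{\lambda D_i}\mid Y_0,\dots,Y_{i-1}\bigr]\le \cosh(\lambda c).$$
Finally, $\cosh(u)\le e^{u^2/2}$ follows by comparing Taylor series termwise, since $(2j)!\ge 2^j j!$.

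Next, iterate with the tower property. Conditioning on $Y_0,\dots,Y_{\ell-1}$ and peeling off one increment at a time gives
$$\ex\bigl[e^{\lambda(Y_\ell-Y_0)}\bigr]=\ex\Bigl[e^{\lambda\sum_{i<\ell}D_i}\,\ex\bigl[e^{\lambda D_\ell}\mid Y_0,\dots,Y_{\ell-1}\bigr]\Bigr]\le e^{\lambda^2c^2/2}\,\ex\bigl[e^{\lambda(Y_{\ell-1}-Y_0)}\bigr].$$
Repeating this $\ell$ times yields $\ex\bigl[e^{\lambda(Y_\ell-Y_0)}\bigr]\le e^{\ell\lambda^2c^2/2}$.

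Markov's inequality then gives
$$\pr[Y_\ell-Y_0\ge t]\le e^{-\lambda t+\ell\lambda^2c^2/2}.$$
Choosing $\lambda=t/(\ell c^2)$ makes the exponent equal to $-t^2/(2\ell c^2)$, which is the one-sided bound. Adding the bound for $-(Y_\ell-Y_0)$ gives the factor $2$ in the statement.

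The only nontrivial point is the conditional Hoeffding step, namely the convexity bound together with $\cosh u\le e^{u^2/2}$. The remaining steps are routine.
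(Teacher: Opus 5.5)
Your proof is correct: it is the standard exponential-moment argument, namely a conditional Hoeffding bound via convexity and $\cosh u\le e^{u^2/2}$, iterated by the tower property, then Markov's inequality with $\lambda=t/(\ell c^2)$ and a union bound over the two tails. The paper gives no proof of its own and simply quotes the inequality from Azuma's 1967 paper, so this is the expected route; the one small point is that your convexity step divides by $2c$, so the degenerate case $c=0$ (where $Y_\ell=Y_0$ and the claim is trivial) should be handled separately.
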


\begin{lemma}\label{lemma: chernoff}
Let $t>0$ and $X$ be a random variable with a binomial or hypergeometric distribution. Suppose $\ex[X]>0$. Then
$$\pr[X\le \ex[X]-t]\le \exp\left(-\frac{t^2}{2\ex[X]}\right).$$
\end{lemma}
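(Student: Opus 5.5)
The plan is the standard exponential moment (Chernoff) argument, applied first to the binomial case; the hypergeometric case is then reduced to it by Hoeffding's comparison between sampling with and without replacement. Write $\mu\coloneqq\ex[X]>0$.

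\emph{Trivial range of $t$.} Since $X\ge 0$, the event $X\le\mu-t$ is empty when $t>\mu$, so the bound holds. When $t=\mu$ the event is $\{X=0\}$, whose probability is at most $e^{-\mu}\le e^{-\mu/2}$:
\begin{itemize}
\item in the binomial case, $\pr[X=0]=\prod_i(1-p_i)\le e^{-\mu}$;
\item in the hypergeometric case, the same bound follows from the comparison step below applied to the convex function $x\mapsto e^{-\lambda x}$ with $\lambda\to\infty$, or directly, since sampling without replacement only decreases the probability of never drawing a marked element.
\end{itemize}
Hence we may assume $0<t<\mu$.

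\emph{Binomial case.} Write $X=\sum_i X_i$ with independent Bernoulli variables $X_i$ of means $p_i$. For $\lambda>0$, Markov's inequality applied to $e^{-\lambda X}$ gives
$$\pr[X\le \mu-t]\le e^{\lambda(\mu-t)}\,\ex[e^{-\lambda X}]=e^{\lambda(\mu-t)}\prod_i\bigl(1-p_i(1-e^{-\lambda})\bigr)\le \exp\bigl(\lambda(\mu-t)-\mu(1-e^{-\lambda})\bigr),$$
using $1-y\le e^{-y}$. We choose $\lambda=-\ln(1-t/\mu)>0$. With $x\coloneqq t/\mu\in(0,1)$, the exponent becomes $-\mu\bigl((1-x)\ln(1-x)+x\bigr)$.

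It remains to show $(1-x)\ln(1-x)+x\ge x^2/2$ on $[0,1)$. Set $g(x)\coloneqq(1-x)\ln(1-x)+x-x^2/2$. Then $g(0)=0$ and $g'(x)=-\ln(1-x)-x\ge 0$, so $g\ge 0$ on $[0,1)$. This gives
$$\pr[X\le\mu-t]\le\exp\bigl(-\mu x^2/2\bigr)=\exp\bigl(-t^2/(2\mu)\bigr).$$

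\emph{Hypergeometric case.} Let $X$ count the marked elements in a uniformly random $s$-subset of an $N$-set containing $K$ marked elements. Let $Y\sim\mathrm{Bin}(s,K/N)$, so that $\ex[Y]=\mu$. Hoeffding's classical theorem on sampling without replacement states that $\ex[f(X)]\le\ex[f(Y)]$ for every convex continuous $f$. Applying it with $f(x)=e^{-\lambda x}$ shows that the moment generating function bound above holds verbatim for $X$. The same choice of $\lambda$ and the same inequality for $g$ then finish the proof.

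\emph{Main obstacle.} The only non-routine ingredient is this comparison for the hypergeometric distribution. I would cite Hoeffding (1963) for it, as in~\cite{JLR:00}, rather than reprove it. If a self-contained argument were required, I would couple the two sampling schemes so that $X$ is a conditional expectation of $Y$; the inequality then follows from Jensen's inequality.
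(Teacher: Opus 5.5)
Your proof is correct: the exponential-moment bound with $\lambda=-\ln(1-t/\mu)$, the inequality $(1-x)\ln(1-x)+x\ge x^2/2$, the separate handling of $t\ge\mu$, and the use of Hoeffding's convex-order comparison $\ex[e^{-\lambda X}]\le\ex[e^{-\lambda Y}]$ for the hypergeometric case all go through. The paper gives no proof of its own and simply cites \cite{JLR:00}, and your argument is essentially the standard one found there (Theorem~2.1, extended to hypergeometric variables via Hoeffding in Theorem~2.10).
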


\subsection{Convergence of random walks}
We also need the following result showing that in an $(n,k,\delta)$-graph $G$, any $\vec T\in\vec{\cS}(G)$ can be reached from any $\vec S\in\vec{\cS}(G)$ in a fixed number of steps.

\begin{lemma}[{\cite[Lemma~5.3]{GGJKO:21}}]\label{lemma: well-connectedness}
Suppose $1/n\ll \zeta\ll 1/L\ll \hat\delta,1/k$. Let $G$ be an $(n,k,\delta)$-graph. For any $\vec S, \vec T\in\vec{\cS}(G)$, there are at least $\zeta n^{L-(k-1)}$ walks of length $L$ from $\vec S$ to $\vec T$ in $G$.
\end{lemma}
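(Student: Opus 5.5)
The statement to prove is Lemma~\ref{lemma: well-connectedness}: for any $\vec S,\vec T\in\vec{\cS}(G)$ there are at least $\zeta n^{L-(k-1)}$ walks of length $L$ from $\vec S$ to $\vec T$. The plan has three stages. First we walk forward from $\vec S$ and build a large, degree-robust family of reachable tuples. Then we do the same backward from $\vec T$. Finally we connect the two families in a bounded number of middle steps.

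For the forward stage, fix a length $L_1$. From $\vec S$, each step has at least $\delta n$ choices of the next vertex, since $\delta_{k-1}(G)\ge\delta n$. So there are at least $(\delta n)^{L_1}$ forward walks of length $L_1$. Reversal is symmetric: a walk from $\vec U$ to $\vec T$ can be extended backwards one vertex at a time with at least $\delta n$ choices. So the number of backward walks of length $L_2$ ending at $\vec T$ is at least $(\delta n)^{L_2}$.

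Counting walks alone is not enough. We need the endpoint tuples of the forward walks and the starting tuples of the backward walks to be joinable by a short middle segment of fixed length $k-1$ with many choices.

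To join a forward endpoint $\vec A=(a_1,\dots,a_{k-1})$ to a backward start $\vec B=(b_1,\dots,b_{k-1})$, we insert $k-1$ new vertices $w_1,\dots,w_{k-1}$. This requires each window of $k$ consecutive vertices in the sequence $a_1\dots a_{k-1}w_1\dots w_{k-1}b_1\dots b_{k-1}$ to be an edge. A single vertex $w$ can bridge two $(k-1)$-sets $X,Y$ whenever $w\in N(X)\cap N(Y)$. Because $\delta>1/2$, $|N(X)\cap N(Y)|\ge 2\hat\delta n$ for any two $(k-1)$-sets, and this is the key use of the Dirac condition. However, the middle windows mix $a$'s, $w$'s and $b$'s, so each constraint on $w_i$ involves more than two sets.

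The approach I would try first is to choose the $w_i$ one at a time, greedily with a lookahead. Choose $w_1$ so that $\{a_1..a_{k-1},w_1\}$ is an edge, and additionally require that the partial tuples involving $w_1$ have large common neighbourhoods with the future sets. This is where I expect the main obstacle: single-step codegree does not directly control multi-window constraints.

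The cleaner route, following the spirit of \cite{GGJKO:21}, is to first prove a reachability statement: for every $\vec S$, the set $R_j(\vec S)$ of tuples reachable in exactly $j$ steps via at least $\gamma n^{j-(k-1)}$ walks eventually contains every tuple. I would prove this by induction on how many coordinates are "free", using two operations.

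\emph{Rotation.} Rotation shows that for a fixed tuple one can cycle through orderings using bounded-length walks with $\Omega(n^{\cdot})$ multiplicity. It relies on the codegree $\delta n$ together with common-neighbourhood counting: for two sets $X,Y$ differing in one vertex, $|N(X)\cap N(Y)|\ge 2\hat\delta n$.

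\emph{Swapping.} Swapping replaces one vertex $u$ of the current tuple by any target vertex $v$. Pick $w\in N(X\cup\{u\}\setminus\{\cdot\})\cap N(\dots\cup\{v\})$ among the $\ge 2\hat\delta n$ common neighbours, then walk around so that $u$ drops out and $v$ enters.

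Each swap costs a bounded number of steps $c_k$ and keeps a multiplicative factor of $\hat\delta^{O(1)} n^{(\text{steps})-1}$ walks; the lost power of $n$ accounts for fixing the target vertex. Performing $k-1$ swaps turns $\vec S$ into $\vec T$ exactly. Padding with free steps at the beginning, each giving at least $\delta n$ choices, adjusts the length to exactly $L$.

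Counting gives $\ge (\hat\delta/2)^{O(L)} n^{L-(k-1)}$ walks, since exactly $k-1$ target vertices are forced and every other step has linearly many choices. Since $1/L\ll\hat\delta,1/k$ and $\zeta\ll 1/L$, this is at least $\zeta n^{L-(k-1)}$.

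One technicality remains: the paper's walks require consecutive $(k-1)$-tuples to have distinct entries. We ensure this by discarding the $O(1)$ forbidden choices at each step, which costs only a $(1-O(1/n))$ factor per step.
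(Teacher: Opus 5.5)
The paper gives no proof of this lemma; it quotes it from Glock, Gould, Joos, K\"uhn and Osthus. Judged on its own, your proposal has a genuine gap: the connecting step, which is the whole content of the lemma, is never carried out, and the counting is wrong.

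\textbf{The gap.} You correctly identify the obstacle in your first attempt. An inserted vertex $w$ lies in $k$ windows, but $\delta>1/2$ only guarantees that \emph{two} neighbourhoods meet in $2\hat\delta n$ vertices. Three or more sets of size $(\frac12+\hat\delta)n$ can have empty intersection when $\hat\delta$ is small. Your ``rotation'' and ``swapping'' do not get around this. Each is itself an instance of the lemma: reach one specified tuple from another by many walks of fixed length. The only concrete step you give (pick $w$ in $N(\cdot)\cap N(\cdot)$, ``then walk around so that $u$ drops out and $v$ enters'') handles two of the $k$ windows through $w$ and says nothing about the others.

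The multiplicity bookkeeping also fails. A walk of length $c$ ending at a specified $(k-1)$-tuple has at most $n^{c-(k-1)}$ realizations, since its last $k-1$ vertices are forced. A tight walk renews its entire tuple every $k-1$ steps. So a swap, which must end at the old tuple with $u$ replaced by $v$, has to re-place the $k-2$ retained vertices as well as $v$. It therefore yields at most $n^{c_k-(k-1)}$ walks, not $\hat\delta^{O(1)}n^{c_k-1}$. Chaining $k-1$ swaps through these specified intermediate tuples gives at most $n^{(k-1)c_k-(k-1)^2}$ walks. That is a factor $n^{(k-1)(k-2)}$ below the required $n^{(k-1)c_k-(k-1)}$, and padding multiplies both sides by the same $n^{p}$. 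For $k\ge 3$ no constant $\zeta$ absorbs this. So ``exactly $k-1$ target vertices are forced'' is false for your scheme. A correct argument must pass through $\Theta(n^{k-1})$ unspecified intermediate tuples.

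\textbf{A possible fix.} One way to close the gap uses only tools the paper already quotes: supersaturation from Lemma~\ref{lemma: tight Hamilton-connected}. Take $M$ with $1/L\ll 1/M\ll\hat\delta,1/k$.
\begin{itemize}
\item For disjoint $\vec U,\vec T$, let $X$ be a uniformly random $(M-2k+2)$-subset of $V(G)\setminus(U\cup T)$.
\item By Lemma~\ref{lemma: chernoff} and a union bound over the $(k-1)$-subsets of $X\cup U\cup T$, with probability at least $1/2$ the graph $G[X\cup U\cup T]$ is an $(M,k,\frac12+\frac{\hat\delta}{2})$-graph. It then has a Hamilton path from $\vec U$ to $\vec T$.
\item That path has inner vertex set exactly $X$, so distinct $X$ give distinct paths. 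Hence there are at least $\frac12\binom{n-2k+2}{M-2k+2}\ge\zeta_0 n^{M-2(k-1)}$ walks of length $M-(k-1)$ from $\vec U$ to $\vec T$.
\item For arbitrary $\vec S$, prepend $p\coloneqq L-M+k-1\ge k-1$ steps, each choosing a new vertex outside $T$. There are at least $\delta n-k\ge n/2$ options per step, and the resulting endpoint $\vec U$ is disjoint from $\vec T$.
\end{itemize}
This gives at least $(n/2)^{p}\zeta_0 n^{M-2(k-1)}\ge\zeta n^{L-(k-1)}$ walks of length exactly $L$. It also settles the exact-length requirement and the case where $\vec S$ and $\vec T$ overlap.
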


We now apply the rapid mixing property of Markov chains to the random walks in $\mathbb{S}[G]$.

\begin{cor}\label{cor: rapid mixing in S[G]}
Suppose $1/n\ll\alpha\ll\hat\delta,1/k,1/b$. Let $G$ be an $(n,k,\delta)$-graph and $\bx$ be a $b$-normal perfect fractional matching of $G$. Let $(\vec Z_t)_{t\in\bN_0}$ be a random walk on $\mathbb{S}[G]$ induced by $\bx$. Then, for any $(k-1)$-tuple $\vec S\in \vec{\cS}(G)$ and $t\ge\alpha^{-2}$, we have
$$\pr[\vec Z_t=\vec S]=\left(1\pm e^{-\alpha t}\right)\frac{\bx_S}{(k-1)!n}.$$
\end{cor}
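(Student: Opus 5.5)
The plan is to identify the stationary distribution of the chain explicitly, and then apply Lemma~\ref{lemma: rapid mixing} to the $L$-step chain $P^L$ rather than to $P$ itself. The reason for this detour is that $P$ has many zero entries, so the constant $c_1$ in Lemma~\ref{lemma: rapid mixing} would be $0$ for $P$.

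\emph{Step 1 (stationary distribution).} Define $\sigma_{\vec S}\coloneqq \frac{\bx_S}{(k-1)!\,n}$ for $\vec S\in\vec\cS(G)$.
\begin{itemize}
\item It is a probability distribution. Each $S\in\cS(G)$ has $(k-1)!$ orderings, and $\sum_{S}\bx_S=n$ by Lemma~\ref{lemma: S[G] property ii}.
\item It is stationary. Fix $\vec S=(s_1,\dots,s_{k-1})$. Its in-neighbours in $\mathbb{S}[G]$ are exactly the tuples $\vec R=(r,s_1,\dots,s_{k-2})$ with $R\cup\{s_{k-1}\}\in E(G)$. Then
$$\sum_{\vec R}\sigma_{\vec R}\,\by[\vec R\vec S]=\sum_{r}\frac{\bx_R}{(k-1)!n}\cdot\frac{\bx[R\cup\{s_{k-1}\}]}{\bx_R}=\frac{1}{(k-1)!n}\sum_{r\in N_G(S)}\bx[S\cup\{r\}]=\frac{\bx_S}{(k-1)!n}.$$
\end{itemize}
Since $\sigma P=\sigma$, also $\sigma P^L=\sigma$ for every $L$.

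\emph{Step 2 (constants for $P^L$).} Choose $\zeta,L$ as in Lemma~\ref{lemma: well-connectedness}, with $\zeta\ll 1/L\ll\hat\delta,1/k$. We may take $\alpha\ll\zeta,1/L,1/b$, since $\zeta$ and $L$ depend only on $\hat\delta$ and $k$.

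For the lower bound, Lemma~\ref{lemma: well-connectedness} gives at least $\zeta n^{L-(k-1)}$ walks of length $L$ between any $\vec S,\vec T$. Each step has probability at least $1/(b^2n)$ by Lemma~\ref{lemma: S[G] property v}. Hence
$$(P^L)_{\vec S\vec T}\ge \zeta b^{-2L}n^{-(k-1)}.$$

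For the upper bound, there are at most $n^{L-(k-1)}$ walks of length $L$ between any $\vec S,\vec T$, since the last $k-1$ vertices are fixed. Each step has probability at most $b^2/(\delta n)$. Hence
$$(P^L)_{\vec S\vec T}\le (b^2/\delta)^L n^{-(k-1)}.$$

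By Lemma~\ref{lemma: S[G] property iv}, $\sigma_{\vec S}$ lies in $\bigl[\frac{\delta}{b(k-1)!}n^{-(k-1)},\,\frac{b}{(k-1)!}n^{-(k-1)}\bigr]$. Combining these, Lemma~\ref{lemma: rapid mixing} applied to $P^L$ has
$$c_1\ge \zeta b^{-2L-1}>0\quad\text{and}\quad c_2\le (k-1)!\,b^{2L+1}\delta^{-L-1}.$$
Both are constants depending only on $\zeta,L,b,\delta,k$.

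\emph{Step 3 (arbitrary $t$).} Write $t=qL+r$ with $0\le r<L$, and condition on $\vec Z_r$. For each fixed starting state, the chain $(\vec Z_{r+jL})_{j\ge0}$ is a Markov chain with transition matrix $P^L$.
\begin{itemize}
\item Since $t\ge\alpha^{-2}$ and $\alpha\ll c_1,1/L,1/c_2$, we get $q\ge 2+2c_1^{-1}\log c_2$. Lemma~\ref{lemma: rapid mixing} then gives $\pr[\vec Z_t=\vec S\mid \vec Z_r]=(1\pm(1-c_1/2)^q)\sigma_{\vec S}$ uniformly in $\vec Z_r$.
\item Averaging over $\vec Z_r$ preserves this bound.
\item Finally, $(1-c_1/2)^q\le e^{-c_1(t/L-1)/2}\le e^{-\alpha t}$. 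This holds because $\alpha\le c_1/(4L)$ and $t\ge\alpha^{-2}$ is large enough to absorb the $-1$ in the exponent.
\end{itemize}

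\emph{Main obstacle.} The only delicate point is that $P$ itself is not positive. This is resolved by passing to $P^L$ via Lemma~\ref{lemma: well-connectedness}. One must also check that $c_1$ and $c_2$ stay bounded independently of $n$, which is why the $n^{\pm(k-1)}$ factors in the bounds for $P^L$ and for $\sigma$ must cancel exactly.
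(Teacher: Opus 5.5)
Your proposal is correct and follows essentially the same route as the paper: you identify $\sigma_{\vec S}=\bx_S/((k-1)!\,n)$ as the stationary distribution and pass to $Q=P^L$ via Lemma~\ref{lemma: well-connectedness}, so that $c_1$ and $c_2$ in Lemma~\ref{lemma: rapid mixing} are independent of $n$; you then handle general $t$ by conditioning on $\vec Z_{t\bmod L}$. Your explicit choice $\alpha\le c_1/(4L)$, which absorbs the loss from $t$ to $t-(t\bmod L)$, is slightly more careful than the paper's, which sets only $c_1\ge 2L\alpha$ and glosses over that discrepancy.
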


\begin{proof}
Using the same notation as the beginning of Section~\ref{section: Probabilistic properties of random walks}, $(\vec Z_t)_{t\in\bN_0}$ is a Markov chain with state space $\vec{\cS}(G)$ and transition matrix $P=(p_{\vec S\vec T})_{\vec S, \vec T\in\vec{\cS}(G)}$. By Lemma~\ref{lemma: S[G] property v}, we have the following bounds on the nonzero entries $p_{\vec S\vec T}$ of $P$,
$$\frac{1}{b^2n}\le p_{\vec S\vec T}\le\frac{b^2}{
\delta n}.$$

By Lemma~\ref{lemma: well-connectedness}, the hypergraph $G$, and hence $\mathbb{S}[G]$, has at least $\zeta n^{L-(k-1)}$ walks of length $L$ from $\vec S$ to $\vec T$ for any $\vec S, \vec T\in \vec{\cS}(G)$, where \mbox{$1/n\ll\zeta\ll 1/L\ll\hat\delta, 1/k$}. This allows us to define a new Markov chain $(\vec Z_t')_{t\in\bN_0}$ on $\vec{\cS}(G)$ with a strictly positive transition matrix $Q\coloneqq P^L$. We have the following bounds on the entries of $Q=(q_{\vec S \vec T})_{\vec S, \vec T\in\vec{\cS}(G)}$,
\begin{equation}\label{eq: Q bounds}
\frac{\zeta}{b^{2L}n^{k-1}}=\zeta n^{L-(k-1)}\left(\frac{1}{b^2n}\right)^{L}\le q_{\vec S\vec T}\le n^{L-(k-1)}\left(\frac{b^2}{\delta n}\right)^{L}=\frac{b^{2L}}{\delta^L n^{k-1}}.
\end{equation}
We define $\sigma\coloneqq (\sigma_{\vec S})_{\vec S\in\vec{\cS}(G)}$ by
$$\sigma_{\vec S}\coloneqq\frac{\bx_S}{(k-1)!n}$$
for each $\vec S\in\vec{\cS}(G)$. We claim that $\sigma$ is the stationary distribution of the Markov chain $(Z_t')_{t\in\bN_0}$.

Indeed, using Lemma~\ref{lemma: S[G] property ii}, it is straightforward to verify that $\sigma$ is a probability distribution. It remains to show that $\sigma$ is a left eigenvector of $P$, and hence a left eigenvector of $Q$. For all $\vec T\in\vec{\cS}(G)$, we have
\begin{align*}
(\sigma P)_{\vec T}&=\sum_{\vec S\in\vec{\cS}(G)}\sigma_{\vec S}p_{\vec S\vec T}=\sum_{\vec S\in\vec{\cS}(G)}\sigma_{\vec S}\by[\vec S\vec T]\1[\vec S\vec T\in E(\mathbb{S}[G])]\\
&=\sum_{\vec S\in\vec{\cS}(G)}\frac{\bx_S}{(k-1)!n}\cdot\frac{\bx[S\cup \{t_{k-1}\}]}{\bx_S}\cdot\1[\vec S\vec T\in E(\mathbb{S}[G])]\\
&=\frac{1}{(k-1)!n}\sum_{\vec S\in\vec{\cS}(G)}\bx[\{s_1\}\cup T]\1 [\vec S\vec T\in E(\mathbb{S}[G])]\\
&=\frac{1}{(k-1)!n}\sum_{v\in N_G(T)}\bx[\{v\}\cup T]=\frac{\bx_T}{(k-1)!n}=\sigma_{\vec T},
\end{align*}
which proves our claim.

By Lemma~\ref{lemma: S[G] property iv}, for each $\vec S\in\vec{\cS}(G)$, we have
$$\frac{\delta}{b(k-1)!n^{k-1}}\le \sigma_{\vec S}\le \frac{b}{(k-1)!n^{k-1}}.$$
Together with~(\ref{eq: Q bounds}), this allows us to bound $c_1$ and $c_2$ as defined in Lemma~\ref{lemma: rapid mixing}. We have
\begin{align*}
c_1&\coloneqq \inf_{\vec S, \vec T, \vec U\in\vec{\cS}(G)}\frac{q_{\vec S\vec T}}{\sigma_{\vec U}}\ge\frac{\zeta}{b^{2L}n^{k-1}}\cdot\frac{(k-1)!n^{k-1}}{b}=\frac{\zeta(k-1)!}{b^{2L+1}}>0,\text{ and}\\
c_2&\coloneqq \sup_{\vec S, \vec T, \vec U\in\vec{\cS}(G)}\frac{q_{\vec S\vec T}}{\sigma_{\vec U}}\le\frac{b^{2L}}{\delta^{L}n^{k-1}}\cdot\frac{b(k-1)!n^{k-1}}{\delta}=\frac{b^{2L+1}(k-1)!}{\delta^{L+1}}.
\end{align*}
Let
$$\alpha\coloneqq\min\left\{\frac{\zeta(k-1)!}{2L b^{2L+1}},\frac{\delta^{L+1}}{b^{2L+1}(k-1)!}\right\}$$
so that $c_1\ge 2L\alpha$ and $c_2\le\alpha^{-1}$.
Using the inequality $(1-x)^t\le e^{-xt}$, Lemma~\ref{lemma: rapid mixing} yields
$$\pr[\vec Z_t=\vec S]=\pr[\vec Z_{t/L}'=\vec S]=\left(1\pm\left(1-\frac{c_1}{2}\right)^{\frac{t}{L}}\right)\sigma_{\vec S}=\left(1\pm\exp\left(-\frac{c_1 t}{2L}\right)\right)\sigma_{\vec S}=\left(1\pm e^{-\alpha t}\right)\sigma_{\vec S}$$
for any $\vec S\in\vec{\cS}(G)$ and any $t/L\ge 2+2(2L\alpha)^{-1}\log(\alpha^{-1})$ such that $L|t$. Similarly, we conclude that under the same conditions on $t$, we have
$$\pr[\vec Z_{i+t}=\vec S|\vec Z_i=\vec T]=\left(1\pm e^{-\alpha t}\right)\sigma_{\vec S}$$
for any $\vec S, \vec T\in\vec{\cS}(G)$. Therefore, we obtain
$$\pr[\vec Z_t=\vec S]=\sum_{\vec T\in\vec{\cS}(G)}\pr[\vec Z_{t\bmod L}=\vec T]\pr[\vec Z_t=\vec S|\vec Z_{t\bmod L}=\vec T]=\left(1\pm e^{-\alpha t}\right)\sigma_{\vec S}$$
for any $\vec S\in\vec{\cS}(G)$ and any $t\ge L(2+2(2L\alpha)^{-1}\log(\alpha^{-1}))+L-1=3L-1+\alpha^{-1}\log(\alpha^{-1})$. In particular, the bound holds for all $t\ge\alpha^{-2}$.
\end{proof}

We now show that a random walk on $G$ satisfies a similar rapid mixing property.

\begin{cor}\label{cor: rapid mixing in G}
Suppose $1/n\ll\alpha\ll\hat\delta,1/k,1/b$. Let $G$ be an $(n,k,\delta)$-graph and $\bx$ be a $b$-normal perfect fractional matching of $G$. Let $(R_t)_{t\in\bN_0}$ be a random walk on $G$ induced by $\bx$. Then, for any vertex $v\in V(G)$ and $t\ge \alpha^{-2}$, we have
$$\pr[R_t=v]=\left(1\pm e^{-\alpha t}\right)\frac{1}{n}.$$
\end{cor}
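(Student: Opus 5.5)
The plan is to derive this from Corollary~\ref{cor: rapid mixing in S[G]} by marginalising over the first coordinate of $\vec Z_t$. By definition of the random walk on $G$, we have $\vec Z_t=(R_t,\dots,R_{t+k-2})$. Hence the event $\{R_t=v\}$ is the disjoint union, over all $\vec S=(s_1,\dots,s_{k-1})\in\vec{\cS}(G)$ with $s_1=v$, of the events $\{\vec Z_t=\vec S\}$. Therefore
$$\pr[R_t=v]=\sum_{\vec S\in\vec{\cS}(G)\colon s_1=v}\pr[\vec Z_t=\vec S].$$

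For $t\ge\alpha^{-2}$, Corollary~\ref{cor: rapid mixing in S[G]} applies with the same $\alpha$ and gives a multiplicative error of $1\pm e^{-\alpha t}$ uniformly over all $\vec S$. So it remains to compute
$$\sum_{\vec S\colon s_1=v}\frac{\bx_S}{(k-1)!\,n}.$$
Every $(k-1)$-set $S$ containing $v$ has exactly $(k-2)!$ orderings with first entry $v$. The sum therefore equals
$$\frac{(k-2)!}{(k-1)!\,n}\sum_{v\in S\in\cS(G)}\bx_S.$$
By Lemma~\ref{lemma: S[G] property iii}, this inner sum is $k-1$, so the whole expression is $\frac{(k-2)!(k-1)}{(k-1)!\,n}=\frac1n$.

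Pulling the uniform factor $(1\pm e^{-\alpha t})$ out of the sum gives the claim. The case $k=2$ is consistent: tuples are single vertices, the factor $(k-2)!=1$, and $\bx_{\{v\}}=1$.

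No step is a real obstacle. The only point to check is that the error term in Corollary~\ref{cor: rapid mixing in S[G]} is uniform in $\vec S$, so it factors out of the sum unchanged. It is, since the constant $\alpha$ there does not depend on $\vec S$.
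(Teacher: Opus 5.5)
Your proposal is correct and follows essentially the same route as the paper's proof. The paper also writes $\pr[R_t=v]$ as the sum of $\pr[\vec Z_t=\vec S]$ over tuples with $s_1=v$, applies Corollary~\ref{cor: rapid mixing in S[G]} termwise, and combines the factor $(k-2)!$ with Lemma~\ref{lemma: S[G] property iii} to obtain $(1\pm e^{-\alpha t})\frac{1}{n}$.
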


\begin{proof}
Let $(\vec Z_t)_{t\in \bN_0}$ be the corresponding random walk on $\mathbb{S}[G]$ induced by $\bx$ and choose $\alpha$ as in Lemma~\ref{cor: rapid mixing in S[G]}. Then for all $t\ge \alpha^{-2}$ and $v\in V(G)$, we have
\begin{align*}
\pr[R_t=v] &=\pr[\vec Z_t\in\{\vec S\in\vec{\cS}(G)|s_1=v\}]=\sum_{\vec S\in\vec{\cS}(G)\colon s_1=v}\pr[\vec Z_t=\vec S]\\
&=(k-2)!\sum_{v\in S\in\cS(G)}(1\pm e^{-\alpha t})\frac{\bx_S}{(k-1)!n}=(1\pm e^{-\alpha t})\frac{1}{n}
\end{align*}
where the last step follows from Lemma~\ref{lemma: S[G] property iii}.
\end{proof}

\subsection{Random walks and vertex subsets}
The following lemma shows that with high probability, a random walk in $G$ behaves as expected with regard to a subset $U\subseteq V(G)$ due to the rapid mixing property (Corollary~\ref{cor: rapid mixing in G}).

\begin{lemma}\label{lemma: behaviour wrt subsets}
Suppose $1/n\ll\alpha\ll\hat\delta, 1/k, 1/b$. Let $G$ be an $(n,k,\delta)$-graph and $\bx$ be a $b$-normal perfect fractional matching of $G$. Let $c\in \bR$ and let $\bw:V(G)\to \bR_{\ge0}$ be a vertex weighting with $\bw_v\le c$ for all $v\in V(G)$. Let $U\subseteq V(G)$. Let $R=(R_1,...,R_m)$ be a random walk on $G$ induced by $\bx$, where $m\coloneqq n^{1/3}$. Then
$$\pr\left[\left|\sum_{v\in R\cap U}\bw_{v}-\frac{m}{n}\sum_{v\in U}\bw_v\right|\ge cn^{\frac{1}{3}-\alpha}\right]\le \frac{1}{n^k}.$$
\end{lemma}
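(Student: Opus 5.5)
We will write $X\coloneqq \sum_{v\in R\cap U}\bw_v$. We interpret this as the sum $\sum_{i=1}^m \bw_{R_i}\1[R_i\in U]$ along the walk; if the walk revisits vertices, the difference from the set version is handled by a separate small estimate. The plan is to use the rapid mixing of Corollary~\ref{cor: rapid mixing in G} to split the walk into many interleaved, almost independent subsequences, and then apply Azuma's inequality (Lemma~\ref{lemma: azuma}) to each one. Set $f(v)\coloneqq \bw_v\1[v\in U]$, so $0\le f\le c$ and $X=\sum_{i=1}^m f(R_i)$. The target mean is $\frac{m}{n}\sum_{v\in U}\bw_v=\sum_{i=1}^m \frac1n\sum_v f(v)$.

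Fix $\alpha'$ with $\alpha\ll\alpha'\ll\hat\delta,1/k,1/b$ for which Corollary~\ref{cor: rapid mixing in S[G]} and Corollary~\ref{cor: rapid mixing in G} hold, including their conditional forms. The proof of Corollary~\ref{cor: rapid mixing in S[G]} gives $\pr[\vec Z_{i+t}=\vec S\mid \vec Z_i=\vec T]=(1\pm e^{-\alpha' t})\sigma_{\vec S}$, and the Markov property lets us condition on the whole history up to time $i$. Choose a gap $t\coloneqq n^{\alpha}$, which is at least $\alpha'^{-2}$ for large $n$ and satisfies $e^{-\alpha' t}\le n^{-2}$.

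Split $[m]$ into the $t$ residue classes $I_j\coloneqq\{i\in[m]: i\equiv j \bmod t\}$, each of size about $m/t=n^{1/3-\alpha}$. For a fixed class $I_j=\{i_1<i_2<\dots\}$, let $\cF_r$ be the history of the walk up to time $i_r$. Summing the conditional rapid-mixing estimate over the $(k-1)$-tuples with first entry $v$, exactly as in the proof of Corollary~\ref{cor: rapid mixing in G}, gives
$$\ex[f(R_{i_{r+1}})\mid\cF_r]=(1\pm n^{-2})\frac1n\sum_v f(v)=\frac1n\sum_v f(v)\pm cn^{-2}.$$
Define the martingale $Y_s\coloneqq\sum_{r\le s}\bigl(f(R_{i_r})-\ex[f(R_{i_r})\mid \cF_{r-1}]\bigr)$. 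Its differences are bounded by $c$, so Lemma~\ref{lemma: azuma} with $\ell=|I_j|\le 2n^{1/3-\alpha}$ gives
$$\pr\bigl[|Y_\ell|\ge c n^{1/3-\alpha}/(4t)\bigr]\le 2\exp\left(-\frac{n^{2/3-2\alpha}}{32t^2\,\ell}\right)=2\exp\bigl(-\Omega(n^{1/3-5\alpha})\bigr).$$
The bias terms contribute at most $|I_j|cn^{-2}$ per class. The first $\alpha'^{-2}$ steps, where the walk has not yet mixed, contribute at most $c\alpha'^{-2}$. Summing over the $t$ classes and taking a union bound, the total deviation is below $cn^{1/3-\alpha}$, after absorbing constants by slightly shrinking the exponent. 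The failure probability is $2t\exp(-\Omega(n^{1/3-5\alpha}))\le n^{-k}$ since $\alpha$ is small.

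\textbf{Main obstacle: balancing the parameters.} The gap $t$ must be large enough that $e^{-\alpha' t}$ kills the mixing error, yet small enough that $(n^{1/3-\alpha}/t)^2/(m/t)$ still grows polynomially. The choice $t=n^{\alpha}$ works because $1/3-3\alpha>0$; since the statement fixes $\alpha$, I would run the whole argument with a smaller auxiliary exponent and then weaken it to $\alpha$.

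Finally, if $R\cap U$ is read as a set, repeated vertices must be controlled. By Lemma~\ref{lemma: S[G] property v} each step hits any given earlier vertex with probability at most $b^2/(\delta n)$. So the number of revisits is stochastically dominated by a binomial with mean $O(m^2/n)=O(n^{-1/3})$, and the probability of more than $k$ revisits is at most $O(m^{2(k+1)}n^{-(k+1)})\le n^{-k}/2$. Each revisit changes the sum by at most $c$, which is negligible compared with $cn^{1/3-\alpha}$.
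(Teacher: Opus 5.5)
Your argument is correct in its main part, but it organizes the concentration differently from the paper. The paper cuts $R$ into about $m^{\alpha}$ consecutive blocks of length about $m^{1-\alpha}$ and discards the first $\ln m$ steps of each block as burn-in. It then applies Lemma~\ref{lemma: azuma} once, to the Doob martingale $Y_i=\ex[\sum_{i'}W_{i'}\mid W_1,\dots,W_i]$ of the block sums. Mixing shows that revealing one block barely moves the conditional expectation of the later blocks, so the differences are $O(cm^{1-\alpha})$ and the tail is $\exp(-\Omega(m^{\alpha/3}))$. However, this only holds at the threshold $cn^{1/3-\alpha/10}$, after which $\alpha$ is renamed.

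You instead interleave. Inside each residue class modulo $t=n^{\alpha}$, consecutive indices are $t$ apart. So by the conditional form of Corollary~\ref{cor: rapid mixing in S[G]}, every single step has conditional mean $(1\pm n^{-2})\frac1n\sum_v f(v)$ given the full history, and the Doob decomposition gives a martingale with increments bounded by $c$. Azuma per class and a union bound over the $n^{\alpha}$ classes then give a tail $\exp(-\Omega(n^{1/3-3\alpha}))$ directly at the threshold $cn^{1/3-\alpha}$. So your remark about passing to an auxiliary exponent is unnecessary.

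Your route avoids the burn-in bookkeeping and the conditioning on events $\{W_1=w_1,\dots,W_i=w_i\}$, at the price of a union bound over classes. The ingredients, conditional rapid mixing plus Azuma, are the same. One small bookkeeping point: the first element $i_1=j$ of a class is only $j\le t$ steps from the start, so its bias is not $O(cn^{-2})$. Bound it crudely by $c$; this costs at most $cn^{\alpha}$ in total, which is still negligible.

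The revisit estimate in your last paragraph contains a false inequality. We have $m^{2(k+1)}n^{-(k+1)}=(m^2/n)^{k+1}=n^{-(k+1)/3}$, which is far larger than $n^{-k}$. Each step hits an earlier vertex with conditional probability at most $mb^2/(\delta n)$, so the probability of at least $r$ revisits is at most $(m^2b^2/(\delta n))^r=O(n^{-r/3})$. You must therefore allow $r=3k+1$ revisits (or any larger constant) instead of $k$. Each revisit changes the sum by at most $c$, so the deviation budget is unaffected.

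The paper does not treat this point at all. Its proof silently identifies $\sum_{v\in R\cap U}\bw_v$ with $\sum_{j=1}^m\bw_{R_j}\1_{R_j\in U}$. This is harmless for its application, since well-behavedness is only used together with self-avoidance in Corollary~\ref{cor: a random walk is likely good}.
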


\begin{proof}
Choose $\alpha$ such that Corollary~\ref{cor: rapid mixing in G} holds. We partition $R$ into $\ell$ consecutive disjoint random walks \mbox{$Q_1=(Q_{1,1},... Q_{1,m_1})$}, $...$, \mbox{$Q_{\ell}=(Q_{\ell,1},...,Q_{\ell,m_{\ell}})$} such that \mbox{$m^{1-\alpha}\le|Q_i|\le 2m^{1-\alpha}$} for all \mbox{$i\in [\ell]$}. Then, we have $m^{\alpha}/2\le\ell\le m^{\alpha}$. For $i\in[\ell]$, let \mbox{$Q_i'\coloneqq(Q_{i,\ln m},...,Q_{i,m_i})$}; that is, we delete the first $\ln m$ vertices from $Q_i$ to obtain $Q_i'$. Since only a few vertices are removed in this process, their contribution is negligible in the final estimate. Then Corollary~\ref{cor: rapid mixing in G} applies to every step in $Q_i'$, since $\ln m>\alpha^{-2}$. We write \mbox{$Q_i-Q_i'=(Q_{i,1},...,Q_{i,\ln m-1})$}, then \mbox{$|Q_i-Q_i'|\le \ln m$}. Note that
\begin{equation}\label{eq: Q_i' sum bound}
m-m^{\alpha}\ln m\le|R|-\sum_{i=1}^{\ell}|Q_i-Q_i'|=\sum_{i=1}^{\ell}|Q_i'|\le |R|=m.
\end{equation}
For each $i\in [\ell]$, let $J_i\coloneqq\{j\in[m]|R_j\in Q_i-Q_i'\}$, $J_i'\coloneqq\{j\in[m]|R_j\in Q_i'\}$, and
$$W_i\coloneqq\sum_{j\in J_i'}\bw_{R_j}\1_{R_j\in U}\le c\sum_{j\in J_i'}\1_{R_j\in U}\le c|Q_i'|.$$

Consider the exposure martingale $Y_i\coloneqq \ex[\sum_{i'=1}^{\ell} W_{i'}|W_1,...,W_i]$, for $i\in[\ell]_0$. Our plan is to use Azuma's inequality to show that $Y_0$ and $Y_{\ell}$ are close to each other. Moreover, they are each close to one of the two sums in the statement of this lemma. Then we use triangle inequality to obtain the desired result.

Let $i\in[\ell]$ and $w_1,...,w_i,w_i'\in\bN_0$ be such that $w_{i'}\le c|Q_{i'}'|$ for each $i'\in[i]$ and $w_i'\le c|Q_i'|$. Let $\cE$ be the event that $W_1=w_1,...,W_i=w_i$ occurs, and let $\cE'$ be the event that $W_1=w_1,...,W_{i-1}=w_{i-1},W_i=w_i'$ occurs. Fix $i<i'\le\ell$. By Corollary~\ref{cor: rapid mixing in G}, for all $j\in J_{i'}'$ and $v\in V(G)$, we have
$$\pr[R_j=v|\cE]=(1\pm e^{-\alpha\ln m})\frac{1}{n}=(1\pm m^{-\alpha})\frac{1}{n}.$$
Using linearity of expectation, this implies
\begin{equation}\label{eq: expectation of W_i}
\ex[W_{i'}|\cE]=\sum_{j\in J_{i'}'}\sum_{v\in U}\bw_{R_j}\ex[\1_{R_j=v}|\cE]=\sum_{j\in J_{i'}'}\sum_{v\in U}\bw_v\pr[R_j=v|\cE]=|Q_{i'}'|\cdot(1\pm m^{-\alpha})\frac{1}{n}\sum_{v\in U}\bw_v.
\end{equation}
The same result holds when $\cE$ is replaced by $\cE'$. Thus, we have
\begin{align*}
&\mathrel{}\left|\ex\left[\left.\sum_{i'=1}^{\ell} W_{i'}\right|\cE\right]-\ex\left[\left.\sum_{i'=1}^{\ell} W_{i'}\right|\cE'\right]\right|\\
\le&\mathrel{}|w_i-w_i'|+\sum_{i'=i+1}^{\ell}|\ex[W_{i'}|\cE]-\ex[W_{i'}|\cE']|\\
\le&\mathrel{} c|Q_i'|+\sum_{i'=i+1}^{\ell}\left||Q_{i'}'|(1\pm m^{-\alpha})\frac{1}{n}\sum_{v\in U}\bw_v-|Q_{i'}'|(1\pm m^{-\alpha})\frac{1}{n}\sum_{v\in U}\bw_v\right|\\
\le&\mathrel{} c\cdot 2m^{1-\alpha}+2m^{-\alpha}\frac{1}{n}\sum_{i'=i+1}^{\ell}|Q_{i'}'|\sum_{v\in U}\bw_v\le 4cm^{1-\alpha}.
\end{align*}
Hence, $|Y_i-Y_{i-1}|\le 4cm^{1-\alpha}$ for all $i\in [\ell]$. Using Azuma's inequality and $\ell\le m^{\alpha}$, we have
\begin{equation}\label{eq: Y bound}
\pr[|Y_{\ell}-Y_0|\ge cm^{1-\frac{\alpha}{3}}]\le 2\exp\left(-\frac{\left(cm^{1-\frac{\alpha}{3}}\right)^2}{2\ell \left(4cm^{1-\alpha}\right)^2}\right)\le 2\exp\left(-\frac{m^{\alpha/3}}{32}\right)\le\frac{1}{n^k}.
\end{equation}
Similar to~(\ref{eq: expectation of W_i}), we obtain
$$Y_0=\ex\left[\sum_{i=1}^{\ell} W_i\right]=\sum_{i=1}^{\ell}\ex [W_i]=\sum_{i=1}^{\ell}|Q_i'|(1\pm m^{-\alpha})\frac{1}{n}\sum_{v\in U}\bw_v.$$
Using~(\ref{eq: Q_i' sum bound}), we have
\begin{align*}
Y_0&=(m\pm m^{\alpha}\ln m)(1\pm m^{-\alpha})\frac{1}{n}\sum_{v\in U}\bw_v\\
&=\frac{m}{n}\sum_{v\in U}\bw_v\pm(m^{\alpha}\ln m+m^{1-\alpha}+\ln m)\frac{1}{n}\sum_{v\in U}\bw_v\\
&=\frac{m}{n}\sum_{v\in U}\bw_v\pm 2cm^{1-\alpha}.\addtocounter{equation}{1}\tag{\theequation}\label{eq: Y_0 bound}
\end{align*}

Moreover, by construction, we have
$$Y_{\ell}\le \sum_{j=1}^m\bw_{R_j}\1_{R_j\in U}=\sum_{i=1}^{\ell}\sum_{j\in J_i'} \bw_{R_j}\1_{R_j\in U}+\sum_{i=1}^{\ell}\sum_{j\in J_i} \bw_{R_j}\1_{R_j\in U}\le\sum_{i=1}^{\ell} W_i+cm^{\alpha}\ln m\le Y_{\ell}+cm^{1-\alpha}.$$
Combining this with~(\ref{eq: Y_0 bound}), by triangle inequality, we have
\begin{align*}
\left|\sum_{j=1}^m\bw_{R_j}\1_{R_j\in U}-\frac{m}{n}\sum_{v\in U}\bw_v\right|&\le\left|\sum_{j=1}^m\bw_{R_j}\1_{R_j\in U}-Y_{\ell}\right|+|Y_{\ell}-Y_0|+\left|Y_0-\frac{m}{n}\sum_{v\in U}\bw_v\right|\\
&\le |Y_{\ell}-Y_0|+3cm^{1-\frac{\alpha}{3}}.
\end{align*}
So using~(\ref{eq: Y bound}), we conclude
\begin{align*}
\pr\left[\left|\sum_{v\in R\cap U}\bw_v-\frac{m}{n}\sum_{v\in U}\bw_v\right|\ge cn^{\frac{1}{3}-\frac{\alpha}{10}}\right]&\le\pr\left[\left|\sum_{j=1}^m\bw_{R_j}\1_{R_j\in U}-\frac{m}{n}\sum_{v\in U}\bw_v\right|\ge 4cm^{1-\frac{\alpha}{3}}\right]\\
&\le\pr[|Y_{\ell}-Y_0|\ge cm^{1-\frac{\alpha}{3}}]\le\frac{1}{n^k}.
\end{align*}
We obtain the desired result by redefining $\alpha$ to be $\alpha/10$.
\end{proof}

In particular, we apply the above lemma to obtain the following bounds on the number of vertices visited in a fixed set $U$ and how the transition probabilities and the entropy scale with respect to the neighbourhood of some $S\in \cS(G)$.

\begin{cor}\label{cor: a random walk is likely (U,a,a')-expected}
Suppose $1/n\ll\alpha\ll\hat\delta, 1/k, 1/b$. Let $G$ be an $(n,k,\delta)$-graph and $\bx$ be a $b$-normal perfect fractional matching of $G$. Let $R=(R_1,...,R_m)$ be a random walk on $G$ induced by $\bx$, where $m\coloneqq n^{1/3}$. Then the following holds.
\begin{enumerate}[label=\normalfont(\roman*), ref={\thelemma(\roman*)}]
    \item For every $U\subseteq V(G)$,
        $$\pr\left[\left||R\cap U|-\frac{m}{n}|U|\right|\ge n^{\frac{1}{3}-\alpha}\right]\le \frac{1}{n^k}.$$
    \item For every $S\in \cS(G)$,
        $$\pr\left[\left|\sum_{v\in R\cap N_G(S)}\by[S\to v]-\frac{m}{n}\right|\ge n^{-\frac{2}{3}-\alpha}\right]\le \frac{1}{n^k}.$$
    \item For every $S\in\cS(G)$,
        $$\pr\left[\left|\sum_{v\in R\cap N_G(S)}\by[S\to v]\log\frac{1}{\by[S\to v]}-\frac{m}{n}h_{\bx}(S)\right|\ge n^{-\frac{2}{3}-\alpha}\right]\le \frac{1}{n^k}.$$
\end{enumerate}
\end{cor}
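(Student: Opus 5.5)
Each of the three parts is a direct application of Lemma~\ref{lemma: behaviour wrt subsets}, with a suitable vertex weighting $\bw$ and bound $c$ each time. The walk $R=(R_1,\dots,R_m)$ and the constant $\alpha$ are as there. The only care needed is the sets over which we sum: Lemma~\ref{lemma: behaviour wrt subsets} controls $\sum_{v\in R\cap U}\bw_v$. I read the sums in the present statement as sums over vertices of $R\cap U$ (a set), consistent with that lemma, so no extra multiplicity argument is required.

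For (i), take $\bw\equiv 1$ and $c=1$. Lemma~\ref{lemma: behaviour wrt subsets} gives $\bigl||R\cap U|-\frac{m}{n}|U|\bigr|<n^{1/3-\alpha}$ with probability at least $1-n^{-k}$.

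For (ii), fix $S\in\cS(G)$. Take $U=N_G(S)$, $\bw_v=\by[S\to v]$ for $v\in N_G(S)$, and $\bw_v=0$ otherwise. By Lemma~\ref{lemma: S[G] property v}, $\bw_v\le \frac{b^2}{\delta n}=:c$, and $\sum_{v\in U}\bw_v=1$. Lemma~\ref{lemma: behaviour wrt subsets}, applied with some $\alpha'$ in place of $\alpha$, bounds the deviation from $\frac{m}{n}$ by $cn^{1/3-\alpha'}=\frac{b^2}{\delta}n^{-2/3-\alpha'}$. Choosing $\alpha'$ slightly larger than $\alpha$, say $\alpha=\alpha'/2$, absorbs the constant $b^2/\delta$ for $n$ large, and the hierarchy $1/n\ll\alpha\ll\hat\delta,1/k,1/b$ permits this.

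For (iii), take $U=N_G(S)$ and $\bw_v=\by[S\to v]\log\frac{1}{\by[S\to v]}$. This is nonnegative, since $\by\le 1$, and by definition $\sum_{v\in U}\bw_v=h_{\bx}(S)$. Again by Lemma~\ref{lemma: S[G] property v}, $\by[S\to v]\in[\frac{1}{b^2n},\frac{b^2}{\delta n}]$, so $\log\frac{1}{\by[S\to v]}\le\log(b^2n)$. Hence $\bw_v\le c:=\frac{b^2}{\delta n}\log(b^2n)$. The resulting error $cn^{1/3-\alpha'}=O_b(n^{-2/3-\alpha'}\log n)$ is at most $n^{-2/3-\alpha}$ once $\alpha<\alpha'$, again by rescaling $\alpha$.

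The only mild obstacle is this bookkeeping of constants and the $\log n$ factor in (iii). It is handled by applying Lemma~\ref{lemma: behaviour wrt subsets} with a larger $\alpha'$ and then renaming, so that the polynomial savings $n^{-(\alpha'-\alpha)}$ dominate $b^2\delta^{-1}\log(b^2n)$.
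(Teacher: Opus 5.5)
Your proposal is correct and is essentially the paper's proof. The paper likewise applies Lemma~\ref{lemma: behaviour wrt subsets} with $2\alpha$ in place of $\alpha$. It takes $\bw\equiv 1$, $c=1$ for (i), and $U=N_G(S)$ with $\bw_v=\by[S\to v]$, $c=\frac{b^2}{\delta n}$ for (ii), respectively $\bw_v=\by[S\to v]\log\frac{1}{\by[S\to v]}$, $c=\frac{b^2}{\delta n}\log(b^2n)$ for (iii), with the spare factor $n^{-\alpha}$ absorbing the constants and the $\log n$ exactly as you describe.
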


\begin{proof}
Let $\alpha$ be such that Lemma~\ref{lemma: behaviour wrt subsets} holds with $2\alpha$ playing the role of $\alpha$. Using the same notation as in Lemma~\ref{lemma: behaviour wrt subsets}, (i) follows by choosing $\bw\equiv1$ and $c=1$.

Fix $S\in \cS(G)$. Using Lemma~\ref{lemma: S[G] property v}, we obtain (ii) by choosing $\bw_v=\by[S\to v]$ for all $v\in V(G)$, $c=\frac{b^2}{\delta n}$ and $U=N_G(S)$. Lastly, (iii) follows by choosing $\bw_v=\by[S\to v]\log\frac{1}{\by[S\to v]}$ for all $v\in V(G)$, $c=\frac{b^2}{\delta n}\log(b^2n)$ and $U=N_G(S)$.
\end{proof}

We now define some desirable properties of a random walk.

Let $G$ be an $(n,k,\delta)$-graph and $\bx$ be a perfect fractional matching of $G$. Let $0<\alpha<1/3$. We say that a set $M\subseteq V(G)$ is \emph{$(G,\alpha)$-well-behaved} if for each $S\in\cS(G)$, the following holds.

\begin{enumerate}[label=\normalfont(\roman*), ref={\thelemma(\roman*)}]
\item $\left||M\cap N_G(S)|-\frac{|M|}{n}|N_G(S)|\right|<n^{\frac{1}{3}-\alpha}$.
\item $\left|\sum_{v\in M\cap N_G(S)} \by[S\to v]-\frac{|M|}{n}\right|<n^{-\frac{2}{3}-\alpha}$.
\item $\left|\sum_{v\in M\cap N_G(S)} \by[S\to v]\log\frac{1}{\by[S\to v]}-\frac{|M|}{n}h_\bx(S)\right|<n^{-\frac{2}{3}-\alpha}$.
\end{enumerate}

Let $Z=(\vec Z_0,...,\vec Z_m)$ be a random walk on $\mathbb{S}[G]$ induced by $\bx$. We say that $Z$ is \emph{self-avoiding} if $Z_i\setminus Z_{i-1}\not\subseteq\cup_{j=0}^{i-1}Z_j$ for all $i\in [m]$. That is, the corresponding walk in $G$ is a path. Let $R=(R_1,...,R_m)$ be the random walk on $G$ corresponding to the first $m$ vertices in $Z$. We say that $Z$ is \emph{$(\mathbb{S}[G], \alpha)$-well-behaved} if $Z$ is self-avoiding and $R$ is $(G, \alpha)$-well-behaved. In this case, we also say that $R$ is a \emph{$(G, \alpha)$-well-behaved path}.

The following lemma shows that with high probability, a random walk is self-avoiding; to this end, a simple union bound suffices, compare \cite{CK:09:lb} and \cite{GGJKO:21}. We omit the short and obvious proof.

\begin{lemma}\label{lemma: a random walk is likely self-avoiding}
Let $G$ be an $(n,k,\delta)$-graph and $\bx$ be a perfect fractional matching of $G$. Let $m\ge 0$ and $Z=(\vec Z_0,...,\vec Z_m)$ be a random walk on $\mathbb{S}[G]$ induced by $\bx$. Suppose every edge $e\in E(G)$ satisfies $\bx_e\le b/n^{k-1}$ for some $b\ge 1$. Then
$$\pr[Z\text{ is self-avoiding}]\ge 1-\frac{2m^2b^2}{n}.$$
\end{lemma}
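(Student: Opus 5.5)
We follow the union-bound strategy of \cite{CK:09:lb} and \cite{GGJKO:21}. Write $R_0,R_1,\dots$ for the vertex sequence in $G$ corresponding to $Z$. Then $\vec Z_0=(R_0,\dots,R_{k-2})$, and for $i\ge 1$ the vertex $R_{i+k-2}$ is the unique new vertex of $\vec Z_i$. The walk $Z$ fails to be self-avoiding exactly when some new vertex $R_{i+k-2}$, $i\in[m]$, coincides with one of the at most $i+k-2$ earlier vertices $R_0,\dots,R_{i+k-3}$. The new vertex cannot lie in $Z_{i-1}$, since $Z_{i-1}\cup\{R_{i+k-2}\}$ is an edge of $G$. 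So a collision can only be with one of at most $i-1\le m$ vertices of $\bigcup_{j<i-1}Z_j\setminus Z_{i-1}$. We therefore bound
$$\pr[Z\text{ not self-avoiding}]\le\sum_{i=1}^m\ex\Bigl[\sum_{u\in \bigcup_{j<i-1}Z_j\setminus Z_{i-1}}\pr[R_{i+k-2}=u\mid \vec Z_0,\dots,\vec Z_{i-1}]\Bigr].$$
By the Markov property, the inner conditional probability equals $\by[Z_{i-1}\to u]$. It thus suffices to show that $\by[S\to u]\le 2b^2/n$ for every $S\in\cS(G)$ and $u\in N_G(S)$. Then each step contributes at most $m\cdot 2b^2/n$, and summing over $i\in[m]$ gives the claimed bound $2m^2b^2/n$.

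The pointwise bound on $\by[S\to u]$ is the heart of the matter, and we split it into two cases according to the definition of $\by$. If $\bx_S=0$, then $\by[S\to u]=1/\deg_G(S)\le 1/(\delta n)\le 2/n\le 2b^2/n$, using the Dirac condition $\delta>1/2$. If $\bx_S\neq 0$, then $\by[S\to u]=\bx[S\cup\{u\}]/\bx_S\le (b/n^{k-1})/\bx_S$ by the hypothesis. In this case we need the lower bound $\bx_S\ge n^{k-2}\cdot\frac{1}{2b}\cdot n^{-(k-2)}\cdots$, more precisely $\bx_S\ge \frac{1}{2bn^{k-2}}$. The natural source for this is Lemma~\ref{lemma: S[G] property iv}, which gives $\bx_S\ge \delta/(bn^{k-2})\ge 1/(2bn^{k-2})$ and hence $\by[S\to u]\le 2b^2/n$.

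I expect the main obstacle to be exactly this case. With only the upper bound $\bx_e\le b/n^{k-1}$, the quantity $\bx_S$ for a single $(k-1)$-set could in principle be tiny while being nonzero. The transition $\by[S\to u]$ could then be of order $1$, and no pointwise bound holds. My first attempt would be to avoid a pointwise bound and average instead. The vertex constraint $\sum_{v\in S}\bx_S=k-1$ (Lemma~\ref{lemma: S[G] property iii}) controls how often the walk can sit at sets with small $\bx_S$, and I would try to use it to bound the expected total collision probability directly. If this averaging does not close, I would fall back on the setting in which the lemma is actually applied. There $\bx$ is always $b$-normal, so Lemma~\ref{lemma: S[G] property iv} applies and the pointwise argument above goes through verbatim. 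The hypothesis should then be read as including the matching lower bound $\bx_e\ge 1/(bn^{k-1})$, and the proof is the two-line union bound.
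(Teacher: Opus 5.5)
Your fallback argument is correct and is exactly the union bound the paper has in mind (it omits the proof). The collision probability at step $i$ is at most $(i-1)\max\by$, and $\max\by\le b^2/(\delta n)\le 2b^2/n$ by Lemma~\ref{lemma: S[G] property v}. That lemma needs $b$-normality, which holds in every application here and is stated explicitly in the appendix analogue, Lemma~\ref{lemma: a random walk is likely self-avoiding - non-tight}.

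You are also right that the upper bound alone does not suffice, and no averaging can rescue the literal statement for a fixed starting tuple. In $K_n^{(3)}$ there is a perfect fractional matching with all weights at most $4/n^2$ that gives weight $0$ to every edge through $\{a,b\}$ or $\{b,c\}$ other than $\{a,b,c\}$. The walk started at $(a,b)$ is then forced to $(b,c)$ and then to $(c,a)$. So it fails to be self-avoiding with probability $1$, while the claimed bound tends to $1$.
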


All together, we show that with high probability, a random walk is $(\mathbb{S}[G],\alpha)$-well-behaved.

\begin{cor}\label{cor: a random walk is likely good}
Suppose $1/n\ll\alpha\ll\hat\delta, 1/k, 1/b$. Let $G$ be an $(n,k,\delta)$-graph and $\bx$ be a $b$-normal perfect fractional matching of $G$. Let $Z=(\vec Z_0,...,\vec Z_m)$ be a random walk on $\mathbb{S}[G]$ induced by $\bx$, where $m\coloneqq n^{1/3}$. Let $\cE$ be the event that $Z$ is $(\mathbb{S}[G],\alpha)$-well-behaved. Then \mbox{$\pr[\cE]\ge 1-n^{-1/4}$}.
\end{cor}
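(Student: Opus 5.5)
We combine Lemma~\ref{lemma: a random walk is likely self-avoiding} and Corollary~\ref{cor: a random walk is likely (U,a,a')-expected} with a union bound. The event $\cE$ fails only if $Z$ is not self-avoiding, or if $R=(R_1,\dots,R_m)$ violates one of the three conditions in the definition of a $(G,\alpha)$-well-behaved set for some $S\in\cS(G)$. We bound each failure probability separately.

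\emph{Self-avoidance.} Since $\bx$ is $b$-normal, every edge satisfies $\bx_e\le b/n^{k-1}$. Lemma~\ref{lemma: a random walk is likely self-avoiding} with $m=n^{1/3}$ therefore gives
$$\pr[Z\text{ not self-avoiding}]\le \frac{2b^2n^{2/3}}{n}=2b^2n^{-1/3}.$$

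\emph{Well-behavedness of $R$.} Choose $\alpha$ small enough that Corollary~\ref{cor: a random walk is likely (U,a,a')-expected} holds with $\alpha$. A small technical point is that the definition uses $|M|$ and strict inequalities, whereas the corollary uses $m$ and non-strict ones; to absorb constants we may apply the corollary with $2\alpha$ in place of $\alpha$. On the self-avoiding event, $R$ consists of $m$ distinct vertices, so the set $R$ has $|R|=m$ and the $m/n$ in the corollary matches $|M|/n$.
\begin{itemize}
\item Condition (i): apply part (i) of the corollary with $U=N_G(S)$.
\item Condition (ii): apply part (ii) of the corollary.
\item Condition (iii): apply part (iii) of the corollary.
\end{itemize}
In the corollary, the sums over $R\cap N_G(S)$ are to be read as sums over the indices $j$ with $R_j\in N_G(S)$. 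These coincide with sums over the set $R\cap N_G(S)$ when $R$ is self-avoiding, so it suffices to intersect with the self-avoiding event, which we already handle.

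Each application fails with probability at most $n^{-k}$. There are $|\cS(G)|\le n^{k-1}$ choices of $S$ and three conditions per $S$, so the union bound gives total failure probability at most $3n^{k-1}\cdot n^{-k}=3/n$.

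\emph{Conclusion.} Altogether,
$$\pr[\overline{\cE}]\le 2b^2n^{-1/3}+3n^{-1}\le n^{-1/4}$$
for $n$ large, using $1/n\ll 1/b$.

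The only delicate point is making sure the quantities in the corollary match those in the definition: the multiset-versus-set issue and $|M|=m$ are both handled on the self-avoiding event. There is no real obstacle beyond this bookkeeping.
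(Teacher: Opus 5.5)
Your proof is correct and is essentially the paper's argument: a union bound combining Lemma~\ref{lemma: a random walk is likely self-avoiding} (failure probability $2b^2n^{-1/3}$) with the three parts of Corollary~\ref{cor: a random walk is likely (U,a,a')-expected} over all $S\in\cS(G)$ (total failure probability $3{n\choose k-1}n^{-k}$). Your check that the multiset sums agree with the set sums and that $|R|=m$ on the self-avoiding event is a detail the paper leaves implicit. Replacing $\alpha$ by $2\alpha$ is unnecessary, because the corollary bounds the probability of deviations $\ge n^{\frac13-\alpha}$, so its complement already gives the strict inequalities in the definition.
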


\begin{proof}
Using Corollary~\ref{cor: a random walk is likely (U,a,a')-expected} and Lemma~\ref{lemma: a random walk is likely self-avoiding}, we have
$$\pr[\cE]\ge 1-\frac{2m^2b^2}{n}-3|\cS(G)|\frac{1}{n^k}\ge 1-2b^2n^{-\frac{1}{3}}-3{n\choose k-1}\frac{1}{n^k}\ge1-n^{-\frac{1}{4}},$$
which completes the proof.
\end{proof}

\section{Counting Hamilton cycles}\label{section: Counting Hamilton cycles}

\subsection{Counting well-behaved random walks}
We now find a lower bound on the entropy of a random walk on $\mathbb{S}[G]$. This then provides a lower bound on the number of such random walks. For convenience, given an $(n,k,\delta)$-graph $G$, we define
$$\overline h(G)\coloneqq h(G)-\frac{n}{k}\log{n\choose k-1}+\frac{n}{k}\log n,$$
since this expression appears frequently in this section. Note that \mbox{$\overline h(G)=\Theta(n\log n)$}.

\begin{lemma}\label{lemma: entropy of a random walk}
Suppose $1/n\ll \hat\delta,1/k, 1/b,\eps$. Let $G$ be an $(n,k,\delta)$-graph and $\bx$ be a $b$-normal perfect fractional matching of $G$ such that $h(\bx)\ge h(G)-\eps n$. Fix $\vec T\in \vec{\cS}(G)$. Let $Z=(\vec Z_0,...,\vec Z_m)$ be a random walk on $\mathbb{S}[G]$ induced by $\bx$ such that $\vec Z_0=\vec T$ and $m\coloneqq n^{1/3}$. Then $H(Z)\ge \frac{m}{n}k\overline h(G)-2k\eps m$.
\end{lemma}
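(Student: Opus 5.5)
We use the chain rule (Fact~\ref{fact: CR for entropy}) and the Markov property. Since $\vec Z_0=\vec T$ is fixed,
$$H(Z)=\sum_{i=1}^{m}H(\vec Z_i\mid \vec Z_0,\dots,\vec Z_{i-1})=\sum_{i=1}^{m}H(\vec Z_i\mid \vec Z_{i-1}).$$
Given $\vec Z_{i-1}=\vec S$, the next state is determined by the new vertex $v\in N_G(S)$, which is chosen with probability $\by[S\to v]$. Hence
$$H(\vec Z_i\mid \vec Z_{i-1}=\vec S)=h_{\bx}(S),\qquad H(\vec Z_i\mid \vec Z_{i-1})=\sum_{\vec S\in\vec\cS(G)}\pr[\vec Z_{i-1}=\vec S]\,h_{\bx}(S).$$
The task therefore reduces to comparing the distribution of $\vec Z_{i-1}$ with the stationary distribution $\sigma_{\vec S}=\bx_S/((k-1)!n)$.

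Fix $\alpha$ as in Corollary~\ref{cor: rapid mixing in S[G]}. For $i-1\ge \alpha^{-2}$ that corollary gives $\pr[\vec Z_{i-1}=\vec S]\ge (1-e^{-\alpha(i-1)})\sigma_{\vec S}$. Since $h_{\bx}(S)\ge 0$, we can simply discard the first $\alpha^{-2}$ steps, which is a constant number of steps, and use only this lower bound for the rest. Summing over the $(k-1)!$ orderings of each set, for such $i$ we obtain
$$H(\vec Z_i\mid\vec Z_{i-1})\ge (1-e^{-\alpha(i-1)})\frac1n\sum_{S\in\cS(G)}\bx_S h_{\bx}(S).$$
By Lemma~\ref{lemma: S[G] property vi},
$$\sum_{S}\bx_S h_{\bx}(S)\ge kh(\bx)-n\log\binom{n}{k-1}+n\log n\ge k\overline h(G)-k\eps n,$$
where the second inequality uses $h(\bx)\ge h(G)-\eps n$ and the definition of $\overline h(G)$. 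Each such term is therefore at least $(1-e^{-\alpha(i-1)})\bigl(\frac{k}{n}\overline h(G)-k\eps\bigr)$.

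Summing over $\alpha^{-2}<i\le m$ gives
$$H(Z)\ge \Bigl(m-\alpha^{-2}-\sum_{j\ge 0}e^{-\alpha j}\Bigr)\Bigl(\frac kn\overline h(G)-k\eps\Bigr)\ge \frac mn k\overline h(G)-k\eps m-O_\alpha(1)\cdot\frac kn\overline h(G).$$
Since $\overline h(G)\le h(G)\le \frac{k-1}{k}n\log n$, the last term is $O_\alpha(\log n)$. This is at most $k\eps m$ because $m=n^{1/3}$ and $n$ is large. This yields $H(Z)\ge\frac mn k\overline h(G)-2k\eps m$.

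The one point that needs care is that $\alpha$ depends only on $\hat\delta,k,b$, so the loss from the discarded initial steps and from the geometric tail is $O(\log n)$, which is absorbed by $\eps m$. We also need $\frac kn\overline h(G)-k\eps\ge 0$ in order to multiply by the lower-bound factors. This holds since $\overline h(G)=\Theta(n\log n)$ by Theorem~\ref{thm: graph entropy bound}; alternatively, one can avoid the issue by applying the factor $(1-e^{-\alpha(i-1)})$ to $\sum_S\bx_S h_{\bx}(S)\ge0$ before substituting the lower bound.
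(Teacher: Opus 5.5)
Your proof is correct and follows the paper's argument. Both use the chain rule with the Markov property, the identity $H(\vec Z_i\mid\vec Z_{i-1}=\vec S)=h_{\bx}(S)$, the rapid-mixing lower bound $\pr[\vec Z_{i-1}=\vec S]\ge(1-e^{-\alpha(i-1)})\sigma_{\vec S}$ after a burn-in, and Lemma~\ref{lemma: S[G] property vi} to reach $k\overline h(G)-k\eps n$. The only difference is bookkeeping: the paper discards the first $\ln n+1$ steps and uses the uniform factor $1-n^{-\alpha}$, while you discard a constant number of steps and sum the geometric tail, and in both cases the loss is $o(m)$ and absorbed by $k\eps m$.
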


\begin{proof}
We use the chain rule and Corollary~\ref{cor: rapid mixing in S[G]} to obtain the desired result. Choose $\alpha$ with $1/n\ll \alpha\ll\hat\delta,1/k, 1/b$ such that Corollary~\ref{cor: rapid mixing in S[G]} holds. Let $a\coloneqq\ln n+1>\alpha^{-2}$. Using the definition of a Markov chain, for $a\le i\le m$, we have
\begin{align*}
H(\vec Z_i|\vec Z_0,...,\vec Z_{i-1})&=H(\vec Z_i|\vec Z_{i-1})=\sum_{\vec S\in \vec{\cS}(G)}\pr[\vec Z_{i-1}=\vec S]H(\vec Z_i|{\vec Z_{i-1}=\vec S})\\
&\ge\sum_{\vec S\in \vec{\cS}(G)}\left(1-e^{-\alpha\ln n}\right)\frac{\bx_S}{(k-1)!n}\cdot h_{\bx}(S)\\
&\ge\left(1-n^{-\alpha}\right)\frac{1}{n}\sum_{S\in \cS(G)}\bx_Sh_{\bx}(S).
\end{align*}
Using Lemma~\ref{lemma: S[G] property vi}, we have
\begin{align*}
&\mathrel{}\sum_{S\in \cS(G)}\bx_Sh_{\bx}(S)\ge\mathrel{} kh(\bx)-n\log{n\choose k-1}+n\log n\ge k\overline h(G)-k\eps n.
\end{align*}
Using the chain rule of entropy (Fact~\ref{fact: CR for entropy}), we have
\begin{align*}
H(Z)&=\sum_{i=1}^mH(\vec Z_i|\vec Z_0,...,\vec Z
_{i-1})\ge (m-a+1)\cdot\left(1-n^{-\alpha}\right)\frac{1}{n}\left(k\overline h(G)-k\eps n\right)\\
&\ge\left(1-n^{-\frac{1}{3}}\ln n-n^{-\alpha}\right)\frac{m}{n}\left(k\overline h(G)-k\eps n\right)\ge \frac{m}{n}k\overline h(G)-2k\eps m,
\end{align*}
which completes the proof.
\end{proof}

\begin{cor}\label{cor: counting well-behaved paths}
Suppose $1/n\ll\alpha\ll \hat\delta,1/k, 1/b,\eps $. Let $G$ be an $(n,k,\delta)$-graph and $\bx$ be a $b$-normal perfect fractional matching of $G$ such that $h(\bx)\ge h(G)-\eps n$. Fix $\vec S\in \vec{\cS}(G)$. Let $Z=(\vec Z_0,...,\vec Z_m)$ be a random walk on $\mathbb{S}[G]$ induced by $\bx$ such that $\vec Z_0=\vec S$ and $m\coloneqq n^{1/3}$. Let $\cZ$ be the set of $(\mathbb{S}[G], \alpha)$-well-behaved realizations of $Z$. Then $\log |\cZ|\ge \frac{m}{n}k\overline h(G)-3k\eps m$.
\end{cor}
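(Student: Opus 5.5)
The count will come from the entropy of $Z$ after conditioning on the event $\cE$ that $Z$ is $(\mathbb{S}[G],\alpha)$-well-behaved. By Fact~\ref{fact: max entropy}, $\log|\cZ|\ge H(Z\mid\cE)$, since conditioned on $\cE$ the walk $Z$ is supported on $\cZ$. So it suffices to show
$$H(Z\mid \cE)\ge H(Z)-k\eps m,$$
and then combine this with Lemma~\ref{lemma: entropy of a random walk}, which gives $H(Z)\ge \frac{m}{n}k\overline h(G)-2k\eps m$.

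To pass from $H(Z)$ to $H(Z\mid\cE)$ I will use Fact~\ref{fact: entropy given a likely event}, taking $X=Z$, the event $\cE$, and $J$ the set of well-behaved realizations.

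\emph{Failure probability.} Corollary~\ref{cor: a random walk is likely good} gives $\pr[\cE]\ge 1-n^{-1/4}$, so I can take $a=n^{-1/4}\le 1/2$.

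\emph{Lower bound on atom probabilities.} I need a lower bound $1/A$ on $\pr[Z=z]$ for every $z$ in the support. Since $\vec Z_0=\vec S$ is fixed, each realization has probability equal to a product of $m$ transition probabilities $\by[\vec Z_{i-1}\vec Z_i]$. Each such factor is at least $\frac{1}{b^2n}$ by Lemma~\ref{lemma: S[G] property v}. Hence every atom has probability at least $(b^2n)^{-m}$, and I set $A\coloneqq (b^2n)^m\ge16$.

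\emph{Applying the fact.} Fact~\ref{fact: entropy given a likely event} then yields
$$H(Z)-H(Z\mid\cE)\le 2n^{-1/4}\cdot m\log(b^2n)=o(m),$$
which is at most $k\eps m$ for $n$ large.

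The only point needing care is that Fact~\ref{fact: entropy given a likely event} is stated for a finite list of values each having probability at least $1/A$. I will take that list to be exactly the support of $Z$, which is finite. The loss $2a\log A$ is then only $O(n^{-1/4}m\log n)$, negligible against $\eps m$.

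Putting everything together,
$$\log|\cZ|\ge H(Z\mid\cE)\ge H(Z)-k\eps m\ge \frac{m}{n}k\overline h(G)-3k\eps m,$$
as claimed. The hierarchy $1/n\ll\alpha$ is used only so that Corollary~\ref{cor: a random walk is likely good} applies with this $\alpha$, and Lemma~\ref{lemma: entropy of a random walk} needs only $1/n\ll\hat\delta,1/k,1/b,\eps$.
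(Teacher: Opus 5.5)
Your proposal is correct and follows the paper's proof essentially step for step. You condition on the well-behaved event $\cE$ with $\pr[\cE]\ge 1-n^{-1/4}$ from Corollary~\ref{cor: a random walk is likely good}, and bound every atom probability below by $(b^2n)^{-m}$ via Lemma~\ref{lemma: S[G] property v}. Fact~\ref{fact: entropy given a likely event} then costs only $2n^{-1/4}m\log(b^2n)=o(m)$, and combining with Fact~\ref{fact: max entropy} and Lemma~\ref{lemma: entropy of a random walk} gives the bound, exactly as the paper does.
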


\begin{proof}
Choose $\alpha$ with $1/n\ll\alpha\ll \hat\delta,1/k, 1/b$ such that Corollary~\ref{cor: a random walk is likely good} holds. Let $\cE$ denote the event that $Z$ is $(\mathbb{S}[G], \alpha)$-well-behaved. By Corollary \ref{cor: a random walk is likely good}, we have $\pr[\cE]\ge 1-n^{-1/4}$. Let $W$ be any realization of $Z$, then Lemma~\ref{lemma: S[G] property v} yields $\pr[Z=W]\ge (b^2n)^{-m}.$ By Facts~\ref{fact: max entropy} and~\ref{fact: entropy given a likely event} as well as Lemma~\ref{lemma: entropy of a random walk}, we have
$$\log|\cZ|\ge H(Z|\cE)\ge H(Z)-2n^{-\frac{1}{4}}\log (b^2n)^m\ge \frac{m}{n}k\overline h(G)-3k\eps m,$$
as desired.
\end{proof}

\subsection{Counting long paths}
The next lemma forms the basis for iterating Corollary~\ref{cor: counting well-behaved paths}. It states that if we remove a $(G, \alpha)$-well-behaved path from an $(n, k, \delta)$-graph~$G$, the resulting subgraph remains $\delta'$-Dirac for some $\delta'$ only slightly smaller than $\delta$. This allows us to apply Corollary~\ref{cor: counting well-behaved paths} again to the subgraph.

\begin{lemma}\label{lemma: graph remains Dirac after removal of a well-behaved path}
Suppose $1/n\ll\alpha\ll 1/b\ll\eps \ll\hat\delta, 1/k$. Let $G$ be an $(n,k,\delta)$-graph and $\bx$ be a $b$-normal perfect fractional matching of $G$ such that $h(\bx)\ge h(G)-\eps n$. Suppose $P=v_1...v_m$ is a $(G, \alpha)$-well-behaved path, where $m\coloneqq n^{1/3}$. Let $G'\coloneqq G-P$, $n'\coloneqq n-m$, $\delta'\coloneqq \delta -2n^{-\frac{2}{3}-\alpha}$ and $b'\coloneqq (1+n^{-\frac{2}{3}-\frac{\alpha}{2}})b$. Then $G'$ is an $(n', k, \delta')$-graph, and there exists a $b'$-normal perfect fractional matching~$\bx'$ of $G'$ such that $h(\bx')\ge h(G')-\eps n'$ and
\begin{equation}\label{eq: condition on x'}
h(\bx')\ge \frac{n'}{n}h(\bx)-\frac{k-1}{k}n'\log \frac{n}{n'}-n^{\frac{1}{3}-\frac{\alpha}{2}}.
\end{equation}
\end{lemma}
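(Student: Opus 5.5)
The proof has three parts, taken in order: the Dirac condition for $G'$, the construction of $\bx'$ by Lemma~\ref{lemma: entropy is as expected after removing a well-behaved subset}, and the comparison $h(\bx')\ge h(G')-\eps n'$. Throughout, $M\coloneqq V(P)=\{v_1,\dots,v_m\}$, so $|M|=m=n^{1/3}$ and $G'=G-M$.

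\emph{Dirac condition.} Fix $S\in\cS(G')\subseteq\cS(G)$. Then $\deg_{G'}(S)=\deg_G(S)-|M\cap N_G(S)|$. Property~(i) of $(G,\alpha)$-well-behavedness gives $|M\cap N_G(S)|\le \frac{m}{n}\deg_G(S)+n^{\frac13-\alpha}$. Hence
$$\deg_{G'}(S)\ge \Bigl(1-\frac mn\Bigr)\deg_G(S)-n^{\frac13-\alpha}\ge \delta n'-n^{\frac13-\alpha}.$$
Since $2n^{-\frac23-\alpha}n'\ge n^{\frac13-\alpha}$, this is at least $\delta' n'$. So $G'$ is an $(n',k,\delta')$-graph, and $\delta'>1/2$ with $\hat\delta'\approx\hat\delta$.

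\emph{Construction of $\bx'$.} I apply Lemma~\ref{lemma: entropy is as expected after removing a well-behaved subset} with this $M$. Its hypothesis (i) is property (ii) of well-behavedness, because $n^{-\frac23-\alpha}=n^{-\alpha}\frac mn$. Its hypothesis (ii) is exactly property (iii). The lemma requires $b\ge4$, which holds since $1/b\ll\eps$. It yields a $b'$-normal perfect fractional matching $\bx'\coloneqq\bz$ of $G'$ satisfying~(\ref{eq: condition on x'}).

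\emph{The comparison $h(\bx')\ge h(G')-\eps n'$.} This is the main obstacle. By Corollary~\ref{cor: pfm entropy bound}, $\frac mn h(\bx)=\frac{k-1}{k}m\log n\pm O_b(m)$, and $\frac{k-1}{k}n'\log\frac{n}{n'}=O(m)$. So (\ref{eq: condition on x'}) gives
$$h(\bx')\ge h(G)-\eps n-\frac{k-1}{k}m\log n-O_b(m).$$
Since $\eps n-\eps n'=\eps m$, it therefore suffices to prove the reverse-type estimate
$$h(G)\ge h(G')+\frac{k-1}{k}m\log n-O_b(m),$$
where the $O_b(m)$ must be at most roughly $\eps m$ once $1/b\ll\eps$ is used. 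The bound from Lemma~\ref{lemma: entropy decreases slightly after removing a small subset} points the wrong way, so it does not help here.

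To get this estimate I would run the argument of Lemma~\ref{lemma: entropy is as expected after removing a well-behaved subset} in reverse. First take, by Lemma~\ref{lemma: existence of b-normal pfm}, a $b''$-normal perfect fractional matching $\bw$ of $G'$ with $h(\bw)\ge h(G')-\eps n'/10$. Next define an edge weighting of $G$ by two pieces:
\begin{itemize}
\item on $E(G')$, the weights $\mu\bw_e$, where $\mu\approx(n'/n)^{k-1}$ is a scaling factor;
\item on the edges meeting $M$, the weights $\bx_e$.
\end{itemize}
Then choose $\mu$ so that the total weight is $n/k$. The vertex loads are then $1\pm O_b(m/n)$. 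As before, this uses the $b$-normality of $\bw$ and $\bx$ together with the fact that a vertex of $G'$ lies in at most $mn^{k-2}$ edges meeting $M$.

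After that, apply the normalization procedure via common neighbourhoods, which is valid since $G$ is $\delta$-Dirac. As in~(\ref{eq: h(z) bound}), it costs only $O(n\Delta\log b)=O_b(m)$ entropy. The entropy of the combined weighting is
$$\mu h(\bw)-\frac{n'}{k}\log\mu+\sum_{e\cap M\neq\emptyset}\bx_e\log\frac1{\bx_e}.$$
The middle term contributes $\frac{k-1}{k}n'\log\frac{n}{n'}=O(m)$. The last sum has total weight at least $m/k$ and individual weights about $n^{-(k-1)}$, so it contributes at least $\frac{m}{k}(k-1)\log n-O_b(m)$. Finally, $\mu h(\bw)\ge h(\bw)-O(m)$ since $h(\bw)=O(n\log n)$.

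The delicate point is the error in the last sum. I would count its mass more carefully, using that each edge meeting $M$ typically meets it in exactly one vertex, so the weight on such edges is about $m$ rather than $m/k$. The aim is to make the error terms genuinely $O_b(m)$ rather than $\Theta(m\log n)$. If this bookkeeping does not close, the fallback is to choose the constant $\eps$ in the conclusion slightly larger (say $2\eps$) and absorb the loss there.
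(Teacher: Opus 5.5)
Your first two parts are correct and match the paper: the codegree bound for $G'$, and obtaining $\bz$ from Lemma~\ref{lemma: entropy is as expected after removing a well-behaved subset} via properties (ii) and (iii) of well-behavedness.

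The third part has a genuine gap. You try to make the \emph{same} matching $\bz$ also satisfy $h(\bz)\ge h(G')-\eps n'$, and this cannot be closed. First, there is no spare $\eps m$: $\frac{n'}{n}(h(G)-\eps n)=\frac{n'}{n}h(G)-\eps n'$ exactly. So from $h(\bx)\ge h(G)-\eps n$ and (\ref{eq: condition on x'}) you get $h(\bz)\ge h(G')-\eps n'$ only if
\[
h(G')\le\frac{n'}{n}h(G)-\frac{k-1}{k}n'\log\frac{n}{n'}-n^{\frac13-\frac{\alpha}{2}},
\]
an inequality with no room for any error term. It is in fact false for $G=K_n^{(k)}$. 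There $h(G)=\frac nk\log{n-1\choose k-1}$, so
\[
\frac{n'}{n}h(G)-\frac{k-1}{k}n'\log\frac{n}{n'}-h(G')=\frac{n'}{k}\sum_{j=1}^{k-1}\log\Bigl(1+\frac{jm}{n(n'-j)}\Bigr)=O(n^{-2/3}),
\]
far below $n^{\frac13-\frac{\alpha}{2}}$. So no comparison of $h(G)$ with $h(G')$ carrying $O_b(m)$ errors can rescue this.

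Two further problems with the reverse construction:
\begin{itemize}
\item Under $1/b\ll\eps$, an $O_b(m)$ term is not at most $\eps m$ anyway.
\item Your step $\mu h(\bw)\ge h(\bw)-O(m)$ is false, since $1-\mu=\Theta(m/n)$ and $h(\bw)=\Theta(n\log n)$.
\end{itemize}
The fallback of weakening $\eps$ to $2\eps$ changes the statement. It would also be fatal in Lemma~\ref{lemma: counting long paths}, where this lemma is iterated roughly $n^{2/3}$ times and the invariant $h(\bx_i)\ge h(G_i)-\eps n_i$ is needed with the same $\eps$ at every step.

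The missing idea is that the two conclusions need not come from the same construction. Since $\delta'\ge\frac12+\frac{\hat\delta}{2}$, the hierarchy $1/b\ll\eps\ll\hat\delta,1/k$ lets you apply Lemma~\ref{lemma: existence of b-normal pfm} to $G'$ with $\hat\delta/2$ in place of $\hat\delta$. This gives a $b$-normal perfect fractional matching $\bz'$ of $G'$ with $h(\bz')\ge h(G')-\eps n'$, and $\bz'$ is also $b'$-normal because $b<b'$. Now take $\bx'$ to be whichever of $\bz$ and $\bz'$ has the larger entropy. Its entropy dominates both lower bounds, so it satisfies both $h(\bx')\ge h(G')-\eps n'$ and (\ref{eq: condition on x'}). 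This is exactly how the paper concludes, and it makes the entire third part of your plan unnecessary.
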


\begin{proof}
Choose $\eps $ and $b\ge 4$ such that Lemma~\ref{lemma: existence of b-normal pfm} holds for the parameters $\frac{\hat\delta}{2}$ (instead of $\hat\delta$) and $k$. Choose $\alpha$ such that  Lemma~\ref{lemma: entropy is as expected after removing a well-behaved subset} holds. Since $P$ is $(G, \alpha)$-well-behaved, for each $S\in \cS(G')$, we have
\begin{align*}
|N_{G'}(S)|&= |N_G(S)|-|N_G(S)\cap P|\ge |N_G(S)|-\frac{m}{n}|N_G(S)|-n^{\frac{1}{3}-\alpha}\\
&\ge\frac{n'}{n}\delta n-\frac{n^{\frac{1}{3}-\alpha}}{n-m}n'\ge\left(\delta-2n^{-\frac{2}{3}-\alpha}\right)n'>\frac{1}{2}n'.
\end{align*}
Thus, $G'$ is an $(n',k,\delta')$-graph. By Lemma~\ref{lemma: entropy is as expected after removing a well-behaved subset}, there exists a $b'$-normal perfect fractional matching $\bz$ of $G'$ satisfying (\ref{eq: condition on x'}). By the choice of $b$ and Lemma~\ref{lemma: existence of b-normal pfm}, there exists a $b$-normal perfect fractional matching $\bz'$ of $G'$ such that \mbox{$h(\bz')\ge h(G')-\eps n'$}.  Since $b'>b$, taking $\bx'$ to be the one between $\bz$ and $\bz'$ that achieves the higher entropy completes the proof.
\end{proof}

\begin{lemma}\label{lemma: counting long paths}
Suppose $1/n\ll \beta\ll\eps\ll\hat\delta, 1/k$. Let $G$ be an $(n,k,\delta)$-graph. Fix $\vec S\in\vec{\cS}(G)$. Let $z$ be the number of paths in $G$ that start at $\vec S$ and have length at least $n-n^{1-\beta}$. Then $\log z\ge k\overline h(G)-n\log e-\eps n.$
\end{lemma}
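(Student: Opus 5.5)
We iterate Corollary~\ref{cor: counting well-behaved paths} together with Lemma~\ref{lemma: graph remains Dirac after removal of a well-behaved path}. Fix constants $1/n\ll\alpha\ll 1/b\ll\eps'\ll\hat\delta,1/k$ with $\eps'\ll\eps$, and $\beta\ll\alpha$.

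\emph{Setup and iteration.} By Lemma~\ref{lemma: existence of b-normal pfm}, take a $b$-normal perfect fractional matching $\bx_0$ of $G_0\coloneqq G$ with $h(\bx_0)\ge h(G)-\eps' n$, and set $\vec S_0\coloneqq\vec S$. At step $i$ we have an $(n_i,k,\delta_i)$-graph $G_i$, a $b_i$-normal perfect fractional matching $\bx_i$ with $h(\bx_i)\ge h(G_i)-\eps' n_i$, and a start tuple $\vec S_i$. We apply Corollary~\ref{cor: counting well-behaved paths} with $m_i\coloneqq n_i^{1/3}$. Each well-behaved walk gives a path $P_i$ from $\vec S_i$ to some $\vec S_{i+1}$. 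We then set $G_{i+1}\coloneqq G_i-(P_i-S_{i+1})$ and invoke Lemma~\ref{lemma: graph remains Dirac after removal of a well-behaved path} to get $\bx_{i+1}$.

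Two small issues need care here. The lemma removes exactly the first $m$ vertices $R_1,\dots,R_m$, and the last $k-1$ vertices are kept as the next start tuple, which matches the definition of well-behavedness. The start tuple $S_i$ should also be deleted; these are only $k-1$ vertices and can be absorbed by Lemma~\ref{lemma: entropy decreases slightly after removing a small subset}, or by shifting indices.

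Concatenating gives distinct paths from $\vec S$: distinct sequences of choices give distinct vertex sequences. Hence $\log z\ge\sum_i\log|\cZ_i|$, summed over the steps until $n_i\le n^{1-\beta}$.

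\emph{Parameter control.} This is the main obstacle. Since $n_i$ drops by $n_i^{1/3}$ each step, the number of steps is $\kappa=O(n^{2/3})$. The recursions give
$$b_\kappa\le b\prod_i\bigl(1+n_i^{-2/3-\alpha/2}\bigr)\le b\exp\Bigl(\sum_i n_i^{-2/3-\alpha/2}\Bigr).$$
The sum is roughly $\int_{n^{1-\beta}}^{n}x^{-1-\alpha/2}\,dx\le n^{-(1-\beta)\alpha/2}\cdot O(1/\alpha)$, which tends to $0$ because $\beta<1$, so $b_i\le 2b$ throughout. Similarly
$$\delta_\kappa\ge\delta-2\sum_i n_i^{-2/3-\alpha}\ge\delta-o(1).$$
So all lemmas apply uniformly with $2b$ and $\hat\delta/2$, provided the hierarchy is chosen with respect to these worst-case values. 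We also need $1/n_i\ll\alpha$, which holds since $n_i\ge n^{1-\beta}$.

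\emph{Entropy telescoping.} Here the crucial input is~(\ref{eq: condition on x'}), which we use instead of only $h(\bx_i)\ge h(G_i)-\eps' n_i$. Define $\phi(G_i)\coloneqq k\overline h(G_i)/n_i$. By Corollary~\ref{cor: counting well-behaved paths}, step $i$ contributes at least $m_i\phi(G_i)-3k\eps' m_i$. Using~(\ref{eq: condition on x'}) with $h(G_{i+1})\ge h(\bx_{i+1})$, and the formula for $\overline h$, we get
$$\frac{h(\bx_{i+1})}{n_{i+1}}\ge\frac{h(\bx_i)}{n_i}-\frac{k-1}{k}\log\frac{n_i}{n_{i+1}}-n_i^{-2/3-\alpha/2}.$$
Note that $\frac1k\log\binom{n}{k-1}-\frac1k\log n$ changes by roughly $\frac{k-2}{k}\log\frac{n_i}{n_{i+1}}$. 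Substituting, we expect $\phi$ to decrease per step by about $\log\frac{n_i}{n_{i+1}}$ plus a negligible error. Telescoping then yields
$$\phi(G_i)\ge\phi(G)-\log\frac{n}{n_i}-o(1)-O(\eps').$$
Summing,
$$\sum_i m_i\phi(G_i)\ge n\,\phi(G)-\sum_i m_i\log\frac{n}{n_i}-O(\eps' n).$$
Approximating the sum by an integral gives
$$\int_0^n\log\frac{n}{x}\,dx=n\log e.$$
Also, $n\phi(G)=k\overline h(G)$ up to the small initial loss $k\eps' n$. The truncation at $n^{1-\beta}$ loses only $n^{1-\beta}\cdot O(\log n)=o(n)$.

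Altogether this gives $\log z\ge k\overline h(G)-n\log e-\eps n$. The delicate point is checking that the $\binom{n}{k-1}$ term in $\overline h$ scales exactly so that the per-step decrement is $\log\frac{n_i}{n_{i+1}}$ up to $O(m_i/n_i^{2})$ errors. I would verify this by expanding $\log\binom{n_i}{k-1}$ via $\binom{n}{k-1}=\frac{n^{k-1}}{(k-1)!}(1+O(1/n))$.
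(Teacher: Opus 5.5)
Your proposal is correct and follows essentially the paper's proof. You iterate Corollary~\ref{cor: counting well-behaved paths} and Lemma~\ref{lemma: graph remains Dirac after removal of a well-behaved path} until $n_i\le n^{1-\beta}$, and telescope~(\ref{eq: condition on x'}) on $h(\bx_i)$ so that the initial $\eps' n$ loss is paid only once. Your bound $\sum_i m_i\log\frac{n}{n_i}\le\int_0^n\log\frac{n}{x}\,dx=n\log e$ is the paper's Stirling estimate $\sum_i m_i\log n_i\ge\log n!-\log n_{\kappa+1}!$ in integral form. Your integral-comparison error sums are also somewhat sharper than the paper's crude use of $\kappa\le n^{2/3+\beta/3}$, which is why the paper ties $\alpha$ to $\beta$.

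The only thing to fix is the start tuple. Take the removed set to be the first $m_i$ vertices of the walk, which already contain $\vec S_i$, as the paper does. Do not use Lemma~\ref{lemma: entropy decreases slightly after removing a small subset} for this: it costs $\eps n_i$ per step over $\Theta(n^{2/3})$ steps and gives no relation like~(\ref{eq: condition on x'}).
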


\begin{proof}
Choose $\eps$ and $b\ge 4$ with $\beta\le 1/b\le\eps$ such that Lemma~\ref{lemma: existence of b-normal pfm} holds for the parameters $\frac{\hat\delta}{2}$ (instead of $\hat\delta$) and $k$. Choose $\alpha\coloneqq 3\beta$ so that Lemma~\ref{cor: counting well-behaved paths} and Lemma~\ref{lemma: graph remains Dirac after removal of a well-behaved path} hold for the parameters $\frac{\hat\delta}{2}$ (instead of $\hat\delta$), $k,\eps $ and $2b$ (instead of $b$). By Lemma~\ref{lemma: existence of b-normal pfm}, $G$ admits a $b$-normal perfect fractional matching $\bx$ with $h(\bx)\ge h(G)-\eps n$. We claim that $\delta_i\ge\frac{1}{2}+\frac{\hat\delta}{2}$ and $b_i\le 2b$ in the following procedure. Let $G_0\coloneqq G$, $n_0\coloneqq n$, $\delta_0\coloneqq\delta$, $b_0\coloneqq b$, $\bx_0\coloneqq \bx$ and $\vec S_0\coloneqq \vec S$. We construct a path in $G$ as follows.

At each step $i\ge 0$, let $\cZ_{i}$ be the set of all $(\mathbb{S}[G_i],\alpha)$-well-behaved realizations of the random walk $Z=(\vec Z_0,...,\vec Z_{m_i})$ on $\mathbb{S}[G_i]$ induced by $\bx_i$, where $\vec Z_0=\vec S_i$ and $m_i\coloneqq n_i^{1/3}$. Fix some walk $Z_i\in \cZ_i$, and let $P_i$ be the path in $G_i$ consisting of the first $m_i$ vertices in $Z_i$, then $P_i$ is a $(G_i,\alpha)$-well-behaved path. Let $\vec S_{i+1}$ be the end of $Z_i$. Define
\begin{align*}
G_{i+1}&\coloneqq G_i-P_i,\\
n_{i+1}&\coloneqq n_i-m_i,\\
\delta_{i+1}&\coloneqq \delta_i-2n_i^{-\frac{2}{3}-\alpha},\\
b_{i+1}&\coloneqq \left(1+n_i^{-\frac{2}{3}-\frac{\alpha}{2}}\right)b_i,\text{ and}\\
z_i&\coloneqq \min|\cZ_i|,
\end{align*}
where the minimum is taken over all possible paths $P_0\cup...\cup P_{i-1}$ and starting points $\vec S_i$. By Lemma~\ref{lemma: graph remains Dirac after removal of a well-behaved path}, $G_{i+1}$ is an $(n_{i+1},k,\delta_{i+1})$-graph, and there exists a $b_{i+1}$-normal perfect fractional matching $\bx_{i+1}$ of $G_{i+1}$ such that $h(\bx_{i+1})\ge h(G_{i+1})-\eps n_{i+1}$ and 
\begin{equation}\label{eq: update h(x)}
h(\bx_{i+1})\ge\frac{n_{i+1}}{n_i}h(\bx_i)-\frac{(k-1)}{k}n_{i+1}\log \frac{n_i}{n_{i+1}}-n_i^{\frac{1}{3}-\frac{\alpha}{2}}.
\end{equation}

We terminate the procedure once $n_i\le n^{1-\beta}.$ Let $\kappa$ be the step such that $n_{\kappa+1}\le n^{1-\beta}<n_{\kappa}$. Note that $Q\coloneqq P_0\cup...\cup P_{\kappa}$ is a path of length at least $n-n^{1-\beta}$. This procedure is finite since at every step $i$, $G_{i+1}$ is a proper subgraph of $G_i$. Moreover, since $m_0\ge...\ge m_{\kappa-1}$ and $m_0+...+m_{\kappa-1}\le n$, we have
$$\kappa\le \frac{n}{m_{\kappa-1}}\le \frac{n}{n_{\kappa-1}^{1/3}}<\frac{n}{(n^{1-\beta})^{\frac{1}{3}}}\le n^{\frac{2}{3}+\frac{\beta}{3}}.$$
We first compute a uniform bound for the $\delta_i$'s and the $b_i$'s. For all $i\in [\kappa]_0$, we have
\begin{align}
\delta_i&\ge \delta_0-2in_0^{-\frac{2}{3}-\alpha}\ge \delta-2\kappa n^{-\frac{2}{3}-\alpha}\ge \delta-2n^{\frac{2}{3}+\frac{\beta}{3}}n^{-\frac{2}{3}-\alpha}\ge \delta-2n^{\frac{\alpha}{9}-\alpha}\ge \frac{1}{2}+\frac{\hat\delta}{2}>\frac{1}{2},\text{ and}\label{eq: delta_i bound}\\
b_i&\le\left(1+n_{\kappa}^{-\frac{2}{3}-\frac{\alpha}{2}}\right)^i b_0\le \left(1+n^{(1-\beta)(-\frac{2}{3}-\frac{\alpha}{2})}\right)^{\kappa}b\le \left(1+n^{-\frac{2}{3}-\frac{\alpha}{4}}\right)^{n^{\frac{2}{3}+\frac{\beta}{3}}}b\le 2b.\label{eq: b_i bound}
\end{align}
The last inequality follows from
$$\log \left(1+n^{-\frac{2}{3}-\frac{\alpha}{4}}\right)^{n^{\frac{2}{3}+\frac{\beta}{3}}}=n^{\frac{2}{3}+\frac{\beta}{3}}\log \left(1+n^{-\frac{2}{3}-\frac{\alpha}{4}}\right)\le 2n^{\frac{2}{3}+\frac{\alpha}{9}}n^{-\frac{2}{3}-\frac{\alpha}{4}}\le 1.$$
Thus, for each $i\in [\kappa]_0$, $G_i$ is an $(n_i, k,\frac{1}{2}+\frac{\hat\delta}{2})$-graph and $\bx_i$ is a $2b$-normal perfect fractional matching of $G_i$. We now show by induction that
\begin{equation}\label{eq: induction}
h(G_i)\ge h(\bx_i)\ge \frac{n_i}{n}h(\bx)-\frac{k-1}{k}n_i\log \frac{n}{n_i}-in^{\frac{1}{3}-\frac{\alpha}{2}}.
\end{equation}
The base case $i=0$ is immediate. For $i\in[\kappa]$, starting with~(\ref{eq: update h(x)}) and followed by the induction hypothesis, we have
\begin{align*}
h(\bx_i)&\ge \frac{n_{i}}{n_{i-1}}h(\bx_{i-1})-\frac{k-1}{k}n_i\log \frac{n_{i-1}}{n_i}-n_{i-1}^{\frac{1}{3}-\frac{\alpha}{2}}\\
&\ge\frac{n_{i}}{n_{i-1}}\left(\frac{n_{i-1}}{n}h(\bx)-\frac{k-1}{k}n_{i-1}\log \frac{n}{n_{i-1}}-(i-1)n^{\frac{1}{3}-\frac{\alpha}{2}}\right)-\frac{k-1}{k}n_i\log \frac{n_{i-1}}{n_i}-n^{\frac{1}{3}-\frac{\alpha}{2}}\\
&\ge\frac{n_i}{n}h(\bx)-\frac{k-1}{k}n_i\log \frac{n}{n_i}-in^{\frac{1}{3}-\frac{\alpha}{2}}.
\end{align*}
This proves~(\ref{eq: induction}). Next, we utilize~(\ref{eq: induction}) to bound $\log z_i$ from below. By Corollary~\ref{cor: counting well-behaved paths} and equation~(\ref{eq: induction}), we have
\begin{align*}
\log z_i&\ge k\frac{m_i}{n_i}h(G_i)-m_i\log{n_i\choose k-1}+m_i\log n_i-3k\eps m_i\\
&\ge k\frac{m_i}{n}h(\bx)-(k-1)m_i\log \frac{n}{n_i}-ik\frac{m_i}{n_i}n^{\frac{1}{3}-\frac{\alpha}{2}}-m_i\log{n_i\choose k-1}+m_i\log n_i-3k\eps m_i.
\end{align*}
We consider the following sums over $i\in[\kappa]_0$. By construction, $\sum_{i=0}^{\kappa}m_i=n-n_{\kappa+1}\ge n-n^{1-\beta}$.
Using Stirling's approximation, we obtain
\begin{align*}
\sum_{i=0}^{\kappa}m_i\log n_i&=\sum_{i=0}^{\kappa}\log n_i^{m_i}\ge \sum_{i=0}^{\kappa}\log \frac{n_i!}{(n_i-m_i)!}=\sum_{i=0}^{\kappa}(\log n_i!-\log n_{i+1}!)=\log n!-\log n_{\kappa+1}!\\
&\ge n\log\frac{n}{e}-\log n -n_{\kappa+1}\log\frac{n_{\kappa+1}}{e}-\log n_{\kappa+1}\ge n\log n-n\log e-\frac{\eps}{2}n.\addtocounter{equation}{1}\tag{\theequation}\label{eq: sum 1}
\end{align*}
Moreover,
\begin{equation}\label{eq: sum 2}
\sum_{i=0}^{\kappa}i\frac{m_i}{n_i}n^{\frac{1}{3}-\frac{\alpha}{2}}\le\sum_{i=0}^{\kappa}i\frac{1}{n_{\kappa}^{2/3}
}n^{\frac{1}{3}-\frac{\alpha}{2}}\le\frac{\kappa^2}{(n^{1-\beta})^{2/3}
}n^{\frac{1}{3}-\frac{\alpha}{2}}\le n^{\frac{4}{3}+\frac{2\beta}{3}}n^{-\frac{2}{3}+\frac{2\beta}{3}}n^{\frac{1}{3}-\frac{\alpha}{2}}\le\frac{\eps}{2}n.
\end{equation}
Note that for $i\in[\kappa]$, we have
$$(k-1)\log \frac{n}{n_i}+\log{n_i\choose k-1}\le\mathrel{}\log\left[\frac{n}{n_i}\cdot\frac{n-1}{n_i-1}\cdot...\cdot\frac{n-k+2}{n_i-k+2}\cdot{n_i\choose k-1}\right]=\log{n\choose k-1}.$$
Let $z$ denote the number of paths in $G$ that start at $\vec S$ and have length at least $n-n^{1-\beta}$. Note that every path $Q$ constructed above is such a path. Thus, we have
\begin{align*}
\log z&\ge \log \prod_{i=0}^{\kappa} z_i=\sum_{i=0}^{\kappa}\log z_i\\
&\ge\sum_{i=0}^{\kappa} k\frac{m_i}{n}h(\bx)-m_i\log {n\choose k-1}+m_i\log n_i-ik\frac{m_i}{n_i}n^{\frac{1}{3}-\frac{\alpha}{2}}-3k\eps m_i\\
&\ge k(h(G)-\eps n)-n\log{n\choose k-1}+n\log n-n\log e-\eps n-3k\eps n\\
&\ge k\overline h(G)-n\log e-5k\eps n.
\end{align*}
This completes the proof.
\end{proof}

\subsection{Absorbing long paths}
Given an $(n,k,\delta)$-graph $G$, the following two lemmas enable us to complete the paths from Lemma~\ref{lemma: counting long paths} into a Hamilton cycle of $G$. Lemma~\ref{lemma: absorption subset} guarantees the existence of a vertex subset $U\subseteq V(G)$ that contains sufficiently many neighbours of every $(k-1)$-subset of $V(G)$. Applying Lemma~\ref{lemma: counting long paths}, we find many long paths in $G-U$. Then Lemma~\ref{lemma: tight Hamilton-connected} allows us to complete each of these paths into a Hamilton cycle of $G$.

\begin{lemma}\label{lemma: absorption subset}
Suppose $1/n\ll\hat\delta,1/k$. Let $G$ be an $(n,k,\delta)$-graph. Then for all $0<\alpha\le1$, there exists a set $U\subseteq V(G)$ of size $n^{\alpha}$ such that for all $S\in \cS(G)$, we have
\begin{equation}\label{eq: absorption subset}
|N_G(S)\cap U|\ge\left(\frac{1}{2}+\frac{3\hat\delta}{4}\right)|U|.
\end{equation}
\end{lemma}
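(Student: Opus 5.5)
I plan to choose $U$ uniformly at random among all subsets of $V(G)$ of size $u\coloneqq n^{\alpha}$, show that any fixed $S$ fails \eqref{eq: absorption subset} with tiny probability, and finish with a union bound over $\cS(G)$.

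\emph{Step 1 (expectation).} Fix $S\in\cS(G)$. Then $X_S\coloneqq|N_G(S)\cap U|$ is hypergeometric: we draw $u$ vertices without replacement from $V(G)$, of which $|N_G(S)|\ge\delta n$ are ``successes''. So
$$\ex[X_S]=\frac{u}{n}|N_G(S)|\ge\delta u=\left(\tfrac12+\hat\delta\right)u .$$

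\emph{Step 2 (concentration for one $S$).} Apply Lemma~\ref{lemma: chernoff} with $t\coloneqq\frac{\hat\delta}{4}u$. If $X_S<\left(\frac12+\frac{3\hat\delta}{4}\right)u$, then $X_S\le\ex[X_S]-t$. Using $\ex[X_S]\le u$, Lemma~\ref{lemma: chernoff} gives
$$\pr\left[X_S<\left(\tfrac12+\tfrac{3\hat\delta}{4}\right)u\right]\le\exp\left(-\frac{\hat\delta^2u^2}{32\,\ex[X_S]}\right)\le\exp\left(-\frac{\hat\delta^2 n^{\alpha}}{32}\right).$$

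\emph{Step 3 (union bound).} Since $|\cS(G)|={n\choose k-1}\le n^{k-1}$, the probability that some $S$ violates \eqref{eq: absorption subset} is at most
$$n^{k-1}\exp\left(-\frac{\hat\delta^2n^{\alpha}}{32}\right)<1 .$$
Hence some choice of $U$ works for every $S$ simultaneously. As usual, we ignore rounding of $n^\alpha$.

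\emph{Main obstacle.} The only delicate point is Step 3. The bound $n^{k-1}\exp(-\hat\delta^2n^{\alpha}/32)<1$ needs $n^{\alpha}\ge C(\hat\delta,k)\log n$. This holds once $n$ is large relative to $\alpha$ as well as to $\hat\delta$ and $k$.

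For $\alpha$ arbitrarily small with $n$ fixed, the statement cannot hold in general. For example, with $|U|=k$ one would need every $(k-1)$-set to be adjacent to essentially all of $U$. So I would read the hypothesis $1/n\ll\hat\delta,1/k$ as also allowing $n$ to depend on $\alpha$. This is consistent with the intended application, where $\alpha$ is a fixed constant close to $1$. In the write-up I would make this dependence explicit.
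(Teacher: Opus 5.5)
Your proposal is correct and is the argument the paper has in mind. The paper omits the proof, saying only that it follows by choosing $U$ of size $n^{\alpha}$ uniformly at random and applying Lemma~\ref{lemma: chernoff}, which is exactly your hypergeometric Chernoff bound plus a union bound over $\cS(G)$.

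Your remark that $n$ must also be large relative to $\alpha$ is a valid correction of the quantifiers. For instance, when $|U|\le k$, any $S$ with $|S\cap U|=\min\{k-1,|U|\}$ violates \eqref{eq: absorption subset}. The correction is harmless, since the lemma is only applied with $\alpha=1-\beta/2$.
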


We omit the standard proof. It follows directly by choosing a set $U$ of size $n^{\alpha}$ uniformly at random and applying Lemma~\ref{lemma: chernoff}.

\begin{lemma}[{\cite[Lemma~3.7]{GGJKO:21}}]\label{lemma: tight Hamilton-connected}
Suppose $1/n\ll\hat\delta,1/k$. Let $G$ be an $(n,k,\delta)$-graph. Then for any disjoint $\vec S,\vec T\in\vec{\cS}(G)$, there is a Hamilton path of $G$ from $\vec S$ to $\vec T$.
\end{lemma}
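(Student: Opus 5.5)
The plan is to use the absorbing method of R\"odl, Ruci\'nski and Szemer\'edi. In this setting that amounts to three ingredients: a connecting lemma, an absorbing path, and an almost-spanning path. They are glued together so that the resulting cycle-like structure starts at $\vec S$, ends at $\vec T$, and covers $V(G)$. Fix constants $1/n\ll\gamma\ll\eta\ll 1/L\ll\hat\delta,1/k$.

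\emph{Connecting.} First I strengthen Lemma~\ref{lemma: well-connectedness} to a statement about paths. For any disjoint $\vec A,\vec B\in\vec\cS(G)$ and any set $X\subseteq V(G)\setminus(A\cup B)$ with $|X|\le\eta n$, there is a path of length exactly $L$ from $\vec A$ to $\vec B$ whose interior avoids $X$. To see this, note that walks of length $L$ from $\vec A$ to $\vec B$ that repeat a vertex or meet $X$ number at most $O_L(n^{L-k})+O_L(\eta n^{L-(k-1)})$. This is less than $\zeta n^{L-(k-1)}$ once $\eta\ll\zeta$. Here I apply Lemma~\ref{lemma: well-connectedness} to $G$ itself; it is $\delta$-Dirac, and deleting $X$ only costs a $\eta$-fraction of the degree. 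I also set aside a random \emph{reservoir} $R$ of size $\gamma n$. By Lemma~\ref{lemma: chernoff}, $G[R\cup A\cup B]$ inherits minimum codegree $(1/2+\hat\delta/2)|R|$ for every $(k-1)$-set. Consequently, any $O(\gamma^2 n)$ connections can be made inside $R$ greedily, with the used part of $R$ always staying $\ll\gamma$.

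\emph{Absorbing.} An absorber for a vertex $v$ is a path $P_v$ of constant length $k$ with the following property: $P_v$ and $v\cup P_v$ both form paths with the same end-tuples. Concretely, I take $2k-2$ vertices $a_1\dots a_{k-1}b_1\dots b_{k-1}$ such that both $a_1\dots a_{k-1}b_1\dots b_{k-1}$ and $a_1\dots a_{k-1}vb_1\dots b_{k-1}$ are paths. The condition $\delta>1/2$ gives $\Omega(n^{2k-2})$ such absorbers for each $v$, by choosing vertices one at a time in the common neighbourhoods. Next I select a random family of $\Theta(\gamma^2 n)$ constant-length candidate absorbers and delete overlapping ones. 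By Lemma~\ref{lemma: chernoff}, every vertex then has at least $\gamma^3 n$ disjoint absorbers in the family. Finally, I chain the family into a single absorbing path $A$ through the reservoir using the connecting step, with $A$ disjoint from $S\cup T$. This step, namely the absorber counting together with the random selection and chaining, is the main technical obstacle. The reason is that the counting must work for every $v$ simultaneously, and the chaining must not use up the reservoir.

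\emph{Covering and finishing.} In $G':=G-(A\cup R\cup S\cup T)$, which is still Dirac, I find a path covering all but $\gamma^3 n/2$ vertices. One way is to repeatedly take long paths and connect them via $R$. The simplest route is the iterative random-walk argument of Lemma~\ref{lemma: counting long paths}, which already yields a path of length $n'-n'^{1-\beta}$. The order of construction is then as follows. Connect $\vec S$ to the start of this long path, connect its end to the start of $A$, and connect the end of $A$ to $\vec T$, all inside $R$. The uncovered vertices are the leftovers of $G'$ and of $R$. Their number is at most $\gamma^3 n$, so I can absorb them one at a time into distinct absorbers of $A$. The result is a Hamilton path from $\vec S$ to $\vec T$.

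For the reservoir leftovers, it is cleaner to choose $R$ before $A$, with $|R|\ll\gamma^3 n$ relative to the absorber count. That is, I would fix the hierarchy so that $|R|$ sits below the absorbing capacity of $A$. Then all of $R$ not used for connections can be absorbed.
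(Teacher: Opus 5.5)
Your overall scheme (connecting, absorbing, and covering via Lemma~\ref{lemma: counting long paths}) is the natural route. There is no circularity in using that lemma, since its proof does not rely on the present statement. The paper itself gives no proof here: it imports the lemma from Glock, Gould, Joos, K\"uhn and Osthus, where it rests on this kind of R\"odl--Ruci\'nski--Szemer\'edi absorbing argument. Your absorber count is also right: each $b_i$ lies in the intersection of two $(k-1)$-neighbourhoods, which has size at least $2\hat\delta n-O(k)$. \textbf{However, the bookkeeping between the reservoir and the absorbing path is inconsistent, and this cannot be fixed by tuning constants.}

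\emph{Why it fails.} You chain the $\Theta(\gamma^2 n)$ absorbers \emph{through} $R$. Each connection uses $L-k+1$ reservoir vertices. Your greedy connecting inside $G[R\cup A\cup B]$ needs the used part of $R$ to stay below $\eta|R|$, so you need $|R|=\Omega(L\gamma^2 n/\eta)$. Then almost all of $R$ remains unused and must be absorbed at the end. But $A$ can absorb at most one vertex per absorber, so at most $\Theta(\gamma^2 n)\ll|R|$ vertices in total, and your guaranteed capacity is only $\gamma^3 n$. Your last paragraph tries to repair this by taking $|R|\ll\gamma^3 n$. Then $R$ cannot even host the $\Theta(L\gamma^2 n)$ vertices the chaining consumes. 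More generally, for any family size $F$, chaining through $R$ forces $|R|\gg LF$, while at most $F$ leftover vertices can be absorbed.

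\emph{The fix} is the standard ordering:
\begin{enumerate}
\item Chain the absorbers in $G$ itself, using your first connecting statement. Take $X$ to be the $O(L\gamma^2 n)\le\eta n$ vertices used so far, together with $S\cup T$. No reservoir is needed for this.
\item Only afterwards choose $R$ at random in $V(G)\setminus(V(A)\cup S\cup T)$. Since $|V(A)|\le\eta n$, Lemma~\ref{lemma: chernoff} still gives every $(k-1)$-set at least $(1/2+\hat\delta/2)|R|$ neighbours in $R$.
\item The covering step produces a single long path, so $R$ only has to host three connections of length $L$. You can therefore take $|R|=\gamma^4 n$.
\end{enumerate}
The uncovered vertices then number at most $n'^{1-\beta}+\gamma^4 n\le\gamma^3 n$. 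Absorbing them one at a time into distinct absorbers yields the Hamilton path from $\vec S$ to $\vec T$.
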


We are now able to prove our main theorem.

\begin{proof}[Proof of Theorem~\ref{main theorem - with entropy}]
Choose $\beta$ and $\eps$ such that Lemma~\ref{lemma: counting long paths} holds for the parameters $\frac{\hat\delta}{2}$ (instead of $\hat\delta$) and $k$. Let $U\subseteq V(G)$ be a set of size $n^{\alpha}$ guaranteed by Lemma~\ref{lemma: absorption subset} with $\alpha\coloneqq 1-\beta/2$. Let $G'\coloneqq G-U$ and $n'\coloneqq n-n^{\alpha}$. Then $G'$ is an $(n', k, \frac{1}{2}+\frac{\hat\delta}{2})$-graph since
$$\delta_{k-1}(G')\ge \delta_{k-1}(G)-|U|\ge\delta n-n^{\alpha}\ge\left(\frac{1}{2}+\frac{\hat\delta}{2}\right)n'.$$
By Lemma~\ref{lemma: entropy decreases slightly after removing a small subset}, we have $h(G')\ge h(G)-\eps n$. Fix $\vec S\in \vec{\cS}(G')$. Let $Q$ be a path in $G'$ of length at least $n'-(n')^{1-\beta}$ from $\vec S$ to $\vec T$ for some $\vec T\in\vec{\cS}(G')$. Let $G''\coloneqq G[V(G'-Q)\cup U\cup S\cup T]$, and $n''\coloneqq |V(G'')|\le (n')^{1-\beta}+n^{\alpha}+2(k-1)\le n^{\alpha}+n^{1-\beta}$. Note that $G''$ is a $(n'', k, \frac{1}{2}+\frac{\hat\delta}{2})$-graph since for all $S\in \cS(G'')\subseteq \cS(G)$, we have
$$|N_{G''}(S)|\ge |N_G(S)\cap U|\ge \left(\frac{1}{2}+\frac{3\hat\delta}{4}\right)n^{\alpha}\ge\left(\frac{1}{2}+\frac{\hat\delta}{2}\right)(n^{\alpha}+n^{1-\beta})\ge\left(\frac{1}{2}+\frac{\hat\delta}{2}\right)n''.$$
By Lemma~\ref{lemma: tight Hamilton-connected}, there exists a Hamilton path $P$ from $\vec T$ to $\vec S$ in $G''$. Then $P+Q$ is a Hamilton cycle of $G$. Since $Q$ was arbitrary, by Lemma~\ref{lemma: counting long paths}, we have
\begin{align*}
\log \Psi(G)&\ge kh(G')-n'\log{n'\choose k-1}+n'\log n'-n'\log e-\eps n'\\
&\ge k(h(G)-\eps n)-n\log{n\choose k-1}+n\log n-n\log e-2\eps n\\
&\ge kh(G)-n\log{n\choose k-1}+n\log n-n\log e-2k\eps n.
\end{align*}
This completes the proof.
\end{proof}

Theorem~\ref{main theorem - without entropy} follows as a corollary.
\begin{proof}[Proof of Theorem~\ref{main theorem - without entropy}]
We use Theorem~\ref{main theorem - with entropy} together with the lower bound for $h(G)$ given by Theorem~\ref{thm: graph entropy bound} to obtain
$$\log \Psi(G)\ge n\log\delta+n\log n-n\log e-o(n)\ge n\log \delta+\log (n!)-o(n).$$
Therefore, we conclude $\Psi(G)\ge (\delta-o(1))^n n!$.
\end{proof}

\section{Conclusion}

In this paper we established a tight lower bound for the number of tight Hamilton cycles in $k$-graphs with large minimum $(k-1)$-degree, thereby solving a conjecture of Ferber, Hardiman and Mond in a strong form.

Proving that our bound is in fact tight up to the error term in the exponent for \emph{all} $k$-graphs~$G$ with $\delta_{k-1}(G)\geq (1/2+o(1))n$ is an obvious interesting open problem (this is true for graphs).
Observe that for graphs, a tight upper bound on the number of Hamilton cycles $\Psi(G)$ can be easily deduced from a tight upper bound on the number of perfect matchings $\Phi(G)$ by noting that $\Psi(G)\leq (\Phi(G))^2$.
On the exponential scale, this is tight because it is not so unlikely that two randomly chosen perfect matchings form a Hamilton cycle in graphs; whereas in $k$-graphs, $k$ randomly chosen perfect matchings form a tight Hamilton cycle only with exponentially small probability.

\bibliographystyle{amsplain}
\providecommand{\bysame}{\leavevmode\hbox to3em{\hrulefill}\thinspace}
\providecommand{\MR}{\relax\ifhmode\unskip\space\fi MR }
\providecommand{\MRhref}[2]{%
	\href{http://www.ams.org/mathscinet-getitem?mr=#1}{#2}
}
\providecommand{\href}[2]{#2}

\section*{Appendix}

We now extend our method in order to count the number of Hamilton $\ell$-cycles in $k$-graphs for $\ell\in[k-1]_0$, thereby proving Theorem~\ref{main theorem - with entropy - non-tight}. Sections~\ref{section: Preliminaries - non-tight},~\ref{section: Probabilistic properties of random ell-walks} and~\ref{section: Counting Hamilton ell-cycles} generalizes the definitions and results in Sections~\ref{section: Preliminaries},~\ref{section: Probabilistic properties of random walks} and~\ref{section: Counting Hamilton cycles}, respectively.

\section{Preliminaries}\label{section: Preliminaries - non-tight}

Let $G$ be a $k$-graph. For $\ell\in[k-1]_0$, we say a subgraph $W$ of $G$ is an \emph{$\ell$-walk} of length $m$ if $W$ admits an ordering $\sigma$ of its vertices $V(W)=(v_1, ...,v_{(k-\ell)m+\ell})$ such that $E(W)=\{\{v_{i(k-\ell)+1},..., v_{i(k-\ell)+k}\}:i\in [m-1]_0\}$. When the ordering $\sigma$ is specified, the walk $W$ is said to be \emph{ordered}. The walk~$W$ is an \mbox{\emph{$\ell$-path}} if the vertices $v_1,...,v_{(k-\ell)m+\ell}$ are distinct. The walk~$W$ is an \emph{$\ell$-cycle} if $(v_1,...,v_{\ell})=(v_{(k-\ell)m+1},...,v_{(k-\ell)m+\ell})$ and the vertices $v_1,...,v_{(k-\ell)m}$ are distinct. A \emph{Hamilton $\ell$-path} in $G$ is an $\ell$-path~$P$ such that $V(P)=V(G)$. A \emph{Hamilton $\ell$-cycle} in $G$ is an $\ell$-cycle~$C$ such that $V(C)=V(G)$. When $\ell\in[k-1]$ and $W$ is a Hamilton $\ell$-cycle, we consider orderings up to reversals and cyclic shifts by multiples of $k-\ell$. That is, two orderings $\sigma$ and $\sigma'$ of $V(W)$ define the same ordered Hamilton $\ell$-cycle if one can be obtained from the other via a cyclic shift by a mutiple of $k-\ell$ and/or reversing the order.

Given an $(n,k,\delta)$-graph $G$ with $\ell\in[k-1]_0$ and $(k-\ell)|n$, let $\Psi_{\ell}(G)$ be the number of Hamilton \mbox{$\ell$-cycles} in $G$. Let $\Psi_{\ell}'(G)$ be the number of ordered Hamilton $\ell$-cycles in $G$. Observe that
$\Psi_{\ell}'(G)=C_{n,k,\ell}\Psi_{\ell}(G)$.

\begin{cor}\label{cor: lower bound on minimum d-degree}
Let $G$ be an $(n,k,\delta)$-graph. Then for all $d\in[k-1]_0$ and sufficiently large $n$, we have
$$\delta_d(G)\ge\frac{\delta n{n-d\choose k-d}}{n-k+1}\ge\frac{\delta}{k^k}n^{k-d}.$$
\end{cor}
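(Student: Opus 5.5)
The corollary is a direct consequence of Fact~\ref{lemma: monotonicity of minimum degrees}, followed by a crude estimate of a binomial ratio. Fix $d\in[k-1]_0$. Fact~\ref{lemma: monotonicity of minimum degrees} gives
$$\frac{\delta_d(G)}{{n-d\choose k-d}}\ge\frac{\delta_{k-1}(G)}{{n-k+1\choose 1}}=\frac{\delta_{k-1}(G)}{n-k+1}.$$
Since $G$ is an $(n,k,\delta)$-graph, we have $\delta_{k-1}(G)\ge\delta n$. Multiplying through by ${n-d\choose k-d}$ yields the first inequality. For $d=k-1$ it simply reads $\delta_{k-1}(G)\ge\delta n$, which holds trivially.

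For completeness, I would recall why the monotonicity holds. It is a double count: every $(d+1)$-set contained in an edge through a $d$-set $D$ gives the identity $\deg(D)(k-d)=\sum_{v\notin D}\deg(D\cup\{v\})\ge (n-d)\,\delta_{d+1}(G)$. Rearranging, $\delta_d/{n-d\choose k-d}\ge \delta_{d+1}/{n-d-1\choose k-d-1}$. Since the paper already states this as a Fact, it may be cited directly.

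For the second inequality, we need ${n-d\choose k-d}\cdot\frac{n}{n-k+1}\ge \frac{n^{k-d}}{k^k}$. The factor $\frac{n}{n-k+1}$ is at least $1$, so it can be dropped. With $j\coloneqq k-d\in[k]$, the bound ${N\choose j}\ge N^j/j^j$ from the preliminaries gives ${n-d\choose j}\ge (n-d)^j/j^j\ge (n-k)^j/k^k$. It remains to compare $(n-k)^j$ with $n^j$. Because $(1-k/n)^j\ge(1-k/n)^k\to1$, this seems to cost a factor slightly below one, so the final constant would be slightly less than $k^k$.

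The only genuine obstacle is recovering the clean constant $1/k^k$ rather than something like $1/(2k^k)$. To avoid the loss, I would keep the factor $\frac{n}{n-k+1}$ and use the sharper bound ${n-d\choose j}\ge\frac{(n-d)^j}{j^j}$ with $j\le k-1$ whenever $d\ge1$. Then $\frac{(n-d)^j}{j^j}\ge\frac{(n-k)^j}{(k-1)^{k-1}}$, and $(k-1)^{-(k-1)}$ exceeds $k^{-k}$ by a factor of at least $k$. This slack absorbs $(1-k/n)^j\ge 1/2$ for sufficiently large $n$. The remaining case $d=0$ gives $j=k$, where ${n\choose k}\cdot\frac{n}{n-k+1}$ must be compared with $n^k/k^k$. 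Here the factor $\frac{n}{n-k+1}$ alone does not fully compensate, since $(1-k/n)^k\approx 1-k^2/n$ while $\frac{n}{n-k+1}\approx 1+(k-1)/n$. Instead I would use the exact product ${n\choose k}=\prod_{i=0}^{k-1}\frac{n-i}{k-i}$. This gives ${n\choose k}\ge \frac{n^k}{k!}\prod_{i}(1-i/n)\ge \frac{n^k}{2k!}$ for large $n$, and $2k!\le k^k$ for $k\ge 3$. For $k=2$, the direct computation $\frac{\delta n\binom n2}{n-1}=\frac{\delta n^2}{2}\ge\frac{\delta n^2}{4}$ settles the case. The phrase ``sufficiently large $n$'' in the statement covers all these asymptotic slacks.
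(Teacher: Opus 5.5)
Your proof is correct and follows the paper's route: Fact~\ref{lemma: monotonicity of minimum degrees} gives the first inequality, and $\binom{N}{j}\ge N^j/j^j$ with $N=n-d$, $j=k-d$ gives the second. Your case split is more elaborate than needed. For $d=0$ there is no loss at all, since $(n-d)^k=n^k$ and $\binom{n}{k}\ge n^k/k^k$ settles it directly; the factor $(1-k/n)^j$ only arises for $d\ge 1$, where the slack $k^k/j^j\ge k$ absorbs it as you describe.
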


\begin{proof}
By Fact~\ref{lemma: monotonicity of minimum degrees}, we have
\begin{align*}
\delta_d(G)&\ge\delta_{k-1}(G)\frac{{n-d\choose k-d}}{{n-k+1\choose 1}}\ge\delta n\frac{1}{n-k+1}\left(\frac{n-d}{k-d}\right)^{k-d}\ge\frac{\delta}{k^k}n^{k-d}.
\end{align*}
\end{proof}

\subsection{Auxiliary graph}\label{section: auxiliary graph - non-tight}
Let $G$ be an $(n,k,\delta)$-graph and $\bx$ be a perfect fractional matching of $G$. We now define an auxiliary graph on $E(G)$, which conceptually generalizes the graph $\mathbb{S}[G]$ defined in Section~\ref{section: auxiliary graph}. Let $\ell\in[k]_0$. Define \mbox{$\cS_{\ell}(G)\coloneqq {V(G)\choose\ell}$} to be the set of all $\ell$-subsets of $V(G)$. Let $\vec{\cS}_{\ell}(G)$ denote the set of all $\ell$-tuples in $V(G)^{\ell}$ with distinct entries. For each \mbox{$\vec S=(s_1,...,s_{\ell})\in\vec{\cS}_{\ell}(G)$}, we write $S=\{s_1,...,s_{\ell}\}$ for the underlying $\ell$-set in $\cS_{\ell}(G)$. Recall that for each $S\in\cS_{\ell}(G)$, we define \mbox{$\bx_ S= \sum_{X\in N_G(S)}\bx[S\cup X]$}. We also write $\bx[S]$ for $\bx_S$. We define the set of ordered edges $\vec{E}(G)\coloneqq\{\vec e\in \vec{\cS}_k(G)|e\in E(G)\}$. For any tuple $\vec S=(s_1,...,s_m)$ and $r\in [m]$, we define $\vec S^{(r)-}\coloneqq (s_1,...,s_r)$ and $\vec S^{(r)+}\coloneqq(s_{k-r+1},...,s_k)$. We also use $S^{(r)-}$ and $S^{(r)+}$ to denote the corresponding $r$-sets.

We defined $\mathbb{S}_{\ell}[G]$ to be the weighted digraph with vertex set $\vec E(G)$, where for all $\vec e=(e_1,...,e_k), \vec f=(f_1,...,f_k)\in \vec E(G)$, there is a directed edge $\vec e\vec f\in E(\mathbb{S}_{\ell}[G])$ if $\vec e^{(\ell)+}=\vec f^{(\ell)-}$. The edge weighting $\by:E(\mathbb{S}_{\ell}[G])\to[0,1]$ in $\mathbb{S}[G]$ induced by $\bx$ is defined by
$$\by\left[\vec e\vec f\right]\coloneqq\begin{cases}\frac{\bx_f}{(k-\ell)!\bx[e^{(\ell)+}]}&\text{if }\bx[e^{(\ell)+}]\neq 0\\
\frac{1}{(k-\ell)!\deg_G(e^{(\ell)+})}&\text{if }\bx[e^{(\ell)+}]=0\end{cases}$$
for all $\vec e\vec f\in E(\mathbb{S}_{\ell}[G])$. When $\bx$ is $b$-normal for some \mbox{$b\ge 1$}, we always have $\bx[e^{(\ell)+}]\neq 0$. Moreover, for simplicity, we often write \mbox{$\by[e^{(\ell)+}\to f^{(k-\ell)+}]$} for \mbox{$(k-\ell)!\by[\vec e\vec f]$}. This means that when $\bx[e^{(\ell)+}]\neq 0$, for all \mbox{$X\in N_G(e^{(\ell)+})$}, we have $\by[e^{(\ell)+}\to X]=\frac{\bx[e^{(\ell)+}\cup X]}{\bx[e^{(\ell)+}]}$. We further observe that for each $\vec e\in \vec E(G)$, we have \mbox{$\sum_{\vec f\in N^+_{\mathbb{S}_{\ell}[G]}(\vec e)}\by[\vec e\vec f]=\sum_{X\in N_G(e^{(\ell)+})}\by[e^{(\ell)+}\to X]=1$}. So we may interpret $\by$ as a probability distribution on the outgoing edges of $\vec e\in \vec E(G)$.

For each $S\in \cS_{\ell}(G)$, we define the \emph{entropy of $S$ induced by $\bx$} by
$$h_{\bx}(S)\coloneqq \sum_{X\in N_G(S)}\by[S\to X]\log\frac{(k-\ell)!}{\by[S\to X]}.$$

Note that for any $\vec e\in\vec E(G)$, $h_{\bx}(e^{(\ell)+})=\sum_{\vec f\in N_{\mathbb{S}_{\ell}(G)}^+(\vec e)}\by[\vec e\vec f]\log\frac{1}{\by[\vec e\vec f]}$. There is a natural bijection between the directed walks in $\mathbb{S}_{\ell}[G]$ and the ordered $\ell$-walks in $G$, by tracing the same sequence of vertices in $G$.

We collect some useful properties in the following lemma.

\begin{lemma}
Suppose $n\ge k\ge 2$. Let $\ell\in [k-1]_0$. Let $G$ be an $(n,k,\delta)$-graph and $\bx$ be a perfect fractional matching of $G$. Then the following holds.
\begin{enumerate}[label=\normalfont(\roman*), ref={\thelemma(\roman*)}]
\item $\sum_{S\in \cS_{\ell}(G)}\bx_S={k\choose \ell}\frac{n}{k}$, and\label{lemma: S[G] property ii - non-tight}
\item $\sum_{v\in S\in \cS_{\ell}(G)}\bx_S={k-1\choose\ell-1}$ for every vertex $v\in V(G)$.\label{lemma: S[G] property iii - non-tight}
\end{enumerate}
If we further assume that $\bx$ is $b$-normal for some $b\ge1$, then for sufficiently large $n$,
\begin{enumerate}[label=\normalfont(\roman*), ref={\thelemma(\roman*)}]\setcounter{enumi}{2}
\item $\sum_{S\in\cS_{\ell}(G)}\bx_S h_{\bx}(S)\ge{k\choose\ell}h(\bx)-{k\choose\ell}\frac{n}{k}\log {n\choose k-1}\frac{(n-k+1)!}{(n-\ell)!n}$,\label{lemma: S[G] property vi - non-tight}
\item $\frac{\delta}{k^kbn^{\ell-1}}\le\bx_S\le\frac{b}{n^{\ell-1}}$\label{lemma: S[G] property iv - non-tight}
for all $S\in\cS_{\ell}(G)$, and
\item $\frac{1}{(k-\ell)!b^2n^{k-\ell}}\le\by[\vec e\vec f]\le\frac{k^kb^2}{(k-\ell)!\delta n^{k-\ell}}$
for all $\vec e\vec f\in E(\mathbb{S}_{\ell}[G])$.\label{lemma: S[G] property v - non-tight}
\end{enumerate}
\end{lemma}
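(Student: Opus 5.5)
We will mirror the proof of the tight-case lemma, handling the five parts by double counting, by an entropy manipulation with Jensen's inequality, and by degree bounds from Corollary~\ref{cor: lower bound on minimum d-degree}.

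For (i), we write
\[
\sum_{S\in\cS_\ell(G)}\bx_S=\sum_{S}\sum_{S\subseteq e}\bx_e={k\choose\ell}\sum_{e}\bx_e={k\choose \ell}\frac nk .
\]
The last equality uses Lemma~\ref{lemma: S[G] property i}. For (ii), fix a vertex $v$. Each edge $e\ni v$ contains exactly ${k-1\choose\ell-1}$ sets $S$ of size $\ell$ with $v\in S\subseteq e$. Since $\sum_{v\in e}\bx_e=1$, the sum in (ii) equals ${k-1\choose\ell-1}$.

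For (iv), the upper bound follows because $S$ lies in at most $n^{k-\ell}$ edges, each of weight at most $b/n^{k-1}$. The lower bound follows because $\deg_G(S)\ge\delta_\ell(G)\ge\frac{\delta}{k^k}n^{k-\ell}$ by Corollary~\ref{cor: lower bound on minimum d-degree}, and each of these edges has weight at least $1/(bn^{k-1})$. For (v), we have $\by[\vec e\vec f]=\bx_f/((k-\ell)!\,\bx[e^{(\ell)+}])$. We bound $\bx_f$ using $b$-normality and bound the denominator using (iv).

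For (iii), we expand the definition, writing $X$ for the neighbour set of $S$ in each edge:
\begin{align*}
\sum_{S}\bx_Sh_\bx(S)&=\sum_S\sum_{X\in N_G(S)}\bx[S\cup X]\log\frac{(k-\ell)!\,\bx_S}{\bx[S\cup X]}\\
&={k\choose\ell}h(\bx)+{k\choose\ell}\frac nk\log (k-\ell)!+\sum_S\bx_S\log\bx_S .
\end{align*}
We then lower bound $\sum_S\bx_S\log\bx_S$ by Jensen's inequality, exactly as in Lemma~\ref{lemma: S[G] property vi}. With total mass $W={k\choose\ell}\frac nk$, this gives
\[
\sum_S\bx_S\log\frac1{\bx_S}\le W\log\frac{{n\choose\ell}}{W}.
\]
The remaining task is to check that
\[
\log\frac{{n\choose\ell}}{W}-\log(k-\ell)!\le\log\left({n\choose k-1}\frac{(n-k+1)!}{(n-\ell)!\,n}\right)
\]
for large $n$.

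To verify this, we simplify both sides. The right-hand argument equals $\frac{1}{(k-1)!}\cdot\frac{n!}{(n-\ell)!\,n}$. The left-hand argument equals
\[
\frac{n!}{\ell!(n-\ell)!}\cdot\frac{k\,\ell!\,(k-\ell)!}{k!\,n}\cdot\frac1{(k-\ell)!}=\frac{n!}{(n-\ell)!\,n\,(k-1)!}.
\]
The two sides are therefore equal, and the inequality holds with equality. This closes (iii), and in fact (iii) does not need the largeness of $n$.

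We expect the main obstacle to be this bookkeeping in (iii): tracking the $(k-\ell)!$ factor that appears both in $\by$ and in $h_\bx$, and making sure the constants collapse exactly. The other parts are routine double counting and degree bounds. The case $\ell=0$ also needs a brief check. There $\cS_0=\{\emptyset\}$ and $\bx_\emptyset=n/k$, which is consistent with (i), while (iv) reads $\bx_\emptyset=\Theta(n)$ and holds with the stated constants.
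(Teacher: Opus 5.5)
Your proposal is correct and follows the paper's proof essentially step for step. You use double counting for (i) and (ii), the same expansion of $\sum_S \bx_S h_{\bx}(S)$ followed by Jensen's inequality for (iii) (where, as you observe, the constants collapse to an exact identity), and the minimum $\ell$-degree bound together with $b$-normality for (iv) and (v).
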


\begin{proof}
(i) and (ii) follow from the following calculations.
\begin{align*}
\sum_{S\in \cS_{\ell}(G)}\bx_S&=\sum_{S\in \cS_{\ell}(G)}\sum_{X\in N_G(S)}\bx[S\cup X]={k\choose \ell}\sum_{e\in E(G)}\bx_e={k\choose \ell}\frac{n}{k},\text{ and}\\
\sum_{v\in S\in \cS_{\ell}(G)}\bx_S&=\sum_{v\in S\in \cS_{\ell}(G)}\sum_{X\in N_G(S)}\bx[S\cup X]={k-1\choose\ell-1}\sum_{v\in e\in E(G)}\bx_e={k-1\choose\ell-1}
\end{align*}
for every vertex $v\in V(G)$. Next, suppose $\bx$ is $b$-normal. To prove (iii), we calculate
\begin{align*}
\sum_{S\in\cS_{\ell}(G)}\bx_S h_{\bx}(S)&=\sum_{S\in\cS_{\ell}(G)}\bx_S\sum_{X\in N_G(S)}\by[S\to X]\log\frac{(k-\ell)!}{\by[S\to X]}\\
&=\sum_{S\in\cS_{\ell}(G)}\sum_{X\in N_G(S)}\bx[ S\cup X]\log\frac{(k-\ell)!\bx_S}{\bx[S\cup X]}\\
&={k\choose\ell}\sum_{e\in E(G)}\bx_e\log\frac{1}{\bx_e}+\sum_{S\in\cS_{\ell}(G)}\bx_S\log(k-\ell)!+\sum_{S\in\cS_{\ell}(G)}\bx_S\log\bx_S\\
&\ge{k\choose\ell}h(\bx)+{k\choose\ell}\frac{n}{k}\log(k-\ell)!-{k\choose\ell}\frac{n}{k}\log \frac{{n\choose \ell}}{{k\choose\ell}\frac{n}{k}}\\
&={k\choose\ell}h(\bx)-{k\choose\ell}\frac{n}{k}\log \frac{k{n\choose \ell}}{n{k\choose\ell}(k-\ell)!}\\
&={k\choose\ell}h(\bx)-{k\choose\ell}\frac{n}{k}\log {n\choose k-1}\frac{(n-k+1)!}{(n-\ell)!n},
\end{align*}
where the inequality follows from Jensen's inequality applied to the concave function $t\mapsto \log t$, as shown below.
\begin{align*}
\sum_{S\in\cS_{\ell}(G)}\bx_S\log \frac{1}{\bx_S}&\le \left(\sum_{S\in\cS_{\ell}(G)}\bx_S\right)\log\frac{\sum_{S\in\cS_{\ell}(G)}\bx_S\frac{1}{\bx_S}}{\sum_{S\in\cS_{\ell}(G)}\bx_S}={k\choose\ell}\frac{n}{k}\log \frac{{n\choose \ell}}{{k\choose\ell}\frac{n}{k}}.
\end{align*}
This proves (iii). For each $S\in \cS_{\ell}(G)$, by Corollary~\ref{cor: lower bound on minimum d-degree}, we have
$$\frac{\delta}{k^kbn^{\ell-1}}\le\frac{\delta}{k^k}n^{k-\ell}\frac{1}{bn^{k-1}}\le\delta_{\ell}(G)\frac{1}{bn^{k-1}}\le\sum_{X\in N_G(S)}\bx[S\cup X]\le {n\choose k-\ell}\frac{b}{n^{k-1}}\le\frac{b}{n^{\ell-1}}.$$
This proves (iv). For each $\vec e\vec f\in E(\mathbb{S}_{\ell}[G])$, we exploit the previous inequality and obtain
$$\frac{1}{(k-\ell)!b^2n^{k-\ell}}=\frac{1}{(k-\ell)!}\frac{1}{bn^{k-1}}\frac{n^{\ell-1}}{b}\le\by[\vec e\vec f]\le\frac{1}{(k-\ell)!}\frac{b}{n^{k-1}}\frac{k^kbn^{\ell-1}}{\delta}= \frac{k^kb^2}{(k-\ell)!\delta n^{k-\ell}},$$
which proves (v).
\end{proof}

We note that Lemma~\ref{lemma: entropy is as expected after removing a well-behaved subset} still holds if $m\coloneqq\lfloor n^{1/3}\rfloor(k-\ell)$, for any $\ell\in[k-1]_0$.

\section{Probabilistic properties of random $\ell$-walks}\label{section: Probabilistic properties of random ell-walks}

In this section, we study random $\ell$-walks in an $(n,k,\delta)$-graph and in its auxiliary graph defined in Section~\ref{section: auxiliary graph - non-tight}. As in the $\ell=k-1$ case, we model these walks as Markov chains and show that after sufficiently many steps, the probability of visiting any vertex is roughly uniform.

Let $G$ be an $(n,k,\delta)$-graph and $\bx$ be a perfect fractional matching of $G$. Let $\ell\in[k-1]_0$. Let $\mathbb{S}_{\ell}[G]$ be as defined in Section~\ref{section: auxiliary graph - non-tight} and $\by$ be the corresponding edge weighting induced by $\bx$. A \emph{random walk} on $\mathbb{S}_{\ell}[G]$ induced by $\bx$ is the Markov chain $(\vec Z_t)_{t\in\bN_0}$ with state space $\vec E(G)$ and a transition matrix $P=(p_{\vec e\vec f})_{\vec e, \vec f\in\vec E(G)}$, defined by
$$p_{\vec e\vec f}=\begin{dcases}
\by[\vec e\vec f]&\quad \text{if }\vec e\vec f\in E(\mathbb{S}_{\ell}[G])\\
0&\quad\text{otherwise}.
\end{dcases}$$
A \emph{random (ordered) $\ell$-walk} on $G$ induced by $\bx$ is a sequence $(R_t)_{t\in \bN_0}$ of random variables with sample space $V(G)$ such that if $\vec Z_t\coloneqq (R_{(k-\ell)t},...,R_{(k-\ell)t+k-1})$ for all $t\in \bN_0$, then $(\vec Z_t)_{t\in \bN_0}$ is a random walk on $\mathbb{S}_{\ell}[G]$ induced by $\bx$. We note that every random walk on $\mathbb{S}_{\ell}[G]$ yields a random $\ell$-walk on $G$.

\subsection{Convergence of random $\ell$-walks}
We need the following generalized version of Lemma~\ref{lemma: well-connectedness}. Recall that an $\ell$-walk of length $L$ has $\ell$ edges and $(k-\ell)L+\ell$ vertices.

\begin{cor}\label{cor: well-connectedness}
Suppose $1/n\ll \zeta\ll 1/L\ll \hat\delta,1/k$. Let $\ell\in [k-1]_0$. Let $G$ be an $(n,k,\delta)$-graph. For any $\vec S, \vec T\in\vec{\cS}_{\ell}(G)$, there are at least $\zeta n^{(k-\ell)L-\ell}$ $\ell$-walks of length $L$ from $\vec S$ to $\vec T$ in $G$.
\end{cor}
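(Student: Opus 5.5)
The plan is to reduce the statement to Lemma~\ref{lemma: well-connectedness} by passing through tight walks. The key observation is that the vertex sequence of a tight walk is also the vertex sequence of an $\ell$-walk. If $v_1,\dots,v_N$ is a sequence in which every $k$ consecutive vertices form an edge of $G$, then in particular every window $\{v_{i(k-\ell)+1},\dots,v_{i(k-\ell)+k}\}$ is an edge. So the same sequence is an (ordered) $\ell$-walk of length $L$ once $N=(k-\ell)L+\ell$. A tight walk with $N$ vertices has length $L'\coloneqq N-k+1=(k-\ell)L+\ell-k+1$. Since $L'\ge L$ and $L'$ is bounded in terms of $L$ and $k$, we may assume that Lemma~\ref{lemma: well-connectedness} applies with $L'$ in place of $L$, giving some $\zeta'$ with $1/n\ll\zeta'\ll 1/L'$. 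We then set $\zeta\coloneqq\zeta'/4$.

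Given $\vec S,\vec T\in\vec{\cS}_\ell(G)$, we extend them to $(k-1)$-tuples.
\begin{itemize}
\item Let $\vec S'\in\vec{\cS}(G)$ be $\vec S$ followed by $k-1-\ell$ further distinct vertices outside $S$.
\item Let $\vec T'\in\vec{\cS}(G)$ be $k-1-\ell$ distinct vertices outside $T$ followed by $\vec T$.
\end{itemize}
When $\ell=0$ both $\vec S$ and $\vec T$ are empty, and $\vec S',\vec T'$ are arbitrary. There are at least $(n-k)^{k-1-\ell}\ge n^{k-1-\ell}/2$ choices for each of $\vec S'$ and $\vec T'$ when $n$ is large.

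For each such pair, Lemma~\ref{lemma: well-connectedness} gives at least $\zeta' n^{L'-(k-1)}$ tight walks of length $L'$ from $\vec S'$ to $\vec T'$. The vertex sequence of each of these has exactly $N$ entries, begins with $\vec S$ and ends with $\vec T$. By the observation above, it is therefore an $\ell$-walk of length $L$ from $\vec S$ to $\vec T$.

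The map from triples (choice of $\vec S'$, choice of $\vec T'$, tight walk) to vertex sequences is injective. The sequence recovers $\vec S'$ as its first $k-1$ entries, $\vec T'$ as its last $k-1$ entries, and the walk itself. Counting, the number of $\ell$-walks is at least
\[
\tfrac14 n^{2(k-1-\ell)}\,\zeta' n^{L'-(k-1)}=\zeta n^{(k-\ell)L-\ell}.
\]
The exponent works out because
\[
2(k-1-\ell)+(k-\ell)L+\ell-k+1-(k-1)=(k-\ell)L-\ell.
\]

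The only delicate point is the constant hierarchy. We need Lemma~\ref{lemma: well-connectedness} at length $L'$ rather than $L$, and $\zeta$ must depend only on $L$ (and $\hat\delta,k$). This is harmless: $L'$ ranges over finitely many values determined by $L$, $k$ and $\ell\in[k-1]_0$. So we can take $\zeta$ to be the minimum of the corresponding $\zeta'/4$. One more remark: $S$ and $T$ may intersect, but this is harmless since walks need not be paths.
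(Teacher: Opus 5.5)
Your proposal is correct and follows the paper's proof. Both arguments extend $\vec S,\vec T$ to $(k-1)$-tuples in $\Theta(n^{k-1-\ell})$ ways each, apply Lemma~\ref{lemma: well-connectedness} with $L'=(k-\ell)(L-1)+1$ so that a tight walk of length $L'$ has exactly $(k-\ell)L+\ell$ vertices, and observe that each such tight walk is in particular an $\ell$-walk of length $L$ from $\vec S$ to $\vec T$. The differences are cosmetic: you fix $L$ and derive $L'$, and you spell out the injectivity of the count, whereas the paper fixes $L'$ with $(k-\ell)\mid(L'-1)$ and defines $L$ from it.
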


\begin{proof}
Take $\vec S,\vec T\in\vec \cS_{\ell}(G)$. There are $N\coloneqq (k-1-\ell)!{n-\ell\choose k-1-\ell}$ ways to extend $\vec S$ to some $\vec S'\in\cS_{k-1}(G)$ such that $\vec S'^{(\ell)-}=\vec S$. Similarly, there are $N$ ways to extend $\vec T$ to some $\vec T'\in\cS_{k-1}(G)$ such that $\vec T'^{(\ell)+}=\vec T$. By Lemma~\ref{lemma: well-connectedness}, we may choose $1/n\ll\zeta'\ll 1/L'\ll \hat\delta,1/k$ with $(k-\ell)|(L'-1)$ such that there are $\zeta' n^{L'-(k-1)}$ $(k-1)$-walks $W$ of length $L'$ from each $\vec S'$ to $\vec T'$. Let 
$$L=\frac{L'-1}{k-\ell}+1\quad\text{and}\quad\zeta=\left(\frac{(k-\ell-1)!}{(2(k-1-\ell))^{k-1-\ell}}\right)^2\zeta',$$
so that $L'+(k-1)=(k-\ell)L+\ell$. By considering a subset of edges, each $W$ contains an $\ell$-walk of length $L$ from $\vec S$ to $\vec T$. Thus, there are at least
$$N^2\zeta' n^{L'-(k-1)}\ge\left(\frac{(k-\ell-1)!n^{k-\ell-1}}{(2(k-1-\ell))^{k-1-\ell}}\right)^2\zeta' n^{(k-\ell)(L-1)+1-(k-1)}\ge\zeta n^{(k-\ell)L-\ell}$$
$\ell$-walks of length $L$ from $\vec S$ to $\vec T$.
\end{proof}

We now apply the rapid mixing property of Markov chains to the random walks in $\cS_{\ell}[G]$.

\begin{cor}\label{cor: rapid mixing in S[G] - non-tight}
Suppose $1/n\ll\alpha\ll\hat\delta,1/k,1/b$. Let $\ell\in[k-1]_0$. Let $G$ be an $(n,k,\delta)$-graph and $\bx$ be a $b$-normal perfect fractional matching of $G$. Let $(\vec Z_t)_{t\in\bN_0}$ be a random walk on $\mathbb{S}_{\ell}[G]$ induced by $\bx$. Then, for any ordered edge $\vec e\in\vec E(G)$ and $t\ge\alpha^{-2}$, we have
$$\pr[\vec Z_t=\vec e]=\left(1\pm e^{-\alpha t}\right)\frac{\bx_e}{(k-1)!n}.$$
\end{cor}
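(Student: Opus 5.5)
The plan is to follow the proof of Corollary~\ref{cor: rapid mixing in S[G]} almost word for word. We pass to a power $Q\coloneqq P^L$ of the transition matrix, all of whose entries are strictly positive and of the same order. We then identify the stationary distribution
$$\sigma_{\vec e}\coloneqq \frac{\bx_e}{(k-1)!n},\qquad \vec e\in\vec E(G),$$
bound the constants $c_1,c_2$ of Lemma~\ref{lemma: rapid mixing}, and finally treat values of $t$ that are not multiples of $L$ by conditioning on $\vec Z_{t\bmod L}$.

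\emph{Stationary distribution.} First, $\sigma$ is a probability distribution: each edge has $k!$ orderings, so by Lemma~\ref{lemma: S[G] property i} we get $\sum_{\vec e}\sigma_{\vec e}=k!\,\frac{n}{k}/((k-1)!n)=1$. For stationarity, fix $\vec f$ and write $\vec T\coloneqq \vec f^{(\ell)-}$. The ordered edges $\vec e$ with $\vec e\vec f\in E(\mathbb{S}_{\ell}[G])$ are exactly those with $\vec e^{(\ell)+}=\vec T$. Summing $\sigma_{\vec e}$ over them gives $(k-\ell)!\sum_{X\in N_G(T)}\bx[T\cup X]/((k-1)!n)$, since the first $k-\ell$ entries can be ordered in $(k-\ell)!$ ways; this equals $(k-\ell)!\,\bx_T/((k-1)!n)$. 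Multiplying by $\by[\vec e\vec f]=\bx_f/((k-\ell)!\bx_T)$ yields $(\sigma P)_{\vec f}=\sigma_{\vec f}$. By $b$-normality, $\sigma_{\vec e}=\Theta_{b}(n^{-k})$, with explicit constants.

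\emph{Positivity of $Q$.} This is the step needing care. A directed walk of $L$ steps in $\mathbb{S}_{\ell}[G]$ from $\vec e$ to $\vec f$ corresponds to an ordered $\ell$-walk of $L+1$ edges whose first edge is $\vec e$ and whose last edge is $\vec f$. Its middle portion is an $\ell$-walk of length $L-1$ from $\vec e^{(\ell)+}$ to $\vec f^{(\ell)-}$. By Corollary~\ref{cor: well-connectedness}, applied with $L-1$ in place of $L$, there are at least $\zeta n^{(k-\ell)(L-1)-\ell}$ such walks. Each step has probability at least $1/((k-\ell)!b^2n^{k-\ell})$ by Lemma~\ref{lemma: S[G] property v - non-tight}, so
$$q_{\vec e\vec f}\ge \zeta\,((k-\ell)!b^2)^{-L}\,n^{-k}.$$
For the upper bound, there are at most $n^{(k-\ell)(L-1)-\ell}$ such walks (the $\ell$ endpoint vertices at each end are fixed), each of probability at most $(k^kb^2/((k-\ell)!\delta n^{k-\ell}))^{L}$, so $q_{\vec e\vec f}=O_{b,k,\delta,L}(n^{-k})$. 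Hence $c_1\ge c(\zeta,b,k,L)>0$ and $c_2\le C(b,k,\delta,L)$, both independent of $n$, because $\sigma$ is also of order $n^{-k}$. The main obstacle I expect is this bookkeeping: the exponents must match exactly, and the degenerate case $\ell=0$ (empty tuples, where Corollary~\ref{cor: well-connectedness} simply counts $0$-walks) must be handled. I would double-check these exponents first.

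\emph{Conclusion.} Choose $\alpha$ as the minimum of $c_1/(2L)$ and $c_2^{-1}$, as in Corollary~\ref{cor: rapid mixing in S[G]}. Lemma~\ref{lemma: rapid mixing} for the chain $\vec Z'_s\coloneqq \vec Z_{sL}$ then gives $\pr[\vec Z_t=\vec e]=(1\pm e^{-\alpha t})\sigma_{\vec e}$ for all $t$ with $L\mid t$ and $t\ge L(2+2(2L\alpha)^{-1}\log\alpha^{-1})$; the same holds conditionally on any starting state. For general $t$, condition on $\vec Z_{t\bmod L}$ and average. All $t\ge 3L-1+\alpha^{-1}\log\alpha^{-1}$ are covered, and in particular all $t\ge\alpha^{-2}$, which proves the claimed estimate.
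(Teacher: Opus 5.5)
Your proposal is correct and follows essentially the same route as the paper. You pass to $Q=P^L$ using Corollary~\ref{cor: well-connectedness} for $\ell$-walks of length $L-1$ from $\vec e^{(\ell)+}$ to $\vec f^{(\ell)-}$, obtain entries of order $n^{-k}$ matching the stationary distribution $\sigma_{\vec e}=\bx_e/((k-1)!n)$, then apply Lemma~\ref{lemma: rapid mixing} and condition on $\vec Z_{t\bmod L}$, with your stationarity check (explicitly counting the $(k-\ell)!$ orderings of the first $k-\ell$ entries of $\vec e$) being a slightly more transparent version of the paper's computation.
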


\begin{proof}
Using the same notation as the beginning of Section~\ref{section: Probabilistic properties of random ell-walks}, $(\vec Z_t)_{t\in\bN_0}$ is a Markov chain with state space $\vec E(G)$ and transition matrix $P=(p_{\vec e\vec f})_{\vec e, \vec f\in\vec E(G)}$. By Lemma~\ref{lemma: S[G] property v - non-tight}, we have the following bounds on the nonzero entries of $P$,
$$\frac{1}{(k-\ell)!b^2n^{k-\ell}}\le p_{\vec e\vec f}\le\frac{k^kb^2}{(k-\ell)!\delta n^{k-\ell}}.$$

By Corollary~\ref{cor: well-connectedness}, there exist \mbox{$1/n\ll\zeta\ll 1/L\ll\hat\delta, 1/k$} such that for any $\vec e,\vec f\in\vec E(G)$, the $k$-graph $G$ has at least $\zeta n^{(k-\ell)(L-1)-\ell}$ $\ell$-walks of length $L-1$ from $\vec e^{(\ell)+}$ to $\vec f^{(\ell)-}$, and hence $\mathbb{S}_{\ell}[G]$ has at least $\zeta n^{(k-\ell)(L-1)-\ell}$ walks of length $L$ from $\vec e$ to $\vec f$. This allows us to define a new Markov chain $(\vec Z_t')_{t\in\bN_0}$ on $\vec E(G)$ with a strictly positive transition matrix $Q\coloneqq P^L$. We have the following bounds on the entries of $Q=(q_{\vec e\vec f})_{\vec e, \vec f\in\vec E(G)}$,
\begin{align}
q_{\vec e\vec f}&\le n^{(k-\ell)(L-1)-\ell}\left(\frac{k^kb^2}{(k-\ell)!\delta n^{k-\ell}}\right)^L=\left(\frac{k^kb^2}{(k-\ell)!\delta}\right)^L n^{-k}\text{, and}\label{eq: Q upper bound}\\
q_{\vec e\vec f}&\ge \zeta n^{(k-\ell)(L-1)-\ell}\left(\frac{1}{(k-\ell)!b^2n^{k-\ell}}\right)^L=\zeta\left(\frac{1}{(k-\ell)!b^2}\right)^L n^{-k}.\label{eq: Q lower bound}
\end{align}
We define $\sigma\coloneqq (\sigma_{\vec e})_{\vec e\in\vec E(G)}$ by
$$\sigma_{\vec e}\coloneqq\frac{\bx_e}{(k-1)!n}$$
for each $\vec e\in\vec E(G)$. We claim that $\sigma$ is the stationary distribution of the Markov chain $(Z_t')_{t\in\bN_0}$.

Indeed, using Lemma~\ref{lemma: S[G] property i}, it is straightforward to verify that $\sigma$ is a probability distribution. It remains to show that $\sigma$ is a left eigenvector of $P$, and hence a left eigenvector of $Q$. For all $\vec f\in\vec E(G)$, we have
\begin{align*}
(\sigma P)_{\vec f}&=\sum_{\vec e\in\vec E(G)}\sigma_{\vec e}p_{\vec e\vec f}=\sum_{\vec e\in\vec E(G)}\sigma_{\vec e}\by[\vec e\vec f]\1[\vec e\vec f\in E(\mathbb{S}_{\ell}[G])]\\
&=\sum_{\vec e\in\vec E(G)}\frac{\bx_e}{(k-1)!n}\frac{\bx_f}{(k-\ell)!\bx[e^{(\ell)+}]}\1[\vec e\vec f\in E(\mathbb{S}_{\ell}[G])]\\
&=\frac{\bx_f}{(k-1)!n}\sum_{\vec e\in\vec E(G)}\frac{\bx_e\1 [\vec e\vec f\in E(\mathbb{S}_{\ell}[G])]}{(k-\ell)!\bx[f^{(\ell)-}]}\\
&=\sigma_{\vec f}\sum_{X\in N_G(f^{(\ell)-})}\frac{\bx[X\cup f^{(\ell)-}]}{\bx[f^{(\ell)-}]}=\sigma_{\vec f},
\end{align*}
which proves our claim.

Since $\bx$ is $b$-normal, for each $\vec e\in\vec E(G)$, we have
$$\frac{1}{b(k-1)!n^k}\le \sigma_{\vec e}\le \frac{b}{(k-1)!n^k}.$$
Together with~(\ref{eq: Q upper bound}) and~(\ref{eq: Q lower bound}), this allows us to bound $c_1$ and $c_2$ as defined in Lemma~\ref{lemma: rapid mixing}. We have
\begin{align*}
c_1&\coloneqq \inf_{\vec e, \vec f, \vec g\in\vec E(G)}\frac{q_{\vec e\vec f}}{\sigma_{\vec g}}\ge\zeta\left(\frac{1}{(k-\ell)!b^2}\right)^L\cdot n^{-k}\cdot\frac{(k-1)!n^k}{b}=\zeta\left(\frac{1}{(k-\ell)!b^2}\right)^L\frac{(k-1)!}{b},\text{ and}\\
c_2&\coloneqq \sup_{\vec e, \vec f, \vec g\in\vec E(G)}\frac{q_{\vec e\vec f}}{\sigma_{\vec g}}\le\left(\frac{k^kb^2}{(k-\ell)!\delta}\right)^L\cdot n^{-k}\cdot b(k-1)!n^k=\left(\frac{k^kb^2}{(k-\ell)!\delta}\right)^L b(k-1)!.
\end{align*}
Let
$$\alpha\coloneqq\min\left\{\frac{\zeta}{2L}\left(\frac{1}{(k-\ell)!b^2}\right)^L\frac{(k-1)!}{b},\left(\frac{(k-\ell)!\delta}{k^kb^2}\right)^L\frac{1}{b(k-1)!}\right\}$$
so that $c_1\ge 2L\alpha$ and $c_2\le\alpha^{-1}$.
Using the inequality $(1-x)^t\le e^{-xt}$, Lemma~\ref{lemma: rapid mixing} yields
$$\pr[\vec Z_t=\vec e]=\pr[\vec Z_{t/L}'=\vec e]=\left(1\pm\left(1-\frac{c_1}{2}\right)^{\frac{t}{L}}\right)\sigma_{\vec e}=\left(1\pm\exp\left(-\frac{c_1 t}{2L}\right)\right)\sigma_{\vec e}=\left(1\pm e^{-\alpha t}\right)\sigma_{\vec e}$$
for any $\vec e\in\vec E(G)$ and any $t/L\ge 2+2(2L\alpha)^{-1}\log(\alpha^{-1})$ such that $L|t$. Similarly, we conclude that under the same conditions on $t$, we have
$$\pr[\vec Z_{i+t}=\vec e|\vec Z_i=\vec f]=\left(1\pm e^{-\alpha t}\right)\sigma_{\vec e}$$
for any $\vec e, \vec f\in\vec E(G)$. Therefore, we obtain
$$\pr[\vec Z_t=\vec e]=\sum_{\vec f\in\vec E(G)}\pr[\vec Z_{t\bmod L}=\vec f]\pr[\vec Z_t=\vec e|\vec Z_{t\bmod L}=\vec f]=\left(1\pm e^{-\alpha t}\right)\sigma_{\vec e}$$
for any $\vec e\in\vec E(G)$ and any $t\ge L(2+2(2L\alpha)^{-1}\log(\alpha^{-1}))+L-1=3L-1+\alpha^{-1}\log(\alpha^{-1})$. In particular, the bound holds for all $t\ge\alpha^{-2}$.
\end{proof}

We now show that a random $\ell$-walk on $G$ satisfies a similar rapid mixing property.

\begin{cor}
Suppose $1/n\ll\alpha\ll\hat\delta,1/k,1/b$. Let $\ell\in [k-1]_0$. Let $G$ be an $(n,k,\delta)$-graph and $\bx$ be a $b$-normal perfect fractional matching of $G$. Let $(R_t)_{t\in\bN_0}$ be a random $\ell$-walk on $G$ induced by $\bx$. Then, for any vertex $v\in V(G)$ and $t\ge \alpha^{-2}$, we have
$$\pr[R_t=v]=\left(1\pm e^{-\alpha t}\right)\frac{1}{n}.$$
\end{cor}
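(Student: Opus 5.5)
The plan follows the proof of Corollary~\ref{cor: rapid mixing in G}, replacing Corollary~\ref{cor: rapid mixing in S[G]} by Corollary~\ref{cor: rapid mixing in S[G] - non-tight} and Lemma~\ref{lemma: S[G] property iii} by its edge-level analogue. Let $(\vec Z_t)_{t\in\bN_0}$ be the random walk on $\mathbb{S}_{\ell}[G]$ associated with $(R_t)$, so $\vec Z_s=(R_{(k-\ell)s},\dots,R_{(k-\ell)s+k-1})$. Write $t=(k-\ell)s+j$ with $0\le j<k-\ell$. Then $R_t$ is the $(j+1)$-st entry of $\vec Z_s$, and
$$\pr[R_t=v]=\sum_{\vec e\in\vec E(G)\colon e_{j+1}=v}\pr[\vec Z_s=\vec e].$$
Choose $\alpha'$ according to Corollary~\ref{cor: rapid mixing in S[G] - non-tight} and set $\alpha\coloneqq\alpha'/(2k)$. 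Then $t\ge\alpha^{-2}$ gives $s\ge (t-k)/k\ge\alpha'^{-2}$. Also $e^{-\alpha' s}\le e^{-\alpha' (t-k)/k}\le e^{-\alpha t}$ once $t$ is large, since $\alpha' (t-k)/k\ge 2\alpha t-\alpha'\ge\alpha t$ for $t\ge \alpha^{-2}$.

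Each term is then $(1\pm e^{-\alpha t})\frac{\bx_e}{(k-1)!n}$. Each unordered edge $e\ni v$ has exactly $(k-1)!$ orderings with $v$ in position $j+1$. Hence the sum equals
$$(1\pm e^{-\alpha t})\frac{1}{n}\sum_{v\in e\in E(G)}\bx_e=(1\pm e^{-\alpha t})\frac1n,$$
because $\bx$ is a perfect fractional matching.

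One point must be checked: the stationary distribution claimed in Corollary~\ref{cor: rapid mixing in S[G] - non-tight} really sums to $1$ over $\vec E(G)$. Here $\sum_{\vec e}\bx_e/((k-1)!n)=k!\cdot\frac nk/((k-1)!n)=1$ by Lemma~\ref{lemma: S[G] property i}, so it is fine. The main obstacle is only the index bookkeeping: the initial steps $t<(k-\ell)\alpha'^{-2}$ are excluded by the hypothesis $t\ge\alpha^{-2}$, and the exponent loses a factor of roughly $k-\ell$, which is absorbed by shrinking $\alpha$. The case $\ell=0$ needs no special treatment, since consecutive states then share no vertices but the formula above still applies.
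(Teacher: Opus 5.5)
Your proposal is correct and takes essentially the same route as the paper. You locate $R_t$ at a fixed position inside a single state $\vec Z_s$, apply the rapid mixing estimate for $\mathbb{S}_{\ell}[G]$ to every ordered edge with $v$ in that position, and use that each edge containing $v$ has $(k-1)!$ such orderings together with $\sum_{v\in e\in E(G)}\bx_e=1$.

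Your explicit rescaling $\alpha=\alpha'/(2k)$ with $s=\lfloor t/(k-\ell)\rfloor$ is more careful than the paper. The paper assumes $t\ge(k-\ell)\alpha^{-2}$ and writes the error as $e^{-\alpha t}$, although mixing directly gives only $e^{-\alpha\tau}$ for the state index $\tau<t$.
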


\begin{proof}
Let $Z=(\vec Z_t)_{t\in \bN_0}$ be the corresponding random walk on $\mathbb{S}_{\ell}[G]$ induced by $\bx$ and choose $\alpha$ as in Lemma~\ref{cor: rapid mixing in S[G] - non-tight}. Fix $t\ge \alpha^{-2}(k-\ell)$. Let $\tau\in\mathbb{N}_0$ and $\tau'\in[k]$ be indices such that $\vec Z_{\tau}=(s_1,...,s_k)$ is the first step in $Z$ that contains $R_t$ and $R_t=s_{\tau'}$. Note that $\tau\ge \alpha^{-2}$. So for all $v\in V(G)$, we have
\begin{align*}
\pr[R_t=v] &=\pr[\vec Z_{\tau}\in\{\vec e\in\vec E(G)|e_{\tau'}=v\}]=\sum_{\vec e\in\vec E(G)\colon e_{\tau'}=v}\pr[\vec Z_{\tau}=\vec e]\\
&=(k-1)!\sum_{v\in e\in E(G)}(1\pm e^{-\alpha t})\frac{\bx_e}{(k-1)!n}=(1\pm e^{-\alpha t})\frac{1}{n}
\end{align*}
which completes the proof.
\end{proof}

\subsection{Random walks and vertex subsets}
Recall the definition of well-behavedness at the end of Section~\ref{section: Probabilistic properties of random walks}, we now extend it to $\ell$-walks.

Let $G$ be an $(n,k,\delta)$-graph and $\bx$ be a perfect fractional matching of $G$. Let $0<\alpha<1/3$. Let $\ell\in [k-1]_0$. Let $Z=(\vec Z_0,...,\vec Z_m)$ be a random walk on $\mathbb{S}_{\ell}[G]$ induced by $\bx$. We say that $Z$ is \emph{self-avoiding} if $Z_i\setminus Z_{i-1}\not\subseteq\cup_{j=0}^{i-1}Z_j$ for all $i\in [m]$. That is, the corresponding $\ell$-walk in $G$ is an $\ell$-path. Let $R=(R_1,...,R_{m'+\ell})$ be the random $\ell$-walk on $G$ corresponding to the first $m'+\ell$ vertices in $Z$, where $m_i'\coloneqq m(k-\ell)$. We say that $Z$ is \emph{$(\mathbb{S}_{\ell}[G], \alpha)$-well-behaved} if $Z$ is self-avoiding and $R$ is $(G, \alpha)$-well-behaved. In this case, we also say that $R$ is a \emph{$(G, \alpha)$-well-behaved $\ell$-path}.

Using a union bound, it is straightforward to show that with high probability, a random $\ell$-walk is self-avoiding.

\begin{lemma}\label{lemma: a random walk is likely self-avoiding - non-tight}
Let $G$ be an $(n,k,\delta)$-graph and $\bx$ be a $b$-normal perfect fractional matching of $G$ for some $b\ge 1$. Let $\ell\in [k-1]_0$, $m\ge 0$ and $Z=(\vec Z_0,...,\vec Z_m)$ be a random walk on $\mathbb{S}_{\ell}[G]$ induced by $\bx$. Then
$$\pr[Z\text{ is self-avoiding}]\ge 1-\frac{2m^2k^{k+1}b^2}{n}.$$
\end{lemma}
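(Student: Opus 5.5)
We prove the bound with a union bound over the steps at which the walk could first fail to be self-avoiding. For $i\in[m]$, let $\cE_i$ be the event that $\vec Z_0,\dots,\vec Z_{i-1}$ is self-avoiding but $Z_i\setminus Z_{i-1}$ meets $\bigcup_{j<i}Z_j$. In other words, one of the $k-\ell$ new vertices added at step $i$ is a previously used vertex. Then
\[
\pr[Z\text{ is not self-avoiding}]\le\sum_{i=1}^m\pr[\cE_i],
\]
so it suffices to show $\pr[\cE_i]\le 2ik^{k+1}b^2/n$ for every $i$. Summing over $i\in[m]$ then gives $\sum_i 2ik^{k+1}b^2/n\le 2m^2k^{k+1}b^2/n$.

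To bound $\pr[\cE_i]$, we condition on an arbitrary history $\vec Z_0=\vec e_0,\dots,\vec Z_{i-1}=\vec e_{i-1}$. Let $B$ be the set of previously used vertices, so $|B|\le (k-\ell)i+\ell\le ki$. Step $i$ chooses an ordered edge $\vec f$ with $\vec f^{(\ell)-}=\vec e_{i-1}^{(\ell)+}$. Its $k-\ell$ new vertices form an unordered set $X\in N_G(e_{i-1}^{(\ell)+})$, together with one of $(k-\ell)!$ orderings. The transition probability of each such $\vec f$ is at most
\[
\frac{k^kb^2}{(k-\ell)!\,\delta n^{k-\ell}}
\]
by Lemma~\ref{lemma: S[G] property v - non-tight}. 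The bad choices are those where $X$ contains some $u\in B\setminus e_{i-1}^{(\ell)+}$. For a fixed $u$, the number of such $X$ is at most $\binom{n}{k-\ell-1}\le n^{k-\ell-1}$. Hence the number of bad ordered $\vec f$ is at most $|B|\,(k-\ell)!\,n^{k-\ell-1}$, and therefore
\[
\pr[\cE_i\mid \text{history}]\le ki\cdot(k-\ell)!\,n^{k-\ell-1}\cdot\frac{k^kb^2}{(k-\ell)!\,\delta n^{k-\ell}}=\frac{ik^{k+1}b^2}{\delta n}\le\frac{2ik^{k+1}b^2}{n},
\]
using $\delta>1/2$. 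Since this holds for every history, averaging gives the same bound for $\pr[\cE_i]$.

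There is no real obstacle; the only point needing care is the bookkeeping in the counting step. We must count ordered edges, so the $(k-\ell)!$ factor cancels against the one in the transition probability. The $\ell$ overlap vertices $e_{i-1}^{(\ell)+}$ are legitimately shared and must be excluded from $B$ when defining the bad event, and this is exactly what the definition of self-avoidance ($Z_i\setminus Z_{i-1}$ avoiding earlier vertices) requires. For $\ell=0$ the overlap is empty and the same count applies verbatim.
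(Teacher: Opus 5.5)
Your proof is correct and is the union-bound argument the paper has in mind; the paper omits this proof, remarking only that a union bound suffices. Conditioning on the history, you bound the number of bad ordered edges by $ki\,(k-\ell)!\,n^{k-\ell-1}$, multiply by the transition-probability bound, use $\delta>1/2$, and sum over $i$, which reproduces exactly the constant $2m^2k^{k+1}b^2/n$. One small point: the event you actually count, that some vertex of $X$ lies in $B$, is slightly larger than your stated $\cE_i$. This is the right event to count, since it also catches a new vertex that coincides with one of the first $k-\ell$ vertices of $\vec Z_{i-1}$, which the condition on $Z_i\setminus Z_{i-1}$ would miss.
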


We note that for $\ell\in[k-1]_0$, Corollary~\ref{cor: a random walk is likely (U,a,a')-expected} generalizes directly to random $\ell$-walks $(R_t)_{t\in\mathbb{N}_0}$ with $m\coloneqq \lfloor n^{1/3}\rfloor (k-\ell)$. All together, we show that with high probability, a random $\ell$-walk is $(\mathbb{S}_{\ell}[G],\alpha)$-well-behaved.

\begin{cor}\label{cor: a random walk is likely good - non-tight}
Suppose $1/n\ll\alpha\ll\hat\delta, 1/k, 1/b$. Let $\ell\in [k-1]_0$. Let $G$ be an $(n,k,\delta)$-graph and $\bx$ be a $b$-normal perfect fractional matching of $G$. Let $Z=(\vec Z_0,...,\vec Z_m)$ be a random walk on $\mathbb{S}_{\ell}[G]$ induced by $\bx$, where $m\coloneqq\lfloor n^{1/3}\rfloor$. Let $\cE$ be the event that $Z$ is $(\mathbb{S}_{\ell}[G],\alpha)$-well-behaved. Then \mbox{$\pr[\cE]\ge 1-n^{-1/4}$}.
\end{cor}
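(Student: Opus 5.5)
The plan is to mirror the proof of Corollary~\ref{cor: a random walk is likely good}: a union bound over the two ways $Z$ can fail to be $(\mathbb{S}_{\ell}[G],\alpha)$-well-behaved. Either $Z$ is not self-avoiding, or the associated random $\ell$-walk $R$ on the first $m'+\ell$ vertices, with $m'=m(k-\ell)$, fails one of the three conditions in the definition of a $(G,\alpha)$-well-behaved set for some $S\in\cS(G)$.

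\emph{Self-avoidance.} Lemma~\ref{lemma: a random walk is likely self-avoiding - non-tight} applies directly with $m=\lfloor n^{1/3}\rfloor$. It gives failure probability at most $2m^2k^{k+1}b^2/n\le 2k^{k+1}b^2n^{-1/3}$.

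\emph{Well-behavedness of $R$.} I would use the generalization of Corollary~\ref{cor: a random walk is likely (U,a,a')-expected} to random $\ell$-walks, noted just before the statement, with length $\lfloor n^{1/3}\rfloor(k-\ell)$. This needs the analogue of Lemma~\ref{lemma: behaviour wrt subsets}. Its proof carries over once we have the rapid mixing statement for random $\ell$-walks, namely the corollary giving $\pr[R_t=v]=(1\pm e^{-\alpha t})/n$. The block decomposition, the deletion of the first $\ln m$ vertices of each block, and the Azuma argument are all unchanged, because the Markov property holds at the level of $\mathbb{S}_{\ell}[G]$. The one adjustment is that conditioning should happen on whole states $\vec Z_\tau$, so block boundaries are taken at multiples of $k-\ell$. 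The extra $\ell$ initial vertices change the sums by at most $\ell c$, which is negligible against $cn^{1/3-\alpha}$. For each $S\in\cS(G)$, the three conditions with the weights $1$, $\by[S\to v]$ and $\by[S\to v]\log(1/\by[S\to v])$ then each fail with probability at most $n^{-k}$. Here $\by[S\to v]$ is the tight-case transition weight for $(k-1)$-sets, bounded by $b^2/(\delta n)$ via Lemma~\ref{lemma: S[G] property v}. The constant $|M|/n$ should be read with $|M|=m'+\ell$; I expect the discrepancy with $m'$ to be absorbed in the error terms after shrinking $\alpha$.

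\emph{Combining.} A union bound over the at most $\binom{n}{k-1}\le n^{k-1}$ sets $S$ and the three conditions gives
$$\pr[\overline{\cE}]\le 2k^{k+1}b^2n^{-1/3}+3n^{k-1}n^{-k}\le n^{-1/4}$$
for $n$ large.

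The main obstacle is the transfer of Lemma~\ref{lemma: behaviour wrt subsets} to $\ell$-walks. The mixing time must be measured in steps of $\mathbb{S}_{\ell}[G]$ rather than in vertices, and the martingale increments need a bound of $O(c\,m^{1-\alpha})$, where each block now contains $k-\ell$ times as many vertices. Both only affect constants, so after redefining $\alpha$ by a constant factor depending on $k$ the argument goes through.
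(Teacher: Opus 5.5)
Your proposal is correct and follows the paper's proof. The paper uses the same union bound: the self-avoidance lemma for $\ell$-walks, plus the $\ell$-walk generalization of the concentration corollary applied to the three conditions for each of the ${n\choose k-1}$ sets $S$, giving $\pr[\cE]\ge 1-2k^{k+1}b^2n^{-1/3}-3{n\choose k-1}n^{-k}\ge 1-n^{-1/4}$. The paper only asserts that the concentration corollary generalizes directly to $\ell$-walks. Your sketch of that transfer (blocks aligned with states of $\mathbb{S}_{\ell}[G]$, and the $m'$ versus $m'+\ell$ discrepancy absorbed into the error terms) fills in detail the paper leaves out.
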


\begin{proof}
Using the generalization of Corollary~\ref{cor: a random walk is likely (U,a,a')-expected} and Lemma~\ref{lemma: a random walk is likely self-avoiding - non-tight}, we have
$$\pr[\cE]\ge 1-\frac{2m^2k^{k+1}b^2}{n}-3|\cS_{k-1}(G)|\frac{1}{n^k}\ge 1-2k^{k+1}b^2n^{-\frac{1}{3}}-3{n\choose k-1}\frac{1}{n^k}\ge1-n^{-\frac{1}{4}},$$
which completes the proof.
\end{proof}

\section{Counting Hamilton $\ell$-cycles}\label{section: Counting Hamilton ell-cycles}

\subsection{Counting well-behaved random walks}
We now find a lower bound on the entropy of a random walk on $\mathbb{S}_{\ell}[G]$. This then provides a lower bound on the number of such random walks. For convenience, given an $(n,k,\delta)$-graph $G$ and $\ell\in[k-1]_0$, we define
$$\overline h_{\ell}(G)\coloneqq h(G)-\frac{n}{k}\log {n\choose k-1}\frac{(n-k+1)!}{(n-\ell)!n}.$$
Note that \mbox{$\overline h_{\ell}(G)=\Theta(n\log n)$}.

\begin{lemma}\label{lemma: entropy of a random walk - non-tight}
Suppose $1/n\ll \hat\delta,1/k, 1/b,\eps$. Let $\ell\in[k-1]_0$. Let $G$ be an $(n,k,\delta)$-graph and $\bx$ be a $b$-normal perfect fractional matching of $G$ such that $h(\bx)\ge h(G)-\eps n$. Fix $\vec f\in \vec E(G)$. Let $Z=(\vec Z_0,...,\vec Z_m)$ be a random walk on $\mathbb{S}_{\ell}[G]$ induced by $\bx$ such that $\vec Z_0=\vec f$ and $m\coloneqq\lfloor n^{1/3}\rfloor$. Then $H(Z)\ge \frac{m}{n}k\overline h_{\ell}(G)-2k\eps m$.
\end{lemma}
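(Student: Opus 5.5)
Mirror the proof of Lemma~\ref{lemma: entropy of a random walk}: apply the chain rule (Fact~\ref{fact: CR for entropy}), use the Markov property to reduce each term to a one-step conditional entropy, replace the law of $\vec Z_{i-1}$ by the stationary distribution once the chain has mixed, and evaluate the resulting stationary average with Lemma~\ref{lemma: S[G] property vi - non-tight}.

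\textbf{Step 1 (chain rule and mixing).} By Fact~\ref{fact: CR for entropy}, $H(Z)=\sum_{i=1}^m H(\vec Z_i|\vec Z_{i-1})$. Choose $\alpha$ as in Corollary~\ref{cor: rapid mixing in S[G] - non-tight} and set $a\coloneqq\ln n+1>\alpha^{-2}$. Terms with $i<a$ are nonnegative and can be discarded. For $i\ge a$, Corollary~\ref{cor: rapid mixing in S[G] - non-tight} gives $\pr[\vec Z_{i-1}=\vec e]\ge(1-n^{-\alpha})\frac{\bx_e}{(k-1)!n}$. Given $\vec Z_{i-1}=\vec e$, the next state is distributed according to $\by[\vec e\,\cdot\,]$. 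By the identity noted after the definition of $h_\bx$ for $\ell$-sets, its entropy equals $h_\bx(e^{(\ell)+})$. Hence
$$H(\vec Z_i|\vec Z_{i-1})\ge(1-n^{-\alpha})\frac{1}{(k-1)!n}\sum_{\vec e\in\vec E(G)}\bx_e h_\bx(e^{(\ell)+}).$$

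\textbf{Step 2 (regrouping the stationary sum).} This is the only genuinely new step. Group ordered edges $\vec e$ by the set $S=e^{(\ell)+}\in\cS_\ell(G)$. For a fixed $S$, the ordered edges with $e^{(\ell)+}=S$ are obtained by choosing $X\in N_G(S)$, an ordering of $X$ in the first $k-\ell$ positions, and an ordering of $S$ in the last $\ell$ positions. This gives
$$\sum_{\vec e}\bx_e h_\bx(e^{(\ell)+})=(k-\ell)!\,\ell!\sum_{S\in\cS_\ell(G)}\bx_S h_\bx(S).$$
Applying Lemma~\ref{lemma: S[G] property vi - non-tight} and $h(\bx)\ge h(G)-\eps n$ yields
$$\sum_{\vec e}\bx_e h_\bx(e^{(\ell)+})\ge (k-\ell)!\,\ell!\binom{k}{\ell}\big(\overline h_\ell(G)-\eps n\big)=\frac{k!}{1}\big(\overline h_\ell(G)-\eps n\big).$$
Dividing by $(k-1)!n$ gives a per-step bound of $(1-n^{-\alpha})\frac{k}{n}(\overline h_\ell(G)-\eps n)$. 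Care is needed with the combinatorial factor: if it came out as anything other than $k!$, the target constant $\frac{m}{n}k$ in the statement would be off. I would therefore double-check this count against Lemma~\ref{lemma: S[G] property ii - non-tight}, which gives $\sum_S\bx_S=\binom{k}{\ell}\frac nk$, and against the normalization $\sum_{\vec e}\sigma_{\vec e}=1$.

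\textbf{Step 3 (summing).} Summing over the at least $m-a+1$ indices with $i\ge a$, and using $\overline h_\ell(G)=\Theta(n\log n)$ to absorb $(a+mn^{-\alpha})\frac{k}{n}\overline h_\ell(G)=o(m)$, gives $H(Z)\ge\frac mn k\overline h_\ell(G)-2k\eps m$. This step is routine.

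The only anticipated obstacle is the counting in Step 2. The rest transfers verbatim from the tight case.
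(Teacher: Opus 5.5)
Your proposal is correct and follows essentially the same route as the paper. The steps match: the chain rule with the Markov property, Corollary~\ref{cor: rapid mixing in S[G] - non-tight} to compare $\pr[\vec Z_{i-1}=\vec e]$ with $\bx_e/((k-1)!n)$ once $i\ge \ln n+1$, regrouping the ordered edges by $e^{(\ell)+}$ to get the factor $(k-\ell)!\,\ell!$, and then Lemma~\ref{lemma: S[G] property vi - non-tight}. Your count is right, since $(k-\ell)!\,\ell!\binom{k}{\ell}/(k-1)!=k$, which is exactly the constant the paper obtains.
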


\begin{proof}
We use the chain rule and Corollary~\ref{cor: rapid mixing in S[G] - non-tight} to obtain the desired result. Choose $\alpha$ with $1/n\ll \alpha\ll\hat\delta,1/k, 1/b$ such that Corollary~\ref{cor: rapid mixing in S[G] - non-tight} holds. Let $a\coloneqq\ln n+1>\alpha^{-2}$. Using the definition of a Markov chain, for $a\le i\le m$, we have
\begin{align*}
H(\vec Z_i|\vec Z_0,...,\vec Z_{i-1})&=H(\vec Z_i|\vec Z_{i-1})=\sum_{\vec e\in \vec E(G)}\pr[\vec Z_{i-1}=\vec e]H(\vec Z_i|{\vec Z_{i-1}=\vec e})\\
&\ge\sum_{\vec e\in \vec E(G)}\left(1-e^{-\alpha\ln n}\right)\frac{\bx_e}{(k-1)!n}h_{\bx}(e^{(\ell)+})\\
&\ge\left(1-n^{-\alpha}\right)\frac{1}{n}\frac{(k-\ell)!\ell!}{(k-1)!}\sum_{S\in \cS_{\ell}(G)}\bx_Sh_{\bx}(S).
\end{align*}
Using Lemma~\ref{lemma: S[G] property vi - non-tight}, we have
$$\sum_{S\in \cS_{\ell}(G)}\bx_Sh_{\bx}(S)\ge{k\choose\ell}h(\bx)-{k\choose\ell}\frac{n}{k}\log {n\choose k-1}\frac{(n-k+1)!}{(n-\ell)!n}\ge{k\choose\ell}\overline h_{\ell}(G)-{k\choose \ell}\eps n.$$
Using the chain rule of entropy (Fact~\ref{fact: CR for entropy}), we have
\begin{align*}
H(Z)&=\sum_{i=1}^mH(\vec Z_i|\vec Z_0,...,\vec Z
_{i-1})\ge (m-a+1)\left(1-n^{-\alpha}\right)\frac{1}{n}\left(k\overline h_{\ell}(G)-k\eps n\right)\\
&\ge\left(1-2n^{-\frac{1}{3}}\ln n-n^{-\alpha}\right)\frac{m}{n}\left(k\overline h_{\ell}(G)-k\eps n\right)\ge \frac{m}{n}k\overline h_{\ell}(G)-2k\eps m,
\end{align*}
which completes the proof.
\end{proof}

\begin{cor}\label{cor: counting well-behaved paths - non-tight}
Suppose $1/n\ll\alpha\ll \hat\delta,1/k, 1/b,\eps $. Let $\ell\in[k-1]_0$. Let $G$ be an $(n,k,\delta)$-graph and $\bx$ be a $b$-normal perfect fractional matching of $G$ such that $h(\bx)\ge h(G)-\eps n$. Fix $\vec e\in \vec E(G)$. Let $Z=(\vec Z_0,...,\vec Z_m)$ be a random walk on $\mathbb{S}_{\ell}[G]$ induced by $\bx$ such that $\vec Z_0=\vec e$ and $m\coloneqq\lfloor n^{1/3}\rfloor$. Let $\cZ$ be the set of $(\mathbb{S}_{\ell}[G], \alpha)$-well-behaved realizations of $Z$. Then
$\log |\cZ|\ge \frac{m}{n}k\overline h_{\ell}(G)-3k\eps m$.
\end{cor}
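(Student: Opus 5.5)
This corollary should follow exactly as Corollary~\ref{cor: counting well-behaved paths} does in the tight case. The plan is to combine three ingredients: the entropy lower bound of Lemma~\ref{lemma: entropy of a random walk - non-tight}, the high-probability statement of Corollary~\ref{cor: a random walk is likely good - non-tight}, and the "conditioning on a likely event" estimate of Fact~\ref{fact: entropy given a likely event}. We first fix $\alpha$ with $1/n\ll\alpha\ll\hat\delta,1/k,1/b$ so that Corollary~\ref{cor: a random walk is likely good - non-tight} applies. Let $\cE$ be the event that $Z$ is $(\mathbb{S}_{\ell}[G],\alpha)$-well-behaved. Then $\pr[\cE]\ge 1-n^{-1/4}$.

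Next we need a uniform lower bound on the probability of each realization of $Z$, so that Fact~\ref{fact: entropy given a likely event} can be applied with a suitable $A$. By Lemma~\ref{lemma: S[G] property v - non-tight}, every transition probability satisfies $\by[\vec e\vec f]\ge \bigl((k-\ell)!b^2n^{k-\ell}\bigr)^{-1}$. Since $Z$ makes $m$ steps from the fixed start $\vec Z_0=\vec e$, every realization $W$ with positive probability has
\[
\pr[Z=W]\ge \bigl((k-\ell)!\,b^2n^{k-\ell}\bigr)^{-m}\eqqcolon A^{-1}.
\]
This gives $\log A=O_{k,b}(m\log n)$.

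Finally, we apply Fact~\ref{fact: max entropy} to the conditional distribution of $Z$ given $\cE$. That distribution is supported on $\cZ$, so $\log|\cZ|\ge H(Z|\cE)$. Fact~\ref{fact: entropy given a likely event} with $a=n^{-1/4}$ then gives
\[
H(Z|\cE)\ge H(Z)-2n^{-1/4}\log A\ge H(Z)-O_{k,b}\bigl(n^{-1/4}m\log n\bigr).
\]
By Lemma~\ref{lemma: entropy of a random walk - non-tight}, $H(Z)\ge \frac{m}{n}k\overline h_{\ell}(G)-2k\eps m$. Because $n^{-1/4}\log n\to 0$, the error term is at most $k\eps m$ once $1/n\ll\eps,1/b,1/k$. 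Combining the three bounds yields $\log|\cZ|\ge \frac{m}{n}k\overline h_{\ell}(G)-3k\eps m$.

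The only point needing care is the bound on $\log A$. Unlike the tight case, each step of $\mathbb{S}_{\ell}[G]$ introduces $k-\ell$ new vertices, so the minimum transition probability is of order $n^{-(k-\ell)}$ rather than $n^{-1}$. This only changes $\log A$ by a constant factor depending on $k$, which the $n^{-1/4}$ factor absorbs, so the main obstacle is merely bookkeeping. We also need $A\ge 16$, which holds trivially for large $n$.
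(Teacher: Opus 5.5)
Your proposal is correct and follows the paper's proof essentially line by line. The paper makes the same choice of $\alpha$ via Corollary~\ref{cor: a random walk is likely good - non-tight} and uses the same per-realization bound $\pr[Z=W]\ge\bigl((k-\ell)!\,b^2n^{k-\ell}\bigr)^{-m}$ from Lemma~\ref{lemma: S[G] property v - non-tight}. It then combines Facts~\ref{fact: max entropy} and~\ref{fact: entropy given a likely event} with Lemma~\ref{lemma: entropy of a random walk - non-tight} exactly as you do, your absorption of the $O(mn^{-1/4}\log n)$ error into $k\eps m$ being the paper's implicit last step.
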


\begin{proof}
Choose $\alpha$ with $1/n\ll\alpha\ll\hat\delta,1/k,1/b$ such that Corollary~\ref{cor: a random walk is likely good - non-tight} holds. Let $\cE$ denote the event that $Z$ is $(\mathbb{S}_{\ell}[G], \alpha)$-well-behaved. By Corollary \ref{cor: a random walk is likely good - non-tight}, we have $\pr[\cE]\ge 1-n^{-1/4}$. Let $W$ be any realization of $Z$, then Lemma~\ref{lemma: S[G] property v - non-tight} yields $\pr[Z=W]\ge ((k-\ell)!b^2n^{k-\ell})^{-m}.$ By Facts~\ref{fact: max entropy} and~\ref{fact: entropy given a likely event} as well as Lemma~\ref{lemma: entropy of a random walk - non-tight}, we have
$$\log|\cZ|\ge H(Z|\cE)\ge H(Z)-2n^{-\frac{1}{4}}\log ((k-\ell)!b^2n^{k-\ell})^m\ge \frac{m}{n}k\overline h_{\ell}(G)-3k\eps m,$$
as desired.
\end{proof}

\subsection{Counting long paths}
The next lemma generalizes Lemma~\ref{lemma: graph remains Dirac after removal of a well-behaved path} and forms the basis for iterating Corollary~\ref{cor: counting well-behaved paths - non-tight}. The proof is the same as the special case of $\ell=k-1$ (Lemma~\ref{lemma: graph remains Dirac after removal of a well-behaved path}). Recall that Lemma~\ref{lemma: entropy is as expected after removing a well-behaved subset} still holds when $m\coloneqq\lfloor n^{1/3}\rfloor(k-\ell)$, for any $\ell\in[k-1]_0$.

\begin{lemma}\label{lemma: graph remains Dirac after removal of a well-behaved path - non-tight}
Suppose $1/n\ll\alpha\ll 1/b\ll\eps \ll\hat\delta, 1/k$. Let $\ell\in[k-1]_0$. Let $G$ be an $(n,k,\delta)$-graph and $\bx$ be a $b$-normal perfect fractional matching of $G$ such that $h(\bx)\ge h(G)-\eps n$. Suppose $P=v_1...v_m$ is a $(G, \alpha)$-well-behaved $\ell$-path, where $m\coloneqq\lfloor n^{1/3}\rfloor(k-\ell)$. Let $G'\coloneqq G-P$, $n'\coloneqq n-m$, $\delta'\coloneqq \delta -2n^{-\frac{2}{3}-\alpha}$ and $b'\coloneqq (1+n^{-\frac{2}{3}-\frac{\alpha}{2}})b$. Then $G'$ is an $(n', k, \delta')$-graph, and there exists a $b'$-normal perfect fractional matching~$\bx'$ of $G'$ such that $h(\bx')\ge h(G')-\eps n'$ and
\begin{equation}
h(\bx')\ge \frac{n'}{n}h(\bx)-\frac{k-1}{k}n'\log \frac{n}{n'}-n^{\frac{1}{3}-\frac{\alpha}{2}}.
\end{equation}
\end{lemma}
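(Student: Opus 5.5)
The plan is to follow the proof of Lemma~\ref{lemma: graph remains Dirac after removal of a well-behaved path} verbatim. The only change is that the removed set now has size $m=\lfloor n^{1/3}\rfloor(k-\ell)$ rather than $n^{1/3}$. Since $k-\ell\le k$ is a constant, all error exponents only change by constant factors, and these can be absorbed by slightly shrinking $\alpha$.

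First, I fix the constants. I choose $\eps$ and $b\ge4$ so that Lemma~\ref{lemma: existence of b-normal pfm} holds with $\hat\delta/2$ in place of $\hat\delta$. I then choose $\alpha$ small enough that Lemma~\ref{lemma: entropy is as expected after removing a well-behaved subset} holds with $m=\lfloor n^{1/3}\rfloor(k-\ell)$, as noted after the auxiliary lemma in the appendix.

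Second, I check the Dirac condition for $G'$. Let $M$ be the vertex set of $P$, so $|M|=m$. Property (i) of $(G,\alpha)$-well-behavedness gives $|N_G(S)\cap M|\le \frac{m}{n}|N_G(S)|+n^{1/3-\alpha}$ for every $S\in\cS(G')$. Hence
$$|N_{G'}(S)|\ge \frac{n'}{n}|N_G(S)|-n^{\frac13-\alpha}\ge \delta n'-n^{\frac13-\alpha}\ge\left(\delta-2n^{-\frac23-\alpha}\right)n',$$
using $n'\ge n/2$. So $G'$ is an $(n',k,\delta')$-graph.

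Third, I produce the matching $\bx'$. Properties (ii) and (iii) of well-behavedness are exactly hypotheses (i) and (ii) of Lemma~\ref{lemma: entropy is as expected after removing a well-behaved subset}, with $|M|/n=m/n$. That lemma therefore gives a $b'$-normal perfect fractional matching $\bz$ of $G'$ satisfying the displayed entropy inequality. Separately, since $\delta'\ge\frac12+\frac{\hat\delta}{2}$, Lemma~\ref{lemma: existence of b-normal pfm} gives a $b$-normal perfect fractional matching $\bz'$ of $G'$ with $h(\bz')\ge h(G')-\eps n'$. Because $b\le b'$, $\bz'$ is also $b'$-normal. Let $\bx'$ be whichever of $\bz,\bz'$ has larger entropy; it satisfies both bounds.

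The main obstacle is justifying that Lemma~\ref{lemma: entropy is as expected after removing a well-behaved subset} survives the change to $m=(k-\ell)\lfloor n^{1/3}\rfloor$. In that proof the error terms take the forms $m/n^{1+\alpha}$, $bm^2/n$ and $m^2/n^2$. With $m\le kn^{1/3}$ these grow by at most a factor $k^2$, which is dominated by the slack between exponents such as $-\alpha$ and $-\frac{11\alpha}{12}$ once $n$ is large. I would recheck each step with $m\le kn^{1/3}$ to confirm the final bounds $b'=(1+n^{-2/3-\alpha/2})b$ and the error $n^{1/3-\alpha/2}$ still hold, possibly after replacing $\alpha$ by a smaller constant at the outset.
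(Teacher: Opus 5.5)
Your proposal is correct and follows the paper's argument. The paper proves this lemma only by saying it is identical to the proof of Lemma~\ref{lemma: graph remains Dirac after removal of a well-behaved path}: check the Dirac condition via property (i) of well-behavedness, get $\bz$ from Lemma~\ref{lemma: entropy is as expected after removing a well-behaved subset}, get $\bz'$ from Lemma~\ref{lemma: existence of b-normal pfm}, and take whichever has larger entropy. It adds the remark that Lemma~\ref{lemma: entropy is as expected after removing a well-behaved subset} still holds for $m=\lfloor n^{1/3}\rfloor(k-\ell)$, and your explanation of why (the factor $k-\ell$ only multiplies error terms by constants absorbed by the slack in the exponents) is more detail than the paper gives.
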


\begin{lemma}\label{lemma: counting long paths - non-tight}
Suppose $1/n\ll \beta\ll\eps\ll\hat\delta, 1/k$. Let $\ell\in[k-1]_0$. Let $G$ be an $(n,k,\delta)$-graph. Fix $\vec e\in\vec E(G)$. Let $z$ be the number of ordered $\ell$-paths in $G$ that start at $\vec e$ and have length at least $\frac{n-n^{1-\beta}}{k-\ell}$. Then $\log z\ge \frac{k}{k-\ell}\overline h(G)-n\log e-\eps n.$
\end{lemma}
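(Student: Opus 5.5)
The plan is to mirror the proof of Lemma~\ref{lemma: counting long paths} step by step, replacing the tight-walk tools by their $\ell$-versions. First I fix constants: choose $\eps$ and $b\ge4$ with $\beta\le 1/b\le\eps$ so that Lemma~\ref{lemma: existence of b-normal pfm} holds for $\hat\delta/2$ and $k$. Then set $\alpha\coloneqq3\beta$ so that Corollary~\ref{cor: counting well-behaved paths - non-tight} and Lemma~\ref{lemma: graph remains Dirac after removal of a well-behaved path - non-tight} apply with parameters $\hat\delta/2$, $k$, $\eps$ and $2b$. Take a $b$-normal $\bx_0$ with $h(\bx_0)\ge h(G)-\eps n$.

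The iteration runs as follows. Set $G_0\coloneqq G$ and $\vec e_0\coloneqq\vec e$. At step $i$, let $\cZ_i$ be the set of $(\mathbb{S}_\ell[G_i],\alpha)$-well-behaved realizations of the random walk of $m_i\coloneqq\lfloor n_i^{1/3}\rfloor$ steps starting at $\vec e_i$. Put $z_i\coloneqq\min|\cZ_i|$, where the minimum ranges over all histories. Fix a realization, and delete the $m_i'=(k-\ell)m_i$ new vertices it covers, keeping the last $\ell$ vertices, i.e.\ the overlap with the final edge $\vec e_{i+1}$. Lemma~\ref{lemma: graph remains Dirac after removal of a well-behaved path - non-tight} then gives $\delta_{i+1}=\delta_i-2n_i^{-2/3-\alpha}$, $b_{i+1}=(1+n_i^{-2/3-\alpha/2})b_i$, and the entropy recursion~(\ref{eq: update h(x)}) with $n_{i+1}=n_i-m_i'$. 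We stop once $n_{\kappa+1}\le n^{1-\beta}$. Exactly as before, $\kappa\le n^{2/3+\beta/3}$. Hence $\delta_i\ge\frac12+\frac{\hat\delta}{2}$ and $b_i\le2b$ throughout, and the same induction gives
\[h(\bx_i)\ge \frac{n_i}{n}h(\bx)-\frac{k-1}{k}n_i\log\frac{n}{n_i}-in^{\frac13-\frac\alpha2}.\]
Concatenating the pieces yields distinct ordered $\ell$-paths from $\vec e$ of length at least $(n-n^{1-\beta})/(k-\ell)$, so $\log z\ge\sum_i\log z_i$.

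The remaining work is the summation. Corollary~\ref{cor: counting well-behaved paths - non-tight} gives $\log z_i\ge \frac{m_i}{n_i}k\overline h_\ell(G_i)-3k\eps m_i$. Since $m_i=m_i'/(k-\ell)$ and $\sum_i m_i'\ge n-n^{1-\beta}$, the factor $k/(k-\ell)$ appears. I substitute the induction bound for $h(G_i)\ge h(\bx_i)$. I then need to control the correction term
\[\frac{n_i}{k}\log\binom{n_i}{k-1}\frac{(n_i-k+1)!}{(n_i-\ell)!\,n_i}=\frac{n_i}{k}\bigl((k-1)\log n_i-\log((k-1)!)-(k-\ell)\log n_i+O(1/n_i)\bigr).\]
Combining the $-\frac{k-1}{k}n_i\log\frac{n}{n_i}$ term with $(k-1)\log n_i$ converts it into $(k-1)\log n$, up to $O(1)$ per step, as in the tight case. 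What remains is a term $+\frac{k-\ell}{k}\cdot\frac{k}{k-\ell}m_i'\log n_i=m_i'\log n_i$. Summing via Stirling as in~(\ref{eq: sum 1}), using $\sum m_i'\log n_i\ge\log n!-\log n_{\kappa+1}!$, produces $n\log n-n\log e-o(n)$. The error sums are handled exactly as in~(\ref{eq: sum 2}).

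The main obstacle is this bookkeeping. I must check that the recombined main terms equal $\frac{k}{k-\ell}\overline h(G)$, up to the $-n\log e$ from Stirling, where $\overline h$ here should be read as $h(G)-\frac nk\log\binom{n}{k-1}+\frac nk\log n$, matching $\overline h_\ell$ after the Stirling contribution. The constant $\log(k-1)!$ and the $(k-\ell)!$ factors only contribute $O(n)$ terms. Since the target bound allows an $\eps n$ error and these constants are fixed, I expect to have to absorb them into the final accounting, matching the definition of $C_{n,k,\ell}$ used later. If this does not close, I would track these $O(n)$ constants explicitly rather than absorb them, since they are exactly what distinguishes ordered from unordered $\ell$-cycles in Theorem~\ref{main theorem - with entropy - non-tight}.
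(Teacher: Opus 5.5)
Your plan is the paper's proof: the same choice $\alpha=3\beta$ with parameters $\hat\delta/2,k,\eps,2b$, the same iteration with $n_{i+1}=n_i-m_i'$, the same bounds on $\kappa$, $\delta_i$ and $b_i$, the same induction for $h(\bx_i)$, and the same Stirling and error sums. One wording point: what is deleted are the first $m_i'$ vertices of the realization. These include the vertices of $\vec e_i$, so that exactly the $k$ vertices of $\vec e_{i+1}$ survive; calling them the ``new'' vertices is misleading. The only real problem is the final accounting, which you yourself flag as the main obstacle.

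\textbf{The constants cannot be absorbed, but they do not need to be.} The $\log((k-1)!)$ contribution totals $\frac{n}{k-\ell}\log((k-1)!)=\Theta_k(n)$, while $\eps\ll 1/k$, so it does not fit in the $\eps n$ slack. Fortunately nothing needs absorbing. Your own expansion gives, per step,
$\log z_i\ge k\frac{m_i}{n}h(\bx)-(k-1)m_i\log n+m_i'\log n_i+m_i\log((k-1)!)-(\text{errors})$.
After summing, the term $\frac{n-n_{\kappa+1}}{k-\ell}\log((k-1)!)$ agrees, up to $O(n^{1-\beta})$, with the $+\frac{n}{k-\ell}\log((k-1)!)$ contained in $-\frac{n}{k-\ell}\log\binom{n}{k-1}\frac{(n-k+1)!}{(n-\ell)!\,n}$.

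No $(k-\ell)!$ factor appears at this level: it is already built into $h_\bx(S)$, and hence into Corollary~\ref{cor: counting well-behaved paths - non-tight}. Likewise $C_{n,k,\ell}$ only enters later, when passing from ordered to unordered cycles. So the sum closes to
\[
\log z\ge\frac{k}{k-\ell}\overline h_\ell(G)-n\log e-O(k\eps n).
\]
This is exactly what the paper obtains, and what the proof of Theorem~\ref{main theorem - with entropy - non-tight} actually uses. The paper avoids asymptotic expansions altogether via the monotonicity bound $\binom{n_i}{k-1}\frac{(n_i-k+1)!}{(n_i-\ell)!}\bigl(\frac{n}{n_i}\bigr)^{\ell}\le\binom{n}{k-1}\frac{(n-k+1)!}{(n-\ell)!}$.

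\textbf{$\overline h$ and $\overline h_\ell$ do not match.} In fact $\overline h_\ell(G)-\overline h(G)=\frac nk\log\frac{(n-\ell)!}{(n-k+1)!}\approx\frac{k-1-\ell}{k}\,n\log n$. The $\overline h$ in the statement is best read as $\overline h_\ell$. If you insist on the tight-case $\overline h$, the stated bound is simply weaker and follows from the display above, since $\overline h_\ell\ge\overline h$. Either way, you are proving an inequality, not verifying that the recombined main terms are equal.
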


\begin{proof}
Let $L\coloneqq\frac{n-n^{1-\beta}}{k-\ell}$. Choose $\eps$ and $b\ge 4$ such that Lemma~\ref{lemma: existence of b-normal pfm} holds for the parameters $\frac{\hat\delta}{2}$ (instead of $\hat\delta$) and $k$. Choose $\alpha\coloneqq 3\beta$ so that Lemma~\ref{cor: counting well-behaved paths - non-tight} and Lemma~\ref{lemma: graph remains Dirac after removal of a well-behaved path - non-tight} hold for the parameters $\frac{\hat\delta}{2}$ (instead of $\hat\delta$), $k,\eps $ and $2b$ (instead of $b$). By Lemma~\ref{lemma: existence of b-normal pfm}, $G$ admits a $b$-normal perfect fractional matching $\bx$ with $h(\bx)\ge h(G)-\eps n$. We claim that $\delta_i\ge\frac{1}{2}+\frac{\hat\delta}{2}$ and $b_i\le 2b$ in the following procedure. Let $G_0\coloneqq G$, $n_0\coloneqq n$, $\delta_0\coloneqq\delta$, $b_0\coloneqq b$ $\bx_0\coloneqq \bx$ and $\vec e_0\coloneqq \vec e$. We construct an ordered $\ell$-path in $G$ as follows.

At each step $i\ge 0$, let $\cZ_{i}$ be the set of all $(\mathbb{S}_{\ell}[G_i],\alpha)$-well-behaved realizations of the random walk $Z=(\vec Z_0,...,\vec Z_{m_i})$ on $\mathbb{S}_{\ell}[G_i]$ induced by $\bx_i$, where $\vec Z_0=\vec e_i$ and $m_i\coloneqq \lfloor n_i^{1/3}\rfloor$. Fix some walk $Z_i\in \cZ_i$, and let $V_i$ be the set of the first $m_i'\coloneqq m_i(k-\ell)$ vertices in $Z_i$. Let $P_i$ be the ordered $\ell$-path in $G_i$ consisting of the first $m_i'+\ell$ vertices in $Z_i$, then $P_i$ is a $(G_i,\alpha)$-well-behaved $\ell$-path. Let $\vec e_{i+1}$ be the end of $Z_i$. Define
\begin{align*}
G_{i+1}&\coloneqq G_i-V_i,\\
n_{i+1}&\coloneqq n_i-m_i',\\
\delta_{i+1}&\coloneqq \delta_i-2n_i^{-\frac{2}{3}-\alpha},\\
b_{i+1}&\coloneqq \left(1+n_i^{-\frac{2}{3}-\frac{\alpha}{2}}\right)b_i,\text{ and}\\
z_i&\coloneqq \min|\cZ_i|,
\end{align*}
where the minimum is taken over all possible ordered $\ell$-paths $P_0\cup...\cup P_{i-1}$ and starting points $\vec e_i$. By Lemma~\ref{lemma: graph remains Dirac after removal of a well-behaved path - non-tight}, $G_{i+1}$ is an $(n_{i+1},k,\delta_{i+1})$-graph, and there exists a $b_{i+1}$-normal perfect fractional matching $\bx_{i+1}$ of $G_{i+1}$ such that $h(\bx_{i+1})\ge h(G_{i+1})-\eps n_{i+1}$ and 
\begin{equation}
h(\bx_{i+1})\ge\frac{n_{i+1}}{n_i}h(\bx_i)-\frac{(k-1)}{k}n_{i+1}\log \frac{n_i}{n_{i+1}}-n_i^{\frac{1}{3}-\frac{\alpha}{2}}.
\end{equation}

We terminate the procedure once $n_i\le n^{1-\beta}.$ Let $\kappa$ be the step such that $n_{\kappa+1}\le n^{1-\beta}<n_{\kappa}$. Note that $Q\coloneqq P_0\cup...\cup P_{\kappa}$ is an ordered $\ell$-path of length at least $L$. This procedure is finite since at every step $i$, $G_{i+1}$ is a proper subgraph of $G_i$. Moreover, since $m_0'\ge...\ge m_{\kappa-1}'$ and $m_0'+...+m_{\kappa-1}'\le n$, we have
$$\kappa\le \frac{n}{m_{\kappa-1}'}\le \frac{n}{(k-\ell)n_{\kappa-1}^{1/3}}<\frac{1}{k-\ell}\cdot\frac{n}{(n^{1-\beta})^{\frac{1}{3}}}\le \frac{1}{k-\ell}\cdot n^{\frac{2}{3}+\frac{\beta}{3}}.$$
By similar arguments as~(\ref{eq: delta_i bound}) and~(\ref{eq: b_i bound}), for all $i\in [\kappa]_0$, we have $\delta_i\ge\frac{1}{2}+\frac{\hat\delta}{2}>\frac{1}{2}$ and $b_i\le 2b$. Thus, $G_i$ is an $(n_i, k,\frac{1}{2}+\frac{\hat\delta}{2})$-graph and $\bx_i$ is a $2b$-normal perfect fractional matching of $G_i$. Again by induction on $i\in [\kappa]_0$, we obtain the same inequality as (\ref{eq: induction}), which is
$$h(G_i)\ge h(\bx_i)\ge \frac{n_i}{n}h(\bx)-\frac{k-1}{k}n_i\log \frac{n}{n_i}-in^{\frac{1}{3}-\frac{\alpha}{2}}.$$
By Corollary~\ref{cor: counting well-behaved paths - non-tight}, we have
\begin{align*}
\log z_i&\ge k\frac{m_i}{n_i}h(G_i)-m_i\log {n_i\choose k-1}\frac{(n_i-k+1)!}{(n_i-\ell)!n}-3k\eps m_i\\
&\ge k\frac{m_i}{n}h(\bx)-(k-1)m_i\log \frac{n}{n_i}-ik\frac{m_i}{n_i}n^{\frac{1}{3}-\frac{\alpha}{2}}-m_i\log {n_i\choose k-1}\frac{(n_i-k+1)!}{(n_i-\ell)!n_i}-3k\eps m_i\\
&\ge k\frac{m_i}{n}h(\bx)-m_i\log{n_i\choose k-1}\frac{(n_i-k+1)!}{(n_i-\ell)!}\left(\frac{n}{n_i}\right)^{\ell}-(k-\ell-1)m_i\log n+(k-\ell)m_i\log n_i\\
&\quad-ik\frac{m_i}{n_i}n^{\frac{1}{3}-\frac{\alpha}{2}}-3k\eps m_i.
\end{align*}
We consider the following sums over $i\in[\kappa]_0$. By construction, $\sum_{i=0}^{\kappa}m_i'=n-n_{\kappa+1}\ge n-n^{1-\beta}$. Similar to~(\ref{eq: sum 1}) and~(\ref{eq: sum 2}), we have
$$\sum_{i=0}^{\kappa}m_i'\log n_i\ge n\log n-n\log e-\frac{\eps}{2}n\quad\text{and}\quad\sum_{i=0}^{\kappa}i\frac{m_i}{n_i}n^{\frac{1}{3}-\frac{\alpha}{2}}\le\frac{\eps}{2}n.$$
Note that for $i\in[\kappa]$, we have
\begin{align*}
{n_i\choose k-1}\frac{(n_i-k+1)!}{(n_i-\ell)!}\left(\frac{n}{n_i}\right)^{\ell}&={n\choose k-1}\frac{n_i!(n-k+1)!}{n!(n_i-k+1)!}\cdot\frac{(n_i-k+1)!}{(n_i-\ell)!}\left(\frac{n}{n_i}\right)^{\ell}\\
&={n\choose k-1}\frac{(n-k+1)!}{(n-\ell)!}\cdot\frac{n_i!(n-\ell)!}{n!(n_i-\ell)!}\left(\frac{n}{n_i}\right)^{\ell}\\
&\le{n\choose k-1}\frac{(n-k+1)!}{(n-\ell)!}.
\end{align*}
Recall that $m_i'=m_i(k-\ell)$ for $i\in[\kappa]_0$. Let $z$ denote the number of ordered $\ell$-paths that start at $\vec e$ and have length at least $L$. Note that every $\ell$-path $Q$ constructed above is such a path. Thus, we have
\begin{align*}
\log z&\ge \log \prod_{i=0}^{\kappa} z_i=\sum_{i=0}^{\kappa}\log z_i\\
&\ge\sum_{i=0}^{\kappa} \frac{k}{k-\ell}\frac{m_i'}{n}h(\bx)-\frac{m_i'}{k-\ell}\log{n\choose k-1}\frac{(n-k+1)!}{(n-\ell)!}-\frac{k-\ell-1}{k-\ell}m_i'\log n+m_i'\log n_i\\
&\quad-ik\frac{m_i}{n_i}n^{\frac{1}{3}-\frac{\alpha}{2}}-3k\eps m_i\\
&\ge \frac{k}{k-\ell}h(G)-\frac{n}{k-\ell}\log{n\choose k-1}\frac{(n-k+1)!}{(n-\ell)!}-\frac{k-\ell-1}{k-\ell}n\log n+n\log \frac{n}{e}-2\eps n-3k\eps n\\
&\ge\frac{k}{k-\ell}\overline h(G)-n\log e-5k\eps n.
\end{align*}
This completes the proof.
\end{proof}

\subsection{Absorbing long paths}
Given an $(n,k,\delta)$-graph $G$, we again use Lemma~\ref{lemma: absorption subset} to pick a vertex subset $U\subseteq V(G)$ that contains sufficiently many neighbours of every $(k-1)$-subset of $V(G)$. Applying Lemma~\ref{lemma: counting long paths - non-tight}, we find many long ordered $\ell$-paths in $G-U$ and use Lemma~\ref{lemma: tight Hamilton-connected} to complete each of these paths into an ordered Hamilton $\ell$-cycle of $G$. We are now able to prove our main theorem.

\begin{proof}[Proof of Theorem~\ref{main theorem - with entropy - non-tight}]
Choose $\beta$ and $\eps$ such that Lemma~\ref{lemma: counting long paths} holds for the parameters $\frac{\hat\delta}{2}$ (instead of $\hat\delta$) and $k$. Let $U\subseteq V(G)$ be a set of size $n^{\alpha}$ guaranteed by Lemma~\ref{lemma: absorption subset} with $\alpha\coloneqq 1-\beta/2$. Let $G'\coloneqq G-U$ and $n'\coloneqq n-n^{\alpha}$. Then $G'$ is an $(n', k, \frac{1}{2}+\frac{\hat\delta}{2})$-graph since
$$\delta_{k-1}(G')\ge \delta_{k-1}(G)-|U|\ge\delta n-n^{\alpha}\ge\left(\frac{1}{2}+\frac{\hat\delta}{2}\right)n'.$$
By Lemma~\ref{lemma: entropy decreases slightly after removing a small subset}, we have $h(G')\ge h(G)-\eps n$. Fix $\vec e\in \vec E(G')$. Let $Q$ be an ordered $\ell$-path in $G'$ of length at least $\frac{n'-(n')^{1-\beta}}{k-\ell}$ from $\vec e$ to $\vec f$ for some $\vec f\in\vec E(G')$. Let $\vec S\coloneqq \vec f^{(k-1)+}$, $\vec T\coloneqq \vec e^{(k-1)-}$, $G''\coloneqq G[V(G'-Q)\cup U\cup S \cup T]$, and $n''\coloneqq |V(G'')|\le (n')^{1-\beta}+n^{\alpha}+2k\le n^{\alpha}+n^{1-\beta}$. Note that $G''$ is an $(n'', k, \frac{1}{2}+\frac{\hat\delta}{2})$-graph since for all $S\in \cS_{k-1}(G'')\subseteq \cS_{k-1}(G)$, we have
$$|N_{G''}(S)|\ge |N_G(S)\cap U|\ge \left(\frac{1}{2}+\frac{3\hat\delta}{4}\right)n^{\alpha}\ge\left(\frac{1}{2}+\frac{\hat\delta}{2}\right)(n^{\alpha}+n^{1-\beta})\ge\left(\frac{1}{2}+\frac{\hat\delta}{2}\right)n''.$$
By Lemma~\ref{lemma: tight Hamilton-connected}, there is a tight Hamilton path $P$ from $\vec S$ to $\vec T$ in $G''$. Observe that $P$ contains an ordered Hamilton $\ell$-path from $\vec f^{(\ell)+}$ to $\vec e^{(\ell)-}$. Then $P+Q$ is an ordered Hamilton $\ell$-cycle of $G$. Since $Q$ was arbitrary, by Lemma~\ref{lemma: counting long paths}, we have
\begin{align*}
\log \Psi_{\ell}'(G)&\ge \frac{k}{k-\ell}h(G')-\frac{n'}{k-\ell}\log {n'\choose k-1}\frac{(n'-k+1)!}{(n'-\ell)!n'}-n'\log e-\eps n'\\
&\ge \frac{k}{k-\ell}(h(G)-\eps n)-\frac{n}{k-\ell}\log {n\choose k-1}\frac{(n-k+1)!}{(n-\ell)!n}-n\log e-2\eps n\\
&\ge \frac{k}{k-\ell}h(G)-\frac{n}{k-\ell}\log {n\choose k-1}\frac{(n-k+1)!}{(n-\ell)!n}-n\log e-3k\eps n.
\end{align*}
Recall that $\Psi_{\ell}'(G)=C_{n,k,\ell}\Psi_{\ell}(G)$. This completes the proof.
\end{proof}

When $\ell=0$, we have $\log \Phi(G)\ge h(G)-\frac{k-1}{k}n\log e-o(n)$, which is the lower bound in Theorem~\ref{thm: KSW}. Theorem~\ref{thm: FHM} also follows as a corollary.
\begin{proof}[Proof of Theorem~\ref{thm: FHM}]
Using the fact that $-\log (1-x)=\log(1+\frac{x}{1-x})\le \frac{2x}{1-x}\le 3x$ for $0< x\le 1/3$, we first calculate
$$\frac{1}{k-\ell}\log\frac{ n(n-\ell)!}{(n-k+1)!}\ge\frac{1}{k-\ell}\log (n-k+2)^{k-\ell}\ge \log n\left(1-\frac{k-2}{n}\right)\ge\log n-\frac{3(k-2)}{n}.$$
We use Theorem~\ref{main theorem - with entropy - non-tight}, together with the lower bound for $h(G)$ given by Theorem~\ref{thm: graph entropy bound} to obtain
\begin{align*}
\log \Psi_{\ell}'(G)&\ge \frac{n}{k-\ell}\log{n\choose k-1}+\frac{n}{k-\ell}\log \delta-\frac{n}{k-\ell}\log{n\choose k-1}\frac{(n-k+1)!}{(n-\ell)!n}-n\log e-o(n)\\
&\ge\frac{n}{k-\ell}\log\delta+\frac{n}{k-\ell}\log\frac{(n-\ell)!n}{(n-k+1)!}-n\log e-o(n)\\
&\ge\frac{n}{k-\ell}\log\delta+n\log n-3(k-2)-n\log e-o(n)\\
&\ge\frac{n}{k-\ell}\log\delta +\log (n!)-o(n).
\end{align*}
Therefore, we conclude $\Psi_{\ell}(G)\ge C_{n,k,\ell}^{-1}n!(\delta-o(1))^{\frac{n}{k-\ell}}=\Psi_{\ell}(K_n^{(k)})(\delta-o(1))^{\frac{n}{k-\ell}}$.
\end{proof}

\end{document}